
\documentclass[11pt]{amsart}
\usepackage{amsmath,amssymb,amsthm}
\usepackage[all]{xy}
\usepackage{bm}
\usepackage{pinlabel}
\usepackage{srcltx}
\usepackage[hypcap=true,font=small,margin=10pt]{caption}
\usepackage[hypcap=true,format=hang,labelfont=up]{subcaption}
\usepackage[colorlinks=true,linkcolor=blue,citecolor=blue,urlcolor=blue]{hyperref}

\title[Joining and gluing SFH]
{Joining and gluing sutured Floer homology}
\author[Rumen Zarev]{Rumen Zarev}
\address{Department  of Mathematics\\Columbia University\\
New York\\NY 10027}
\email{rzarev@math.columbia.edu}
\thanks{The author was partially supported by NSF grant number DMS-0804121.}


\def\co{\colon\thinspace} 
\newcommand\lu[1]{\mbox{}^{#1}} 
\newcommand\li[1]{\mbox{}#1} 

\def\Figure{Figure}
\def\Figures{Figures}
\def\Equation{Eq.}

\def\Theorem{Theorem}
\def\Theorems{Theorems}
\def\Proposition{Proposition}
\def\Propositions{Propositions}
\def\Lemma{Lemma}

\def\Section{Section}

\def\Definition{Definition}
\def\Definitions{Definitions}
\def\Appendix{Appendix}

\newcommand{\del}{\partial}

\newcommand{\sqtens}{\boxtimes} 

\newcommand{\ol}{\overline}
\newcommand{\into}{\hookrightarrow}


\DeclareMathOperator{\Int}{Int}

\DeclareMathOperator{\id}{id}

\DeclareMathOperator{\op}{op}
\DeclareMathOperator{\Hom}{Hom}
\DeclareMathOperator{\Mor}{Mor}
\DeclareMathOperator{\Mod}{Mod}
\DeclareMathOperator{\Db}{D}	

\DeclareMathOperator{\BBar}{Bar}

\newcommand{\A}{\mathcal{A}}

\newcommand{\D}{\mathcal{D}}
\newcommand{\F}{\mathcal{F}}
\newcommand{\G}{\mathcal{G}}
\newcommand{\I}{\mathcal{I}}

\newcommand{\M}{\mathcal{M}}
\newcommand{\Q}{\mathcal{Q}}

\newcommand{\W}{\mathcal{W}}
\newcommand{\Y}{\mathcal{Y}}
\newcommand{\Z}{\mathcal{Z}}
\newcommand{\HH}{\mathcal{H}}
\newcommand{\PP}{\mathcal{P}}
\newcommand{\TW}{\mathcal{TW}}


\newcommand{\ZZ}{\mathbb{Z}}
\newcommand{\II}{\mathbb{I}}
\newcommand{\RR}{\mathbb{R}}

\newcommand{\ZZZ}{\mathbf{Z}}
\newcommand{\aaa}{\mathbf{a}}
\newcommand{\xxx}{\mathbf{x}}
\newcommand{\yyy}{\mathbf{y}}


\newcommand{\balpha}{\boldsymbol\alpha}
\newcommand{\bbeta}{\boldsymbol\beta}


\newcommand{\Ainf}{\mathcal{A}_{\infty}}

\newcommand{\HFhat}{\widehat{\textit{HF}}}
\newcommand{\CFD}{\widehat{\textit{CFD}}}
\newcommand{\CFA}{\widehat{\textit{CFA}}}
\newcommand{\BSD}{\widehat{\textit{BSD}}}
\newcommand{\BSDA}{\widehat{\textit{BSDA}}}
\newcommand{\BSAA}{\widehat{\textit{BSAA}}}
\newcommand{\BSDD}{\widehat{\textit{BSDD}}}
\newcommand{\BSAD}{\widehat{\textit{BSAD}}}

\newcommand{\BSA}{\widehat{\textit{BSA}}}
\newcommand{\SFH}{\textit{SFH}}
\newcommand{\SFC}{\textit{SFC}}


\newcommand{\dtens}{\mathbin{\widetilde{\otimes}}} 

\newcommand{\Zbar}{\overline{\Z}}

\newtheorem{THM}{Theorem}

\newtheorem{CONJ}[THM]{Conjecture}
\newtheorem{thm}{Theorem}[section]
\newtheorem{prop}[thm]{Proposition}

\newtheorem{lem}[thm]{Lemma}

\newtheorem{defn}[thm]{Definition}

\theoremstyle{remark}

\newtheorem*{rmk}{Remark}

\begin{document}

\begin{abstract}
	We give a partial characterization of bordered Floer homology in terms of sutured Floer homology. The bordered
	algebra and modules are direct sums of certain sutured Floer complexes. The algebra multiplication and
	algebra action correspond to a new gluing map on $\SFH$. It is defined algebraically, and is a special case of a
	more general ``join'' map.
	
	In a follow-up paper we show that this gluing map can be identified with the contact cobordism map of
	Honda-Kazez-Mati\'c. The join map is conjecturally equivalent to the cobordism maps on $\SFH$ defined by Juh\'asz. 
\end{abstract}

\maketitle

\section{Introduction}
\label{sec:introduction}

Heegaard Floer homology is a family of invariants for 3 and 4--manifold invariants defined by counting
pseudo-holomorphic curves, originally introduced by Ozsv\'ath and Szab\'o. The most simple form associates to an
oriented 3--manifold $Y$ a graded homology group $\HFhat(Y)$~\cite{OS:HF_definition,OS:HF_properties}.

While Heegaard Floer theory for closed 3--manifolds has been very successful, a lot of the applications involve
manifolds with boundary. In~\cite{Juh:SFH} Juh\'asz introduced sutured Floer homology, or $\SFH$, which
generalizes $\HFhat$ to sutured manifolds.
Introduced by Gabai in~\cite{Gab:foliations}, they are 3--manifolds with boundary, and some extra structure.
In the context of Heegaard Floer homology, the extra structure can be considered to be a multicurve
$\Gamma$, called a \emph{dividing set}, on the boundary of the 3--manifold $Y$. Sutured Floer homology associates to such
a pair $(Y,\Gamma)$ a homology group $\SFH(Y,\Gamma)$.

Among other applications, sutured Floer homology has been used to solve problems in contact topology, via a contact
invariant for contact manifolds with boundary, and a map associated to contact cobordisms, defined by Honda, Kazez, and
Mati\'c in~\cite{HKM:EH, HKM:TQFT}. This map has been used by Juh\'asz in~\cite{Juh:cobordisms} to define a map on $\SFH$ associated to a
cobordism (with corners) between two sutured manifolds.

A shortcoming of sutured Floer homology is that there is little relationship between the groups $\SFH(Y,\Gamma_1)$ and
$\SFH(Y,\Gamma_2)$, where $\Gamma_1$ and $\Gamma_2$ are two dividing sets on the same manifold $Y$. For example one can
find many examples where one of the groups vanishes, while the other does not. Moreover, the groups $\SFH(Y_1,\Gamma_1)$ and
$\SFH(Y_2,\Gamma_2)$ are not sufficient to reconstruct $\HFhat(Y)$, where $Y=Y_1\cup Y_2$ is a closed manifold.

To overcome these shortcomings, Lipshitz, Ozsv\'ath, and Thurston introduced in~\cite{LOT:pairing} a new Heegaard Floer
invariant for 3--manifolds with boundary called \emph{bordered Floer homology}. To a parametrized closed connected surface $F$
they associate a DG-algebra $\A(F)$. To a 3--manifold $Y$ with boundary $\del Y\cong F$ they associate
an $\Ainf$--module $\CFA(Y)$ over $\A(F)$ (defined up to $\Ainf$--homotopy equivalence). This invariant overcomes both
of the above shortcomings of $\SFH$. On the one hand, given two parametrizations of the surface $F$, the modules $\CFA(Y)$
associated to these parametrizations can be computed from each other. On the other hand, if $Y_1$ and $Y_2$ are two
manifolds with boundary diffeomorphic to $F$, the group $\HFhat(Y_1\cup_F Y_2)$ can be computed from $\CFA(Y_1)$ and $\CFA(Y_2)$.

The natural question arises: How are these two theories for 3--manifolds with boundary related to each other? Can $\SFH(Y,\Gamma)$
be computed from $\CFA(Y)$, and if yes, how? Can $\CFA(Y)$ be computed from the sutured homology of $Y$, and if yes, how?

In~\cite{Zar:BSFH} we introduced \emph{bordered sutured Floer homology}, to serve as a bridge between
the two worlds. We used it to answer the first part of the above question---to each dividing set $\Gamma$ on $F$ we can
associate a module $\CFD(\Gamma)$ over $\A(F)$, such that $\SFH(Y,\Gamma)$ is simply the homology of the derived tensor product
$\CFA(Y)\dtens\CFD(\Gamma)$.

In the current paper we answer the second half of this question. We show that for a given parametrization of $F$, the
homologies of the bordered algebra $\A(F)$ and the module $\CFA(Y)$ associated to a 3--manifold $Y$ are direct sums of
finitely many sutured Floer homology groups. Moreover we identify multiplication in $H_*(\A(F))$ and the action of
$H_*(\A(F))$ on $H_*(\CFA(Y))$ with a certain \emph{gluing map} $\Psi$ on sutured Floer homology.

\subsection{Results}
\label{sec:intro_results}

The first result of this paper is to define the gluing map $\Psi$ discussed above. Suppose $(Y_1,\Gamma_1)$ and
$(Y_2,\Gamma_2)$ are two sutured manifolds. We say that we can glue them if there are subsets $F_1$ and $F_2$
of their boundaries, where $F_1$ can be identified with the mirror of $F_2$, such that the multicurve $\Gamma_1\cap F_1$
is identified with $\Gamma_2\cap F_2$, preserving the orientations on $\Gamma_i$. This means that the regions $R_+$ and
$R_-$ on the two boundaries are interchanged.
We will only talk of gluing in the case when $F_i$ have no closed components, and all components of $\del
F_i$ intersect the dividing sets $\Gamma_i$.
\begin{defn}
	\label{def:intro_gluing}
	Suppose $(Y_1,\Gamma_1)$, $(Y_2,\Gamma_2)$, $F_1$ and $F_2$ are as above. The \emph{gluing of $(Y_1,\Gamma_1)$ and
	$(Y_2,\Gamma_2)$ along $F_i$} is the sutured manifold $(Y_1\cup_{F_i} Y_2,\Gamma_{1+2})$. The dividing set
	$\Gamma_{1+2}$
	is obtained from $(\Gamma_1\setminus F_1)\cup_{\del F_i}(\Gamma_2\setminus F_2)$ as follows.
	Along each component $f$ of $\del F_i$ the orientations of $\Gamma_1$ and $\Gamma_2$ disagree. We apply the
	minimal possible positive fractional Dehn twist along $f$ that gives a consistent orientation.
\end{defn}

An illustration of gluing is given in \Figure~\ref{fig:intro_gluing_example}. We define a gluing map
$\Psi$ on $\SFH$ corresponding to this topological construction.

\begin{figure}
	\labellist
	\pinlabel $\bigcup$ at 172 126
	\pinlabel $\longrightarrow$ at 386 126
	\hair=4pt
	\pinlabel $\Y_1$ [t] at 68 24
	\pinlabel $\Y_2$ [t] at 276 24
	\pinlabel $\Y_1\cup_{F_i}\Y_2$ [t] at 570 24
	\pinlabel $F_1$ [b] at 136 228
	\pinlabel $F_2$ [b] at 208 228
	\endlabellist
	\includegraphics[scale=.5]{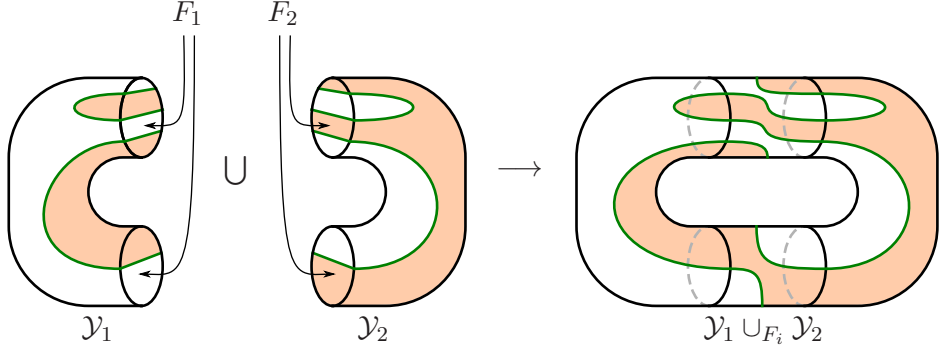}
	\caption{Gluing two solid balls along $F=D^2\cup D^2$, to obtain a solid torus. The $R_+$ regions have been shaded.}
	\label{fig:intro_gluing_example}
\end{figure}

\begin{THM}
	\label{thm:intro_join_and_gluing}
	Let $(Y_1,\Gamma_1)$ and $(Y_2,\Gamma_2)$ be two balanced sutured manifolds, that can be glued along $F$.
	Then there is a well defined map
	\begin{equation*}
		\Psi_F \co \SFH(Y_1,\Gamma_1)\otimes\SFH(Y_2,\Gamma_2)\to\SFH( (Y_1,\Gamma_1)\cup_{F}(Y_2,\Gamma_2)),
	\end{equation*}
	satisfying the following properties:
	\begin{enumerate}
		\item Symmetry: The map $\Psi_F$ for gluing $Y_1$ to  $Y_2$ is equal to that for gluing $Y_2$ to $Y_1$.
		\item Associativity: Suppose that we can glue $Y_1$ to $Y_2$ along $F_1$, and $Y_2$ to $Y_3$ along $F_2$, such that
			$F_1$ and $F_2$ are disjoint in $\del Y_2$. Then the order of gluing is irrelevant:
			\begin{equation*}
				\Psi_{F_2}\circ\Psi_{F_1}=\Psi_{F_1}\circ\Psi_{F_2}=\Psi_{F_1\cup F_2}.
			\end{equation*}
		\item Identity: Given a dividing set $\Gamma$ on $F$, there is a dividing set
			$\Gamma'$ on $F\times[0,1]$, and an element $\Delta_\Gamma\in\SFH(F\times[0,1],\Gamma')$, satisfying the
			following. For any sutured manifold $(Y,\Gamma'')$ with $F\subset\del Y$ and $\Gamma''\cap F=\Gamma$, there
			is a diffeomorphism $(Y,\Gamma'')\cup_F(F\times[0,1],\Gamma')\cong(Y,\Gamma'')$.
			Moreover, the map $\Psi_F(\cdot,\Delta_\Gamma)$ is the identity of $\SFH(Y,\Gamma'')$.
	\end{enumerate}
\end{THM}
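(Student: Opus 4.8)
The plan is to construct $\Psi_F$ inside bordered sutured Floer homology, using the pairing theorem of~\cite{Zar:BSFH}; it will appear as a special case of a purely algebraic ``join'' map. Fix an arc diagram $\Z$ parametrizing $F$ compatibly with the identification of $\Gamma_1\cap F_1$ and $\Gamma_2\cap F_2$, so that $F_1$ carries $\Z$ while $F_2$ carries the opposite diagram $-\Z$. This promotes the balanced sutured manifolds $(Y_1,\Gamma_1)$ and $(Y_2,\Gamma_2)$ to admissible bordered sutured manifolds, with invariants the type--$A$ module $\BSA(Y_1)$ and the type--$D$ module $\BSD(Y_2)$, both over the algebra $\A(\Z)$; the hypotheses on $F_i$ (no closed components, every component of $\del F_i$ meeting $\Gamma_i$) are what make this legitimate. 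Two consequences of~\cite{Zar:BSFH} are needed. First, filling in the $\Z$--parametrized part of $\del Y_i$ by a standard sutured region $W_i$ (a thickened parametrized surface with its canonical dividing set) recovers $(Y_i,\Gamma_i)$, so that $\SFH(Y_1,\Gamma_1)\cong H_*\bigl(\BSA(Y_1)\dtens\BSD(W_1)\bigr)$ and $\SFH(Y_2,\Gamma_2)\cong H_*\bigl(\BSA(W_2)\dtens\BSD(Y_2)\bigr)$. Second, the pairing theorem identifies $\SFH\bigl((Y_1,\Gamma_1)\cup_F(Y_2,\Gamma_2)\bigr)$ with $H_*\bigl(\BSA(Y_1)\dtens\BSD(Y_2)\bigr)$, the fractional Dehn twist of \Definition~\ref{def:intro_gluing} being exactly what makes the gluing of these two bordered sutured manifolds consistent, so that no correction bimodule is required.

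I would then define $\Psi_F$ by tensoring with a distinguished class. Gluing $W_1$ to $W_2$ along their parametrized boundaries produces a fixed sutured manifold $\Q_F$ depending only on $F$ and $\Gamma\cap F$, and $\SFH(\Q_F)\cong H_*\bigl(\BSA(W_2)\dtens\BSD(W_1)\bigr)$ contains a canonical cycle $\theta_F$, built from the matched idempotents of $\A(\Z)$ and representing the identity morphism. One then sets $\Psi_F(x\otimes y)=\bigl[\mu(x,\theta_F,y)\bigr]$, where $\mu$ is the iterated chain--level pairing map landing in $H_*\bigl(\BSA(Y_1)\dtens\BSD(Y_2)\bigr)\cong\SFH\bigl((Y_1,\Gamma_1)\cup_F(Y_2,\Gamma_2)\bigr)$; equivalently, and more conveniently for the properties below, $\Psi_F$ is box--tensoring with the class of $\theta_F$. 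Well--definedness rests on one preliminary lemma: the construction is independent of the auxiliary arc diagram $\Z$, since two admissible choices are related by the arc--diagram--change (mapping cylinder) bimodules of bordered sutured theory, and conjugation by these preserves $\theta_F$; independence of the remaining choices is the usual bordered invariance.

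It remains to verify the three properties, in increasing order of difficulty. \emph{Symmetry}: interchanging the type--$A$ and type--$D$ roles of $Y_1$ and $Y_2$ (and $\Z\leftrightarrow-\Z$) leaves $\Psi_F$ unchanged; this follows from the $\BSA$--$\BSD$ duality of a bordered sutured manifold together with the manifest symmetry of the pair $(\Q_F,\theta_F)$ under this operation. \emph{Identity}: take $\Gamma'$ on $F\times[0,1]$ to be the dividing set whose bordered sutured invariant is the identity type--$DA$ bimodule $\BSDA(\II)$ of $\A(\Z)$ over itself (so the collar $F\times[0,1]$ absorbs into $Y$ as claimed); $\Delta_\Gamma$ is then the class of the canonical idempotent cycle of $\BSDA(\II)$, and $\Psi_F(\,\cdot\,,\Delta_\Gamma)$ is the identity because $\dtens$ with $\BSDA(\II)$ is. The main obstacle is \emph{associativity}. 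When $F_1,F_2\subset\del Y_2$ are disjoint, one parametrizes them by disjoint arc diagrams $\Z_1,\Z_2$, so that $Y_2$ carries a bordered sutured bimodule over $\A(\Z_1)\otimes\A(\Z_2)$ (of type $\BSAA$, $\BSDA$ or $\BSDD$ according to the roles of the two sides); the identity $\Psi_{F_2}\circ\Psi_{F_1}=\Psi_{F_1}\circ\Psi_{F_2}=\Psi_{F_1\cup F_2}$ then unwinds to the associativity and commutativity of $\dtens$ over two algebras together with the compatibility $\theta_{F_1}\otimes\theta_{F_2}\simeq\theta_{F_1\cup F_2}$ of the distinguished cycles under disjoint union. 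The algebraic associativity is formal, but choosing models for the capping regions $W_i$ and the gluing pieces that behave correctly under disjoint union, and then proving the chain--level gluing statement for the $\theta$'s up to the homotopies implicit in $\dtens$, is where the genuine difficulty lies.
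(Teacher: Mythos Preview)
Your proposal has a genuine conceptual gap at its core, concerning the target of the pairing.

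You claim that ``the fractional Dehn twist of \Definition~\ref{def:intro_gluing} being exactly what makes the gluing of these two bordered sutured manifolds consistent, so that no correction bimodule is required,'' and hence that $\SFH\bigl((Y_1,\Gamma_1)\cup_F(Y_2,\Gamma_2)\bigr)\cong H_*\bigl(\BSA(Y_1)\dtens\BSD(Y_2)\bigr)$. This is not right. The hypothesis that $R_+$ and $R_-$ are interchanged across $F_1\cong-F_2$ means that if $F_1$ is parametrized by $\Z$, then $F_2$ is parametrized by $\ol\Z$ (the type--swapped diagram), not by $-\Z$. Consequently $\Y_1$ is bordered by $\F(\Z)$ while $\Y_2$ is bordered by $\F(\ol\Z)=\ol{\F(\Z)}$, and these \emph{cannot} be concatenated directly: the pairing theorem requires $\F$ against $-\F$, not $\F$ against $\ol\F$. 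The positive fractional Dehn twist is precisely the insertion of the twisting slice $\TW_{\F,+}$, bordered by $-\F\sqcup-\ol\F$, and its bimodule invariant is $\li{_A}{A^\vee}_A$, \emph{not} the identity bimodule. So the correct model for the target is
\[
\SFC\bigl((Y_1,\Gamma_1)\cup_F(Y_2,\Gamma_2)\bigr)\;\simeq\;\BSD(\Y_1)\sqtens_A A^\vee\sqtens_A\BSD(\Y_2),
\]
and a correction bimodule is very much required.

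This gap propagates. Your $\theta_F$ construction, read as ``box--tensoring with a distinguished idempotent class,'' produces a map into a direct pairing, not into the $A^\vee$--twisted pairing above; so as stated it does not land in $\SFC$ of the glued manifold. The paper's actual mechanism is an explicit $\Ainf$--bimodule homomorphism
\[
\nabla_M\co\li{_A}(M\otimes M^\vee)_A\longrightarrow\li{_A}{A^\vee}_A,
\]
an $\Ainf$--version of the evaluation pairing, where $M=\BSA(\W_\Gamma)$ is the invariant of the cap. One then sets $\Psi_F=\id_U\sqtens\nabla_M\sqtens\id_V$ with $U=\BSD(\Y_1)$ and $V=\BSD(\Y_2)$. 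Well--definedness requires two invariance lemmas (under change of $M$ up to homotopy equivalence, and under change of algebra via a quasi--invertible $DA$--bimodule), both of which are nontrivial $\Ainf$--computations; your sketch of invariance under change of $\Z$ elides these. For the Identity property, the element $\Delta_\Gamma$ does not live in a manifold with invariant $\BSDA(\II)$; it lives in the double $\D(\W_\Gamma)=-\W_\Gamma\cup\TW_{-}\cup\W_\Gamma$, whose chain complex is $M^\vee\sqtens\II\sqtens A\sqtens\II\sqtens M$, and $\Delta$ is the diagonal element $\sum_i m_i^\vee\sqtens 1\sqtens m_i$. Verifying $\Psi_F(\cdot,\Delta)\simeq\id$ then requires an explicit homotopy equivalence $\II\sqtens A^\vee\sqtens\II\sqtens A\simeq\II$, which is a separate lemma.
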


One application of this result is the following characterization of bordered Floer homology in terms of $\SFH$ and the
gluing map.
Fix a parametrized closed surface $F$, with bordered algebra $A=\A(F)$. Let $F'$ be $F$ with
a disc removed, and let $p,q\in\del F'$ be two points. We can find $2^{2g(F)}$ distinguished dividing sets on $F$, which
we denote $\Gamma_I$ for
$I\subset\{1,\ldots,2g\}$, and corresponding dividing sets $\Gamma'_I=\Gamma_I\cap F'$ on $F'$.
Let $\Gamma_{I\to J}$ be a dividing set on $F'\times[0,1]$ which is $\Gamma'_I$ along $F'\times\{0\}$,
$\Gamma'_J$ along $F'\times\{1\}$, and half of a negative Dehn twist of $\{p,q\}\times[0,1]$ along $\del
F'\times[0,1]$.

\begin{THM}
	\label{thm:intro_bordered_via_SFH}
	Suppose the surfaces $F$ and $F'$, the algebra $A$, and the dividing sets $\Gamma_I$, $\Gamma'_I$, and $\Gamma_{I\to
	J}$ are as described above.
	Then there is an isomorphism
	\begin{equation*}
		H_*(A) \cong\bigoplus_{I,J\subset\{1,\ldots,2g\}}\SFH(F'\times[0,1],\Gamma_{I\to J}),\\
	\end{equation*}
	and the multiplication map $\mu_2$ on $H_*(A)$ can be identified with the gluing map $\Psi_{F'}$. It maps 
	$\SFH(F'\times[0,1],\Gamma_{I\to J})\otimes\SFH(F'\times[0,1],\Gamma_{J\to K})$ to
		$\SFH(F'\times[0,1],\Gamma_{I\to K})$, and sends all other summands to 0.
\end{THM}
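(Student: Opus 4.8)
The plan is to work inside the bordered sutured Floer homology of \cite{Zar:BSFH}, where the bordered algebra $A=\A(F)$ is the algebra $\A(\Z)$ of a pointed matched circle $\Z$ (equivalently an arc diagram) and $F'$ is the associated genus $g$ surface with one boundary circle, $F'=F(\Z)$. The algebra $\A(\Z)$ has a complete system of orthogonal idempotents $\{I(I)\}$ indexed by $I\subset\{1,\dots,2g\}$ with $\sum_I I(I)=1$, so as a chain complex $\A(\Z)=\bigoplus_{I,J}I(I)\,\A(\Z)\,I(J)$. Because homology commutes with direct sums, and with tensor products over the ground field $\mathbb{F}_2$, the theorem reduces to: (a) a natural isomorphism $H_*\!\bigl(I(I)\,\A(\Z)\,I(J)\bigr)\cong\SFH(F'\times[0,1],\Gamma_{I\to J})$; and (b) under these isomorphisms, $\mu_2$ restricted to $I(I)\A(\Z)I(J)\otimes I(J)\A(\Z)I(K)$, which lands in $I(I)\A(\Z)I(K)$, agrees with $\Psi_{F'}$, while $\mu_2$ kills $I(I)\A(\Z)I(J)\otimes I(J')\A(\Z)I(K)$ for $J\neq J'$ because $I(J)I(J')=0$.

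For (a) I would realize $(F'\times[0,1],\Gamma_{I\to J})$ by a bordered sutured Heegaard diagram directly. Viewed as a bordered sutured manifold with the two parametrized boundary pieces $F'\times\{0\}$ and $F'\times\{1\}$ both carrying the arc diagram $\Z$, the product $F'\times[0,1]$ has $\BSDA$ bimodule the identity bimodule $\lu{\A(\Z)}[\II]_{\A(\Z)}$, which as a chain complex is $\A(\Z)$ itself. The distinguished dividing sets $\Gamma'_I$, $\Gamma'_J$ correspond, via the pairing theorem of \cite{Zar:BSFH} (the statement $\SFH(Y,\Gamma)\cong H_*(\CFA(Y)\dtens\CFD(\Gamma))$ recalled in the introduction), to the elementary type-$D$ modules supported at $I(I)$ and $I(J)$; capping both parametrized faces with these and taking $\SFH$ of the resulting honest sutured manifold then computes $H_*\!\bigl(I(I)\,\A(\Z)\,I(J)\bigr)$, i.e.\ the box tensor product of the simple modules at $I$ and $J$ with $\lu{\A(\Z)}[\II]_{\A(\Z)}$. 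The half of a negative Dehn twist along $\del F'\times[0,1]$ built into $\Gamma_{I\to J}$ is exactly the framing correction that makes this capping reproduce the \emph{identity} bimodule and not a twisted variant; it records that Reeb chords of $\Z$ run in the positive direction away from the basepoint. At the chain level, the generators of such a diagram should be pairs of compatible idempotent states carrying Reeb-chord data, in bijection with a standard basis of $I(I)\A(\Z)I(J)$, with the holomorphic differential matching the crossing-resolution differential of $\A(\Z)$; this on-the-nose check is the technical core of the proof.

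For (b), glue $(F'\times[0,1],\Gamma_{I\to J})$ to $(F'\times[0,1],\Gamma_{J\to K})$ along a copy of $F'$, identifying the outgoing face $F'\times\{1\}$ of the first with the incoming face $F'\times\{0\}$ of the second. Topologically $F'\times[0,1]\cup_{F'}F'\times[0,1]\cong F'\times[0,1]$; the copies of $\Gamma'_J$ on the glued face pass into the interior, leaving $\Gamma'_I$ on one outer face and $\Gamma'_K$ on the other, and I would check that the prescription of \Definition~\ref{def:intro_gluing} --- the minimal positive fractional Dehn twist along $\del F'$ restoring consistent orientation --- fuses the two half negative twists of $\Gamma_{I\to J}$ and $\Gamma_{J\to K}$ into the single half negative twist of $\Gamma_{I\to K}$. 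It then remains to see that $\Psi_{F'}$ is $\mu_2$ on these product pieces. Since $\Psi_{F'}$ is defined algebraically inside the bordered sutured formalism --- as the map induced on $\SFH=H_*(\BSA\dtens\BSD)$ by the pairing across the gluing surface --- and the gluing region of two stacked product cobordisms is again a product, one expects that pairing to be box tensor with $\lu{\A(\Z)}[\II]_{\A(\Z)}$, whose action on $I(I)\A I(J)\otimes I(J)\A I(K)$ is multiplication. As a check, the identity $\Delta_{\Gamma'_I}$ of \Theorem~\ref{thm:intro_join_and_gluing}(3) should correspond to the unit $I(I)\in H_*(I(I)\A(\Z)I(I))$, and associativity of $\Psi$ to associativity of $\mu_2$.

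The main obstacle is (a): pinning down the precise bordered sutured Heegaard diagram (or the precise sequence of pairing-theorem identifications) for $(F'\times[0,1],\Gamma_{I\to J})$, and verifying that its generators, idempotents and differential reconstruct the DG-algebra summand $I(I)\,\A(\Z)\,I(J)$ --- in particular aligning the Dehn-twist framing with the direction of Reeb chords, and handling the full algebra over all subsets $I$ at once rather than a single $\spinc$ grading. A secondary, bookkeeping-level point is to confirm that the abstractly defined $\Psi_{F'}$ restricts to the honest multiplication $\mu_2$ on the nose, not merely up to chain homotopy, which needs one to unwind the construction of $\Psi$ from the earlier sections.
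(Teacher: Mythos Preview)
Your overall strategy---decompose by idempotents, realize each summand as $\SFH$ of a product piece via the pairing theorem, then identify the gluing map with multiplication---matches the paper's. But there is a genuine confusion in step~(a) that would derail the argument.

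You write that the product $F'\times[0,1]$, as a bordered sutured manifold, has $\BSDA$ the identity bimodule $\lu{\A(\Z)}\II_{\A(\Z)}$, ``which as a chain complex is $\A(\Z)$ itself.'' This is wrong: the identity type--$DA$ bimodule has underlying module the ground ring $\I(\Z)$, not the algebra. The object whose $\BSAA$ invariant is $\li{_A}A_A$ is not the honest product but the \emph{negative twisting slice} $\TW_{\F,-}$ (\Proposition~\ref{prop:twisting_slice_invariants}), and the half negative Dehn twist built into $\Gamma_{I\to J}$ is exactly what makes $(F'\times[0,1],\Gamma_{I\to J})$ decompose as $-\W_I\cup\TW_{-\ol\F,-}\cup\W_J$ (\Definition~\ref{def:gamma_I_J}). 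So the twist is not a ``framing correction that reproduces the identity bimodule''; it is the source of the algebra. Once you have this straight, the isomorphism in~(a) falls out immediately: $\BSD(-\W_I)$ and $\BSD(\W_J)$ are the elementary type--$D$ modules for $\iota_I$ and $\iota_J$ (\Proposition~\ref{prop:elem_div_sets}), and sandwiching $\li{_A}A_A$ between them gives $\iota_I\cdot A\cdot\iota_J$ on the nose.

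For step~(b) your outline is right in spirit but too vague to constitute a proof. The paper's argument uses two concrete inputs you have not supplied: first, since $\BSA(\W_I)$ is an elementary type--$A$ module, the join map $\Psi_{\W_I}$ takes the very simple form of \Proposition~\ref{prop:join_for_elementary}; second, after applying $\Psi$, one must cancel an $A^\vee\sqtens\II\sqtens A$ factor using the explicit homotopy equivalence $c_A$ of \Lemma~\ref{lem:av_a_homotopy_equivalence}. With those in hand the composite is computed directly (\Figure~\ref{fig:BSA_SFH_action}) and seen to equal $u\otimes a\mapsto m_{1|1}(u,a)$. Your suggestion that ``$\Psi$ restricts to $\mu_2$ on the nose, not merely up to homotopy'' is correct, but it needs this explicit chain-level computation; the abstract properties of $\Psi$ from \Theorem~\ref{thm:intro_join_and_gluing} only determine it up to homotopy.
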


The module $\CFA$ can be similarly described.

\begin{THM}
	\label{thm:intro_modules}
	Suppose $Y$ is a 3--manifold with boundary $\del Y\cong F$. There is an isomorphism
	\begin{align*}
		H_*(\CFA(Y)_A)\cong\bigoplus_{I\subset\{1,\ldots,2g\}} \SFH(Y,\Gamma_I),
	\end{align*}
	and the action $m_2$ of $H_*(A)$ on $H_*(\CFA(Y))$ can be identified with the gluing map $\Psi_{F'}$. It maps
	$\SFH(Y,\Gamma_I)\otimes\SFH(F'\times[0,1],\Gamma_{I\to J})$ to $\SFH(Y,\Gamma_J)$, and sends all other summands to 0.
\end{THM}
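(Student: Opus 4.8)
The plan is to reduce everything to the bordered-sutured pairing theorem of \cite{Zar:BSFH} together with Theorem~\ref{thm:intro_bordered_via_SFH}, which I take as already proved. First I would invoke the dictionary of \cite{Zar:BSFH}: the algebra $A=\A(F)$ is the bordered-sutured algebra $\A(\Z)$ of an arc diagram $\Z$ parametrizing $F'=F\setminus D^2$, with a primitive idempotent $\iota_I$ for each $I\subset\{1,\dots,2g\}$; the module $\CFA(Y)$ is $\BSA$ of the bordered-sutured manifold with underlying space $Y$ in which the disc $\del Y\setminus F(\Z)$ carries the standard dividing set; and for every dividing set $\Gamma$ on $F$ there is a type-$D$ module $\CFD(\Gamma)$ over $\A(\Z)$ with $\SFH(Y,\Gamma)\cong H_*(\CFA(Y)\dtens\CFD(\Gamma))$. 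The idempotent decomposition $\CFA(Y)=\bigoplus_I\CFA(Y)\iota_I$ then holds on the nose. The defining feature of the distinguished $\Gamma_I$ is that $\CFD(\Gamma_I)$ is homotopy equivalent to the elementary type-$D$ module $\mathbf{e}_I$ on one generator with idempotent $\iota_I$ and vanishing differential; granting this,
\begin{equation*}
	\SFC(Y,\Gamma_I)\;\simeq\;\CFA(Y)\dtens\CFD(\Gamma_I)\;\simeq\;\CFA(Y)\dtens\mathbf{e}_I\;\cong\;\bigl(\CFA(Y)\iota_I,\,m_1\bigr),
\end{equation*}
and taking homology and summing over $I$ yields the first isomorphism. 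Since every arrow here is a quasi-isomorphism of $\Ainf$-modules and the pairing map is natural, the isomorphism on homology carries the $H_*(A)$-module structure on $H_*(\CFA(Y))$ to the one on $\bigoplus_I\SFH(Y,\Gamma_I)$ built from these identifications and the bordered-sutured gluing maps.

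To identify $m_2$ with $\Psi_{F'}$ I would combine this with Theorem~\ref{thm:intro_bordered_via_SFH}, which provides $\iota_I H_*(A)\iota_J\cong\SFH(F'\times[0,1],\Gamma_{I\to J})$ and identifies the multiplication $\iota_I H_*(A)\iota_J\otimes\iota_J H_*(A)\iota_K\to\iota_I H_*(A)\iota_K$ with $\Psi_{F'}$. Writing $\CFD(\Gamma_I)=\BSD(\mathcal C_I)$ for the bordered-sutured cap $\mathcal C_I$ that closes up the manifold underlying $\CFA(Y)$ along $F'$ to the sutured manifold $(Y,\Gamma_I)$, the map $m_2\co\CFA(Y)\iota_I\otimes\iota_I A\iota_J\to\CFA(Y)\iota_J$ becomes, under the pairing theorem, the bordered-sutured gluing that attaches the $\Gamma_{I\to J}$-region of $F'\times[0,1]$ to $\mathcal C_I$ and produces $\mathcal C_J$. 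This is exactly the gluing along $F'$ underlying the diffeomorphism $(Y,\Gamma_I)\cup_{F'}(F'\times[0,1],\Gamma_{I\to J})\cong(Y,\Gamma_J)$, so by the construction of the join and gluing maps in the body of the paper it coincides with $\Psi_{F'}$; orthogonality of the $\iota_I$ makes $m_2$ vanish on the remaining summands, matching the support of $\Psi_{F'}$. As a consistency check, the unit $\iota_I\in\iota_I H_*(A)\iota_I$ corresponds to the identity element $\Delta_{\Gamma'_I}$ of Theorem~\ref{thm:intro_join_and_gluing}(3), so $m_2(\cdot,\iota_I)=\Psi_{F'}(\cdot,\Delta_{\Gamma'_I})$ is the identity, as it must be.

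I expect the genuine work to be concentrated in two places, both essentially inherited from the proof of Theorem~\ref{thm:intro_bordered_via_SFH}. The first is the computation that $\CFD(\Gamma_I)\simeq\mathbf{e}_I$ and that the bimodule invariant of the $\Gamma_{I\to J}$-region is the $(\iota_I,\iota_J)$-summand of the identity bimodule of $\A(\Z)$; these are computations with explicit, essentially product, (bordered-)sutured Heegaard diagrams. The second is the purely topological bookkeeping: verifying that closing up the bordered-sutured manifold underlying $\CFA(Y)$ with $\mathcal C_I$ really gives $(Y,\Gamma_I)$, and that the minimal positive fractional Dehn twist along $\del F'$ dictated by Definition~\ref{def:intro_gluing} cancels the half negative Dehn twist built into $\Gamma_{I\to J}$, so that the glued manifolds and dividing sets are the ones claimed. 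Once these are in hand, the identification of $m_2$ with $\Psi_{F'}$ is forced, with associativity guaranteed by Theorem~\ref{thm:intro_join_and_gluing}(2).
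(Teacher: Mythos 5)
Your overall architecture matches the paper's: decompose $\CFA(Y)$ by idempotents, identify $\CFA(Y)\iota_I$ with $\SFC(Y,\Gamma_I)$ by pairing with the elementary type--$D$ module of the cap $\W_I$ (this is \Proposition~\ref{prop:elem_div_sets} plus the pairing theorem), and then match $m_2$ with the gluing map. The first isomorphism is therefore fine as you set it up.

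The gap is in the second half, where you declare that the identification of $m_2$ with $\Psi_{F'}$ is ``forced'' once the glued manifolds coincide. It is not: $\Psi_{F'}$ is defined \emph{algebraically} as $\id_U\sqtens\nabla_{M_I}\sqtens\id$, and its target is the complex $U\sqtens A^\vee\sqtens(\II\sqtens A\sqtens\II\sqtens M_J)$, which is \emph{not} the complex $U\sqtens\II\sqtens M_J\simeq\SFC(Y,\Gamma_J)$ on which $m_2$ lands. The topological diffeomorphism $(Y,\Gamma_I)\cup_{F'}(F'\times[0,1],\Gamma_{I\to J})\cong(Y,\Gamma_J)$ only tells you these two complexes are homotopy equivalent for \emph{some} equivalence; to compare the maps you must exhibit a specific one and then compute the composite. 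This is exactly where the paper works: \Proposition~\ref{prop:join_for_elementary} reduces $\nabla_{M_I}$ to insertion of $\iota_I^\vee$, and \Lemma~\ref{lem:av_a_homotopy_equivalence} constructs the cancellation equivalence $c_A\co\II\sqtens A^\vee\sqtens\II\sqtens A\to\II$ (whose proof --- that $c_A$ is a cycle and a homotopy equivalence --- is itself a substantial diagrammatic argument). Only after composing $\Psi_{M_I}$ with $\id_U\sqtens c_A\sqtens\id$ does the map visibly become $u\otimes a\mapsto m_{1|1}(u,a)$. Your proposal locates the ``genuine work'' in the elementary-module computation and the fractional-Dehn-twist bookkeeping, both of which are indeed needed, but omits this third, central ingredient; without it the claim $m_2=\Psi_{F'}$ is not even well-posed at the chain level. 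A secondary remark: deducing the module statement from \Theorem~\ref{thm:intro_bordered_via_SFH} does not actually save this work --- the paper runs the logic the other way, proving the module-level identity first and obtaining the algebra statement by viewing $A$ as a right module over itself.
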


The gluing construction and the gluing map readily generalize to a more general \emph{join} construction, and \emph{join
map}, which are 3--dimensional analogs. Suppose that $(Y_1,\Gamma_1)$ and $(Y_2,\Gamma_2)$ are two sutured
manifolds, and $F_1$ and $F_2$ are subsets of their boundaries, satisfying the conditions for gluing. Suppose further
that the diffeomorphism $F_1\to F_2$ extends to $W_1\to W_2$, where $W_i$ is a compact codimension--0 submanifold of
$Y_i$, and $\del W_i\cap\del Y_i=F_i$. Instead of gluing $Y_1$ and $Y_2$ along $F_i$, we can \emph{join} them along
$W_i$.

\begin{defn}
	\label{def:intro_join}
	The \emph{join of $(Y_1,\Gamma_1)$ and $(Y_2,\Gamma_2)$ along $W_i$} is the sutured manifold
	\begin{equation*}
		( (Y_1\setminus W_1)\cup_{\del W_i\setminus F_i} (Y_2\setminus W_2),\Gamma_{1+2}),
	\end{equation*}
	where the dividing set $\Gamma_{1+2}$ is constructed exactly as in \Definition~\ref{def:intro_gluing}. We denote the
	join by $(Y_1,\Gamma_1)\Cup_{W_i}(Y_2,\Gamma_2)$.
\end{defn}

An example of a join is shown in \Figure~\ref{fig:intro_join_example}. Notice that if $W_i$ is a collar neighborhood of
$F_i$, then the notions of join and gluing coincide. That is, the join operation is indeed a generalization of gluing.
In fact, throughout the paper we work almost exclusively with joins, while only regarding gluing as a special case.

\begin{THM}
	\label{thm:intro_join_map}
	There is a well-defined join map
	\begin{equation*}
		\Psi_W\co\SFH(Y_1,\Gamma_1)\otimes\SFH(Y_2,\Gamma_2)\to\SFH((Y_1,\Gamma_1)\Cup_{W}(Y_2,\Gamma_2)),
	\end{equation*}
	satisfying properties of symmetry, associativity, and identity, analogous to those listed in
	\Theorem~\ref{thm:intro_join_and_gluing}.
\end{THM}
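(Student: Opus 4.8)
The plan is to construct $\Psi_W$ inside bordered sutured Floer homology, building on \cite{Zar:BSFH}. The conditions imposed on the surfaces $F_i$---no closed components, and every component of $\del F_i$ meeting $\Gamma_i$---are exactly what is needed in order to choose an arc diagram $\mathcal{Z}$ parametrizing a surface diffeomorphic to the interior boundary $Z\cong\del W_1\setminus F_1\cong\del W_2\setminus F_2$, compatibly with the dividing sets $\Gamma_i\cap F_i$ along $\del Z$. Fixing such a $\mathcal{Z}$ turns each complement $X_i:=Y_i\setminus\Int W_i$ and each $W_i$ into balanced bordered sutured manifolds, parametrized along $Z$ (the two parametrizations of $Z$ being mirror to one another), with sutured boundaries carrying $\Gamma_i\setminus F_i$ and $\Gamma_i\cap F_i$ respectively. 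Since $Y_i=X_i\cup_Z W_i$, and since by \Definition~\ref{def:intro_join} the join $(Y_1,\Gamma_1)\Cup_W(Y_2,\Gamma_2)$ is obtained by gluing $X_1$ to $X_2$ directly along $Z$---the fractional Dehn twist there being exactly the one built into the standard bordered sutured gluing along $\mathcal{Z}$---the pairing theorem of \cite{Zar:BSFH} gives homotopy equivalences
\begin{equation*}
	\SFC(Y_i,\Gamma_i)\simeq\BSA(X_i)\boxtimes\BSD(W_i),\qquad
	\SFC\bigl((Y_1,\Gamma_1)\Cup_W(Y_2,\Gamma_2)\bigr)\simeq\BSA(X_1)\boxtimes\BSDD(\mathbb{I}_{\mathcal{Z}})\boxtimes\BSA(X_2),
\end{equation*}
where $\BSDD(\mathbb{I}_{\mathcal{Z}})$ is the identity $\tDD$--bimodule of $\A(\mathcal{Z})$, i.e.\ $\BSDD$ of the product region $Z\times[0,1]$.

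Since $\SFH$ is multiplicative under disjoint union, $\SFH(Y_1,\Gamma_1)\otimes\SFH(Y_2,\Gamma_2)$ is the homology of $\BSA(X_1)\boxtimes\BSDD(W_1\sqcup W_2)\boxtimes\BSA(X_2)$, where $\BSDD(W_1\sqcup W_2)=\BSD(W_1)\otimes\BSD(W_2)$. Comparing this with the expression for $\SFC$ of the join, defining $\Psi_W$ reduces to producing a morphism of $\tDD$--bimodules
\begin{equation*}
	\merge_W\co\BSDD(W_1\sqcup W_2)\longrightarrow\BSDD(\mathbb{I}_{\mathcal{Z}}),
\end{equation*}
well defined up to homotopy, and taking $\Psi_W$ to be the map induced on homology by $\id\boxtimes\merge_W\boxtimes\id$. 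The essential input is that the gluing hypotheses force the diffeomorphism $F_1\to F_2$ (which identifies $F_1$ with the mirror of $F_2$) to extend to a diffeomorphism $W_2\cong\ol{W_1}$, so that $\BSD(W_2)\simeq\BSD(\ol{W_1})$; the map $\merge_W$ is then built from this identification using the duality relating the bordered sutured invariants of a manifold and its mirror together with the explicit form of the identity bimodule (cf.\ \cite{LOT:pairing}). Checking that $\merge_W$ is a chain map, is independent of the auxiliary bordered sutured Heegaard data, and is canonical up to homotopy is the first technical task.

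Granting this construction, the stated properties should follow from formal features of $\boxtimes$. \emph{Well-definedness} (independence of $\mathcal{Z}$): a change of arc diagram acts on $\BSA(X_i)$, $\BSD(W_i)$, and $\BSDD(\mathbb{I}_{\mathcal{Z}})$ alike, by boxing with the bimodule of an elementary change of parametrization, and this intertwines $\merge_W$, so the induced map on $\SFH$ is unchanged. \emph{Symmetry}: interchanging $Y_1$ and $Y_2$ interchanges the two tensor factors of $\BSDD(W_1\sqcup W_2)$ and the two sides of $\BSDD(\mathbb{I}_{\mathcal{Z}})$, and $\merge_W$ is symmetric under this. \emph{Identity}: take $(Y_2,\Gamma_2)$ to be the double $(W_1\cup_Z\ol{W_1},\Gamma')$ (with $\Gamma'$ the induced dividing set) and $W_2$ the $\ol{W_1}$--summand; then $X_2\cong W_1$, the join returns $(Y_1,\Gamma_1)$, and the canonical ``coevaluation'' generator $\Delta_\Gamma\in\SFH(W_1\cup_Z\ol{W_1},\Gamma')$ satisfies $\Psi_W(\cdot,\Delta_\Gamma)=\id$, this being the unit property of the identity bimodule (when $W_1$ is a collar of $F$ this reduces to the statement in \Theorem~\ref{thm:intro_join_and_gluing}).

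The main obstacle is \emph{associativity}. In the situation of a triple join---analogous to the associativity statement of \Theorem~\ref{thm:intro_join_and_gluing}, where $Y_2$ is joined on one side to $Y_1$ and on a disjoint side to $Y_3$---one must show that the merge morphism effecting the first join and that effecting the second act on disjoint interface factors of the bimodules involved, hence commute, and that their composite coincides with the merge morphism of the simultaneous join. On the bimodule level this is a compatibility between two instances of the $\merge$ construction; it should follow from the associativity and functoriality of $\boxtimes$ together with the topological fact that the triple join is independent of the order of assembly, but making it rigorous requires careful control of the various $\tDD$-- and $\DA$--bimodules, their homotopies, and the compatibility of $\merge$ with $\boxtimes$---and this bookkeeping is where the substance of the argument lies. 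Finally, specializing to $W_i$ a collar neighborhood of $F_i$ recovers \Theorem~\ref{thm:intro_join_and_gluing} as the case of gluing.
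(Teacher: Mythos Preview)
Your high-level strategy is the same as the paper's: decompose $Y_i$ along the interior boundary of $W_i$, use bordered sutured invariants for the pieces, exploit that $W_2$ is the mirror of $W_1$ so that their invariants are dual, and build the join map from a pairing-type morphism on the middle bimodule. The paper carries this out with the roles of type--$A$ and type--$D$ reversed relative to your sketch: it takes $M=\BSA(\W)$ (type--$A$) and $U=\BSD(\Y_1\setminus\W)$, $V=\BSD(\Y_2\setminus-\W)$ (type--$D$), so that the middle bimodule is the type--$AA$ invariant $\BSAA(\TW_{\F,+})\simeq A^\vee$, and then defines the key map $\nabla_M\co M\otimes M^\vee\to A^\vee$ explicitly by ``rotating'' the module action of $M$ sideways. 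The invariance proofs (Propositions~4.3 and~4.4) and the formal properties (Theorem~5.1) are then purely algebraic manipulations of $\nabla_M$.

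There are two genuine issues in your proposal. First, the middle bimodule is \emph{not} the identity $DD$--bimodule. The join inserts the \emph{positive} twisting slice $\TW_{\F,+}$, whose $\BSAA$ is $A^\vee$ (Proposition~3.4), whereas the manifold whose $\BSAA$ is $A$---the one that deserves to be called the identity---is the \emph{negative} slice $\TW_{\F,-}$. Concretely, $X_1$ carries the sutured surface $\F$ while $X_2$ carries $\ol\F$ (same underlying surface, $R_+$ and $R_-$ swapped); these cannot be concatenated directly in the bordered sutured sense, and the twist is not ``built into'' the pairing---it is supplied by $\TW_{\F,+}$. So your target for $\merge_W$ should be $\BSDD(\TW_{\F,+})\simeq\II\sqtens A^\vee\sqtens\II$, not the bare $\II_\Z$. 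Second, and more seriously, you never actually construct $\merge_W$: saying it ``is built from duality together with the explicit form of the identity bimodule'' is exactly the content that needs to be supplied. The paper's formula~(4.1) for $\nabla_M$, the verification that it is a cycle (Proposition~4.2), and the invariance under change of $M$ and of arc diagram (Propositions~4.3--4.4) are the substance of the argument; your sketch defers all of this to ``the first technical task'' without indicating how it would go.
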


\begin{figure}
	\labellist
	\pinlabel $\bigcup$ at 230 344
	\hair=4pt
	\pinlabel $\Y_1$ [t] at 100 260
	\pinlabel $\Y_2$ [t] at 360 260
	\pinlabel $\Y_1\Cup_{\W_i}\Y_2$ [t] at 232 24
	\hair=0.5pt
	\pinlabel $\W_1$ [b] at 168 452
	\pinlabel $\W_2$ [b] at 292 452
	\endlabellist
	\includegraphics[scale=.6]{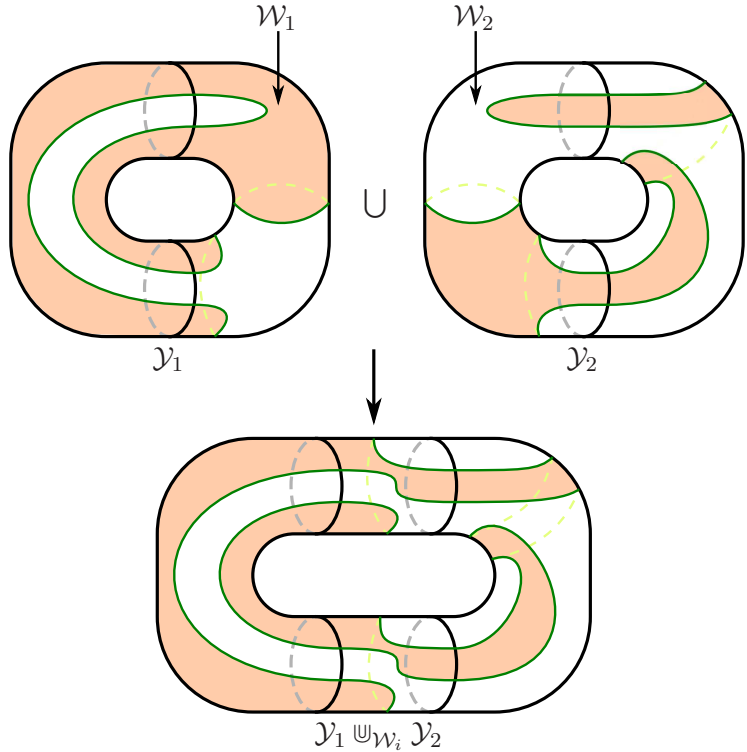}
	\caption{Join of two solid tori along $D^2\times[0,1]$, to obtain another solid torus. The $R_+$ regions have been shaded.}
	\label{fig:intro_join_example}
\end{figure}

The join map is constructed as follows. We cut out $W_1$ and $W_2$ from $Y_1$ and $Y_2$, respectively, and regard the
complements as bordered sutured manifolds. The join operation corresponds to replacing $W_1$ and $W_2$ by an interpolating piece $\TW_{F,+}$.
We define a map between the bordered sutured invariants, from the product
$\BSA(W_1)\otimes\BSA(W_2)$ to the bimodule $\BSAA(\TW_{F,+})$. We show that for an appropriate choice of
parametrizations, the modules $\BSA(W_1)$ and $\BSA(W_2)$ are duals, while $\BSAA(\TW_{F,+})$ is the
dual of the bordered algebra for $F$. The map is then an $\Ainf$--version of the natural pairing between a module and its dual. The proof of invariance and the
properties from \Theorems~\ref{thm:intro_join_and_gluing} and~\ref{thm:intro_join_map} is purely algebraic. Most of the arguments involve
$\Ainf$--versions of standard facts in commutative algebra.

The proofs of \Theorems~\ref{thm:intro_bordered_via_SFH} and~\ref{thm:intro_modules} involve several steps. First, we find a
manifold whose bordered sutured invariant is the bordered algebra, as a bimodule over itself. Second, we find
manifolds whose bordered sutured invariants are all possible simple modules over the algebra. Finally, we compute the gluing map
$\Psi$ explicitly in several cases.

\subsection{Further implications and speculations}
\label{sec:speculations}

In a follow-up paper~\cite{Zar:gluing2} we prove that the gluing map map $\Psi_F$ can be identified with the contact cobordism map
from~\cite{HKM:TQFT}. This is somewhat surprising as that the definition of that map uses contact structures and open
books, while our definition uses bordered sutured Floer homology and is purely algebraic. The equivalence of the two
maps also gives a purely contact-geometric interpretation of the bordered algebra.

There is no analog of the join map in the setting of Honda, Kazez, and Mati\'c. However, there is a natural
pair-of-pants cobordism
\begin{equation*}
	Z_W\co (Y_1,\Gamma_1)\sqcup (Y_2,\Gamma_2)\to (Y_1,\Gamma_1)\Cup_W(Y_2,\Gamma_2),
\end{equation*}
and we conjecture that the join map $\Psi_W$ is equivalent to the cobordism map $F_{Z_W}$ that Juh\'asz associates to such a
cobordism, by counting pseudo-holomorphic triangles.

Though \Theorems~\ref{thm:intro_bordered_via_SFH} and~\ref{thm:intro_modules} give a pretty good description of bordered Floer
homology in terms of sutured Floer homology, it is not complete. For instance, to be able to recover the pairing theorem
for bordered Floer homology, we need to work either with the full bordered DG-algebra $\A(F)$, or with its homology
$H_*(\A(F))$, considered as an $\Ainf$--algebra. That is, $H_*(\A(F))$ inherits higher multiplication maps $\mu_i$,
for $i\geq 2$ from the DG-structure on $\A(F)$. \Theorem~\ref{thm:intro_bordered_via_SFH} only recovers $\mu_2$. Similarly,
$H_*(\CFA(Y))$ has higher actions $m_i$, for $i\geq 2$ by $H_*(\A(F))$, while \Theorem~\ref{thm:intro_modules} only recovers
$m_2$.

We believe that these higher structures can be recovered by a similar construction. There are
maps, $\ol\Psi_i$, for $i\geq 2$, defined algebraically, similar to $\Psi$, of the following form:
\begin{equation*}
	\ol\Psi_i\co\SFC(Y_1)\otimes\cdots\otimes\SFC(Y_i)\to\SFC(Y_1\Cup\ldots\Cup Y_i).
\end{equation*}
Here $\SFC$ denotes the chain complex defining the homology group $\SFH$. The first term $\ol\Psi_2$ induces the usual
join $\Psi$ on homology.

\begin{CONJ}
	The following two statements hold:
	\begin{enumerate}
		\item The collection of maps $\ol\Psi_i$, for $i\geq 2$ can be used to recover the higher multiplications $\mu_i$ on
			$H_*(\A(F))$, and the higher actions $m_i$ of $H_*(\A(F))$ on $H_*(\CFA(Y))$.
		\item The map $\ol\Psi_i$ can be computed by counting pseudo-holomorphic $(i+1)$--gons in a sutured Heegaard
			multidiagram.
	\end{enumerate}
\end{CONJ}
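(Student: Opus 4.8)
The plan is to mirror the proofs of \Theorems~\ref{thm:intro_bordered_via_SFH} and~\ref{thm:intro_modules}, upgrading every assertion from the level of homology and the single operations $\mu_2$, $m_2$ to the level of the chain complexes $\SFC$ and of the full $\Ainf$--structure.

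\textbf{Part (1).} First I would pin down the maps $\ol\Psi_i$ themselves. Recall that $\Psi=\ol\Psi_2$ is obtained by excising $W_1,W_2$ from $Y_1,Y_2$, viewing the complements as bordered sutured manifolds, replacing $W_1\cup W_2$ by the interpolating piece $\TW_{F,+}$, and applying an $\Ainf$ pairing $\BSA(W_1)\otimes\BSA(W_2)\to\BSAA(\TW_{F,+})$. The $\Ainf$ pairing of $\Ainf$--modules against an $\Ainf$--bimodule is not a single bilinear map but a family of multilinear maps indexed by the number of module inputs; the maps $\ol\Psi_i$ should be precisely the higher members of this family, with the $i$ inputs recorded as generators of $\SFC(Y_1),\ldots,\SFC(Y_i)$ and the output in $\SFC(Y_1\Cup\cdots\Cup Y_i)$. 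With the parametrizations of \Theorems~\ref{thm:intro_bordered_via_SFH} and~\ref{thm:intro_modules}, the $\BSA(W_j)$ are $\Ainf$ models for (duals of) the simple modules over $A=\A(F)$, and $\BSAA(\TW_{F,+})$ is an $\Ainf$ model for the dual of $A$, so the transfer of its $\Ainf$ structure to homology produces exactly the operations $\mu_i$ on $H_*(A)$. It then remains to exhibit an $\Ainf$ quasi-isomorphism between the package $\{\ol\Psi_i\}$ and the transferred $\Ainf$--structure on $H_*(A)$ (respectively on $H_*(\CFA(Y))$). This last step is pure homological algebra: both structures arise from the DG data on $A$ (respectively on $\CFA(Y)$) by homological perturbation, and homotopy transfer is unique up to $\Ainf$--homotopy.

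\textbf{Part (2).} Here I would construct a \emph{sutured Heegaard multidiagram} adapted to the join: a sutured surface carrying $i+1$ collections of attaching curves $\balpha^{(0)},\balpha^{(1)},\ldots,\balpha^{(i)}$ (besides the $\bbeta$--curves) arranged so that $(\balpha^{(j-1)},\balpha^{(j)})$ is a sutured Heegaard diagram computing $\SFC(Y_j)$ and $(\balpha^{(0)},\balpha^{(i)})$ computes $\SFC(Y_1\Cup\cdots\Cup Y_i)$ --- the same configuration that underlies the conjectural comparison with Juh\'asz's polygon-counting cobordism maps of~\cite{Juh:cobordisms}. Counting rigid holomorphic $(i+1)$--gons of the appropriate Maslov index defines a candidate map; to identify it with the algebraic $\ol\Psi_i$ I would stretch the neck along the separating surfaces that cut the multidiagram into bordered sutured pieces modelling $W_1,\ldots,W_i$ and the interpolating region. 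A degeneration/time-dilation argument, in the spirit of the bordered pairing theorem of~\cite{LOT:pairing} and its bordered sutured refinement in~\cite{Zar:BSFH} but for polygons rather than bigons, should match the broken configurations exactly with the trees of $\Ainf$ operations appearing in the definition of $\ol\Psi_i$.

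\textbf{The main obstacle.} In both parts the crux is controlling higher holomorphic polygons and their compactifications. The $\mu_2$/$m_2$ case used only triangles and a single codimension--one degeneration, whereas the general case demands a complete analysis of the compactified moduli of $(i+1)$--gons --- all collisions of corners, all Reeb chords breaking off along the sutured boundary, all index--additive splittings --- together with the matching combinatorics of $\Ainf$ trees on the algebraic side, and a check that no unexpected boundary stratum contributes. Establishing transversality and gluing for these polygon moduli in the bordered sutured setting is where the real technical content lies. A secondary point is that the $\ol\Psi_i$ are well-defined only up to $\Ainf$--homotopy, so part (1) must be formulated and proved as an $\Ainf$ quasi-isomorphism statement rather than an equality of operations.
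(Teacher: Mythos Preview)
The statement you are attempting to prove is a \emph{conjecture} in the paper, not a theorem: it appears in \Section~\ref{sec:speculations} (``Further implications and speculations'') and the paper offers no proof whatsoever. There is therefore nothing in the paper to compare your proposal against. Your outline is a reasonable strategic sketch of how one might attack the conjecture, and you have correctly identified the main analytic difficulty (compactness and gluing for holomorphic polygons in the bordered sutured setting), but as written it is a plan rather than a proof. In particular, the maps $\ol\Psi_i$ are only loosely described in the paper (``defined algebraically, similar to $\Psi$''), so even the precise definition you would need in Part~(1) is not supplied; your proposal to extract them from the higher components of an $\Ainf$ pairing is plausible but would itself need to be made precise and shown to be well-defined up to homotopy before any comparison with the transferred $\Ainf$--structure could begin.
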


Analogs of sutured Floer homology have been defined in settings other than Heegaard Floer homology---for instanton
and monopole Floer homology in~\cite{KM:sutured}, and for embedded contact homology in~\cite{CGHH:sutured_ECH}. We
believe that analogs of the join and gluing maps can be used to extend bordered Floer homology to those settings.

\subsection*{Organization}
\label{sec:intro_organization}
We start by introducing in more detail the topological constructions of the gluing join operations in \Section~\ref{sec:topology}.
In \Section~\ref{sec:bordered} we recall briefly the definitions of the bordered sutured
invariants $\BSA$ and $\BSD$. We also discuss how the original definitions involving only $\alpha$--arcs can be extended
to diagrams using both $\beta$--arcs, and to some mixed diagrams using both. Section~\ref{sec:BSA_computations} contains
computations of several $\BSA$ invariants needed later.

We define the join map in \Section~\ref{sec:join}, on the level of chain complexes.
The same section contains the proof that it descends to a unique map on homology.
In the following \Section~\ref{sec:join_properties} we prove the properties
from \Theorems~\ref{thm:intro_join_and_gluing} and~\ref{thm:intro_join_map}.
Finally, \Section~\ref{sec:reinterpretation} contains the statement and the proof of a slightly more general version of
\Theorems~\ref{thm:intro_bordered_via_SFH} and~\ref{thm:intro_modules}.

Throughout the paper, we make use of a diagrammatic calculus to compute $\Ainf$--morphisms, which greatly simplifies
the arguments.  \Appendix~\ref{sec:diagrams} contains a brief description of this calculus, and the necessary algebraic
assumptions. \Appendix~\ref{sec:algebra} gives an overview of $\Ainf$--bimodules in terms of the diagrammatic calculus,
as they are used in the paper.

\subsection*{Acknowledgments}
\label{sec:intro_acknowledgments}
The author is grateful to his advisor Peter Ozsv\'ath, and to Robert Lipshitz, Dylan Thurston, and Shea Vela-Vick for
many productive discussions about this work. Shea Vela-Vick, Robert Lipshitz, and Peter Ozsv\'ath also gave much appreciated feedback on
earlier versions of this paper. A significant portion of the work described here was carried out at the
Mathematical Sciences Research Institute, which kindly hosted the author as a program associate in the program
``Homology theories of knots and links'', during the Spring of 2010.

\section{Topological preliminaries}
\label{sec:topology}
We recall the definition of a sutured manifold and some auxiliary notions, and define what we mean by gluing and surgery.

\begin{rmk}
	Throughout the paper all manifolds are oriented. We use $-M$ to denote the manifold $M$ with
	its orientation reversed.
\end{rmk}

\subsection{Sutured manifolds and surfaces}
\label{sec:sutured_defs}
\begin{defn}
	\label{def:sutured_manifold}
	As defined in~\cite{Juh:SFH}, a \emph{balanced sutured manifold} is a pair $\Y=(Y,\Gamma)$ consisting of the following:
	\begin{itemize}
		\item An oriented 3--manifold $Y$ with boundary.
		\item A collection $\Gamma$ of disjoint oriented simple closed curves in $\del Y$, called \emph{sutures}.
	\end{itemize}

	They are required to satisfy the following conditions:
	\begin{itemize}
		\item $Y$ can be disconnected but cannot have any closed components.
		\item $\del Y$ is divided by $\Gamma$ into two complementary regions $R_+(\Gamma)$ and $R_-(\Gamma)$ such that $\del
			R_\pm(Y)=\pm\Gamma$. ($R_+$ and $R_-$ may be disconnected.)
		\item Each component of $\del Y$ contains a suture. Equivalently, $R_+$ and $R_-$ have no closed components.
		\item $\chi(R_+)=\chi(R_-)$.
	\end{itemize}
\end{defn}

In~\cite{Zar:BSFH} we introduced the notion of a sutured surface.

\begin{defn}
	\label{def:sutured_surface}
	A \emph{sutured surface} is a pair $\F=(F,\Lambda)$ consisting of the following:
	\begin{itemize}
		\item A compact oriented surface $F$.
		\item A finite collection $\Lambda\subset\del F$ of points with sign, called \emph{sutures}.
	\end{itemize}

	They are required to satisfy the following conditions:
	\begin{itemize}
		\item $F$ can be disconnected but cannot have any closed components.
		\item $\del F$ is divided by $\Lambda$ into two complementary regions $S_+(\Gamma)$ and $S_-(\Gamma)$ such that $\del
			S_\pm(Y)=\pm\Lambda$. ($S_+$ and $S_-$ may be disconnected.)
		\item Each component of $\del F$ contains a suture. Equivalently, $S_+$ and $S_-$ have no closed components.
	\end{itemize}
\end{defn}

A sutured surface is precisely the 2--dimensional equivalent of a balanced sutured manifold. The requirement
$\chi(S_+)=\chi(S_-)$ follows automatically from the other conditions.

From $\F=(F,\Lambda)$ we can construct two other sutured surfaces: $-\F=(-F,-\Lambda)$, and $\ol{\F}=(-F,\Lambda)$.
In both of $-\F$ and $\ol\F$, the orientation of the underlying surface $F$ is reversed.
The difference between the two is that in $-\F$ the roles of $S_+$ and $S_-$ are preserved, while in $\ol\F$ they are
reversed.

\begin{defn}
	\label{def:dividing_set}
	Suppose $\F=(F,\Lambda)$ is a sutured surface. A \emph{dividing set} $\Gamma$ for $\F$ is a finite collection
	$\Gamma$ of disjoint embedded oriented arcs and simple closed curves in $F$, with the following properties:
	\begin{itemize}
		\item $\del \Gamma=-\Lambda$, as an oriented boundary.
		\item $\Gamma$ divides $F$ into (possibly disconnected) regions $R_+$ and $R_-$ with $\del R_\pm=(\pm\Gamma)\cup
			S_\pm$.
	\end{itemize}
\end{defn}

We can extend the definition of a dividing set to pairs $(F,\Lambda)$ which do not quite satisfy the conditions for a sutured surface.
We can allow some or all of the components $F$ to be closed. We call such a pair \emph{degenerate}. In that case we
impose the extra condition that each closed component contains a component of $\Gamma$.

Note that the sutures $\Gamma$ of a sutured manifold $(Y,\Gamma)$ can be regarded as a dividing set for the (degenerate)
sutured surface $(\del Y,\varnothing)$.

\begin{defn}
	\label{def:partially_sutured_manifold}
	A \emph{partially sutured manifold} is a triple $\Y=(Y,\Gamma,\F)$ consisting of the following:
	\begin{itemize}
		\item A 3--manifold $Y$ with boundary and 1--dimensional corners.
		\item A sutured surface $\F=(F,\Lambda)$, such that $F\subset\del Y$, and such that the 1--dimensional corner of $Y$ is
			$\del F$.
		\item A dividing set $\Gamma$ for $(\del Y\setminus F,-\Lambda)$ (which might be degenerate).
	\end{itemize}
\end{defn}

Note that a partially sutured manifold $\Y=(Y,\Gamma,\F_1\sqcup\F_2)$ can be thought of as a cobordism between $-\F_1$
and $\F_2$. On the other hand, the partially sutured manifold $\Y=(Y,\Gamma,\varnothing)$ is just a sutured
manifold, although it may not be balanced. We can \emph{concatenate} $\Y=(Y,\Gamma,\F_1\sqcup\F_2)$ and
$\Y'=(Y',\Gamma',-\F_2\sqcup\F_3)$ along $\F_2=(F_2,\Lambda_2)$ and $-\F_2=(-F_2,-\Lambda_2)$ to obtain
\begin{equation*}
	\Y\cup_{\F_2}\Y'=(Y\cup_{F_2} Y',\Gamma\cup_{\Lambda_2}\Gamma',\F_1\sqcup\F_3).
\end{equation*}
We use the term concatenate to distinguish from the operation of \emph{gluing} of two sutured manifolds described
in \Definition~\ref{def:gluing}.

A partially sutured manifold whose sutured surface is parametrized by an arc diagram is a \emph{bordered sutured manifold},
as defined in ~\cite{Zar:BSFH}. We will return to this point in section~\ref{sec:bordered}, where we give the precise
definitions.

An important special case is when $Y$ is a thickening of $F$.

\begin{defn}
	\label{def:cap}
	Suppose $\Gamma$ is a dividing set for the sutured surface $\F=(F,\Lambda)$.
	Let $W=F\times[0,1]$, and $W'=F\times[0,1]/\sim$, where $(p,t)\sim(p,t')$ whenever $p\in\del F$, and
	$t,t'\in[0,1]$. We will refer to the partially sutured manifolds
	\begin{align*}
		\W_{\Gamma}&=(W,\Gamma\times\{1\}\cup\Lambda\times[0,1],(-F\times\{0\},-\Lambda\times\{0\})),\\
		\W'_{\Gamma}&=(W',\Gamma\times\{1\},(-F\times\{0\},-\Lambda\times\{0\}))
	\end{align*}
	as the \emph{caps} for $\F$ associated to $\Gamma$.
\end{defn}

Since $\W'_{\Gamma}$ is just a smoothing of $\W_{\Gamma}$ along the corner $\del F\times\{1\}$, we will not distinguish between them.
An illustration of a dividing set and a cap is shown in
\Figure~\ref{fig:dividing_sets_and_caps}. In this and in all other figures we use the convention that the dividing set
is colored in green, to avoid confusion with Heegaard diagrams later. We also shade the $R_+$ regions.

Notice that the sutured surface for $\W_\Gamma$ is $-\F$. This means that if $\Y=(Y,\Gamma',\F)$ is a partially sutured
manifold, we can concatenate $\Y$ and $\W$ to obtain $(Y,\Gamma'\cup\Gamma)$. That is, the effect is that of ``filling
in'' $F\subset\del Y$ by $\Gamma$.

\begin{figure}
	\begin{subfigure}[b]{.36\linewidth}
		\centering
		\labellist
		\tiny\hair=1pt
		\pinlabel $\boldsymbol{+}$ [r] at 32 160
		\pinlabel $\boldsymbol{-}$ [r] at 16 100
		\pinlabel $\boldsymbol{+}$ [r] at 32 48
		\pinlabel $\boldsymbol{-}$ [r] at 172 100
		\pinlabel $\boldsymbol{-}$ [l] at 216 160
		\pinlabel $\boldsymbol{+}$ [l] at 232 100
		\pinlabel $\boldsymbol{-}$ [l] at 216 48
		\pinlabel $\boldsymbol{-}$ [l] at 76 100
		\pinlabel $\boldsymbol{+}$ [b] at 124 56
		\pinlabel $\boldsymbol{+}$ [t] at 124 152
		\endlabellist
		\includegraphics[scale=.5]{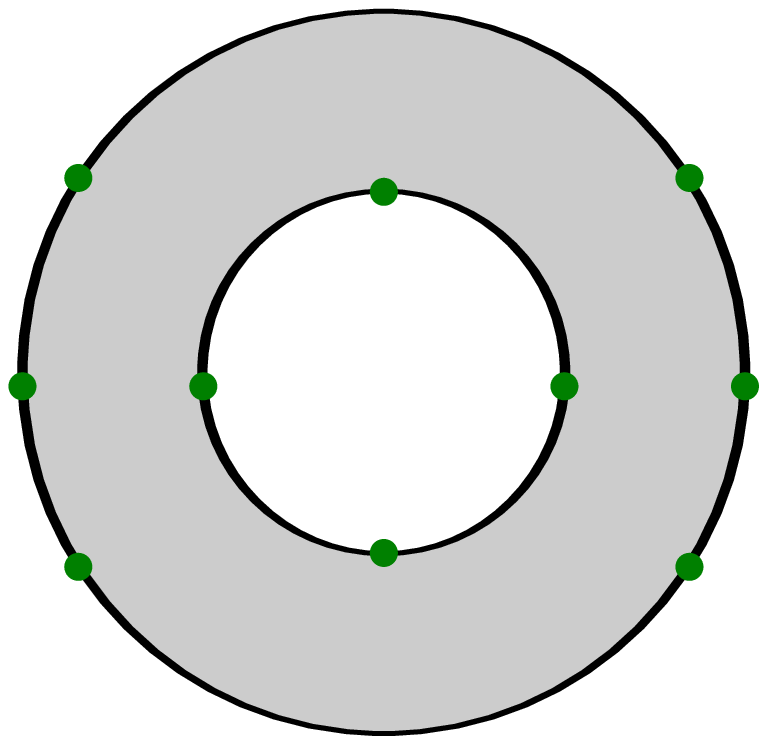}
		\caption{The sutured surface $\F$.}
		\label{subfig:sutured_surface}
	\end{subfigure}
	\begin{subfigure}[b]{.36\linewidth}
		\centering
		\labellist
		\tiny\hair=1pt
		\pinlabel $\boldsymbol{+}$ [r] at 32 160
		\pinlabel $\boldsymbol{-}$ [r] at 16 100
		\pinlabel $\boldsymbol{+}$ [r] at 32 48
		\pinlabel $\boldsymbol{-}$ [r] at 172 100
		\pinlabel $\boldsymbol{-}$ [l] at 216 160
		\pinlabel $\boldsymbol{+}$ [l] at 232 100
		\pinlabel $\boldsymbol{-}$ [l] at 216 48
		\pinlabel $\boldsymbol{-}$ [l] at 76 100
		\pinlabel $\boldsymbol{+}$ [b] at 124 56
		\pinlabel $\boldsymbol{+}$ [t] at 124 152
		\endlabellist
		\includegraphics[scale=.5]{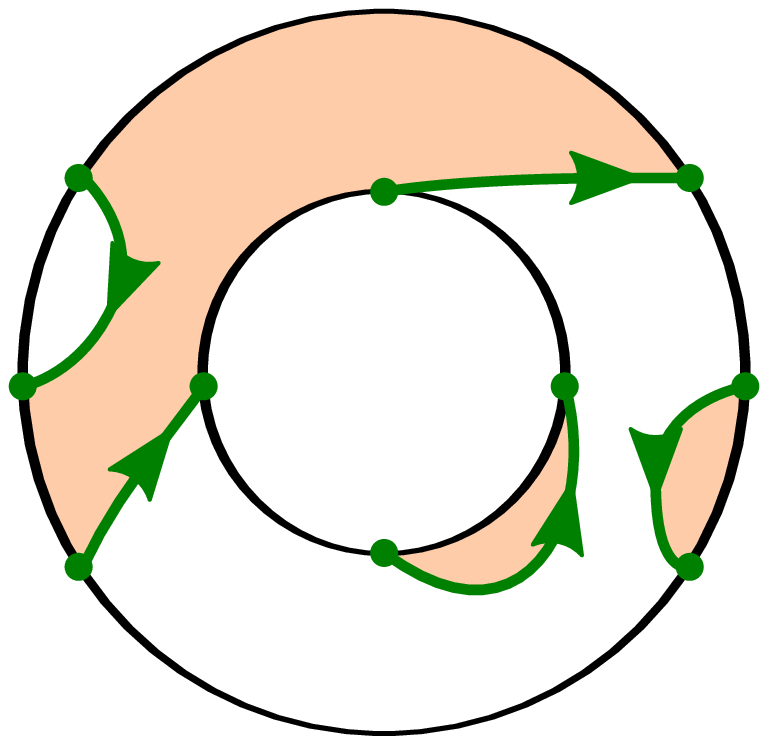}
		\caption{A dividing set $\Gamma$ of $\F$.}
		\label{subfig:dividing_set}
	\end{subfigure}
	\begin{subfigure}[b]{.25\linewidth}
		\centering
		\labellist
		\small\hair=2.5pt
		\pinlabel $-\F$ [b] at 12 244
		\pinlabel $[0,1]$ [b] at 76 228
		\pinlabel $(\F,\Gamma)$ [b] at 132 244
		\endlabellist
		\includegraphics[scale=.5]{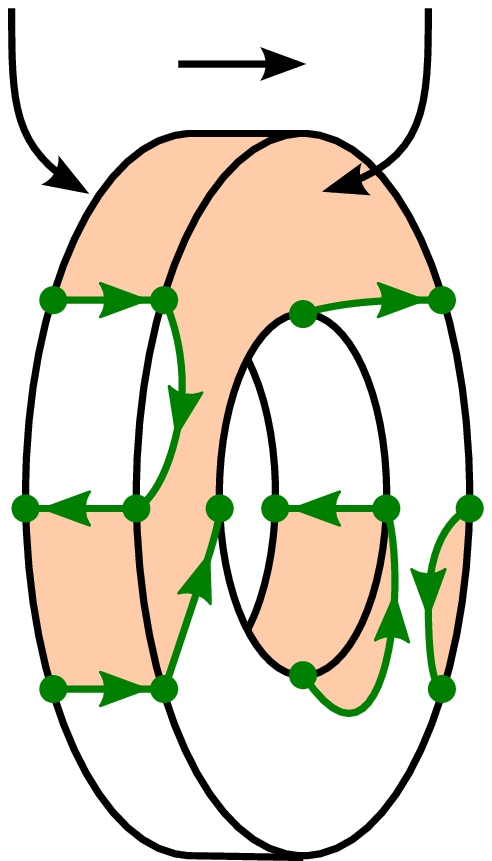}
		\caption{The cap for $\Gamma$.}
		\label{subfig:cap}
	\end{subfigure}
	\caption{A sutured annulus $\F$, with a cap associated to a dividing set.}
	\label{fig:dividing_sets_and_caps}
\end{figure}

\begin{defn}
	\label{def:partially_sutured_embedding}
	Suppose $\F=(F,\Lambda)$ is a sutured surface.
	An \emph{embedding} $\W\into\Y$ of the partially sutured $\W=(W,\Gamma_W,\F)$ into the sutured $\Y=(Y,\Gamma_Y)$ is an embedding
	$W\into Y$ with the following properties:
	\begin{itemize}
		\item $F\subset\del W$ is properly embedded in $Y$ as a separating surface.
		\item $\del W\setminus F=\del Y\cap W$.
		\item $\Gamma_W=\Gamma_Y\cap\del W$.
	\end{itemize}
\end{defn}

The complement $Y\setminus W$ also inherits a partially sutured structure.  We define
\begin{equation*}
	\Y\setminus\W=(Y\setminus W,\Gamma_Y\setminus\Gamma_W,-\F).
\end{equation*}

The definition of embeddings easily extends to $\W\into\Y$ where both $\W=(W,\Gamma_W,\F)$ and $\Y=(Y,\Gamma_Y,\F')$ 
are partially sutured.  In this case we require that $W$ is disjoint from a collar neighborhood of $F'$. Then there is still a complement 
\begin{equation*}
	\Y\setminus\W=(Y\setminus W,\Gamma_Y\setminus\Gamma_W,\F'\cup-\F).
\end{equation*}

In both cases $\Y$ is diffeomorphic to the concatenation $\W\cup_\F(\Y\setminus\W)$.
Examples of a partial sutured manifold and of an embedding are given in \Figure~\ref{fig:sutured_and_partial_example}.

\begin{figure}
	\labellist
	\small\hair=1.5pt
	\pinlabel $\W$ [r] at 16 212
	\pinlabel $\Y$ [r] at 16 52
	\pinlabel $\Y\setminus\W$ [r] at 242 212
	\pinlabel $\F$ [l] at 132 132
	\pinlabel $-\F$ [l] at 262 132
	\endlabellist
	\includegraphics[scale=.6]{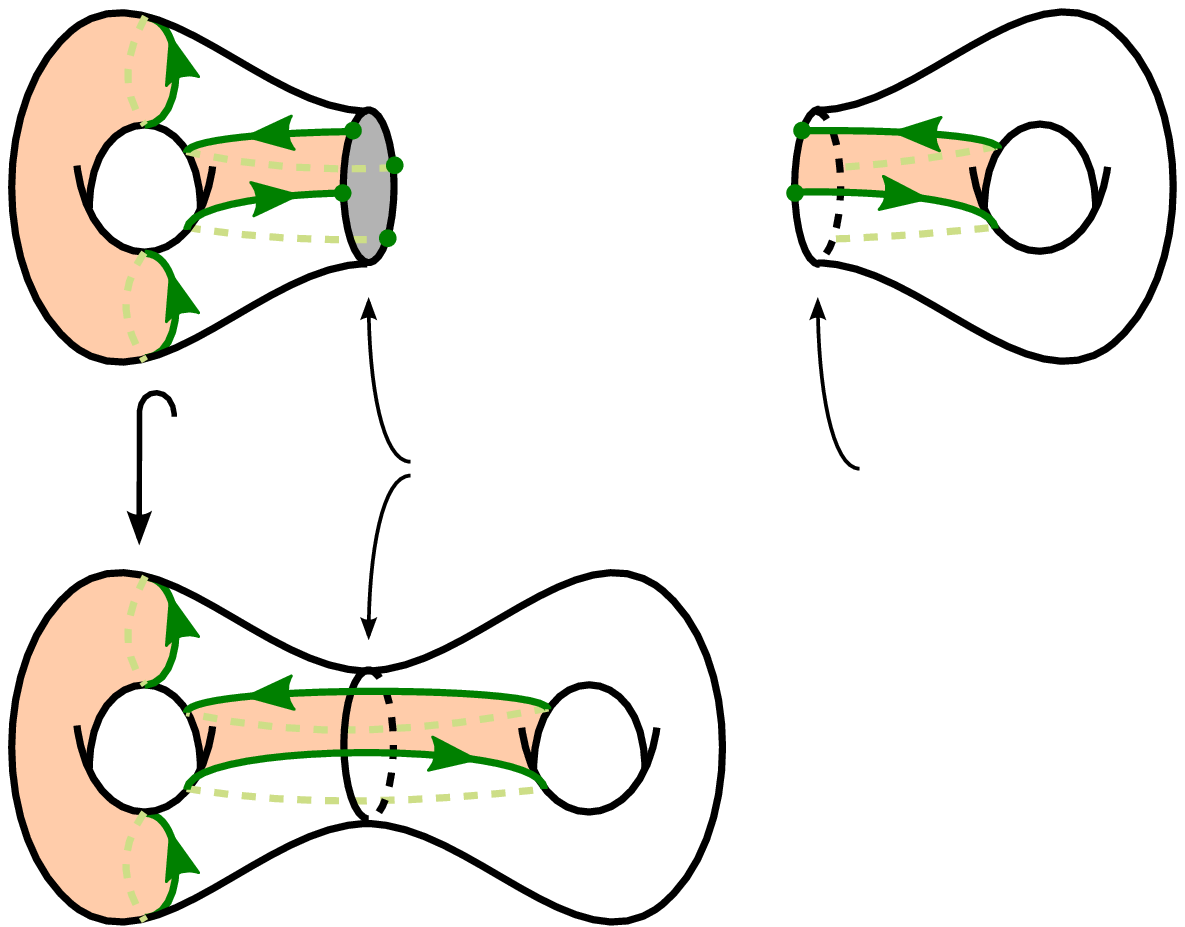}
	\caption{Examples of a partially sutured manifold $\W$ embedding into the sutured manifold $\Y$, and the
	complement $\Y\setminus\W$, which is also partially sutured.}
	\label{fig:sutured_and_partial_example}
\end{figure}

\subsection{Mirrors and doubles; joining and gluing}
\label{sec:joining_def}

We want to define a gluing operation which takes two sutured manifolds $(Y_1,\Gamma_1)$ and $(Y_2,\Gamma_2)$, and surfaces
$F\subset\del Y_1$ and $-F\subset\del Y_2$, and produces a new sutured manifold $(Y_1\cup_{F}Y_2,\Gamma_3)$. To do
that we have to decide how to match up the dividing sets on and around $F$ and $-F$. One solution is to require that we
glue $F\cap R_+(\Gamma_1)$ to $-F\cap R_+(\Gamma_2)$, and $F\cap R_-(\Gamma_1)$ to $-F\cap R_-(\Gamma_2)$. Then
$(\Gamma_1\setminus F)\cup(\Gamma_2\setminus -F)$ is a valid dividing set, and candidate for $\Gamma_3$. The problem with
this approach is that even if we glue two balanced sutured manifolds, the result is not guaranteed to be balanced.

Another approach, suggested by contact topology is the following. We glue $F\cap R_+$ to $-F\cap R_-$, and vice versa.
To compensate for the fact that the dividing sets $\Gamma_1\setminus F$ and $\Gamma_2 \setminus -F$ do not match up
anymore, we introduce a slight twist along $\del F$. In contact topology this twist appears
when we smooth the corner between two convex surfaces meeting at an angle.

It turns out that the same approach is the correct one, from the bordered sutured point of view. To be able to define a
gluing map on $\SFH$ with nice formal properties, the underlying topological operation should employ the same kind of
twist. However, its direction is opposite from the one in the contact world. This is not unexpected, as orientation
reversal is the norm when defining any contact invariant in Heegaard Floer homology.

As we briefly explained in \Section~\ref{sec:introduction}, we will also define a surgery procedure which we call
\emph{joining}, and which generalizes this gluing operation. We will
associate a map on sutured Floer homology to such a surgery in \Section~\ref{sec:join_def_geometric}. 

First we define some preliminary notions.

\begin{defn}
	\label{def:mirror}
	The \emph{mirror} of a partially sutured manifold $\W=(W,\Gamma,\F)$, where $\F=(F,\Lambda)$ is $-\W=(-W,\Gamma,\ol\F)$. Alternatively, it is a
	partially sutured manifold $(W',\Gamma',\F')$, with an orientation reversing diffeomorphism $\varphi\co W\to W'$,
	such that:
	\begin{itemize}
		\item $F$ is sent to $-F'$ (orientation is reversed).
		\item $\Gamma$ is sent to $\Gamma'$ (orientation is preserved).
		\item $R_+(\Gamma)$ is sent to $R_-(\Gamma')$, and vice versa.
		\item $S_+(\Lambda)$ is sent to $S_-(\Lambda')$, and vice versa.
	\end{itemize}
\end{defn}

Whenever we talk about a pair of mirrors, we will implicitly assume that a specific diffeomorphism between them has been
chosen. An example is shown in \Figure~\ref{fig:mirror_example}.

\begin{figure}
	\labellist
	\small\hair=-1pt
	\pinlabel $\W=(W,\Gamma,\F)$ [b] at 84 0
	\pinlabel $-\W=(-W,\Gamma,\ol\F)$ [b] at 304 0
	\pinlabel $\F$ at 84 264
	\pinlabel $\ol\F$ at 304 264
	\hair=1.5pt
	\pinlabel $\boldsymbol{-}$ [b] at 25 159
	\pinlabel $\boldsymbol{+}$ [b] at 50 158
	\pinlabel $\boldsymbol{-}$ [b] at 121 154
	\pinlabel $\boldsymbol{+}$ [b] at 159 165
	\pinlabel $\boldsymbol{-}$ [b] at 367 159
	\pinlabel $\boldsymbol{+}$ [b] at 344 158
	\pinlabel $\boldsymbol{-}$ [b] at 270 154
	\pinlabel $\boldsymbol{+}$ [b] at 233 165
	\endlabellist
	\includegraphics[scale=.5]{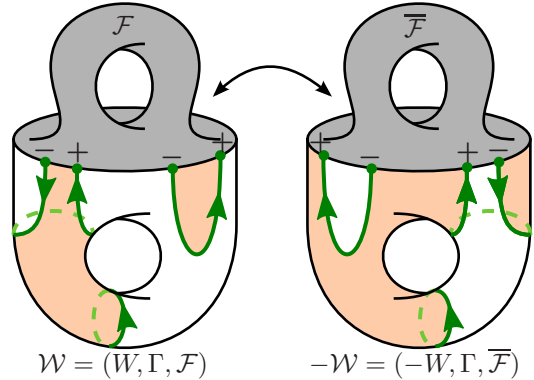}
	\caption{A partially sutured manifold $\W$ and its mirror $-\W$.}
	\label{fig:mirror_example}
\end{figure}

There are two partially sutured manifolds, which will play an important role.

\begin{defn}
	\label{def:twisting_slice}
	A \emph{positive} (respectively \emph{negative}) \emph{twisting slice along the sutured surface $\F=(F,\Lambda)$} is
	the partially sutured manifold $\TW_{\F,\pm}=(F\times[0,1],\Gamma,-\F\cup-\ol\F)$ where we identify $-\F$ with
	$F\times\{0\}$, and $-\ol\F$ with $F\times\{1\}$. The dividing set $\Gamma$ is obtained from $\Lambda\times[0,1]$
	by applying $\frac{1}{n}$--th of a positive (respectively negative) Dehn twist along each component of $\del
	F\times\{\frac{1}{2}\}$, containing $n$ points of $\Lambda$. (The twists might be different for different
	components.)
\end{defn}

Examples of twisting slices are shown
in \Figure~\ref{fig:twists}.

\begin{figure}
	\begin{subfigure}[b]{.48\linewidth}
		\centering
		\labellist
		\hair=2.5pt
		\pinlabel $\F$ [b] at 12 244
		\pinlabel $\ol\F$ [b] at 184 244
		\pinlabel $[0,1]$ [b] at 96 228
		\endlabellist
		\includegraphics[scale=.4]{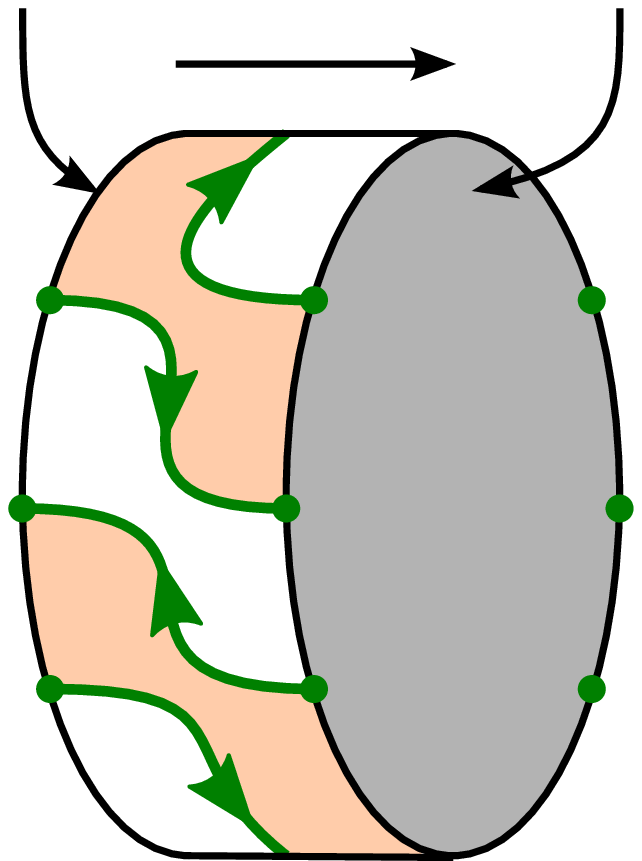}
		\caption{$\TW_{\F,+}$}
		\label{subfig:positive_twist}
	\end{subfigure}
	\begin{subfigure}[b]{.48\linewidth}
		\centering
		\labellist
		\hair=2.5pt
		\pinlabel $\F$ [b] at 12 244
		\pinlabel $\ol\F$ [b] at 184 244
		\pinlabel $[0,1]$ [b] at 96 228
		\endlabellist
		\includegraphics[scale=.4]{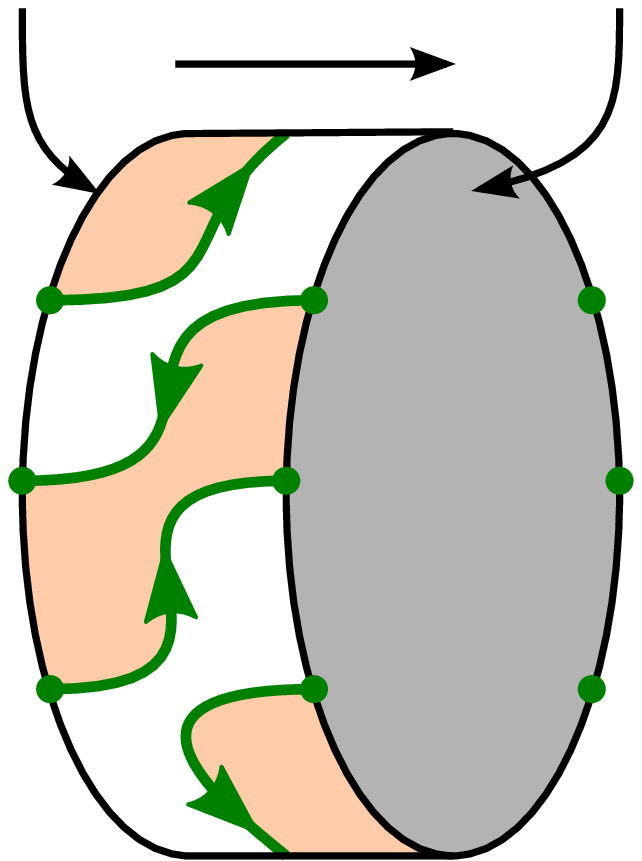}
		\caption{$\TW_{\F,-}$}
		\label{subfig:negative_twist}
	\end{subfigure}
	\caption{Positive and negative twisting slices. The dividing sets are $\Lambda\times[0,1]$, after a fractional Dehn twist has
	been applied. The $R_+$ regions have been shaded.}
	\label{fig:twists}
\end{figure}

\begin{defn}
	\label{def:def_join}
	Let $\Y_1$ and $\Y_2$ be sutured manifolds, and $\W=(W,\Gamma,-\F)$ be partially sutured. Suppose there are
	embeddings $\W\into\Y_1$ and $-\W\into\Y_2$. We will call the new sutured manifold
	\begin{equation*}
		\Y_1~\Cup_{\W}~\Y_2 = (\Y_1\setminus\W)~\cup_{\F}~\TW_{\F,+}~\cup_{-\ol\F}~(\Y_2\setminus-\W)
	\end{equation*}
	the \emph{join of $\Y_1$ and $\Y_2$ along $\W$}.
\end{defn}

Intuitively, this means that we cut out $\W$ and $-\W$ and concatenate the complements together. There is a mismatch of
$R_+$ with $R_-$ along the boundary, so we introduce a positive twist to fix it. An example of gluing was shown
in \Figure~\ref{fig:intro_join_example}.

Another important operation is gluing.

\begin{defn}
	\label{def:gluing}
	Suppose that $\Y_1=(Y_1,\Gamma_1,\F)$ and $\Y_2=(Y_2,\Gamma_2,\ol\F)$ are two partially sutured manifolds, and $\Gamma_0$ is a
	dividing set for $\F=(F,\Lambda)$. We define the \emph{gluing} of the sutured manifolds $(Y_1,\Gamma_1\cup_\Lambda\Gamma_0)$ and
	$(Y_2,\Gamma_2\cup_{\Lambda}\Gamma_0)$ along $(F,\Gamma_0)$ to be the concatenation
	\begin{equation*}
		\Y_1\cup_{-\F}\TW_{\F,+}\cup_{\ol\F}\Y_2,
	\end{equation*}
	and denote it by
	\begin{equation*}
		(Y_1,\Gamma_1\cup\Gamma_0)\cup_{(F,\Gamma_0)}(Y_2,\Gamma_2\cup\Gamma_0).
	\end{equation*}
\end{defn}

An example of gluing was shown in \Figure~\ref{fig:intro_gluing_example}.
It is easy to see that gluing is a special case of the join. Recall that the
concatenation $(Y,\Gamma',\F)\cup_{\F}\W_{\Gamma}$ is the sutured manifold $(Y,\Gamma'\cup\Gamma)$. Thus we can identify
gluing along $(F,\Gamma_0)$ with join along $\W_{\Gamma_0}$. 

Another useful object is the double of a partially sutured manifold.

\begin{defn}
	\label{def:double}
	Given a partially sutured manifold $\W=(W,\Gamma,\F)$, where $\F=(F,\Lambda)$, define the \emph{double} of $\W$ to be 
	the be sutured manifold obtained by concatenation as follows:
	\begin{equation*}
		\Db(\W)=-\W~\cup_{-\ol\F}~\TW_{-\ol\F,-}~\cup_{\F}~\W.
	\end{equation*}
\end{defn}

All the operations we have defined so far keep us in the realm of balanced sutured manifolds.

\begin{prop}
	\label{prop:balanced}
	If we join or glue two balanced sutured manifolds together, the result is balanced. The double of any partially
	sutured manifold $\W$ is balanced.
\end{prop}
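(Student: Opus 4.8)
The plan is to reduce all three assertions to the behavior of a single numerical quantity under concatenation. For a partially sutured manifold $\Y=(Y,\Gamma,\F)$ set $\tau(\Y)=\chi(R_+(\Y))-\chi(R_-(\Y))$. A balanced sutured manifold is, by Definition~\ref{def:sutured_manifold}, a sutured manifold satisfying $\tau=0$ together with the structural conditions: no closed $3$--manifold components, $\Gamma$ a genuine dividing set with $\partial R_\pm=\pm\Gamma$, and every boundary component carrying a suture. So the essential content of the proposition is that $\tau$ vanishes for joins, gluings, and doubles; the structural conditions I would verify by direct inspection of the constructions. For that verification the relevant points are that the fractional Dehn twist built into $\TW_{\F,\pm}$ is exactly what makes $\Gamma$ consistent along $\partial F$; that, because $\F$ is a sutured surface, $F$ has no closed components and every component of $\partial F$ carries a point of $\Lambda$, so the inserted twisting slice contributes a suture to every one of the annuli $\partial F\times[0,1]$, forcing every boundary component of the result that meets them to contain a suture, while the boundary components disjoint from them are untouched boundary components of the inputs; and that the join or gluing of two genuine sutured manifolds again has empty sutured surface, since in the concatenation below the sutured-surface faces of the three pieces are all matched up and become interior.

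I would then establish three facts about $\tau$. \textbf{(i) Additivity.} For concatenation along a sutured surface $\G=(G,\Lambda_G)$, $\tau(\Y\cup_\G\Y')=\tau(\Y)+\tau(\Y')$: concatenation glues $R_+(\Y)$ to $R_+(\Y')$ along the $S_+$--arcs of $\partial G$ and $R_-(\Y)$ to $R_-(\Y')$ along the $S_-$--arcs (an $R_+$--to--$R_-$ matching would leave the result with no valid dividing set), so
\begin{equation*}
	\chi\bigl(R_\pm(\Y\cup_\G\Y')\bigr)=\chi\bigl(R_\pm(\Y)\bigr)+\chi\bigl(R_\pm(\Y')\bigr)-\chi\bigl(S_\pm(\G)\bigr),
\end{equation*}
and since the arcs of $\partial G$ alternate between $S_+$ and $S_-$ one has $\chi(S_+(\G))=\chi(S_-(\G))$, so the two corrections cancel upon forming $\tau$. \textbf{(ii) Twisting slices.} $\tau(\TW_{\G,\pm})=0$: here $R_\pm$ lies on $\partial G\times[0,1]$, and on the annulus over a component of $\partial G$ carrying $2k$ points of $\Lambda_G$ the dividing set consists of $2k$ arcs joining the two boundary circles, cutting the annulus into $2k$ rectangles that alternate between $R_+$ and $R_-$; hence $\chi(R_+)=\chi(R_-)=\lvert\Lambda_G\rvert/2$. \textbf{(iii) Mirrors.} $\tau(-\W)=-\tau(\W)$, immediate from Definition~\ref{def:mirror}, which sends $R_+(\W)$ onto $R_-(-\W)$ and $R_-(\W)$ onto $R_+(-\W)$.

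These facts settle all three claims. Definition~\ref{def:def_join} exhibits the join as the concatenation $(\Y_1\setminus\W)\cup_{\F}\TW_{\F,+}\cup_{-\ol\F}(\Y_2\setminus-\W)$, so by (i) and (ii)
\begin{equation*}
	\tau(\Y_1\Cup_\W\Y_2)=\tau(\Y_1\setminus\W)+\tau(\Y_2\setminus-\W).
\end{equation*}
Since $\Y_1$ is a concatenation of $\W$ with $\Y_1\setminus\W$ and $\Y_1$ is balanced, (i) gives $\tau(\Y_1\setminus\W)=-\tau(\W)$; likewise $\tau(\Y_2\setminus-\W)=-\tau(-\W)=\tau(\W)$ by (iii). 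Hence $\tau(\Y_1\Cup_\W\Y_2)=0$. Gluing along $(F,\Gamma_0)$ is the special case $\W=\W_{\Gamma_0}$, so it is covered too. For the double, Definition~\ref{def:double} gives $\Db(\W)=-\W\cup_{-\ol\F}\TW_{-\ol\F,-}\cup_{\F}\W$, whence (i)--(iii) yield $\tau(\Db(\W))=\tau(-\W)+0+\tau(\W)=0$, with no balancing hypothesis on $\W$ required.

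The one genuinely subtle point is the assertion in (i) that concatenation matches $R_+$ with $R_+$ and not with $R_-$; the rest is direct computation or definition-chasing. That matching is forced, since a valid (partially) sutured structure cannot have $R_+$ and $R_-$ adjacent across a gluing surface without an intervening suture, so the orientation-reversing identification used in concatenation must identify $S_+$--arcs with $S_+$--arcs and hence $R_+$ with $R_+$. Granting this---together with the routine bookkeeping of the operations $\F\mapsto-\F$, $\F\mapsto\ol\F$, and $\W\mapsto-\W$ needed to confirm that Definitions~\ref{def:def_join} and~\ref{def:double} really do present the constructions as iterated concatenations of twisting slices with the complements of the inputs---the Euler-characteristic arithmetic above completes the argument.
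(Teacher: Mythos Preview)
Your proof is correct and follows exactly the approach of the paper: the three facts (i) additivity of $\tau=\chi(R_+)-\chi(R_-)$ under concatenation, (ii) $\tau=0$ for twisting slices, and (iii) $\tau(-\W)=-\tau(\W)$ are precisely the paper's ``three key observations,'' and you apply them in the same way. You have simply fleshed out the justifications that the paper leaves to the reader.
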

\begin{proof}
	There are three key observations. The first one is that $\chi(R_+)-\chi(R_-)$ is additive under
	concatenation. The second is that when passing from $\W$ to its mirror $-\W$, the values of $\chi(R_+)$ and $\chi(R_-)$ are
	interchanged. Finally, for positive and negative twisting slices $\chi(R_+)=\chi(R_-)$.
\end{proof}

The operations of joining and gluing sutured manifolds have good formal properties described in the following proposition.

\begin{prop}
	\label{prop:join_properties} The join satisfies the following:
	\begin{enumerate}
		\item Commutativity: $\Y_1\Cup_{\W}\Y_2$ is canonically diffeomorphic to $\Y_2\Cup_{-\W}\Y_1$.
		\item Associativity: If there are embeddings $\W\into\Y_1$, $(-\W\sqcup\W')\into\Y_2$, and $-\W'\into\W_3$
			then there are canonical diffeomorphisms
			\begin{align*}
				(\Y_1~\Cup_{\W}~\Y_2)~\Cup_{\W'}~\Y_3 
				& \cong \Y_1~\Cup_{\W}~(\Y_2~\Cup_{\W'}~\Y_3)\\
				& \cong (\Y_1~\sqcup~\Y_3)~\Cup_{\W\cup-\W'}~\Y_2.
			\end{align*}
		\item Identity: $\Y~\Cup_{\W}~\D(\W)\cong\Y$.
	\end{enumerate}
	Gluing satisfies analogous properties.
\end{prop}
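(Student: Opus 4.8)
The plan is to prove each of the three properties by exhibiting the claimed diffeomorphism explicitly at the level of the underlying manifolds, and then checking that the dividing sets (including the fractional Dehn twists introduced along the boundary components) match up. Throughout I would reduce everything to the join, since gluing along $(F,\Gamma_0)$ is identified with the join along $\W_{\Gamma_0}$ (as noted right after \Definition~\ref{def:gluing}), so the ``analogous properties'' for gluing follow by specialization.

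For commutativity, $\Y_1\Cup_{\W}\Y_2 = (\Y_1\setminus\W)\cup_\F\TW_{\F,+}\cup_{-\ol\F}(\Y_2\setminus-\W)$, and I would simply reverse the order of concatenation; concatenation of partially sutured manifolds is order-reversing in the obvious sense (a cobordism from $-\F_1$ to $\F_2$ read backwards is one from $-\F_2$ to $\F_1$), and the key point is that the positive twisting slice $\TW_{\F,+}$ read ``from the other side'' is exactly the twisting slice associated to $-\ol\F$ appearing in $\Y_2\Cup_{-\W}\Y_1$. Here the orientation bookkeeping in \Definition~\ref{def:mirror} and \Definition~\ref{def:twisting_slice} does all the work: passing $\W\rightsquigarrow-\W$ swaps $S_+\leftrightarrow S_-$, which is precisely compensated by the twist. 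For associativity, I would first handle the middle expression: both $(\Y_1\Cup_\W\Y_2)\Cup_{\W'}\Y_3$ and $\Y_1\Cup_\W(\Y_2\Cup_{\W'}\Y_3)$ are, after cutting out $\W,-\W,\W',-\W'$, the concatenation of five pieces $(\Y_1\setminus\W)$, a twisting slice, $(\Y_2\setminus(-\W\sqcup\W'))$, a twisting slice, $(\Y_3\setminus-\W')$, in the same order; since the two twistings are along disjoint surfaces in $\del Y_2$ (the embeddings $-\W$ and $\W'$ into $\Y_2$ are disjoint) the two bracketings give literally the same concatenation. For the third expression one regroups: $\Y_1\sqcup\Y_3$ with $\W\sqcup-\W'$ removed, joined to $\Y_2$ along the disjoint union, is again the same ordered concatenation of the same five pieces, just parsed differently, using that $\TW$ distributes over disjoint unions of sutured surfaces. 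Finally, for the identity, I would unwind $\D(\W)=-\W\cup_{-\ol\F}\TW_{-\ol\F,-}\cup_\F\W$ and compute
\begin{equation*}
	\Y\Cup_\W\D(\W)=(\Y\setminus\W)\cup_\F\TW_{\F,+}\cup_{-\ol\F}\bigl((-\W\cup_{-\ol\F}\TW_{-\ol\F,-}\cup_\F\W)\setminus-\W\bigr)
	=(\Y\setminus\W)\cup_\F\TW_{\F,+}\cup_{-\ol\F}\TW_{-\ol\F,-}\cup_\F\W.
\end{equation*}
The core lemma is then that $\TW_{\F,+}\cup_{-\ol\F}\TW_{-\ol\F,-}$ is diffeomorphic, rel boundary, to the trivial slice $F\times[0,1]$ with product dividing set $\Lambda\times[0,1]$ (i.e.\ $\W_\Gamma$ read as a bordered piece): the positive fractional twist along each component of $\del F$ is undone by the subsequent negative one. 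Concatenating the trivial slice back onto $(\Y\setminus\W)$ and then onto $\W$ reassembles $\Y$, since $(\Y\setminus\W)\cup_\F(\text{trivial})\cup_\F\W\cong(\Y\setminus\W)\cup_\F\W\cong\Y$.

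The main obstacle I anticipate is not the manifold-level diffeomorphisms, which are essentially formal, but the careful matching of the \emph{fractional} Dehn twists and the orientation conventions on $R_\pm$ and $S_\pm$ across all the mirror and double operations --- in particular verifying the cancellation $\TW_{\F,+}\cup\TW_{-\ol\F,-}\cong(\text{product})$ with the correct identifications, and confirming in the associativity statement that when two twisting slices occur in one concatenation they really are supported on disjoint subsurfaces so that no twists interact. Once the twisting-slice cancellation lemma and its ``sum over boundary components / disjoint union'' compatibility are pinned down, all three properties drop out by rearranging concatenations, and the gluing statements follow by setting $\W=\W_{\Gamma_0}$.
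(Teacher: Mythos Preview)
Your proposal is correct and is precisely the unpacking of the paper's one-line proof, which reads in its entirety: ``These facts follow immediately from the definitions.'' You have supplied the explicit concatenation manipulations and the twisting-slice cancellation $\TW_{\F,+}\cup\TW_{-\ol\F,-}\cong(\text{product})$ that make this immediate, so there is nothing to add.
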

\begin{proof}
	These facts follow immediately from the definitions.
\end{proof}

\section{Bordered sutured Floer homology}
\label{sec:bordered}

We recall the definitions of bordered sutured manifolds and their invariants, as introduced in~\cite{Zar:BSFH}.

\subsection{Arc diagrams and bordered sutured manifolds}
\label{sec:arc_diagrams}

Parametrizations by arc diagrams, as described below are a slight generalization of those originally defined in~\cite{Zar:BSFH}. The latter
corresponded to parametrizations using only $\alpha$--arcs. While this is sufficient to define invariants for all
possible situations, it is
somewhat restrictive computationally. Indeed, to define the join map $\Psi$ we need to exploit some symmetries that are
not apparent unless we also allow parametrizations using $\beta$--arcs.

\begin{defn}
	\label{def:arc_diagram}
	An \emph{arc diagram of rank $k$} is a triple $\Z=(\ZZZ,\aaa,M)$ consisting of the following:
	\begin{itemize}
		\item A finite collection $\ZZZ$ of oriented arcs.
		\item A collection of points $\aaa=\{a_1,\ldots,a_{2k}\}\subset\ZZZ$.
		\item A 2--to--1 matching $M\co\aaa\to\{1,\ldots,k\}$ of the points into pairs. 
		\item A type: ``$\alpha$'' or ``$\beta$''.
	\end{itemize}
	
	We require that the 1--manifold obtained by performing surgery on all the 0--spheres $M^{-1}(i)$ in $\ZZZ$ has no closed components.
\end{defn}

We represent arc diagrams graphically by a graph $G(\Z)$, which consists of the arcs $\ZZZ$, oriented upwards, and an arc $e_i$ attached at
the pair $M^{-1}(i)\in\ZZZ$, for $i=1,\ldots,k$. Depending on whether the diagram is of $\alpha$ or $\beta$ type, we draw the arcs to the right or to the left,
respectively.

\begin{defn}
	\label{def:surface_from_diagram}
	The \emph{sutured surface $\F(\Z)=(F(\Z),\Lambda(\Z))$ associated to the $\alpha$--arc diagram $\Z$} is constructed in the following
	way. The underlying surface $F$ is produced from the product $\ZZZ\times[0,1]$ by attaching 1--handles along the 0--spheres
	$M^{-1}(i)\times\{0\}$, for $i=1,\ldots,k$. The sutures are $\Lambda=\del\ZZZ\times\{1/2\}$, with the positive region $S_+$ being
	``above'', i.e. containing $\ZZZ\times\{1\}$.
\end{defn}

The \emph{sutured surface associated to a $\beta$--arc diagram} is constructed in the same fashion, except that the
1--handles are attached ``on top'', i.e. at $\M^{-1}(i)\times\{1\}$. The positive region $S_+$ is still above.

Suppose $F$ is a surface with boundary, $G(\Z)$ is properly embedded in $F$, and $\Lambda=\del
G(\Z)\subset\del F$ are the vertices of valence $1$. If $F$ deformation retracts onto $G(\Z)$, we can identify
$(F,\Lambda)$ with $\F(\Z)$. In fact, the embedding uniquely determines such an identification, up to isotopies fixing the
boundary. We say that \emph{$\Z$ parametrizes $(F,\Lambda)$}.

As mentioned earlier, all arc diagrams considered in~\cite{Zar:BSFH} are of $\alpha$--type.

Let $\Z=(\ZZZ,\aaa,M)$ be an arc diagram. We will denote by $-\Z$ the diagram obtained by reversing the orientation of $\ZZZ$ (and
preserving the type). We will denote by $\Zbar$ the diagram obtained by switching the type---from $\alpha$ to $\beta$, or vice versa---and
preserving the triple $(\ZZZ,\aaa,M)$. There are now four related diagrams: $\Z$, $-\Z$, $\Zbar$, and $-\Zbar$. The notation is
intentionally similar to the one for the variations on a sutured surface. Indeed, they are related as follows:
\begin{align*}
	\F(-\Z)&=-\F(\Z),&\F(\ol{\Z})&=\ol{\F(\Z)}.
\end{align*}

To illustrate that, \Figure~\ref{fig:arc_diags} has four variations of an arc diagram of rank $3$.
\Figure~\ref{fig:parametrized_surfaces} shows the corresponding parametrizations of sutured surfaces, which are all tori
with one boundary component and four sutures. Notice the embedding of the graph in each case.

\begin{figure}
	\begin{subfigure}[t]{.24\linewidth}
		\centering
		\labellist
		\small\hair 3pt
		\pinlabel $e_1$ [l] at 36 48
		\pinlabel $e_2$ [l] at 36 88
		\pinlabel $e_3$ [l] at 36 112
		\endlabellist
		\includegraphics[scale=.7]{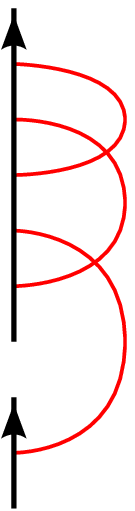}
		\caption{$\Z$ of $\alpha$-type}
		\label{subfig:arc_diags_1}
	\end{subfigure}
	\begin{subfigure}[t]{.24\linewidth}
		\centering
		\labellist
		\small\hair 3pt
		\pinlabel $e_1$ [l] at 36 96
		\pinlabel $e_2$ [l] at 36 56
		\pinlabel $e_3$ [l] at 36 32
		\endlabellist
		\includegraphics[scale=.7]{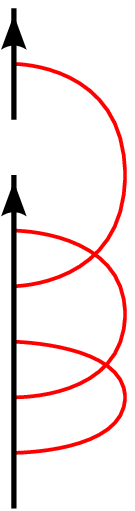}
		\caption{$-\Z$ of $\alpha$-type}
		\label{subfig:arc_diags_2}
	\end{subfigure}
	\begin{subfigure}[t]{.24\linewidth}
		\centering
		\labellist
		\small\hair 3pt
		\pinlabel $e_1$ [r] at 12 48
		\pinlabel $e_2$ [r] at 12 88
		\pinlabel $e_3$ [r] at 12 112
		\endlabellist
		\includegraphics[scale=.7]{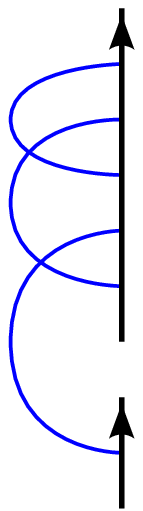}
		\caption{$\Zbar$ of $\beta$-type}
		\label{subfig:arc_diags_3}
	\end{subfigure}
	\begin{subfigure}[t]{.24\linewidth}
		\centering
		\labellist
		\small\hair 3pt
		\pinlabel $e_1$ [r] at 12 96
		\pinlabel $e_2$ [r] at 12 56
		\pinlabel $e_3$ [r] at 12 32
		\endlabellist
		\includegraphics[scale=.7]{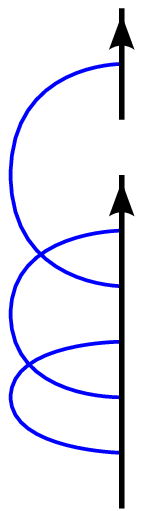}
		\caption{$-\Zbar$ of $\beta$-type}
		\label{subfig:arc_diags_4}
	\end{subfigure}
	\caption{Four variants of an arc diagram}
	\label{fig:arc_diags}
\end{figure}
\begin{figure}
	\begin{subfigure}[b]{.24\linewidth}
		\centering
		\labellist
		\small\hair=0pt
		\pinlabel $S_+$ [r] at 25 176
		\pinlabel $S_+$ [r] at 25 50
		\pinlabel $S_-$ [l] at 150 132
		\tiny
		\hair=0.5pt
		\pinlabel $+$ [t] at 55 19
		\pinlabel $+$ [t] at 55 110
		\pinlabel $-$ [b] at 55 83
		\pinlabel $-$ [b] at 55 243
		\endlabellist
		\includegraphics[scale=.45]{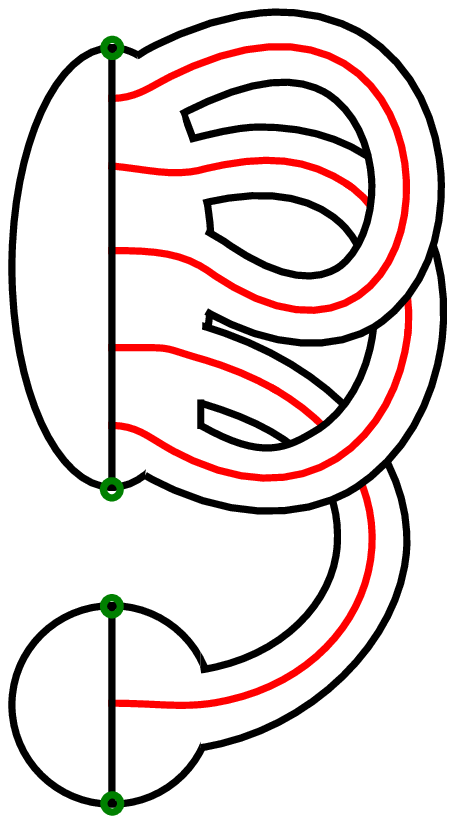}
		\caption{$\F(\Z)$}
		\label{subfig:paramatrized_surface_1}
	\end{subfigure}
	\begin{subfigure}[b]{.24\linewidth}
		\centering
		\labellist
		\small\hair=0pt
		\pinlabel $S_+$ [r] at 25 88
		\pinlabel $S_+$ [r] at 25 214
		\pinlabel $S_-$ [l] at 150 132
		\tiny
		\hair=0.5pt
		\pinlabel $+$ [t] at 55 20
		\pinlabel $+$ [t] at 55 181
		\pinlabel $-$ [b] at 55 154
		\pinlabel $-$ [b] at 55 245
		\endlabellist
		\includegraphics[scale=.45]{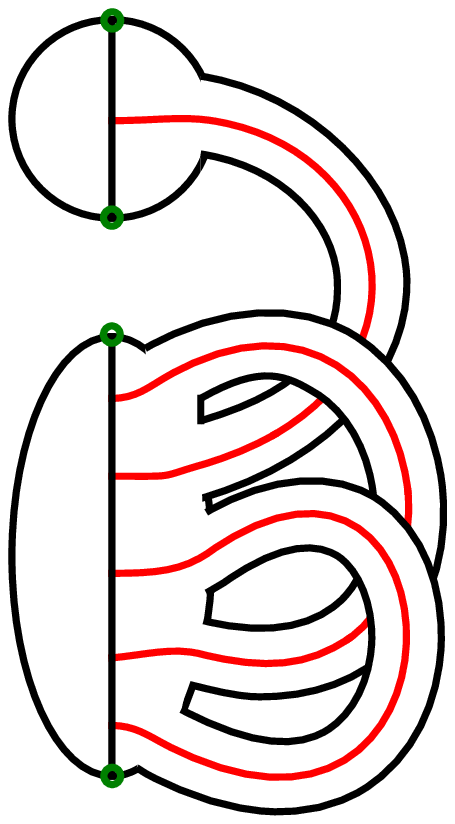}
		\caption{$\F(-\Z)$}
		\label{subfig:paramatrized_surface_2}
	\end{subfigure}
	\begin{subfigure}[b]{.24\linewidth}
		\centering
		\labellist
		\small\hair=0pt
		\pinlabel $S_-$ [l] at 155 176
		\pinlabel $S_-$ [l] at 155 50
		\pinlabel $S_+$ [r] at 30 132
		\tiny
		\hair=0.5pt
		\pinlabel $+$ [t] at 125 19
		\pinlabel $+$ [t] at 125 110
		\pinlabel $-$ [b] at 125 83
		\pinlabel $-$ [b] at 125 243
		\endlabellist
		\includegraphics[scale=.45]{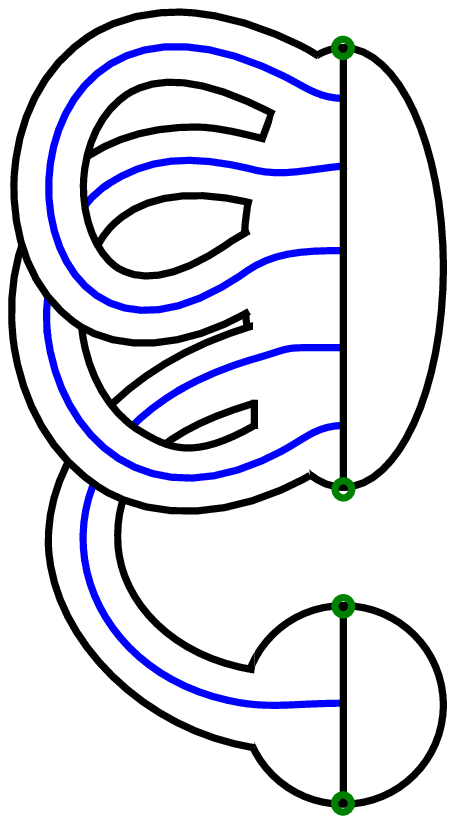}
		\caption{$\F(\ol\Z)$}
		\label{subfig:paramatrized_surface_3}
	\end{subfigure}
	\begin{subfigure}[b]{.24\linewidth}
		\centering
		\labellist
		\small\hair=0pt
		\pinlabel $S_-$ [l] at 155 88
		\pinlabel $S_-$ [l] at 155 214
		\pinlabel $S_+$ [r] at 30 132
		\tiny
		\hair=0.5pt
		\pinlabel $+$ [t] at 125 20
		\pinlabel $+$ [t] at 125 181
		\pinlabel $-$ [b] at 125 154
		\pinlabel $-$ [b] at 125 245
		\endlabellist
		\includegraphics[scale=.45]{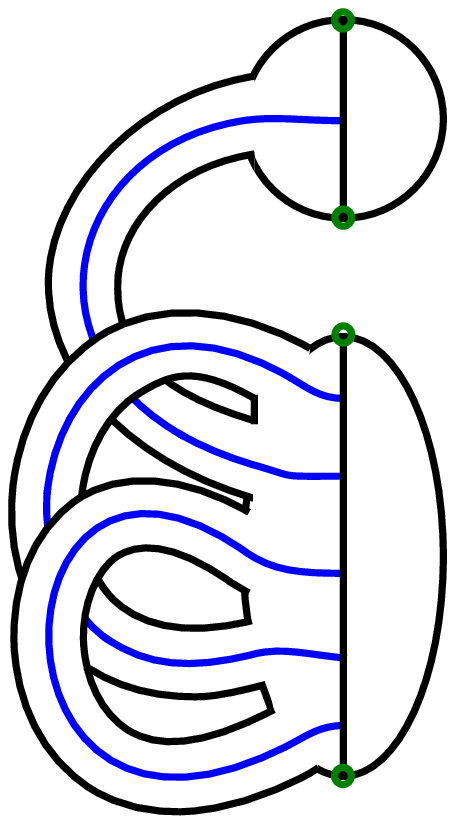}
		\caption{$\F(-\ol\Z)$}
		\label{subfig:paramatrized_surface_4}
	\end{subfigure}
	\caption{Parametrizations of surfaces by the arc diagrams in \Figure~\ref{fig:arc_diags}}
	\label{fig:parametrized_surfaces}
\end{figure}

\begin{defn}
	\label{def:bs_manifold}
	A \emph{bordered sutured manifold} $\Y=(Y,\Gamma,\Z)$ is a partially sutured manifold $(Y,\Gamma,\F)$, whose sutured surface $\F$
	has been parametrized by the arc diagram $\Z$.
\end{defn}

As with partially sutured manifolds, $\Y=(Y,\Gamma,\Z_1\sqcup\Z_2)$ can be thought of as a cobordism from
$\F(-\Z_1)$ to $\F(\Z_2)$.

\subsection{The bordered algebra}
\label{sec:algebra_def}

We will briefly recall the definition of the algebra $\A(\Z)$ associated to an $\alpha$--type arc
diagram $\Z$. Fix a diagram $\Z=(\ZZZ,\aaa,M)$ of rank $k$. First, we define a larger \emph{strands algebra}
$\A'(\ZZZ,\aaa)$,
which is independent of the matching $M$. Then we define $\A(\Z)$ as a subalgebra of $\A'(\ZZZ,\aaa)$.

\begin{defn}
	\label{def:strands_algebra}
	The \emph{strands algebra associated to $(\ZZZ,\aaa)$} is a $\ZZ/2$--algebra $\A'(\ZZZ,\aaa)$, which is generated (as a vector space)
	by diagrams in $[0,1]\times\ZZZ$ of the following type. Each diagram consists of several embedded oriented arcs or
	\emph{strands}, starting in $\{0\}\times\aaa$ and ending in $\{1\}\times\aaa$. All tangent vectors on the strands
	should project non-negatively on $\ZZZ$, i.e. they are ``upward-veering''. Only transverse intersections are
	allowed.

	The diagrams are subjects to two relations---any two diagrams related by a Reidemeister III move represent the same
	element in $\A'(\ZZZ,\aaa)$, and any diagram in which two strands intersect more than once represents zero.

	Multiplication is given by concatenation of diagrams in the $[0,1]$--direction, provided the endpoints of the
	strands agree. Otherwise the product is zero. The differential of a diagram is the sum of all diagrams obtained
	from it by taking the oriented resolution of a crossing.
\end{defn}

We refer to a strand connecting $(0,a)$ to $(1,a)$ for some $a\in\aaa$ as \emph{horizontal}.
Notice that the idempotent elements of $\A'(\ZZZ,\aaa)$ are precisely those which are sums of diagrams
with only horizontal strands. To recover the information carried by the matching $M$ we single out some of these
idempotents.

\begin{defn}
	\label{def:idempotent_ring}
	The \emph{ground ring $\I(\Z)$ associated to $\Z$} is a ground ring, in the sense of \Definition~\ref{def:ground_ring},
	of rank $2^k$ over $\ZZ/2$, with canonical basis $(\iota_I)_{I\subset\{1,\ldots,k\}}$. It is identified with a
	subring of the strands algebra $\A'(\ZZZ,\aaa)$, by setting $\iota_I=\sum_J D_J$. The sum is over all $J\subset\aaa$
	such that $M|_J\co J\to I$ is a bijection, and $D_J$ is the diagram with horizontal strands $[0,1]\times J$.
\end{defn}

For all $I\subset\{1,\ldots,k\}$, the generator $\iota_I$ is a sum of $2^{\#I}$ diagrams.

\begin{defn}
	\label{def:alpha_algebra}
	The \emph{bordered algebra $\A(\Z)$ associated to $\Z$} is the subalgebra of
	$\I(\Z)\cdot\A'(\ZZZ,\aaa)\cdot\I(\Z)$ consisting of all elements $\alpha$ subject to the following condition.
	Suppose $M(a)=M(b)$, and $D$ and $D'$ are two diagrams, where $D'$ is obtained from $D$ by replacing
	the horizontal arc $[0,1]\times\{a\}$ by the horizontal arc	$[0,1]\times\{b\}$. Then $\alpha$ contains $D$ as a
	summand iff it contains $D'$ as a summand.
\end{defn}

We use $\I(\Z)$ as the ground ring for $\A(\Z)$, in the sense of \Definition~\ref{def:ainf_algebra}. The condition in
\Definition~\ref{def:alpha_algebra} ensures that the
canonical basis elements of $\I(\Z)$ are indecomposable in $\A(\Z)$.

It is straightforward to check that \Definition~\ref{def:alpha_algebra} is equivalent to the definition of $\A(\Z)$
in~\cite{Zar:BSFH}.

Examples of several algebra elements are given in \Figure~\ref{fig:algebra_elements}. The dotted lines on the side
are given to remind us of the matching in the arc diagram $\Z$. All strands are oriented left to right, so we avoid
drawing them with arrows. The horizontal lines in \Figure~\ref{subfig:algebra_elements_a2} are dotted, as a shorthand
for the sum of two diagrams, with a single horizontal line each. For the elements in this example, we have
$a_1\cdot a_2=a_3$, and $\del a_1=a_4$.

\begin{figure}
	\begin{subfigure}[t]{.24\linewidth}
		\centering
		\includegraphics[scale=.8]{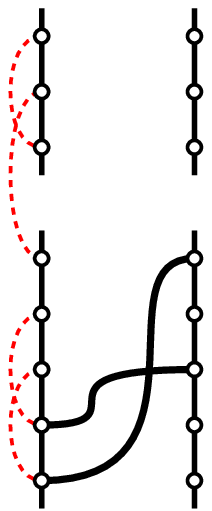}
		\caption{$a_1$}
		\label{subfig:algebra_elements_a1}
	\end{subfigure}
	\begin{subfigure}[t]{.24\linewidth}
		\centering
		\includegraphics[scale=.8]{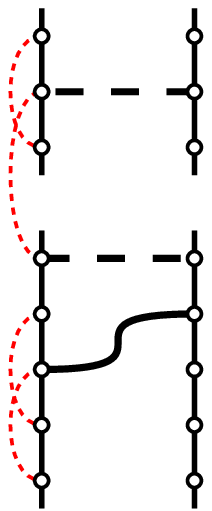}
		\caption{$a_2$}
		\label{subfig:algebra_elements_a2}
	\end{subfigure}
	\begin{subfigure}[t]{.24\linewidth}
		\centering
		\includegraphics[scale=.8]{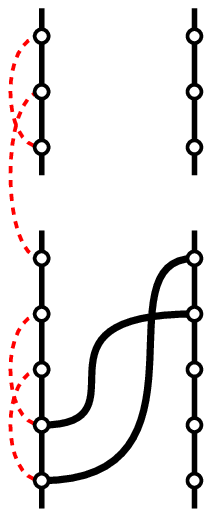}
		\caption{$a_3$}
		\label{subfig:algebra_elements_a3}
	\end{subfigure}
	\begin{subfigure}[t]{.24\linewidth}
		\centering
		\includegraphics[scale=.8]{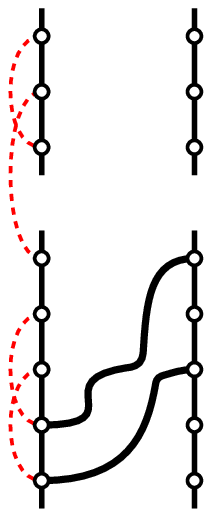}
		\caption{$a_4$}
		\label{subfig:algebra_elements_a4}
	\end{subfigure}
	\caption{Four generators of $\A(\Z)$.}
	\label{fig:algebra_elements}
\end{figure}

The situation for arc diagrams of $\beta$--type is completely analogous, with one important difference.

\begin{defn}
	\label{def:beta_algebra}
	The \emph{bordered algebra $\A(\Z)$ associated to a $\beta$--arc diagram $\Z$},
	is defined in the exact same way as in \Definitions~\ref{def:alpha_algebra},
	except that moving strands are downward veering, instead of upward.
\end{defn}

The relationship between the different types of algebras is summarized in the following proposition.

\begin{prop}
	\label{prop:algebra_relationships}
	Suppose $\Z$ is an arc diagram of either $\alpha$ or $\beta$--type. The algebras associated
	to $\Z$, $-\Z$, $\Zbar$, and $-\Zbar$ are related as follows:
	\begin{align*}
		\A(-\Z)&\cong\A(\ol\Z)\cong\A(\Z)^{\op},\\
		\A(-\ol\Z)&\cong\A(\Z).
	\end{align*}
\end{prop}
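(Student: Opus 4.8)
The plan is to establish the four claimed isomorphisms by tracking how each of the defining pieces of $\A(\Z)$ --- the strands algebra $\A'(\ZZZ,\aaa)$, the ground ring $\I(\Z)$, and the symmetrization condition of \Definition~\ref{def:alpha_algebra} --- behaves under the three operations $\Z\mapsto-\Z$ (reverse orientation of $\ZZZ$), $\Z\mapsto\ol\Z$ (switch $\alpha\leftrightarrow\beta$ type), and their composite. The key point is that none of these operations changes the underlying data $(\ZZZ,\aaa,M)$ or the rank, so the ground ring $\I(\Z)$ and its canonical basis $(\iota_I)$ are literally unchanged; everything reduces to understanding what happens to the strand diagrams and the multiplication and differential on them.

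First I would treat $\A(\Zbar)\cong\A(\Z)^{\op}$. By \Definition~\ref{def:beta_algebra}, the $\beta$-algebra on the same triple $(\ZZZ,\aaa,M)$ is defined exactly as the $\alpha$-algebra except that moving strands are downward-veering instead of upward-veering. The map sending a strand diagram $D$ in $[0,1]\times\ZZZ$ to its reflection $D^{\mathrm{r}}$ across $\{1/2\}\times\ZZZ$ in the $[0,1]$-coordinate turns upward-veering strands into downward-veering ones, preserves horizontality, sends Reidemeister III moves to Reidemeister III moves, and sends ``two strands crossing twice'' to the same, so it is a well-defined $\ZZ/2$-linear isomorphism of vector spaces. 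Concatenation in the $[0,1]$-direction gets reversed, which is exactly the $\op$ on multiplication; the differential (oriented resolution of a crossing) is preserved because an oriented resolution of a reflected crossing is the reflection of the oriented resolution. Since the map fixes each $D_J$ it restricts to the identity on $\I(\Z)$ and clearly respects the symmetrization condition of \Definition~\ref{def:alpha_algebra} (it permutes the relevant pairs of summands), so it descends to $\A(\Zbar)\cong\A(\Z)^{\op}$.

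Next I would do $\A(-\Z)\cong\A(\Z)^{\op}$ by the analogous argument using the reflection of a strand diagram across $\ZZZ\times\{1/2\}$ in the $\ZZZ$-direction (equivalently, reparametrizing each arc of $\ZZZ$ by orientation reversal). This again turns upward-veering into upward-veering strands once we have also reversed the orientation of $\ZZZ$ --- the subtle sign bookkeeping here is exactly why one must check that ``upward'' relative to $-\Z$ agrees with ``downward'' relative to $\Z$ --- and it reverses the vertical ordering of strands along $\ZZZ$, which does not affect concatenation or the differential but is needed to see it lands in the right subalgebra. Composing the two identifications gives $\A(-\ol\Z)\cong(\A(\Z)^{\op})^{\op}=\A(\Z)$, and composing either one of them with the other on the nose gives $\A(\ol\Z)\cong\A(-\Z)$, completing all the isomorphisms; the compatibility with $\I(\Z)$ is automatic throughout since the matching is untouched.

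The main obstacle is not any single hard computation but rather being careful and consistent about conventions: precisely which reflection implements which operation, and checking that each reflection (a) stays within the upward/downward-veering class appropriate to the target algebra's type, (b) is compatible with the passage from $\A'(\ZZZ,\aaa)$ to the sub-quotient $\I(\Z)\cdot\A'(\ZZZ,\aaa)\cdot\I(\Z)$ cut out by \Definition~\ref{def:alpha_algebra}, and (c) matches (or reverses) the $A_\infty$/DG structure correctly. In particular one should verify that the symmetrization condition --- ``$\alpha$ contains $D$ iff it contains $D'$'' where $D'$ swaps a horizontal arc over $\{a\}$ for one over $\{b\}$ with $M(a)=M(b)$ --- is preserved, which holds because each of our reflections commutes with this horizontal-arc swap. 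Once the conventions are pinned down, each verification is a one-line check on diagrams, and the proposition follows; I would also remark that this is the $\beta$-type generalization of the corresponding statement for $\alpha$-diagrams proved in~\cite{Zar:BSFH}, so the new content is only the interplay with the type-switching operation $\ol{\phantom{\Z}}$.
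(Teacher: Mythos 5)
Your first identification, $\A(\ol\Z)\cong\A(\Z)^{\op}$ via the reflection $(t,z)\mapsto(1-t,z)$ of $[0,1]\times\ZZZ$, is correct and is exactly the paper's ``reflect along the vertical axis'': it keeps the endpoints in $\aaa$, converts upward-veering to downward-veering strands, reverses the order of concatenation, and commutes with the differential, the Reidemeister III relation, the double-crossing relation, and the symmetrization condition.

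The treatment of $\A(-\Z)\cong\A(\Z)^{\op}$, however, has a genuine gap. The map you propose --- reflection in the $\ZZZ$-direction, alias reparametrizing $\ZZZ$ by orientation reversal --- preserves the $[0,1]$-coordinate, and you say yourself that it ``does not affect concatenation.'' But multiplication in the strands algebra \emph{is} concatenation in the $[0,1]$-direction, so any map preserving that direction can only produce an algebra \emph{isomorphism}, never the anti-isomorphism required to land in $\A(\Z)^{\op}$; if your map were a well-defined isomorphism onto $\A(-\Z)$ it would prove $\A(\Z)\cong\A(\Z)^{\op}$ for every $\Z$, which is false in general. (There is also a well-definedness problem: a metric reflection of $\ZZZ$ does not send $\aaa$ to $\aaa$, and the relabeling version permutes $\aaa$ and hence changes the matching, so you do not land in the algebra of $-\Z=(\ZZZ,\aaa,M)$ at all.) The correct map is the paper's $180$-degree rotation, whose essential ingredient is again the reversal of the $[0,1]$-coordinate --- that is what flips the veering direction while keeping endpoints in $\aaa$ and what reverses the product; the up-down component is only the cosmetic relabeling needed to draw the result with $-\ZZZ$ oriented upward. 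Alternatively, note that $\A(-\Z)$ and $\A(\ol\Z)$ consist of literally the same diagrams (``downward-veering with respect to the original orientation'') with the same concatenation product, so $\A(-\Z)=\A(\ol\Z)$ on the nose and the $-\Z$ case follows from the $\ol\Z$ case you already proved; the same observation gives $\A(-\ol\Z)=\A(\Z)$ directly, without composing two anti-isomorphisms.
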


Here $A^{\op}$ denotes the opposite algebra of $A$. That is, an algebra with the same additive structure and
differential, but the order of multiplication reversed.

\begin{proof}
	This is easily seen by reflecting and rotating diagrams. To get from $\A(\Z)$ to $\A(-\Z)$ we have to rotate all
	diagrams by 180 degrees. This means that multiplication switches order, so we get the opposite algebra.

	To get from $\A(\Z)$ to $\A(\ol\Z)$ we have to reflect all diagrams along the vertical axis. This again means that
	multiplication switches order.
	
	An example of the correspondence is shown in \Figure~\ref{fig:rotated_elements}.
\end{proof}

\begin{figure}
	\begin{subfigure}[t]{.24\linewidth}
		\centering
		\includegraphics[scale=.8]{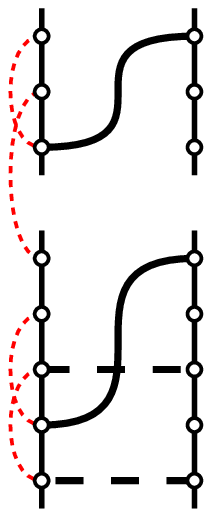}
		\caption{$\A(\Z)$}
		\label{subfig:rotated_Z}
	\end{subfigure}
	\begin{subfigure}[t]{.24\linewidth}
		\centering
		\includegraphics[scale=.8]{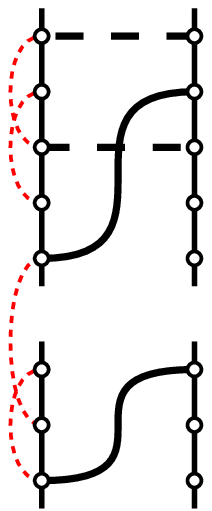}
		\caption{$\A(-\Z)$}
		\label{subfig:rotated_-Z}
	\end{subfigure}
	\begin{subfigure}[t]{.24\linewidth}
		\centering
		\includegraphics[scale=.8]{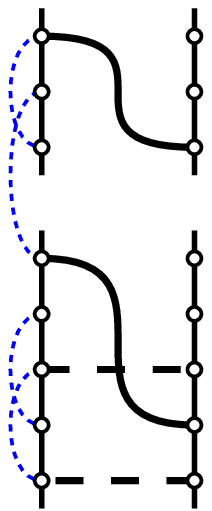}
		\caption{$\A(\ol\Z)$}
		\label{subfig:rotated_Zbar}
	\end{subfigure}
	\begin{subfigure}[t]{.24\linewidth}
		\centering
		\includegraphics[scale=.8]{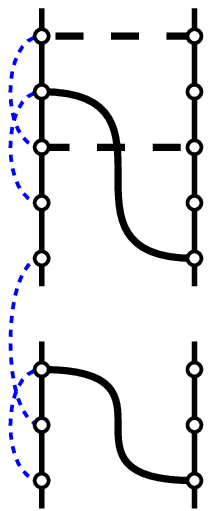}
		\caption{$\A(-\ol\Z)$}
		\label{subfig:rotated_-Zbar}
	\end{subfigure}
	\caption{Four elements in the algebras $\Z$, $-\Z$, $\ol\Z$, and $-\ol\Z$, which correspond to each other.}
	\label{fig:rotated_elements}
\end{figure}

\subsection{The bordered invariants}
\label{sec:bordered_invariants_definition}

We will give a brief sketch of the definitions of the bordered invariants from~\cite{Zar:BSFH}, which apply for 
the case of $\alpha$--arc diagrams. Then we discuss the necessary modifications when $\beta$--arcs are involved.

For now assume $\Z=(\ZZZ,\aaa,M)$ is an $\alpha$--arc diagram.

\begin{defn}
	\label{def:heegaard_diagram}
	A \emph{bordered sutured Heegaard diagram} $\HH=(\Sigma,\balpha,\bbeta,\Z)$ consists of the following:
	\begin{itemize}
		\item A compact surface $\Sigma$ with no closed components.
		\item A collection of circles $\balpha^c$ and a collection of arcs $\balpha^a$, which are pairwise disjoint and
			properly embedded in $\Sigma$. We set $\balpha=\balpha^a\cup\balpha^c$.
		\item A collection of disjoint circles $\bbeta$, properly embedded in $\Sigma$.
		\item An embedding $G(\Z)\into\Sigma$, such that $\ZZZ$ is sent into $\del\Sigma$,
			preserving orientation, while $\balpha^a$ is the image of the arcs $e_i$ in $G(\Z)$.
	\end{itemize}
	
	We require that $\pi_0(\del\Sigma\setminus\ZZZ)\to\pi_0(\Sigma\setminus(\balpha^c\cup\balpha^a))$ and
	$\pi_0(\del\Sigma\setminus\ZZZ)\to\pi_0(\Sigma\setminus\bbeta)$ be surjective.
\end{defn}

To such a diagram we can associate a bordered sutured manifold $(Y,\Gamma,\Z)$ as follows. We obtain $Y$ from
$\Sigma\times[0,1]$ by
gluing $2$--handles to $\bbeta\times\{1\}$ and $\balpha^c\times\{0\}$. The dividing set is $\Gamma=(\del\Sigma\setminus\ZZZ)\times\{1/2\}$,
and $F(\Z)$ is a neighborhood of $\ZZZ\times[0,1]\cup\balpha^a\times\{0\}$.

As proved in~\cite{Zar:BSFH}, for every bordered sutured manifold there is a unique Heegaard diagram, up to isotopy
and some moves.

The bordered invariants are certain homotopy-equivalence classes of $\Ainf$--modules (see \Appendix~\ref{sec:algebra}).
For a given Heegaard diagram $\HH$, we can form the set of \emph{generators}
$\G(\HH)$ consisting of collections of intersection points of $\balpha\cap\bbeta$.

The invariant $\BSA(\HH)_{\A(\Z)} $ is a right type--$A$ $\Ainf$--module over $\A(\Z)$, with $\ZZ/2$--basis $\G(\HH)$. The
ground ring $\I(\Z)$ acts as follows. The only idempotent in $\I(\Z)$ which acts nontrivially on $\xxx\in\G(\HH)$ is
$\iota_{I(\xxx)}$ where $I(\xxx)\subset\{1,\ldots,k\}$ records the $\alpha$--arcs which contain a point of $\xxx$.

The structure map $m$ of $\BSA(\HH)$ counts certain holomorphic curves in $\Int\Sigma\times[0,1]\times\RR$, with
boundary on $(\balpha\times\{1\}\times\RR)\cup(\bbeta\times\{0\}\times\RR$). Each such curve
has two types of asymptotics---ends at $(\balpha\cap\bbeta)\times[0,1]\times\pm\infty$, and ends at
$\del\Sigma\times\{0\}\times\{h\}$ where $h\in\RR$ is finite. The possible ends at $\del\Sigma$ are in 1-to-1
correspondence with elements of $\A(\Z)$.

The expression $\left<m(\xxx,a_1,\ldots,a_n),\yyy^\vee\right>$ counts curves as above, which have asymptotics
$\xxx\times[0,1]$ at $-\infty$, $\yyy\times[0,1]$ at $+\infty$, and $a_1, a_2,\ldots,a_n$ at some finite values
$h_1<h_2<\ldots<h_n$.

We write $\BSA(\Y)$ for the homotopy equivalence class of $\BSA(\HH)$. (Invariance was proven
in~\cite{Zar:BSFH}.)

The invariant $\lu{\A(-\Z)}\BSD(\HH)$ is a left type--$D$ $\Ainf$--module over $\A(-\Z)={\A(\Z)}^{\op}$,
with $\ZZ/2$--basis $\G(\HH)$. (See \Appendix~\ref{sec:algebras_and_modules} for type--$D$ modules, and the meaning of
upper and lower indices). The
ground ring $\I(-\Z)$ acts as follows. The only idempotent in $\I(-\Z)$ which acts nontrivially on $\xxx\in\G(\HH)$ is
$\iota_{I^c(\xxx)}$ where $I^c(\xxx)\subset\{1,\ldots,k\}$ records the $\alpha$--arcs which \emph{do not} contain a point of $\xxx$.

The structure map $\delta$ of $\BSD(\HH)$ counts a subset of the same holomorphic curves as for $\BSA(\HH)$.
Their interpretation is somewhat different, though. Equivalently,
$\lu{\A(\Z)^{\op}}\BSD(\HH)=\BSA(\HH)_{\A(\Z)}\sqtens\lu{\A(\Z),{\A(\Z)}^{\op}}\II$, where $\II$ is a certain bimodule
defined in~\cite{LOT:bimodules}.

Again, we write $\BSD(\Y)$ for the homotopy equivalence class of $\BSD(\HH)$. (Invariance was proven
in~\cite{Zar:BSFH}.)

We can also construct invariants $\li{_{{\A(\Z)}^{\op}}}\BSA(\Y)$ and $\BSD(\Y)^{\A(\Z)}$ purely algebraically from the
usual $\BSA$ and $\BSD$. Indeed, as discussed in \Appendix~\ref{sec:duals}, any right $A$--module is a left--$A^{\op}$ module and vice versa.

If $\Y$ is bordered by $\F(\Z_1)\sqcup\F(\Z_2)$, we can similarly define several bimodules invariants for $\Y$:
\begin{align*}
	&\li{_{\A(\Z_1)^{\op}}}\BSAA(\Y)_{\A(\Z_2)} &
	&\lu{\A(\Z_1)^{\op}}\BSDA(\Y)_{\A(\Z_2)} \\
	&\li{_{\A(\Z_1)^{\op}}}\BSAD(\Y)^{\A(\Z_2)} &
	&\lu{\A(\Z_1)^{\op}}\BSDD(\Y)^{\A(\Z_2)}
\end{align*}

For the invariants of $\beta$--diagrams little changes. Suppose $\Z$ is a $\beta$--type arc diagram. Heegaard diagrams
will now involve $\beta$--arcs as the images of $e_i\subset G(\Z)$, instead of $\alpha$--arcs. We still count holomorphic curves in
$\Int\Sigma\times[0,1]\times\RR$. However, since there are $\beta$--curves hitting $\del\Sigma$ instead of $\alpha$,
the asymptotic ends at $\del\Sigma\times\{1\}\times\{h\}$ are replaced by ends at $\del\Sigma\times\{0\}\times\{h\}$, which
again correspond to elements of $\A(\Z)$. The rest of the definition is essentially unchanged.

The last case is when $\Y$ is bordered by $\F(\Z_1)\sqcup\F(\Z_2)$, where $\Z_1$ is a diagram of $\alpha$--type and
$\Z_2$ is of $\beta$--type. We can extend the definition of $\BSAA(\Y)$ as before. There are now four types of asymptotic ends:
\begin{itemize}
	\item The ones at $\pm\infty$ which correspond to generators $\xxx,\yyy\in\G(\HH)$.
	\item $\del\Sigma\times\{1\}\times\{h\}$ (or \emph{$\alpha$--ends}) which correspond to $\A(\Z_1)$.
	\item $\del\Sigma\times\{0\}\times\{h\}$ (or \emph{$\beta$--ends}) which correspond to $\A(\Z_2)$.
\end{itemize}
Each holomorphic curve will have some number $k\geq 0$ of $\alpha$--ends, and some number $l\geq 0$ of $\beta$--ends.
Such a curve contributes to the structure map $m_{k|1|l}$ which takes $k$ elements of $\A(\Z_1)$ and $l$ elements of $\A(\Z_2)$.

To summarize we have the following theorem.

\begin{thm}
	\label{thm:bimodules_all}
	Let $\Y$ be a bordered sutured manifold, bordered by $-\F(\Z_1)\sqcup\F(\Z_2)$, where $\Z_1$ and $\Z_2$ can be any
	combination of $\alpha$ and $\beta$ types. Then there are bimodules, well defined up to homotopy equivalence:
	\begin{align*}
		&\li{_{\A(\Z_1)}}\BSAA(\Y)_{\A(\Z_2)} &
		&\lu{\A(\Z_1)}\BSDA(\Y)_{\A(\Z_2)} \\
		&\li{_{\A(\Z_1)}}\BSAD(\Y)^{\A(\Z_2)} &
		&\lu{\A(\Z_1)}\BSDD(\Y)^{\A(\Z_2)}
	\end{align*}

	If $\Y_1$ and $\Y_2$ are two such manifolds, bordered by $-\F(\Z_1)\sqcup\F(\Z_2)$ and $-\F(\Z_2)\sqcup\F(\Z_3)$,
	respectively, then there are homotopy equivalences
	\begin{align*}
		\BSAA(\Y_1\cup\Y_2)&\simeq\BSAA(\Y_1)\sqtens_{\A(\Z_2)}\BSDA(\Y_2),\\
		\BSDA(\Y_1\cup\Y_2)&\simeq\BSDD(\Y_1)\sqtens_{\A(\Z_2)}\BSAA(\Y_2),
	\end{align*}
	etc. Any combination of bimodules for $\Y_1$ and $\Y_2$ can be used, where one is type--$A$ for
	$\A(\Z_2)$, and the other is type--$D$ for $\A(\Z_2)$.
\end{thm}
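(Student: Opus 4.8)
The plan is to reduce everything to the results already established in \cite{Zar:BSFH} for $\alpha$--diagrams, by treating the appearance of a $\beta$--arc diagram as a formal operation. First I would address the \emph{existence and invariance} of the bimodules. For a pair of types in $\{\alpha,\beta\}^2$ one builds a bordered sutured Heegaard diagram $\HH=(\Sigma,\balpha,\bbeta,\Z_1\sqcup\Z_2)$ in which the arcs of $G(\Z_i)$ land among the $\alpha$--curves or the $\beta$--curves according to the type of $\Z_i$; the count of holomorphic curves in $\Int\Sigma\times[0,1]\times\RR$ with the four classes of asymptotics described before the statement defines the structure maps $m_{k|1|l}$. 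The $\Ainf$--relations follow, exactly as in the $\alpha$--only case, from the usual analysis of ends of the index--one moduli spaces; nothing in that argument used that the boundary arcs were of $\alpha$--type rather than $\beta$--type, only that they were disjoint from the curves of the opposite color. Invariance under the Heegaard moves (isotopy, handleslide, stabilization) is likewise formally identical to \cite{Zar:BSFH}, so I would state it as such and point to the one new ingredient, namely that a $\beta$--type diagram $\Z$ for a surface $\F$ is an $\alpha$--type diagram for $\ol\F$, so that every statement about $\beta$--diagrams is the mirror of a statement about $\alpha$--diagrams via \Proposition~\ref{prop:algebra_relationships}.

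Second I would handle the \emph{pairing theorem} in the mixed setting. The cleanest route is not to re-prove a holomorphic gluing theorem but to derive the mixed pairing from the $\alpha$--only pairing theorem of \cite{Zar:BSFH} together with the purely algebraic reinterpretations already recalled in the text: a right $A$--module is a left $A^{\op}$--module (Appendix~\ref{sec:duals}), and $\lu{\A(\Z)^{\op}}\BSD(\HH)=\BSA(\HH)_{\A(\Z)}\sqtens\lu{\A(\Z),\A(\Z)^{\op}}\II$ with $\II$ the bimodule of \cite{LOT:bimodules}. Concretely: given $\Y_1$ bordered by $-\F(\Z_1)\sqcup\F(\Z_2)$ and $\Y_2$ bordered by $-\F(\Z_2)\sqcup\F(\Z_3)$, glue the underlying partially sutured manifolds along $\F(\Z_2)$; choose Heegaard diagrams $\HH_i$ whose union is a Heegaard diagram for $\Y_1\cup\Y_2$. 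If $\Z_2$ is of $\alpha$--type this is exactly the situation treated in \cite{Zar:BSFH}, giving $\BSAA(\Y_1\cup\Y_2)\simeq\BSAA(\Y_1)\sqtens_{\A(\Z_2)}\BSDA(\Y_2)$. If $\Z_2$ is of $\beta$--type, I would observe that the gluing region is parametrized by $\Z_2$ from the $\Y_1$ side and by $\ol{\Z_2}$ (equivalently $-\ol{-\Z_2}$) from the $\Y_2$ side, and that $\A(\Z_2)\cong\A(\ol{-\Z_2})$; relabelling the boundary of each piece by switching the color of the relevant arc diagram converts the mixed gluing into a monochrome one, to which the established theorem applies, and then one transports back. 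The bookkeeping is that $\sqtens$ over $\A(\Z_2)$ and over $\A(\Z_2)^{\op}$ agree after the $\II$--twist, so any choice of one type--$A$ and one type--$D$ factor pairs correctly.

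The main obstacle I anticipate is not conceptual but organizational: keeping straight, across the eight bimodule flavors and the two colors of each of $\Z_1,\Z_2,\Z_3$, exactly which algebra ($\A(\Z_i)$ versus $\A(\Z_i)^{\op}$) acts on which side and with which handedness, so that the homotopy equivalences compose associatively. I would manage this by fixing once and for all the dictionary of \Proposition~\ref{prop:algebra_relationships} ($\A(-\Z)\cong\A(\ol\Z)\cong\A(\Z)^{\op}$, $\A(-\ol\Z)\cong\A(\Z)$) and the convention that upper/lower indices record type--$D$/type--$A$, then verifying the mixed pairing in the single case "$\Z_2$ of $\beta$--type, first factor $\BSAA$, second factor $\BSDA$" and deducing all others by applying the already-proven algebraic dualities ($M_A\leftrightarrow{}_{A^{\op}}M$ and $\II$--twisting) on the outer boundaries, which do not interact with the interface $\Z_2$. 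A secondary, genuinely analytic point to be checked is that when both an $\alpha$--end and a $\beta$--end can degenerate simultaneously in a one--parameter family, the resulting broken configurations are exactly those predicted by the $m_{k|1|l}$ $\Ainf$--relations; but since $\alpha$--ends occur at $\del\Sigma\times\{1\}$ and $\beta$--ends at $\del\Sigma\times\{0\}$, these degenerations are supported near disjoint parts of the boundary and do not interfere, so the codimension count is the sum of the two familiar ones and no new phenomena arise.
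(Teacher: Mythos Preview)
The paper's own ``proof'' of this theorem is a single sentence: it asserts that the argument is a straightforward adaptation of the $\alpha$--only proofs in \cite{Zar:BSFH}, and points to \cite{LOT:morphism_spaces} for the analogous mixed $\alpha$/$\beta$ construction in the purely bordered setting. So your discussion of existence and invariance, and your closing remark that $\alpha$--ends and $\beta$--ends degenerate over disjoint parts of $\del\Sigma$ and hence do not interact, already matches everything the paper actually says.

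Where you diverge is in the pairing theorem. You propose to \emph{avoid} redoing the neck--stretching argument and instead reduce the $\beta$--gluing to the $\alpha$--gluing by ``relabelling the boundary of each piece by switching the color of the relevant arc diagram.'' This step is not well defined as stated. The type of an arc diagram is not a label attached to a boundary component; it records whether the arcs $e_i$ lie among the $\alpha$--curves or the $\beta$--curves of the Heegaard diagram, and that is a global feature of $\HH$. The only way to ``switch the color'' is to swap $\balpha\leftrightarrow\bbeta$ throughout one of the diagrams, which (as in the proof of \Proposition~\ref{prop:mirror_is_dual}) replaces the underlying manifold by its mirror and dualizes all the modules. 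One could in principle push such an argument through---apply the $\alpha$--only pairing theorem to $-\Y_1$ and $-\Y_2$, then dualize back---but this uses \Proposition~\ref{prop:mirror_is_dual}, which in the paper's logical order comes \emph{after} the present theorem, and it is considerably more circuitous than simply observing that the time--dilation proof of \cite{Zar:BSFH} goes through verbatim with $\beta$--arcs at the neck.

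In short: your ``secondary, genuinely analytic point'' is in fact the entire content of the paper's proof, and your primary algebraic reduction has a gap at the relabelling step. The cleanest fix is to invert your priorities: state that the holomorphic gluing argument adapts directly (since nothing in it distinguishes $\alpha$--arcs from $\beta$--arcs at the interface), and drop the reduction.
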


The latter statement is referred to as the \emph{pairing theorem}. The proof of \Proposition~\ref{thm:bimodules_all} is
a straightforward adaptation of the corresponding proofs when dealing with only type--$\alpha$ diagrams. An analogous construction
involving both $\alpha$ and $\beta$ arcs in the purely bordered setting is given in~\cite{LOT:morphism_spaces}.

\subsection{Mirrors and twisting slices}
\label{sec:BSA_computations}

In this section we give two computations of bordered invariants. One of them relates the invariants for a bordered
sutured manifold $\W$ and its mirror $-\W$. The other gives the invariants for a positive and negative twisting slice.

Recall that if $\W=(W,\Gamma,\F(\Z))$, its mirror is $-\W=(-W,\Gamma,\ol{\F(\Z)})=(-W,\Gamma,\F(\ol\Z))$.

\begin{prop}
	\label{prop:mirror_is_dual}
	Let $\W$ and $-\W$ be as above. Let $M_{\A(\Z)}$ be a representative for the homotopy equivalence class
	$\BSA(\W)_{\A(\Z)}$. Then its dual $\li{_{\A(\Z)}}M^\vee$ is a representative for $\li{_{\A(\Z)}}\BSA(-\W)$.
	Similarly, there are homotopy equivalences
	\begin{align*}
		\left(\BSD(\W)^{\A(\Z)}\right)^\vee&\simeq\lu{\A(\Z)}\BSD(-\W),\\
		\left(\li{_{{\A(\Z)}^{\op}}}\BSA(\W)\right)^\vee&\simeq\BSA(-\W)_{{\A(\Z)}^{\op}},\\
		\left(\lu{{\A(\Z)}^{\op}}\BSD(\W)\right)^\vee&\simeq\BSD(-\W)^{{\A(\Z)}^{\op}}.
	\end{align*}

	A similar statement holds for bimodules---if $\W$ is bordered by $\F(\Z_1)\sqcup\F(\Z_2)$, then the corresponding
	bimodule invariants of $\W$ and $-\W$ are duals of each other.
\end{prop}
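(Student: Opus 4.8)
The plan is to prove the first (module) identity at the level of bordered sutured Heegaard diagrams and then to deduce the remaining statements formally. Fix a diagram $\HH=(\Sigma,\balpha,\bbeta,\Z)$ representing $\W$; since $\W$ is bordered by $\F(\Z)$ with $\Z$ an $\alpha$--arc diagram, the arcs of $\HH$ are $\alpha$--arcs and $\bbeta$ consists of circles. From $\HH$ I would build a diagram $\HH^\vee$ representing $-\W$ by reversing the orientation of the appropriate factors of $\Int\Sigma\times[0,1]\times\RR$ and simultaneously interchanging the roles of $\balpha$ and $\bbeta$: the $\alpha$--arcs become $\beta$--arcs and the old $\bbeta$--circles become the new $\balpha$--circles, so that the ambient arc diagram passes from $\Z$ to $\ol\Z$. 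Using the handle description following \Definition~\ref{def:heegaard_diagram} and its $\beta$--analogue, one checks that the underlying $3$--manifold becomes $-W$, that the dividing set $\Gamma=(\del\Sigma\setminus\ZZZ)\times\{1/2\}$ is unchanged, and that the sutured surface on the boundary becomes $\F(\ol\Z)=\ol{\F(\Z)}$, so $\HH^\vee$ represents $-\W=(-W,\Gamma,\F(\ol\Z))$. This is exactly the point where the $\beta$--arc diagram formalism of \Section~\ref{sec:arc_diagrams} and the mixed invariants of \Theorem~\ref{thm:bimodules_all} are needed: without them $-\W$ would admit no Heegaard diagram of the allowed shape.

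Next I would match the two invariants generator by generator. The intersection points of $\balpha$ with $\bbeta$ used to form $\G(\HH)$ and $\G(\HH^\vee)$ coincide, and because the $\beta$--arcs of $\HH^\vee$ are the $\alpha$--arcs of $\HH$, the set of occupied arcs of a generator $\xxx$ is unchanged; hence the $\I(\Z)$--idempotent acting on $\xxx$ in $\BSA(\HH^\vee)$ is the one acting on the dual generator in $M^\vee$, where $M=\BSA(\HH)$. For the structure maps, the orientation/label--reversing identification above (combined with conjugating the almost complex structure) carries the moduli spaces of holomorphic curves in $\Int\Sigma\times[0,1]\times\RR$ defining $m$ onto the moduli spaces defining the structure map of $\BSA(\HH^\vee)$; under it the asymptotics at $\pm\infty$ are interchanged, the finite ends $h_1<\cdots<h_n$ along $\del\Sigma$ have their order reversed, and each such end is relabelled by the anti-isomorphism $\A(\Z)\xrightarrow{\sim}\A(\ol\Z)$ of \Proposition~\ref{prop:algebra_relationships}. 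Comparing with the definition of a dual $\Ainf$--module recalled in \Appendix~\ref{sec:duals}, this says precisely that the structure maps of $\BSA(\HH^\vee)$ are those of $\li{_{\A(\Z)}}M^\vee$, so $\li{_{\A(\Z)}}M^\vee$ represents $\li{_{\A(\Z)}}\BSA(-\W)$.

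The remaining module identities then follow formally. The $\A(\Z)^{\op}$--versions are the same statements read through the isomorphism $\A(\ol\Z)\cong\A(\Z)^{\op}$ of \Proposition~\ref{prop:algebra_relationships} and the fact (\Appendix~\ref{sec:duals}) that a right $\A(\Z)$--module is the same datum as a left $\A(\Z)^{\op}$--module. The statements involving $\BSD$ follow from those for $\BSA$ using that $\BSD(\HH)$ is obtained from $\BSA(\HH)$ by $\sqtens$ with the canonical bimodule $\II$ of~\cite{LOT:bimodules}, together with the fact that dualization intertwines this operation for $\W$ with the corresponding one for $-\W$ (because $\II$ is self-dual in the appropriate sense). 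Finally, for a manifold $\W$ bordered by $\F(\Z_1)\sqcup\F(\Z_2)$ one runs the identical construction on a Heegaard diagram with two boundary pieces, tracking both arc diagrams at once; this produces a diagram for $-\W$ bordered by $\F(\ol{\Z_1})\sqcup\F(\ol{\Z_2})$ and identifies each of the four bimodule invariants of $-\W$ with the dual of the corresponding bimodule of $\W$.

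The main difficulty will not be conceptual but rather bookkeeping: one has to check that the precise combination of reflections of the factors of $\Int\Sigma\times[0,1]\times\RR$ and the swap $\balpha\leftrightarrow\bbeta$ really yields the mirror $-\W$ --- with $\ol\F$ (the roles of $S_+$ and $S_-$ interchanged) rather than $-\F$, and with corners and dividing set in the right place --- and that the induced identification of moduli problems (a symmetric choice of almost complex structure, admissibility, boundary data) turns the order--reversal of the input tuple and the anti-isomorphism of \Proposition~\ref{prop:algebra_relationships} into exactly the conventions defining the dual $\Ainf$--module. All of this parallels the orientation--reversal arguments for bordered Floer homology in the purely bordered setting, and I would import those wherever they apply.
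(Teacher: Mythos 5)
Your proposal is correct and follows essentially the same route as the paper: replace $\HH=(\Sigma,\balpha,\bbeta,\Z)$ by the swapped diagram $(\Sigma,\bbeta,\balpha,\ol\Z)$, check that it presents $-\W$, and match holomorphic curves by reflecting the $[0,1]$ and $\RR$ factors, which reverses the order of the algebra inputs and applies the anti-isomorphism $\A(\Z)\cong\A(\ol\Z)^{\op}$, yielding exactly the duality pairing. The paper dispatches the remaining cases with ``all others follow by analogy,'' whereas you spell out the reductions (via $\op$ and via $\sqtens\,\II$ for $\BSD$); that extra detail is consistent with the text and harmless.
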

\begin{proof}
	We prove one case. All others follow by analogy. Let $\HH=(\Sigma,\balpha,\bbeta,\Z)$ be a Heegaard
	diagram for $\W$. Let $\HH'=(\Sigma,\bbeta,\balpha,\ol\Z)$ be the diagram obtained by switching all $\alpha$ and
	$\beta$ curves. (Note that if $\Z$ was an $\alpha$--type diagram, this turns it into the $\beta$--type diagram
	$\ol\Z$, and vice versa.)

	The bordered sutured manifold described by $\HH'$ is precisely $-\W$. Indeed, it is obtained from the same manifold
	$\Sigma\times[0,1]$ by attaching all $2$--handles on the opposite side, and taking the sutured surface $\F$ also on
	the opposite side. This is equivalent to reversing the orientation of $W$, while keeping the orientations of
	$\Gamma\subset\del\Sigma$ and $\ZZZ\subset\del\Sigma$ the same. (Compare to~\cite{HKM:EH}, where the
	$EH$--invariant for contact structures on $(Y,\Gamma)$ is defined in $\SFH(-Y,+\Gamma)$.)

	The generators $\G(\HH)$ and $\G(\HH')$ of the two diagrams are the same.
	There is also a 1--to--1 correspondence between the holomorphic curves $u$ in the definition of $\BSA(\HH)_{\A(\Z)}$ and
	the curves $u'$ in the definition of $\BSA(\HH')_{\A(\ol\Z)}$. This is given by reflecting both the $[0,1]$--factor
	and the $\RR$--factor in the domain $\Int\Sigma\times[0,1]\times\RR$. The $\pm\infty$ asymptotic ends are reversed. The
	$\alpha$--ends of $u$ are sent to the $\beta$--ends of $u'$, and vice versa, while their heights $h$ on the
	$\RR$--scale are reversed. When turning $\alpha$--ends to $\beta$--ends, the corresponding elements of $\A(\Z)$ are
	reflected (as in the correspondence $\A(\ol\Z)\cong{\A(\Z)}^{\op}$ from \Proposition~\ref{prop:algebra_relationships}).

	This implies the following relation between the structure maps $m$ of $\BSA(\HH)$ and $m'$ of $\BSA(\HH')$:
	\begin{equation*}
		\left<m(\xxx,a_1,\ldots,a_n),\yyy^\vee\right>=\left<m'(\yyy',a_n^{\op},\ldots,a_1^{op}),\xxx'^\vee\right>.
	\end{equation*}

	Turning $\BSA(\HH')$ into a left module over $(\A(\Z)^{\op})^{\op}=\A(\Z)$, we get the relation
	\begin{equation*}
		\left<m(\xxx,a_1,\ldots,a_n),\yyy^\vee\right>=\left<m'(a_1,\ldots,a_n,\yyy'),\xxx'^\vee\right>.
	\end{equation*}

	This is precisely the statement that $\BSA(\HH)_{\A(\Z)}$ and $\li{_{\A(\Z)}}\BSA(\HH')$ are duals, with $\G(\HH)$
	and $\G(\HH')$ as dual bases.
\end{proof}

A similar statement for purely bordered invariants is proven in~\cite{LOT:morphism_spaces}.

\begin{prop}
	\label{prop:twisting_slice_invariants}
	Let $\Z$ be any arc diagram, and let $A=\A(\Z)$. The twisting slices $\TW_{\F(\Z),\pm}$ are bordered
	by $-\F(\Z)\sqcup-\ol{\F(\Z)}$. They have bimodule invariants
	\begin{align*}
		\li{_A}\BSAA(\TW_{\F(\Z),-})_A&\simeq \li{_A}A_A, &
		\li{_A}\BSAA(\TW_{\F(\Z),+})_A&\simeq \li{_A}{A^\vee}_A.
	\end{align*}
\end{prop}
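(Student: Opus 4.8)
The plan is to compute one of the two bimodules, say $\li{_A}\BSAA(\TW_{\F(\Z),-})_A$, directly from a Heegaard diagram, and to deduce the other by mirroring. Assume without loss of generality that $\Z$ is of $\alpha$--type (the $\beta$--case is symmetric). Then $\TW_{\F(\Z),-}$ is bordered by $-\F(\Z)\sqcup-\ol{\F(\Z)}=\F(-\Z)\sqcup\F(-\ol\Z)$, where $-\Z$ is an $\alpha$--diagram and $-\ol\Z$ a $\beta$--diagram, and $\A(-\ol\Z)\cong\A(\Z)=A$ by \Proposition~\ref{prop:algebra_relationships}; so $\li{_A}\BSAA(\TW_{\F(\Z),-})_A$ is computed from a mixed $\alpha$/$\beta$ Heegaard diagram, which is exactly the reason $\beta$--arc diagrams were introduced.

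\textbf{The negative slice.} First I would exhibit a \emph{standard} mixed diagram $\HH^-$ for $\TW_{\F(\Z),-}$: it is the bordered sutured analogue of the Lipshitz--Ozsv\'ath--Thurston diagram for the identity bimodule, built from two parallel copies of the graph $G(\Z)$ in a thickened surface, one copy furnishing $\balpha^a$ and the other $\bbeta^a$, with one elementary handle per matched pair of $M$. A handle-calculus check should identify the associated bordered sutured manifold with $\TW_{\F(\Z),-}$, with the correct (negative) direction of the fractional Dehn twist. I would then read off the algebra: matching the occupied versus unoccupied arcs of a generator against the combinatorics of \Definition~\ref{def:alpha_algebra} puts $\G(\HH^-)$, together with the idempotent bookkeeping, in bijection with the $\ZZ/2$--basis of $A$ by its strands diagrams (possibly after a homotopy reduction), the two idempotent actions recording the starting and ending subsets; and a domain/index count shows that the contributing holomorphic curves are precisely the elementary ones of the strands algebra, so that the differential becomes $\del$ on $A$, the operations $m_{1|1|0}$ and $m_{0|1|1}$ become left and right multiplication, and all higher $m_{i|1|j}$ with $i+j\geq 2$ vanish. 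This exhibits $\li{_A}\BSAA(\TW_{\F(\Z),-})_A$ as $A$ with its tautological bimodule structure.

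\textbf{The positive slice.} Reversing the orientation of $F\times[0,1]$ (for instance by reversing the $F$--factor) turns a negative fractional Dehn twist into a positive one, and computing the induced sutured surfaces (using $\ol{-\F}=-\ol\F$) identifies the mirror $-\TW_{\F(\Z),-}$ with $\TW_{\F(\Z),+}$, up to reordering the two boundary components. Hence the bimodule version of \Proposition~\ref{prop:mirror_is_dual} gives
\begin{equation*}
	\li{_A}\BSAA(\TW_{\F(\Z),+})_A\simeq\bigl(\li{_A}\BSAA(\TW_{\F(\Z),-})_A\bigr)^{\vee}\simeq\li{_A}{A^\vee}_A,
\end{equation*}
the relabelling of boundary components being absorbed into the conventions for the dual bimodule. (Equivalently, one can run the diagrammatic argument above directly on a standard diagram for $\TW_{\F(\Z),+}$, where the curve count realizes the evaluation pairing $A\otimes A^\vee\to\ZZ/2$.)

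\textbf{Main difficulty.} The delicate step is the holomorphic curve count in $\HH^-$: one must verify both that the low-order operations reproduce the DG structure of $A$ on the nose and that no spurious higher $\Ainf$ operations survive. I expect this to go exactly as in the purely bordered setting of~\cite{LOT:bimodules}: with $\HH^-$ chosen sufficiently ``nice'', every positive domain with admissible asymptotics is forced by an index count to be a disjoint union of the elementary domains visible in the strands algebra. The only genuinely new bookkeeping, compared with~\cite{LOT:bimodules}, is the simultaneous presence of $\alpha$-- and $\beta$--arcs on $\del\Sigma$ alongside the sutured part of the boundary; this affects the accounting of the two families of asymptotic ends but not the essential estimates. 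As an alternative, one could instead compute $\BSDD(\TW_{\F(\Z),-})$, which has a particularly simple combinatorial model, and pass to $\BSAA$ using the bimodule $\II$ of~\cite{LOT:bimodules}, at the cost of importing the relevant Koszul-duality statement.
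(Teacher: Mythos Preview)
Your proposal is correct and follows essentially the same route as the paper: reduce to the negative slice via \Proposition~\ref{prop:mirror_is_dual}, build an explicit nice mixed $\alpha$/$\beta$ diagram for $\TW_{\F(\Z),-}$, and read off the bimodule structure combinatorially as the strands algebra. The paper's diagram is described more concretely (squares indexed by components of $\ZZZ$ with a $1$--handle per matched pair, $\alpha$--arcs along the left edges and $\beta$--arcs along the right) and yields the bijection $\G(\HH)\cong A$ on the nose with no homotopy reduction needed; for the curve count the paper defers to the identical computations in Auroux~\cite{Aur:bordered} and~\cite{LOT:morphism_spaces} rather than~\cite{LOT:bimodules}.
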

\begin{proof}
	Since $\TW_{\F(\Z),\pm}$ are mirrors of each other, by \Proposition~\ref{prop:mirror_is_dual}, it is enough to prove the first equivalence. The key ingredient is a very
	convenient \emph{nice diagram} $\HH$ for $\TW_{\F(\Z),-}$. This diagram was discovered by the author, and
	independently by Auroux in~\cite{Aur:bordered}, where it appears in a rather different setting.
	
	Recall from~\cite{Zar:BSFH} that a nice diagram is a diagram,
	$(\Sigma,\balpha,\bbeta,\Z)$ where each region of $\Sigma\setminus(\balpha\cup\bbeta)$ is either a boundary region,
	a rectangle, or a bigon. The definition trivially extends to the current more general setting. Nice diagrams can still
	be used to combinatorially compute bordered sutured invariants.

	The diagram is obtained as follows. For concreteness assume that $\Z$ is of $\alpha$--type. To construct the
	Heegaard surface $\Sigma$, start with several
	squares $[0,1]\times[0,1]$, one for each component $Z\in\ZZZ$. There are three identifications of $\ZZZ$ with sides
	of the squares:
	\begin{itemize}
		\item $\varphi$ sending $\ZZZ$ to the ``left sides'' $\{0\}\times[0,1]$, oriented from $0$ to $1$.
		\item $\varphi'$ sending $\ZZZ$ to the ``right sides'' $\{1\}\times[0,1]$, oriented from $1$ to $0$.
		\item $\psi$ sending $\ZZZ$ to the ``top sides'' $[0,1]\times\{1\}$, oriented from $1$ to $0$.
	\end{itemize}

	For each matched pair $\{a,b\}=M^{-1}(i)\subset\aaa\subset\ZZZ$, attach a 1--handle at $\psi(\{a,b\})$. Add an
	$\alpha$--arc $\alpha^a_i$ from $\varphi(a)$ to $\varphi(b)$, and a $\beta$--arc
	$\beta^a_i$ from $\varphi'(a)$ to $\varphi'(b)$, both running through the handle corresponding to ${a,b}$. To see that this gives the correct manifold, notice
	that there are no $\alpha$ or $\beta$--circles, so the manifold is topologically $\Sigma\times[0,1]$. The pattern of
	attachment of the 1--handles shows that $\Sigma=F(\Z)$. It is easy to check that $\Gamma$ and the arcs are in the
	correct positions.

	This construction is demonstrated in \Figure~\ref{fig:identity}. The figure corresponds to the arc diagram $\Z$ from
	\Figure~\ref{subfig:arc_diags_3}.

	Calculations with the same diagram in~\cite{Aur:bordered} and~\cite{LOT:morphism_spaces} show that the bimodule
	$\BSAA(\HH)$ is indeed the algebra $A$ as a bimodule over itself. While the statements in those cases are not about
	bordered sutured Floer homology, the argument is purely combinatorial and caries over completely.

	We give a brief summary of this argument. Intersection points in $\balpha\cap\bbeta$ are of two types:
	\begin{itemize}
		\item $x_i\in\alpha_i^a\cap\beta_i^a$ inside the 1--handle corresponding to
			$M^{-1}(i)$, for $i\in\{1,\ldots,k\}$. The point $x_i$ corresponds to the two horizontal strands
			$[0,1]\times M^{-1}(i)$ in $\A(\Z)$.
		\item $y_{ab}\in\alpha_{M(a)}^a\cap\beta_{M(b)}^a$, inside the square regions of $\HH$. The point $y_{ab}$
			corresponds to a strand $(0,a)\to(1,b)$ (or $a\to b$ for short) in $\A(\Z)$.
	\end{itemize}
	The allowed combinations of intersection points correspond to the allowed diagrams in $\A(\Z)$, so
	$\BSA(\HH)\cong\A(\Z)$ as a $\ZZ/2$--vector space.
	
	Since $\HH$ is a nice diagram the differential counts embedded rectangles
	in $\HH$, with sides on $\balpha$ and $\bbeta$. The rectangle with corners $(y_{ad},y_{bc},y_{ac},y_{ad})$
	corresponds to resolving the crossing between the strands $a\to d$ and $b\to c$ (getting $a\to c$ and $b\to d$).

	The left action $m_{1|1|0}$ of $A$ counts rectangles hitting the $-\Z$--part of the boundary. The rectangle with
	corners $(\varphi(a),y_{ac},y_{bc},\varphi(b))$ corresponds to concatenating the strands $a\to b$ and $b\to c$ (getting
	$a\to c$). The right action is similar, with rectangles hitting the $-\ol\Z$--part of the boundary.

	Some examples of domains in $\HH$ contributing to $m_{0|1|0}$, $m_{1|1|0}$, and $m_{0|1|1}$ are shown in
	\Figure~\ref{fig:identity_domains}. They are for the diagram $\HH$ from \Figure~\ref{fig:identity}.
\end{proof}

\begin{figure}
	\labellist
	\small\hair 2pt
	\pinlabel $-\Z$ [r] at 30 60
	\pinlabel $-\ol\Z$ [l] at 270 60
	\endlabellist
	\includegraphics[scale=.5]{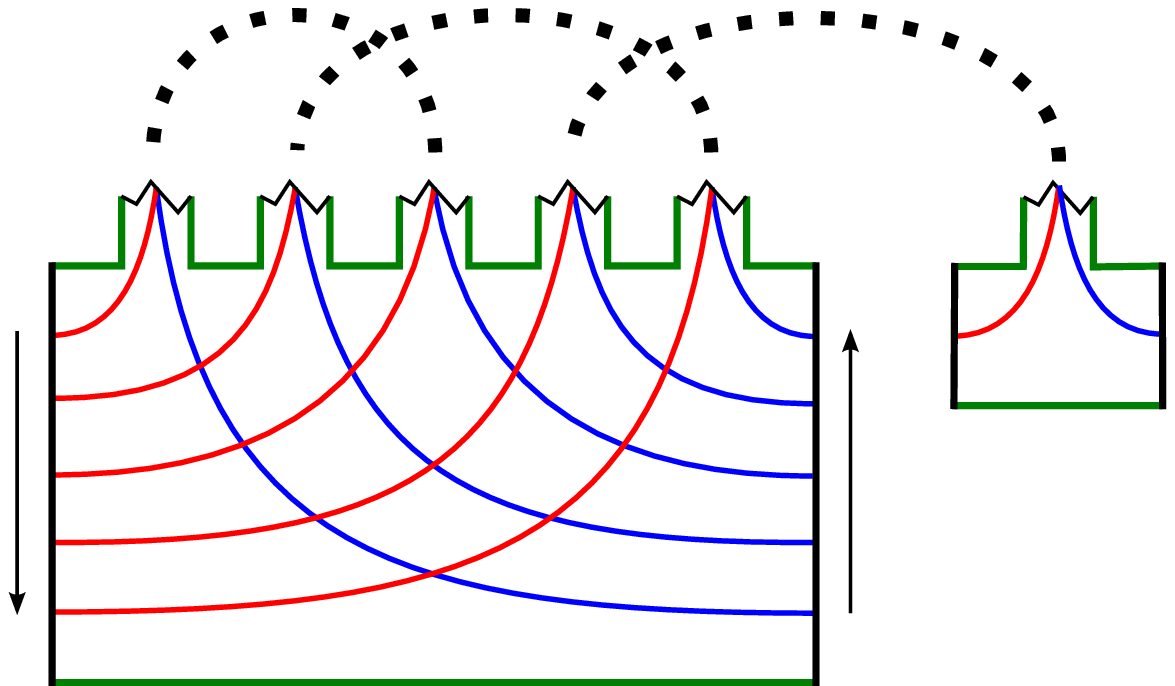}
	\caption{Heegaard diagram for a negative twisted slice $\TW_{\F,-}$.}
	\label{fig:identity}
\end{figure}

\begin{figure}
	\begin{subfigure}[t]{.32\linewidth}
		\labellist
		\small\hair=2pt
		\pinlabel $\del$ [b] at 144 70
		\endlabellist
		\centering
		\includegraphics[scale=.35]{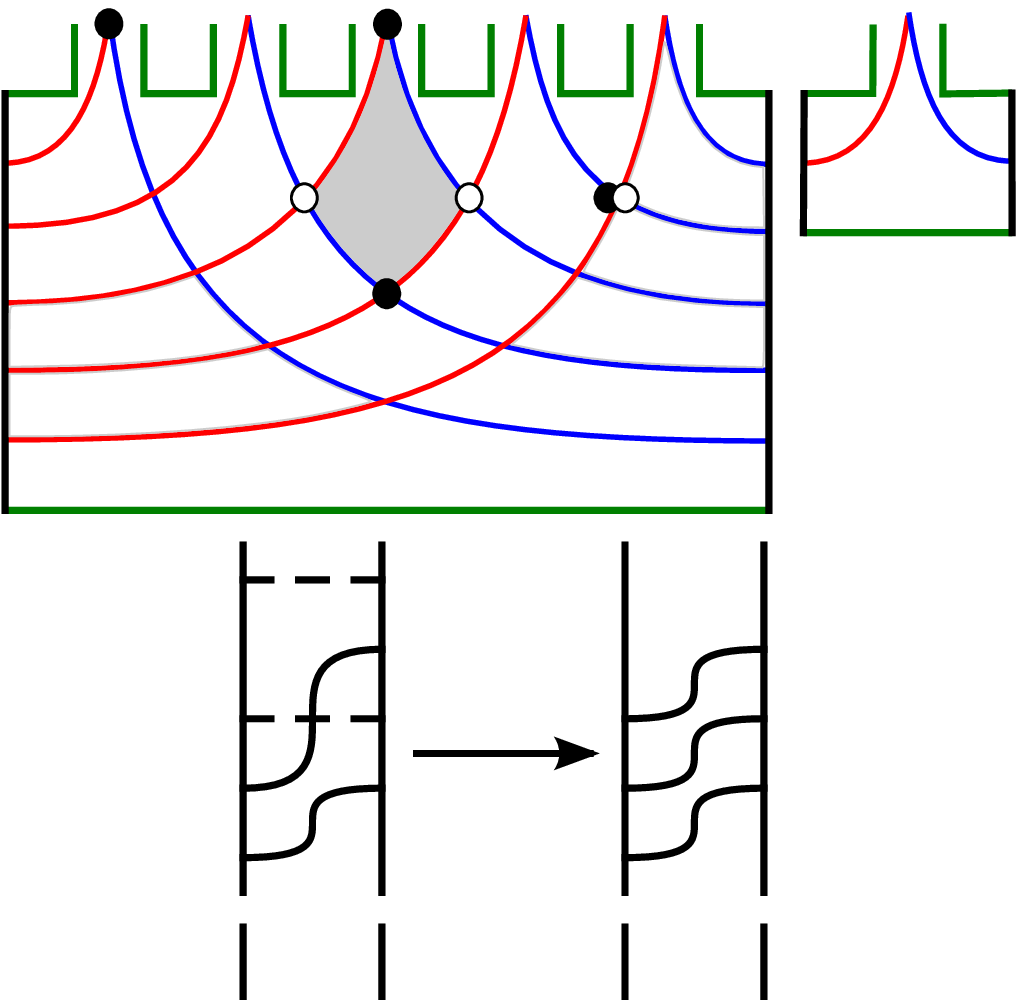}
		\caption{Differential.}
		\label{subfig:identity_domain_1}
	\end{subfigure}
	\begin{subfigure}[t]{.32\linewidth}
		\labellist
		\pinlabel $\cdot$ at 90 70
		\pinlabel $=$ at 185 70
		\endlabellist
		\centering
		\includegraphics[scale=.35]{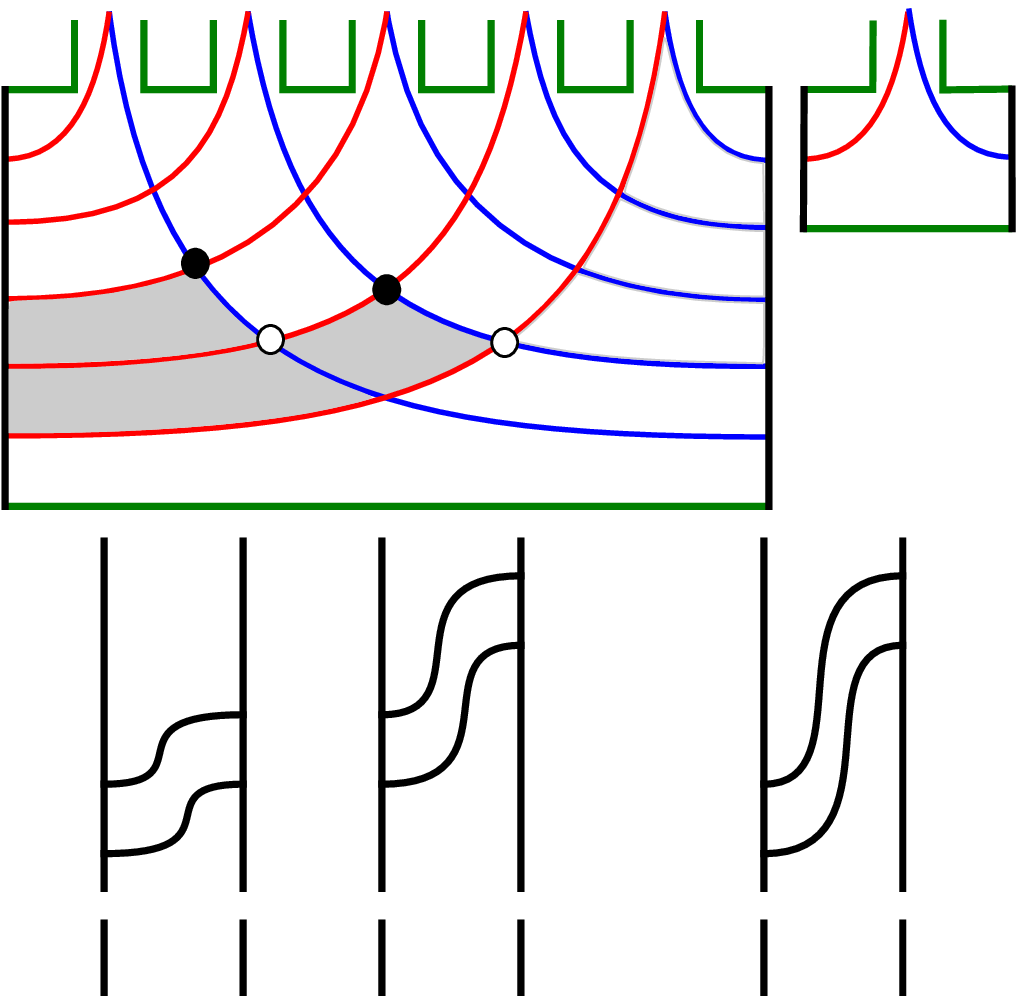}
		\caption{Left action.}
		\label{subfig:identity_domain_2}
	\end{subfigure}
	\begin{subfigure}[t]{.32\linewidth}
		\labellist
		\pinlabel $\cdot$ at 90 70
		\pinlabel $=$ at 185 70
		\endlabellist
		\centering
		\includegraphics[scale=.35]{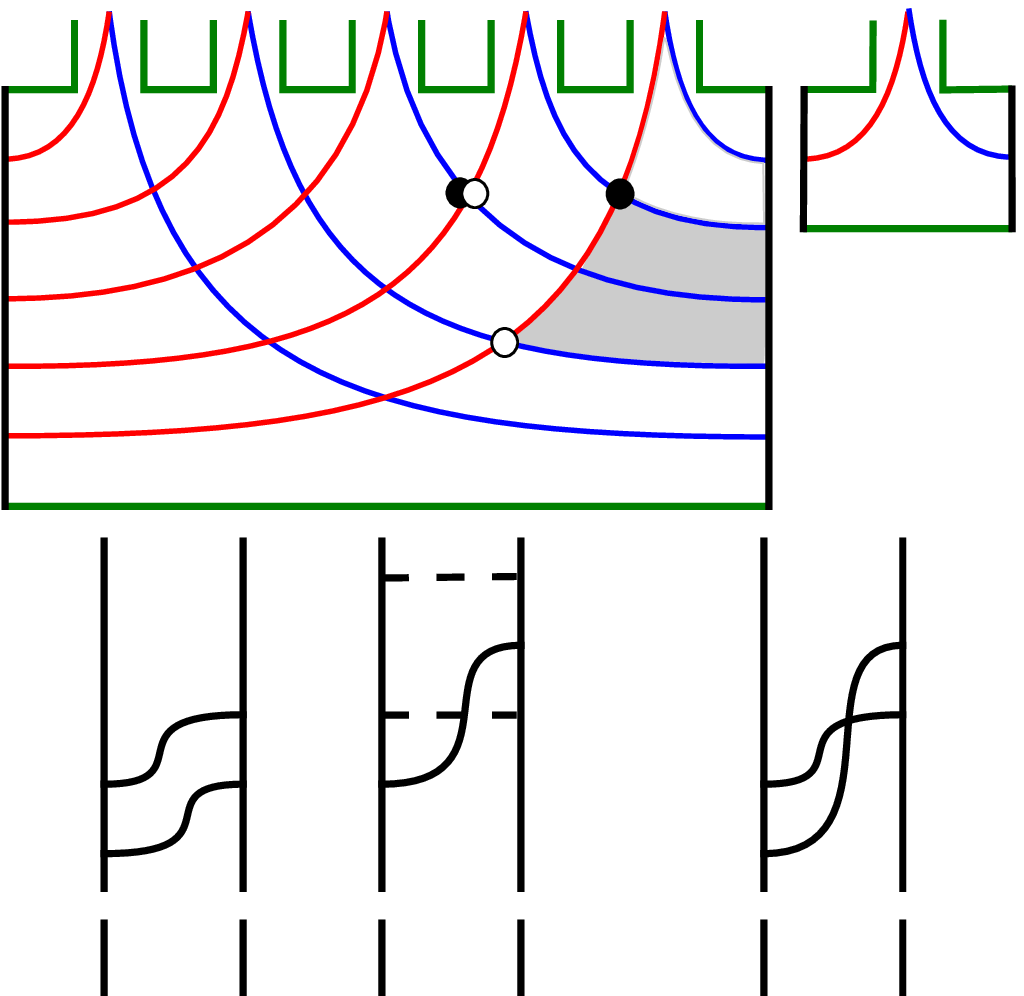}
		\caption{Right action.}
		\label{subfig:identity_domain_3}
	\end{subfigure}
	\caption{Examples of domains counted in the diagram for $\TW_{\F,-}$. In each case the domain goes from the black
	dots to the white dots. Below them we show the corresponding operations on the algebra.}
	\label{fig:identity_domains}
\end{figure}

\section{The join map}
\label{sec:join}

In this section we will define the join and gluing maps, and prove some basic properties.
Recall that the gluing operation is defined as a special case of the join operation. The gluing map
is similarly a special case of the join map. Thus for the most part we will only talk about the
general case, i.e. the join map.

\subsection{The algebraic map}
\label{sec:join_def_algebraic}

We will first define an abstract algebraic map, on the level of $\Ainf$--modules.

Let $A$ be a differential graded algebra, and $\li{_A}M$ be a left $\Ainf$--module over it. As discussed in \Appendix~\ref{sec:duals},
the dual
${M^\vee}_A$ is a right $\Ainf$--module over $A$. Thus $\li{_A}(M\otimes M^\vee)_A$ is an
$\Ainf$--bimodule. On the other hand, since $A$ is a bimodule over itself, so is its dual $\li{_A}{A^\vee}_A$.
We define a map $M\otimes M^\vee\to A^\vee$ which is an $\Ainf$--analog of the natural pairing of a module and its dual.

\begin{defn}
	\label{def:nabla_map}
	The \emph{algebraic join map} $\nabla_M\co \li{_A}(M\otimes M^\vee)_A \to \li{_A}{A^\vee}_A$---or
	just $\nabla$ when unambiguous---is an
	$\Ainf$--bimodule morphism, defined as follows. It is the unique morphism satisfying
	\begin{multline}
		\label{eq:nabla_def}
		\left<\nabla_{i|1|j}(a_1,\ldots,a_i,~p,~q^\vee,~,a'_1,\ldots,a'_j),a''\right>\\
		=\left<m_{i+j+1|1}(a'_1,\ldots,a'_j,a'',a_1,\ldots,a_i,~p),q^\vee\right>,
	\end{multline}
	for any $i,j\geq 0$, $p\in M$, $q^\vee\in M^\vee$, and $a^*_*\in A$.
\end{defn}

\Equation~(\ref{eq:nabla_def}) is best represented diagrammatically, as in \Figure~\ref{fig:nabla_def}. Note that
$\nabla_M$ is a bounded morphism if and only if $M$ is a bounded module.

\begin{figure}
	\centering
	\labellist
	\pinlabel $=$ at 128 78
	\pinlabel $=$ at 264 78
	\small
	\pinlabel $\nabla_M$ at 60 66
	\pinlabel $\nabla_M$ at 196 66
	\pinlabel \rotatebox[origin=c]{90}{$m_M$} at 348 92
	\hair 1pt
	\pinlabel $A$ [b] at 20 120
	\pinlabel $M$ [b] at 48 120
	\pinlabel $M^\vee$ [b] at 76 120
	\pinlabel $A$ [b] at 100 120
	\pinlabel $A^\vee$ [t] at 64 16
	\pinlabel $A$ [b] at 156 120
	\pinlabel $M$ [b] at 184 120
	\pinlabel $M$ [b] at 208 120
	\pinlabel $A$ [b] at 236 120
	\pinlabel $A$ [t] at 196 16
	\pinlabel $A$ [b] at 292 120
	\pinlabel $M$ [b] at 324 120
	\pinlabel $M$ [b] at 372 120
	\pinlabel $A$ [b] at 404 120
	\pinlabel $A$ [t] at 348 16
	\endlabellist
	\includegraphics[scale=.5]{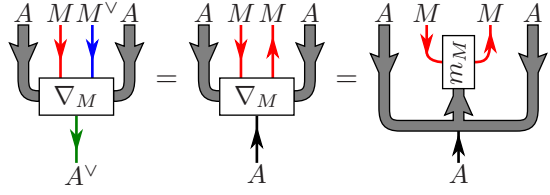}
	\caption{Definition of the join map $\nabla$.}
	\label{fig:nabla_def}
\end{figure}

As discussed in \Appendix~\ref{sec:morphisms_and_premorphisms}, morphisms of $\Ainf$--modules form chain complexes,
where cycles are \emph{homomorphisms}. Only homomorphisms descend to maps on homology.

\begin{prop}
	\label{prop:nabla_well_defined}
	For any $\li{_A}M$, the join map $\nabla_M$ is a homomorphism.
\end{prop}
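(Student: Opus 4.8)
The plan is to verify directly that $\nabla_M$ is a cycle in the morphism complex, i.e. that $\del(\nabla_M) = 0$, where $\del$ is the differential on the space of $\Ainf$--bimodule premorphisms. Concretely, $\del(\nabla_M)$ has two kinds of terms: those where a structure map ($m_A$ on one of the incoming $A$--inputs, the bimodule structure map on $M\otimes M^\vee$, or the bimodule structure map on $A^\vee$) is applied before $\nabla$, and those where it is applied after. Since $\nabla_M$ is defined by the pairing formula \eqref{eq:nabla_def}, I would rewrite each such term, paired against an arbitrary test element $a''\in A$ and an arbitrary $q^\vee \in M^\vee$, using the defining identity, so that everything is expressed in terms of the module structure map $m_M$ of $\li{_A}M$ evaluated on a single long input string.

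The key computation is then the following: the left $\Ainf$--module relation for $\li{_A}M$ — namely $\sum m_M(\ldots, m_M(\ldots), \ldots) + \sum m_M(\ldots, \mu_A(a_k,a_{k+1}), \ldots) = 0$ — is exactly what $\del(\nabla_M)=0$ becomes after unwinding \eqref{eq:nabla_def}. So the steps in order are: (1) expand $\del(\nabla_M)_{i|1|j}$ as a sum of composites, keeping careful track of which inputs go to $\nabla$ and which are ``below'' or ``above'' it in the diagrammatic picture of \Figure~\ref{fig:nabla_def}; (2) pair with $a''$ and apply \eqref{eq:nabla_def} to each composite, noting that the dual module structure on $M^\vee$ and the dual bimodule structure on $A^\vee$ are defined so that pairing against them transposes the relevant structure maps onto $m_M$ and onto the $A$--multiplication on $a''$ respectively; (3) observe that the resulting sum of terms is a cyclic rearrangement (coming from the ``wrap-around'' of inputs $a'_1,\ldots,a'_j,a'',a_1,\ldots,a_i$) of the terms in the $\Ainf$--module relation for $M$, hence vanishes.

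The bookkeeping is cleanest in the diagrammatic calculus of \Appendix~\ref{sec:diagrams}: drawing $\del \nabla_M$ as a sum of trees with one internal structure-map vertex and one $\nabla$--vertex, and then using \eqref{eq:nabla_def} to slide the $\nabla$--vertex to the outside, turns every tree into a tree computing a term of $\del(m_M) = 0$. I expect the main obstacle — really the only subtle point — to be getting the signs/orderings right in the "wrap-around": one must check that the three flavors of structure map appearing in $\del\nabla$ ($\mu_A$ acting among the $a_i$ or among the $a'_j$, $\mu_A$ acting between $a''$ and an adjacent $a$, the $M\otimes M^\vee$ structure map, and the $A^\vee$ structure map) assemble under \eqref{eq:nabla_def} into precisely the full list of terms of the module relation for $m_M$ applied to the cyclically-ordered string, with each term hit exactly once. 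Since we are over $\ZZ/2$ there are no sign issues, so this reduces to a combinatorial matching of terms, and once that matching is exhibited the proposition follows immediately. I would also remark that boundedness of $\nabla_M$ when $M$ is bounded (already noted after \Definition~\ref{def:nabla_map}) ensures the morphism complex and the composition $\del$ are well-defined here, so no convergence issues arise.
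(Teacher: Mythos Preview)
Your approach is correct: unwinding $\del(\nabla_M)$ via the defining pairing \eqref{eq:nabla_def} and matching the resulting terms against the $\Ainf$--module relation for $m_M$ does work, and the paper explicitly acknowledges this as the ``straightforward but tedious computation'' route.

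The paper's own argument is different and more conceptual. Rather than expanding and matching terms, it observes that the diagram defining $\nabla_M$ (\Figure~\ref{fig:nabla_def}), when rotated partly sideways, becomes the diagram for the canonical homotopy equivalence $h_M\co A\dtens M\to M$. Rotating each term of $\del\nabla_M$ in the same way yields exactly the terms of $\del h_M$; since $h_M$ is known to be a homomorphism, $\del h_M=0$, hence $\del\nabla_M=0$. This buys a one-line proof with no bookkeeping, at the cost of invoking the (standard) fact that $h_M$ is a cycle. Your direct route is more self-contained and makes the dependence on the module relation explicit, but requires the term-by-term matching you describe---in particular the careful handling of the $m_{A^\vee}$ term, which after pairing with $a''$ contributes the $\mu_A$--terms that land \emph{between} the $a'_*$ and $a_*$ blocks (i.e.\ straddling $a''$) in the cyclically ordered string.
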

\begin{proof}
	It is a straightforward but 
	tedious computation to see that $\del \nabla_M=0$ is equivalent to the structure equation for $m_M$.

	A more enlightening way to see this is to notice that by turning the diagram in \Figure~\ref{fig:nabla_def} partly sideways,
	we get a diagram for the homotopy equivalence $h_M\co A\dtens M\to M$, shown in \Figure~\ref{fig:h_M}. Taking the differential $\del\nabla_M$
	and turning the resulting diagrams sideways, we get precisely $\del h_M$. We know that $h_M$ is a homomorphism and,
	so $\del h_M=0$.
	
	The equivalences are presented in \Figure~\ref{fig:del_nabla_proof}.
\end{proof}

\begin{figure}
			\labellist
			\pinlabel $=$ at 116 76
			\small
			\pinlabel $h_M$ at 60 76
			\pinlabel $m_M$ at 224 76
			\hair=1.5pt
			\pinlabel $A$ [b] at 12 136
			\pinlabel $A$ [b] at 36 136
			\pinlabel $A$ [b] at 60 136
			\pinlabel $M$ [b] at 84 136
			\pinlabel $M$ [t] at 60 16
			\pinlabel $A$ [b] at 136 136
			\pinlabel $A$ [b] at 156 136
			\pinlabel $A$ [b] at 176 136
			\pinlabel $M$ [b] at 224 136
			\pinlabel $M$ [t] at 224 16
			\endlabellist
			\includegraphics[scale=.55]{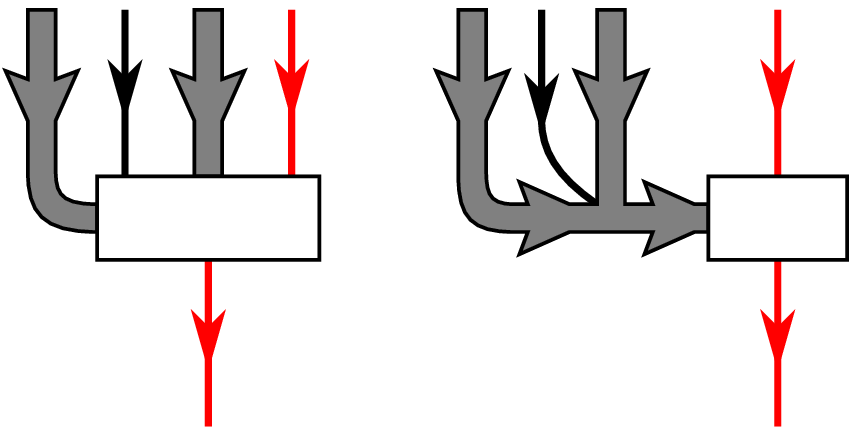}
			\caption{The homotopy equivalence $h_M\co A\dtens M\to M$.}
			\label{fig:h_M}
\end{figure}

\begin{figure}
	\begin{subfigure}[h]{\linewidth}
		\centering
		\labellist
		\pinlabel $+$ at 124 78
		\pinlabel $+$ at 252 78
		\pinlabel $+$ at 412 78
		\pinlabel $+$ at 564 78
		\small
		\pinlabel $\ol\mu_A$ at 20 96
		\pinlabel $\ol\mu_A$ at 228 96
		\pinlabel \rotatebox[origin=c]{180}{$\ol\mu_A$} at 332 60
		\pinlabel $\nabla_M$ at 60 50
		\pinlabel $\nabla_M$ at 188 50
		\pinlabel $\nabla_M$ at 332 104
		\pinlabel $\nabla_M$ at 500 50
		\pinlabel $\nabla_M$ at 628 50
		\pinlabel $m_M$ at 484 100
		\pinlabel \rotatebox[origin=c]{180}{$m_M$} at 644 100
		\endlabellist
		\includegraphics[scale=.5]{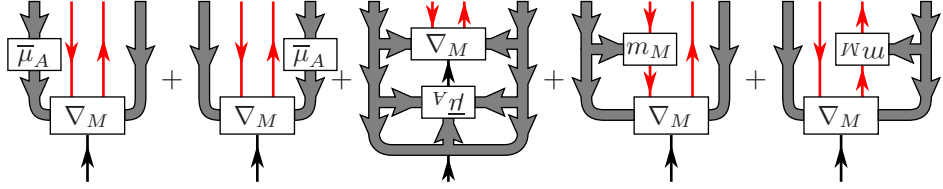}
		\caption{The differential $\del\nabla_M$ which needs to vanish to show that $\nabla_M$ is
		an $\Ainf$--bimodule homomorphism.}
		\label{subfig:del_nabla}
	\end{subfigure}
	\begin{subfigure}[h]{\linewidth}
		\labellist
		\pinlabel $+$ at 108 78
		\pinlabel $+$ at 238 78
		\pinlabel $+$ at 368 78
		\pinlabel $+$ at 488 78
		\small
		\pinlabel $\ol\mu_A$ at 20 96
		\pinlabel $\ol\mu_A$ at 296 76
		\pinlabel $\ol\mu_A$ at 440 100
		\pinlabel $h_M$ at 68 50
		\pinlabel $h_M$ at 192 86
		\pinlabel $h_M$ at 328 32
		\pinlabel $h_M$ at 440 50
		\pinlabel $h_M$ at 560 50
		\pinlabel $m_M$ at 192 38
		\pinlabel $m_M$ at 612 100
		\endlabellist
		\centering
		\includegraphics[scale=.5]{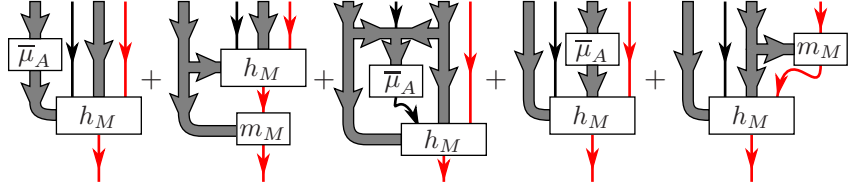}
		\caption{The differential $\del h_M$ of the homotopy equivalence $h_M$.}
		\label{subfig:del_h_equivalence}
	\end{subfigure}
	\caption{Proof that $\nabla$ is a homomorphism, by rotating diagrams.}
	\label{fig:del_nabla_proof}
\end{figure}

We will prove two naturallity statements about $\nabla$ that together imply that $\nabla$ descends to a well defined map on the
derived category. The first shows that $\nabla$ is natural with respect to isomorphisms in the derived category of
the DG-algebra $A$, i.e. homotopy equivalences of modules.
The second shows that $\nabla$  is natural with respect to equivalences of derived categories. (Recall from~\cite{Zar:BSFH} that different algebras
corresponding to the same sutured surface are derived-equivalent.)

\begin{prop}
	\label{prop:nabla_invariance_module}
	Suppose $\li{_A}M$ and $\li{_A}N$ are two $\Ainf$--modules over $A$, such that there are inverse homotopy equivalences
	$\varphi\co M\to N$ and $\psi\co N\to M$. Then there is an $\Ainf$--homotopy
	equivalence of $A,A$--bimodules
	\begin{equation*}
		\varphi\otimes\psi^\vee\co M\otimes M^\vee\to N\otimes N^\vee,
	\end{equation*}
	and the following diagram commutes up to $\Ainf$--homotopy:

	\centerline{\xymatrix@C=1in{
	M\otimes M^\vee \ar[d]_{\varphi\otimes\psi^\vee} \ar[dr]^{\nabla_M} & \\
	N\otimes N^\vee \ar[r]^{\nabla_N} & A^\vee. \\
	}}
\end{prop}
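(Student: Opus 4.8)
<br>

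The plan is to verify the homotopy-commutativity by exhibiting the homotopy explicitly in terms of a chain homotopy between $\varphi$ and $\psi^{-1}$, building on the diagrammatic calculus from the appendix. First I would construct the bimodule morphism $\varphi\otimes\psi^\vee$: since $\varphi\co M\to N$ is a module morphism, its dual $\varphi^\vee\co N^\vee\to M^\vee$ is a morphism of right modules, and likewise $\psi^\vee\co M^\vee\to N^\vee$; the tensor of a left-module morphism with a right-module morphism gives a bimodule morphism $\varphi\otimes\psi^\vee\co M\otimes M^\vee\to N\otimes N^\vee$, and one checks it is a homotopy equivalence because $\psi\otimes\varphi^\vee$ is an inverse up to homotopy (using that $\varphi\psi\simeq\id_N$ and $\psi\varphi\simeq\id_M$ imply $\varphi^\vee\psi^\vee\simeq\id$ on the dual side). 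This is formal manipulation in the diagrammatic calculus and I would state it as such, perhaps dispatching it to the appendix conventions.

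The core of the argument is to show $\nabla_N\circ(\varphi\otimes\psi^\vee)\simeq\nabla_M$. The key observation is that, just as in the proof of \Proposition~\ref{prop:nabla_well_defined}, the diagram defining $\nabla_M$ can be turned partly sideways into the diagram for the homotopy equivalence $h_M\co A\dtens M\to M$. Under this correspondence, the composite $\nabla_N\circ(\varphi\otimes\psi^\vee)$ turns sideways into $\psi\circ h_N\circ(\id_A\otimes\varphi)$ (the $\psi^\vee$ factor, being a dual on the output leg, becomes $\psi$ precomposed; the $\varphi$ factor feeds into the $M$-input of $h_N$), while $\nabla_M$ turns into $h_M$. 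So the statement reduces to the assertion that $\psi\circ h_N\circ(\id_A\otimes\varphi)\simeq h_M$ as morphisms $A\dtens M\to M$. But $h_M$ and $h_N$ are the canonical homotopy equivalences witnessing $A\dtens M\simeq M$ and $A\dtens N\simeq N$, and $\varphi$, $\psi$ are inverse homotopy equivalences, so this is exactly the naturality of the action map $h$ with respect to module homotopy equivalences. I would then produce the connecting homotopy by taking a homotopy $H$ with $\del H = \varphi\psi - \id_N$ (or $\psi\varphi-\id_M$), dualizing appropriately, and tensoring; turning the resulting diagrams sideways shows it is the standard homotopy witnessing naturality of $h$.

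I expect the main obstacle to be bookkeeping: carefully tracking which legs of the diagrams are inputs versus outputs under the sideways rotation, and making sure the dual $\psi^\vee$ really corresponds to $\psi$ in the correct slot of the rotated picture (the direction reversal inherent in dualization interacts with the reflection that implements the rotation). A secondary subtlety is boundedness — $\nabla_M$ is only a bounded morphism when $M$ is bounded, and the homotopies $H$ and the composite $\varphi\otimes\psi^\vee$ need to be bounded for the homotopy-commutative square to make sense in the relevant category; I would note that in all applications the modules are bounded (coming from admissible Heegaard diagrams), or handle the general case by the same formal argument in the category of all $\Ainf$-morphisms. Once the sideways-rotation dictionary is set up, the actual verification that $\del$ of the connecting homotopy equals the difference of the two composites is a routine, if lengthy, diagram chase entirely parallel to the proof of \Proposition~\ref{prop:nabla_well_defined}, so I would present it via figures rather than formulas.
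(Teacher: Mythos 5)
Your proposal is correct and follows essentially the same route as the paper: reduce to the sideways statement that $h_M + \psi\circ h_N\circ(\id_A\dtens\varphi)$ is null-homotopic, then build the null-homotopy from the canonical homotopy witnessing naturality of $h$ together with the homotopy $H$ between $\id_M$ and $\psi\circ\varphi$. The paper writes this null-homotopy out explicitly as $\psi\circ h_\varphi + H\circ h_M$, but that is exactly the "standard homotopy" you describe.
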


\begin{prop}
	\label{prop:nabla_invariance_algebra}
	Suppose $A$ and $B$ are differential graded algebras, and $\li{_B}X^A$ and $\li{_A}Y^B$ are two
	type--$DA$ bimodules, which are quasi-inverses. That is, there are $\Ainf$--homotopy
	equivalences
	\begin{align*}
		\li{_A}(Y\sqtens X)^A &\simeq\li{_A}\II^A,
		&\li{_B}(X\sqtens Y)^B &\simeq\li{_B}\II^B.
	\end{align*}
	Moreover, suppose $H_*(B^\vee)$ and $H_*(X\sqtens A^\vee\sqtens X^\vee)$ have the same rank (over $\ZZ/2$).

	Then there is a $B,B$--bimodule homotopy equivalence
	\begin{equation*}
		\varphi_X\co X\sqtens A^\vee \sqtens X^\vee \to B^\vee.
	\end{equation*}
	Moreover, for any $\Ainf$--module $\li{_A}M$, such that $X\sqtens M$ is well defined, the following diagram commutes up to $\Ainf$--homotopy:
	
	\centerline{\xymatrix@C=1in{
		X\sqtens M\otimes M^\vee \sqtens X^\vee \ar[d]_{\id_X\sqtens\nabla_M\sqtens\id_{X^\vee}}
		\ar[dr]^{\nabla_{X\sqtens M}} & \\
		X\sqtens A^\vee\sqtens X^\vee \ar[r]^{\varphi_X} & B^\vee.
	}}
\end{prop}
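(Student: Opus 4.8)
The plan is to establish the two assertions of the proposition separately: first the existence of the $B,B$--bimodule homotopy equivalence $\varphi_X\co X\sqtens A^\vee\sqtens X^\vee\to B^\vee$, and then the homotopy--commutativity of the triangle. All box tensor products appearing below are defined thanks to the boundedness hypotheses on $X$ and $Y$ (and, for the second half, on $M$).

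For the first assertion I would start from the fact that $X$ and $Y$ induce quasi-inverse equivalences of the relevant homotopy categories of $\Ainf$--modules, since $\li{_A}(Y\sqtens X)^A\simeq\li{_A}\II^A$ and $\li{_B}(X\sqtens Y)^B\simeq\li{_B}\II^B$. These pass to an equivalence of $\Ainf$--bimodule categories carrying $\li{_A}{A^\vee}_A$ to $\li{_B}{B^\vee}_B$; concretely, $X\sqtens A^\vee\sqtens Y\simeq B^\vee$ as $B,B$--bimodules. The only gap between this and the statement is that $X^\vee$ appears in place of the genuine quasi-inverse $Y$. In general there is a canonical comparison morphism of type--$DA$ bimodules $X^\vee\to Y$, built from the evaluation and coevaluation of the duality, which is split on homology by the triangle identities; it becomes a homotopy equivalence exactly when $X$ is ``finite'', and this finiteness is precisely what the hypothesis that $H_*(B^\vee)$ and $H_*(X\sqtens A^\vee\sqtens X^\vee)$ have equal rank supplies. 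So I would set $\varphi_X$ to be the composite $X\sqtens A^\vee\sqtens X^\vee\to X\sqtens A^\vee\sqtens Y\simeq B^\vee$, note it is split on homology by the above, and then use the equality of ranks to upgrade it to a quasi-isomorphism. Since a quasi-isomorphism of $\Ainf$--bimodules is automatically a homotopy equivalence, this gives the first claim.

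For the commutativity of the triangle I would argue as in \Proposition~\ref{prop:nabla_invariance_module}, one categorical level higher, exploiting the characterization~(\ref{eq:nabla_def}) of $\nabla$ as the \emph{unique} bimodule homomorphism whose pairing against an arbitrary $a''$ reproduces the module structure map $m$. It then suffices to check that the lower composite $\varphi_X\circ(\id_X\sqtens\nabla_M\sqtens\id_{X^\vee})$ satisfies the analogous structure relation for the left $B$--module $X\sqtens M$. Since the structure map $m_{X\sqtens M}$ is assembled from $m_M$ and the type--$DA$ operations of $X$, and since $\varphi_X$ was built from the same duality data relating $X^\vee$ and $Y$, unwinding both sides reduces to an identity that holds after cancelling the terms coming from the homotopy $\li{_B}(X\sqtens Y)^B\simeq\li{_B}\II^B$. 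The cleanest way to carry this out in practice is the diagrammatic calculus of \Appendix~\ref{sec:diagrams}: one draws both composites as sums of planar diagrams, ``rotates'' each $\nabla$--box into an $m$--box as in the proof of \Proposition~\ref{prop:nabla_well_defined}, and reads off the required chain homotopy between the two sides.

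The main obstacle I anticipate is bookkeeping rather than conceptual. One must keep precise track of which side of each bimodule is type $D$ and which is type $A$, of the distinction between $A$ and $A^{\op}$ (and $B$ and $B^{\op}$), and of exactly which dual is meant by $X^\vee$, so that every $\sqtens$ in the statement genuinely typechecks and so that the transported bimodule is $B^\vee$ on the nose rather than, say, $B^{\vee,\op}$. A related subtlety is choosing a model for $\varphi_X$ that serves both halves at once: if the abstract ``transport'' description is awkward to feed into the diagrammatic argument, the alternative is to fix a concrete morphism (for instance a suitable restriction of $\nabla_{X\sqtens A}$, using that $A$ is a module over itself) and then verify separately, again via the rank hypothesis, that this concrete morphism is a homotopy equivalence before running the second half with it.
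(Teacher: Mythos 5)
Your overall mechanism for the first half --- produce a map that is one-sided invertible on homology and then use the equal-rank hypothesis to upgrade it to a quasi-isomorphism, hence (over $\ZZ/2$) a homotopy equivalence --- is exactly the paper's. But the construction of $\varphi_X$ is where the content lies, and your version has genuine gaps. First, the transport claim $X\sqtens A^\vee\sqtens Y\simeq B^\vee$ is not automatic from quasi-inverseness: $A^\vee$ is not the identity bimodule, and tensoring with it is a Nakayama/Serre-type functor; showing that a derived equivalence intertwines $\cdot\sqtens A^\vee$ with $\cdot\sqtens B^\vee$ requires an argument you do not supply. Second, and more seriously, the claim that the rank hypothesis makes your comparison morphism $X^\vee\to Y$ a homotopy equivalence cannot be right as stated: if $X^\vee\simeq Y$, then $X\sqtens A^\vee\sqtens X^\vee\simeq X\sqtens A^\vee\sqtens Y\simeq B^\vee$ at once and the rank hypothesis would be superfluous, whereas the paper retains it precisely because it only ever establishes a one-sided inverse. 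The paper sidesteps both issues by writing $\varphi_X$ down explicitly (\Equation~(\ref{eq:phi_XAX_to_B})): rotated sideways it \emph{is} the canonical equivalence $h_X\co B\dtens X\to X$, so it is automatically a homomorphism; the quasi-inverse $Y$ enters only to build a right homotopy inverse $\psi$ out of $f\co\II\to X\sqtens Y$, $g\co X\sqtens Y\to\II$, and the analogously defined $\varphi_Y$, and the rank count then forces both $\varphi_X$ and $\psi$ to be quasi-isomorphisms.

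For the triangle, the appeal to the ``uniqueness'' of $\nabla$ does not work: $\nabla_{X\sqtens M}$ is unique as the morphism satisfying the exact identity~(\ref{eq:nabla_def}), so uniqueness says nothing about a composite that satisfies that identity only up to homotopy; you would have to build the homotopy by hand, and with a transport-defined $\varphi_X$ that is a substantial computation you have only gestured at (and it is not clear it closes up, since the triangle pins down $\varphi_X$ up to homotopy on the image). With the explicit $\varphi_X$ the paper's verification is short and exact: because $\delta_X$ and $\ol\delta_X$ commute with merges and splits, $\varphi_X\circ(\id_X\sqtens\nabla_M\sqtens\id_{X^\vee})$ equals $\nabla_{X\sqtens M}$ on the nose. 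Your closing suggestion --- fix a concrete morphism built from the module structure of $A$ over itself and verify the equivalence separately via the rank hypothesis --- is the right instinct and is essentially what the paper does, but it is the part of the argument that actually has to be carried out.
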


Notice the condition that $X\sqtens M$ be well defined. This can be satisfied for example if $M$ is a bounded module,
or if $X$ is reletively bounded in $A$ with respect to $B$.
Before proving \Propositions~\ref{prop:nabla_invariance_module} and~\ref{prop:nabla_invariance_algebra} in \Section~\ref{sec:proof_join_algebraic}, we will use them to
define the join $\Psi$.

\subsection{The geometric map}
\label{sec:join_def_geometric}

Suppose that $\Y_1$ and $\Y_2$ are two sutured manifolds, and $\W=(W,\Gamma,-\F)$
is a partially sutured manifold, with embeddings
$\W\into\Y_1$ and $-\W\into\Y_2$. Let $\Z$ be any arc diagram parametrizing the surface $\F$.
Recall that $-\W=(-W,\Gamma,-\ol\F)$. Also recall the twisting slice $\TW_{\F,+}$, from \Definition~\ref{def:twisting_slice}.
The join $\Y_1\Cup_{\W}\Y_2$ of $\Y_1$ and $\Y_2$ along $\W$ was defined as
\begin{equation*}
	\Y_1~\Cup_{\W}~\Y_2 = (\Y_1\setminus\W)~\cup_\F~\TW_{\F,+}~\cup_{-\ol\F}~(\Y_2\setminus-\W).
\end{equation*}

Let $A=\A(\Z)$ be the algebra associated to $\Z$. Let $\li{_A}M$, $U^A$, and $\lu{A}V$ be representatives for the
bordered sutured modules
$\li{_A}\BSA(\W)$, $\BSD(\Y_1\setminus \W)^A$, and $\lu{A}\BSD(\Y_2\setminus -\W)$, respectively such that $U\sqtens M$
and $M^\vee\sqtens V$ are well-defined. (Recall that the modules are only defined up to homotopy equivalence, and that
the $\sqtens$ product is only defined under some boundedness conditions.)
We proved in
\Proposition~\ref{prop:mirror_is_dual} that ${M^\vee}_A$ is a representative for $\BSA(-\W)_A$, and in
\Proposition~\ref{prop:twisting_slice_invariants} that $\li{_A}{A^\vee}_A$ is a representative for $\BSAA(\TW_{\F,+})$.

From the K\"unneth formula for $\SFH$ of a disjoint union, and from \Theorem~\ref{thm:bimodules_all}, we have the following homotopy equivalences of
chain complexes.
\begin{multline*}
	\SFC(\Y_1\cup\Y_2)\cong\SFC(\Y_1)\otimes\SFC(\Y_2)\\
	\simeq\left(\BSD(\Y_1\setminus\W)\sqtens_{A}\BSA(\W)\right)\otimes\left(\BSA(-\W)\sqtens_{A}\BSD(\Y_2\setminus-\W)\right)\\
	\simeq U^A~\sqtens~\li{_A}(M\otimes M^\vee)_A~\sqtens~\lu{A}V.
\end{multline*}
\begin{multline*}
	\SFC(\Y_1\Cup_{\W}\Y_2)\\
	\simeq\BSD(\Y_1\setminus\W)\sqtens_{A}\BSAA(\TW_{\F,+})\sqtens_{A}\BSD(\Y_2\setminus-\W)\\
	\simeq U^A~\sqtens~\li{_A}{A^\vee}_A~\sqtens~\lu{A}V.
\end{multline*}

\begin{defn}
	\label{def:join_map_def}
	Let $\Y_1$, $\Y_2$ and $\W$ be as described above. Define the \emph{geometric join map}
	\begin{equation*}
		\Psi_M\co\SFC(\Y_1)\otimes\SFC(\Y_2)\to\SFC(\Y_1\Cup_{\W}\Y_2)
	\end{equation*}
	by the formula
	\begin{equation}
		\label{eq:join_map_geom_def}
		\Psi_M=\id_{U}\sqtens\nabla_M\sqtens\id_{V}
		\co U\sqtens M \otimes M^\vee \sqtens V \to U \sqtens A^\vee \sqtens V.
	\end{equation}
\end{defn}

Note that such an induced map is not generally well defined (it might involve an infinite sum). In this case, however,
we have made some boundedness assumptions. Since $U\sqtens M$ and $M^\vee\sqtens V$ are defined, either $M$ must be bounded,
or both of $U$ and $V$ must be bounded. In the former case, $\nabla_M$ is also bounded. Either of these situations
guarantees that the sum defining $\Psi_M$ is finite. 

\begin{thm}
	\label{thm:join_map_invariance}
	The map $\Psi_M$ from \Definition~\ref{def:join_map_def} is, up to homotopy, independent on the choice of
	parametrization $\Z$, and on the choices of representatives $M$, $U$, and $V$.
\end{thm}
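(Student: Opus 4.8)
The plan is to show that $\Psi_M$ is well-defined up to homotopy by peeling off the two sources of ambiguity separately: the choice of representatives $M$, $U$, $V$ in their homotopy-equivalence classes, and the choice of parametrizing arc diagram $\Z$. For the first, I would invoke \Proposition~\ref{prop:nabla_invariance_module} together with standard functoriality of $\sqtens$ with respect to homotopy equivalences. Concretely, if $\li{_A}M'$ is another representative of $\BSA(\W)$ with inverse homotopy equivalences $\varphi\co M\to M'$ and $\psi\co M'\to M$, then by \Proposition~\ref{prop:nabla_invariance_module} the square with vertical arrow $\varphi\otimes\psi^\vee\co M\otimes M^\vee\to M'\otimes(M')^\vee$ and the two $\nabla$ maps to $A^\vee$ commutes up to $\Ainf$--homotopy. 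Applying $\id_U\sqtens(-)\sqtens\id_V$ to this homotopy-commuting triangle (which is legitimate because the boundedness hypotheses ensure all the box tensor products in sight are well-defined) shows that the resulting maps $\SFC(\Y_1)\otimes\SFC(\Y_2)\to\SFC(\Y_1\Cup_\W\Y_2)$ agree up to homotopy. The same argument handles changing $U$ or $V$: any homotopy equivalence of $U$ (resp. $V$) induces, via $\sqtens$, a homotopy between the two versions of $\Psi_M$, since $\nabla_M$ is fixed and $\sqtens$ is functorial up to homotopy on the type-$D$ side as well. One also has to check independence of the auxiliary choice of homotopy equivalences realizing the identifications in the two chains of equivalences preceding \Definition~\ref{def:join_map_def}; but homotopy equivalences between fixed complexes are unique up to homotopy, so this contributes nothing new.

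For independence of the arc diagram $\Z$, I would use \Proposition~\ref{prop:nabla_invariance_algebra}. Recall from~\cite{Zar:BSFH} that if $\Z$ and $\Z'$ both parametrize $\F$, the algebras $A=\A(\Z)$ and $B=\A(\Z')$ are derived-equivalent, via quasi-inverse type--$DA$ bimodules $\li{_B}X^A$ and $\li{_A}Y^B$ arising from a bordered sutured ``change of parametrization'' piece; moreover these bimodules are designed precisely so that $\BSA_{\Z'}(\W)\simeq X\sqtens\BSA_{\Z}(\W)$ and, on the type-$D$ side, $\BSD_{\Z'}(\Y_i\setminus W)\simeq \BSD_{\Z}(\Y_i\setminus W)\sqtens Y$ (with the mirror statement for the other factor). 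Under these identifications the factor $\TW_{\F,+}$, whose invariant is $A^\vee$ by \Proposition~\ref{prop:twisting_slice_invariants}, gets replaced by the piece with invariant $X\sqtens A^\vee\sqtens X^\vee$, which is homotopy equivalent to $B^\vee$ by the map $\varphi_X$ of \Proposition~\ref{prop:nabla_invariance_algebra}; the rank hypothesis there is met because both sides compute $\SFC(\TW_{\F(\Z'),+})$, whose homology is an invariant of the underlying sutured surface. Then the commuting triangle in \Proposition~\ref{prop:nabla_invariance_algebra}, with vertical map $\id_X\sqtens\nabla_M\sqtens\id_{X^\vee}$, bottom map $\varphi_X$, and hypotenuse $\nabla_{X\sqtens M}$, says exactly that the $\Z$-version of $\nabla$ and the $\Z'$-version agree after the change-of-parametrization identifications. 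Pre- and post-composing with $\id_U\sqtens(-)\sqtens\id_V$ turns this into the statement that the two versions of $\Psi_M$ coincide up to homotopy.

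Finally, one should remark that at each step the boundedness hypotheses carry through: $U\sqtens M$ and $M^\vee\sqtens V$ being defined forces either $M$ bounded or both $U,V$ bounded, and in either case all the intermediate box products ($X\sqtens M$, $U\sqtens X^\vee$, etc.) appearing above are well-defined, so the homotopies constructed are genuine finite-sum homotopies and the induced maps on homology are legitimate. The main obstacle is really \Proposition~\ref{prop:nabla_invariance_algebra} itself — matching the algebraic change-of-basis map $\varphi_X$ with the geometric change-of-parametrization equivalence, and verifying that the type--$DA$ bimodules from~\cite{Zar:BSFH} interact with duals and with $\TW_{\F,+}$ in the way required — but since that proposition is stated (to be proved in \Section~\ref{sec:proof_join_algebraic}) and the change-of-parametrization invariance of $\BSA$, $\BSD$ is established in~\cite{Zar:BSFH}, the present theorem is a formal consequence. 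So the proof is: combine \Propositions~\ref{prop:nabla_invariance_module} and~\ref{prop:nabla_invariance_algebra} with functoriality of $\sqtens$ and the invariance results of~\cite{Zar:BSFH}.
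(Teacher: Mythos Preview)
Your proposal is correct and follows essentially the same approach as the paper: break invariance into (i) independence from $U$ and $V$ via DG-functoriality of $\sqtens$, (ii) independence from $M$ via \Proposition~\ref{prop:nabla_invariance_module}, and (iii) independence from the arc diagram via \Proposition~\ref{prop:nabla_invariance_algebra}. The only cosmetic difference is that the paper constructs the quasi-inverse bimodules $X$, $Y$ explicitly by slicing $\W$ at two parallel copies of $-\F$ (parametrized by $\Z$ and $\Z'$ respectively), which makes the diffeomorphism $\PP\cup\TW_{\F'',+}\cup(-\PP)\cong\TW_{\F',+}$ manifest and thereby verifies the rank hypothesis of \Proposition~\ref{prop:nabla_invariance_algebra} directly; you instead invoke the change-of-parametrization bimodules from~\cite{Zar:BSFH} abstractly, which amounts to the same thing. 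One small point worth tightening: your boundedness remark at the end is slightly loose---knowing $M$ is bounded does not automatically make $X\sqtens M$ bounded, so to push the argument through cleanly you should (as the paper does) choose bounded representatives for $X$ and $Y$, which is always possible for bordered sutured bimodules.
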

\begin{proof}
	First, we will give a more precise version of the statement.
	Let $\Z'$ be any other parametrization of $\F$, with $B=\A(-\Z')$, and let 
	$\li{_B}M'$, $U'^B$ and $\lu{B}V'$, be representatives for the respective bordered sutured modules.
	Then there are homotopy equivalences $\varphi$ and $\psi$ making the following diagram commute up to $\Ainf$--homotopy:

	\centerline{\xymatrix@C=1in{
	U\sqtens M\otimes M^\vee\sqtens V \ar[r]^\varphi \ar[d]_{\Psi_M}
	& U'\sqtens M'\otimes M'^\vee\sqtens V' \ar[d]^{\Psi_{M'}} \\
	U\sqtens A^\vee \sqtens V \ar[r]^\psi
	& U'\sqtens B^\vee \sqtens V'.
	}}

	The proof can be broken up into several steps. The first step is
	independence
	from the choice of $U$ and $V$, given a fixed choice for $A$ and $M$. This follows directly from the fact
	$\id\sqtens\cdot$ and $\cdot\sqtens\id$ are DG-functors.

	The second step is to show independence from the choice of $M$, for fixed $A$, $U$, and $V$. This follows from
	\Proposition~\ref{prop:nabla_invariance_module}. Indeed, suppose $\varphi\co M\to M'$ is a homotopy equivalence
	with homotopy inverse $\psi\co M'\to M$. Then $\psi^\vee\co M^\vee \to {M'}^\vee$ is also a homotopy equivalence
	inducing the homotopy equivalence
	\begin{equation*}
		\id_U\sqtens\varphi\otimes\psi^\vee\sqtens\id_V
		\co U\sqtens M\otimes M^\vee\sqtens V
		\to U\sqtens M'\otimes {M'}^\vee\sqtens V.
	\end{equation*}
	By \Proposition~\ref{prop:nabla_invariance_module}, $\nabla_M\simeq\nabla_{M'}\circ(\varphi\otimes\psi^\vee)$, which implies
	\begin{equation*}
		\id_U\sqtens\nabla_M\sqtens\id_V\simeq(\id_U\sqtens\nabla_{M'}\sqtens\id_V)
		\circ(\id_U\sqtens\varphi\otimes\psi^\vee\sqtens\id_V).
	\end{equation*}

	The final step is to show independence from the choice of algebra $A$. We will cut $\Y_1$ and $\Y_2$ into several
	pieces, so we can evaluate the two different versions of $\Psi$ from the same geometric picture.

	Let $-\F'$ and $-\F''$ be two parallel copies of $-\F$ in $\W$, which cut out $\W'=(W',\Gamma',-\F')$ and
	$\W''=(W'',\Gamma'',-\F'')$, where $\W''\subset\W'\subset\W$. Let $\PP=\W'\setminus\W''$ and $\Q=\W\setminus\W'$
	(see \Figure~\ref{fig:x-cutting-W-3d}). Both $\PP$ and $\Q$ are topologically $F\times[0,1]$.

	\begin{figure}
		\centering
		\labellist
		\small\hair=1.5pt
		\pinlabel $-\F$ [t] at 12 20
		\pinlabel $-\F'$ [t] at 92 20
		\pinlabel $-\F''$ [t] at 172 20
		\pinlabel $\W$ [b] at 210 296
		\pinlabel $\W'$ [b] at 250 264
		\pinlabel $\W''$ [b] at 290 232
		\pinlabel $\PP$ [b] at 140 232
		\pinlabel $\Q$ [b] at 60 232
		\endlabellist
		\includegraphics[scale=.4]{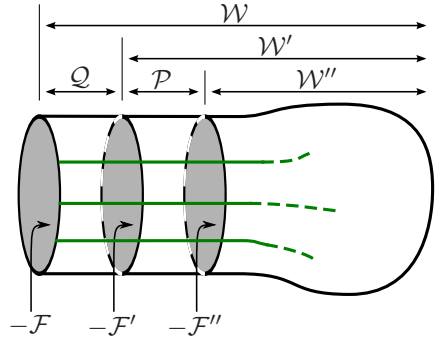}
		\caption{The various pieces produced by slicing $\W$ at two surfaces parallel to $\F$.}
		\label{fig:x-cutting-W-3d}
	\end{figure}

	Parametrize $\F$ and $\F''$ by $\Z$, and $\F'$ by $\Z'$, where $\A(\Z)=A$, and $\A(\Z')=B$.  
	Let $\li{_B}X^A$ and $\li{_A}Y^B$ be representatives for $\li{_B}\BSAD(\PP)^A$ and $\li{_A}\BSAD(\Q)^B$,
	respectively. Note that $\Q\cup_{\F'}\PP$ is a product bordered sutured manifold, and thus has trivial
	invariant $\li{_A}\BSAD(\Q\cup\PP)^A\simeq\li{_A}\II^A$. By the pairing theorem, this implies
	$Y\sqtens X\simeq\li{_A}\II^A$. Similarly, by stacking $\PP$ and $\Q$ in the opposite order we get
	$X\sqtens Y\simeq\li{_B}\II^B$.

	There are embeddings $\W',\W''\into\Y_1$ and $-\W',-\W''\into\Y_2$ and two distinct ways to cut and glue them together,
	getting $\Y_1\Cup_{\W'}\Y_2\cong\Y_1\Cup_{\W''}\Y_2$. This is illustrated schematically in
	\Figure~\ref{fig:x-cutting-and-regluing}.

	\begin{figure}
		\begin{subfigure}[t]{\linewidth}
			\centering
			\labellist
			\small\hair=1.5pt
			\pinlabel $\Y_1\setminus\W'$ at 38 58
			\pinlabel $\PP$ at 92 58
			\pinlabel $\W''$ at 134 58
			\pinlabel $-\W''$ at 226 58
			\pinlabel $-\PP$ at 268 58
			\pinlabel $\Y_2\setminus-\W'$ at 344 58
			\pinlabel $\F'$ [t] at 76 20
			\pinlabel $\F''$ [t] at 108 20
			\pinlabel $\ol\F''$ [t] at 252 20
			\pinlabel $\ol\F'$ [t] at 284 20
			\endlabellist
			\includegraphics[scale=.6]{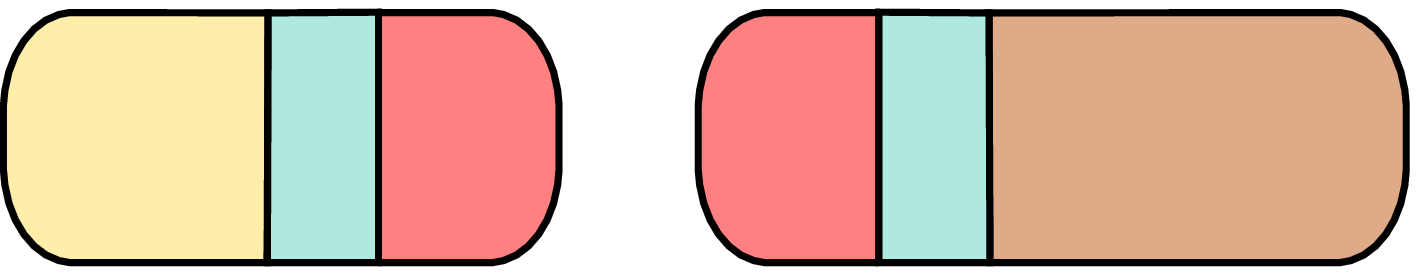}
			\caption{Cutting $\Y_1$ and $\Y_2$ in two different places.}
			\label{subfig:x-cutting-W-2d}
		\end{subfigure}
		\begin{subfigure}[t]{\linewidth}
			\centering
			\labellist
			\small\hair=1.5pt
			\pinlabel $\Y_1\setminus\W'$ at 38 58
			\pinlabel $\PP$ at 92 58
			\pinlabel $\TW_{\F'',+}$ at 144 58
			\pinlabel $-\PP$ at 192 58
			\pinlabel $\Y_2\setminus-\W'$ at 268 58
			\pinlabel $\F'$ [t] at 76 20
			\pinlabel $\F''$ [t] at 108 20
			\pinlabel $\ol\F''$ [t] at 176 20
			\pinlabel $\ol\F'$ [t] at 208 20
			\endlabellist
			\includegraphics[scale=.6]{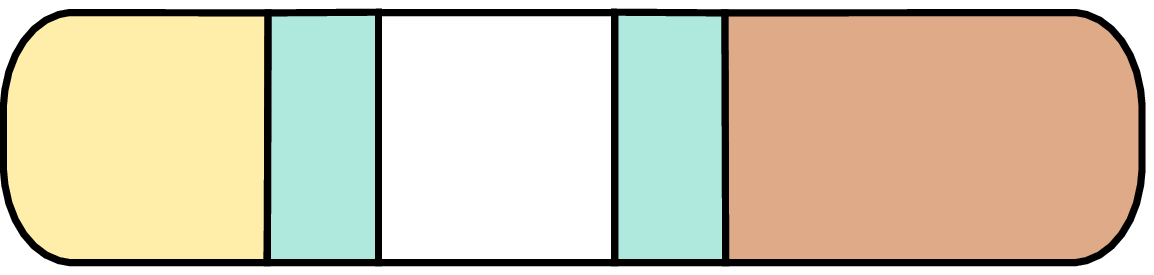}
			\caption{The join by $\W''$.}
			\label{subfig:x-cutting-W-regluing-1}
		\end{subfigure}
		\begin{subfigure}[t]{\linewidth}
			\centering
			\labellist
			\small\hair=1.5pt
			\pinlabel $\Y_1\setminus\W'$ at 38 58
			\pinlabel $\TW_{\F',+}$ at 142 58
			\pinlabel $\Y_2\setminus-\W'$ at 268 58
			\pinlabel $\F'$ [t] at 76 20
			\pinlabel $\F''$ [t] at 108 20
			\pinlabel $\ol\F''$ [t] at 176 20
			\pinlabel $\ol\F'$ [t] at 208 20
			\endlabellist
			\includegraphics[scale=.6]{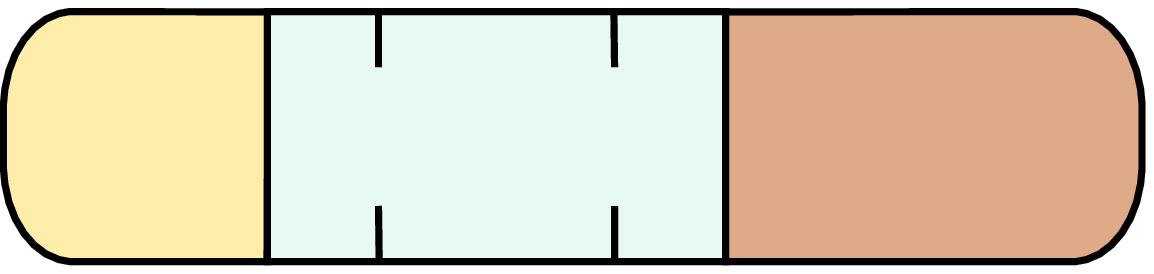}
			\caption{The join by $\W'$.}
			\label{subfig:x-cutting-W-regluing-2}
		\end{subfigure}
		\caption{Two ways of cutting and pasting to get the join of $\Y_1$ and $\Y_2$.}
		\label{fig:x-cutting-and-regluing}
	\end{figure}

	Let $\li{_A}M$ be a representative for $\li{_A}\BSA(\W'')$. By the pairing theorem, $\li{_B}(X\sqtens M)$ is
	a representative for $\li{_B}\BSA(\W')$. Notice that $\TW_{\F',+}\cong\PP\cup\TW_{\F'',+}\cup-\PP$ and
	$\li{_B}{B^\vee}_B$ and $\li{_B}(X\sqtens A^\vee \sqtens X^\vee)_B$ are both representatives for its
	$\BSAA$ invariant. In particular, they have the same homology.
	Finally, let $U^B$ and $\lu{B}V$ be representatives for $\BSD(\Y_1\setminus\W')^B$ and
	$\lu{B}\BSD(\Y_2\setminus-\W')$, respectively.
	
	The two join maps $\Psi_M$ and $\Psi_{X\sqtens M}$ are described by the following equations.
	\begin{multline*}
		\Psi_M=\id_{U\sqtens X}\sqtens\nabla_{M}\sqtens\id_{X^\vee\sqtens V}\co\\
		(U\sqtens X)\sqtens M \otimes M^\vee\sqtens (X^\vee \sqtens V)
		\to (U\sqtens X) \sqtens A^\vee \sqtens (X^\vee \sqtens V),
	\end{multline*}
	\begin{multline*}
		\Psi_{X\sqtens M}=\id_U\sqtens\nabla_{X\sqtens M}\sqtens\id_V\co\\
		U\sqtens (X\sqtens M) \otimes (M^\vee\sqtens X^\vee) \sqtens V
		\to U\sqtens B^\vee \sqtens V.
	\end{multline*}

	We can apply \Proposition~\ref{prop:nabla_invariance_algebra}. The boundedness condition can be satisfied by
	requiring that $X$ and $Y$ are bounded modules. There is a homotopy equivalence
	$\varphi_X\co X\sqtens A^\vee\sqtens X^\vee\to B$, and a homotopy
	$\nabla_{X\sqtens M}\sim\varphi_X\circ(\id_X\sqtens\nabla_{M}\sqtens\id_{X^\vee})$. These induce a homotopy
	\begin{multline*}
		(\id_U\sqtens\varphi_X\sqtens\id_V)\circ\Psi_M
		=\id_U\sqtens(\varphi_X\circ(\id_X\sqtens\nabla_M\sqtens\id_{X^\vee}))\sqtens\id_V\\
		\sim\id_U\sqtens\nabla_{X\sqtens M}\sqtens\id_V
		=\Psi_{X\sqtens M}.
	\end{multline*}
	
	This finishes the last step. Combining all three gives complete invariance. Thus we can refer to $\Psi_\W$
	from now on.
\end{proof}

\subsection{Proof of algebraic invariance}
\label{sec:proof_join_algebraic}

In this section we prove \Propositions~\ref{prop:nabla_invariance_module}
and~\ref{prop:nabla_invariance_algebra}.

\begin{proof}[Proof of \Proposition~\ref{prop:nabla_invariance_module}]
	The proof will be mostly diagrammatic. There are two modules $\li{_A}M$ and $\li{_A}N$, and two inverse homotopy
	equivalences,
	$\varphi\co M\to N$ and $\psi\co N\to M$. The dualizing functor $\li{_A}\Mod\to\Mod_{A}$ is a DG-functor. Thus it is easy to see that
	\begin{equation*}
	\varphi\otimes\psi^\vee=(\varphi\otimes\id_{N^\vee})\circ(\id_M\otimes\psi^\vee)
	\end{equation*}
	is also a homotopy equivalence. Let $H\co M\to M$ be the homotopy between $\id_M$ and $\psi\circ\varphi$.
	
	We have to show that the homomorphism
	\begin{equation}
		\label{eq:nabla_variation_module}
		\nabla_M+\nabla_N\circ(\varphi\otimes\psi^\vee)
	\end{equation}
	is null-homotopic (see \Figure~\ref{subfig:nabla_variation_original}).
	Again, it helps if we turn the diagram sideways,
	where bar resolutions come into play. Let $h_M\co A\dtens M\to M$ and $h_N\co A\dtens N\to N$ be the
	natural homotopy equivalences.
	
	Turning the first term in \Equation~(\ref{eq:nabla_variation_module}) sideways, we get $h_M$. Turning the second term
	sideways we get $\psi\circ h_N\circ(\id_A\dtens\varphi)$. Thus we need to show that
	\begin{equation}
		\label{eq:nabla_variation_module_sideways}
		h_M + \psi\circ h_N\circ(\id_A\dtens\varphi)
	\end{equation}
	is null-homotopic (see \Figure~\ref{subfig:nabla_variation_sideways}).

	There is a canonical homotopy $h_{\varphi}\co A \dtens M\to N$  between
	$\varphi\circ h_M$ and $h_N\circ(\id_A\dtens\varphi)$, given by
	\begin{multline*}
		h_\varphi(a_1,\ldots,a_i,~(a',~a''_1,\ldots,a''_j,~m))
		=\varphi(a_1,\ldots,a_i,a',a''_1,\ldots,a''_j,~m).
	\end{multline*}

	Thus we can build the null-homotopy $\psi\circ h_{\varphi} + H\circ h_M$ 
	(see \Figure~\ref{subfig:nabla_var_homotopy_sideways}). Indeed,
	\begin{align*}
	\del(\psi\circ h_{\varphi})&=\psi\circ\varphi\circ h_M + \psi\circ h_N \circ(\id_A\dtens\varphi),\\
	\del(H\circ h_M)&=\id_M\circ h_M+\psi\circ\varphi\circ h_M.
	\end{align*}

	Alternatively, we can express the null-homotopy of the expression~(\ref{eq:nabla_variation_module}) directly
	as in \Figure~\ref{subfig:nabla_var_homotopy_original}.
\end{proof}

\begin{figure}
	\begin{subfigure}[h]{.48\linewidth}
		\centering
		\labellist
		\pinlabel $+$ at 148 82
		\small
		\pinlabel \rotatebox[origin=c]{90}{$m_M$} at 68 100
		\pinlabel \rotatebox[origin=c]{90}{$m_N$} at 236 80
		\pinlabel $\varphi$ at 212 124
		\pinlabel \rotatebox[origin=c]{180}{$\psi$} at 260 124
		\endlabellist
		\includegraphics[scale=.55]{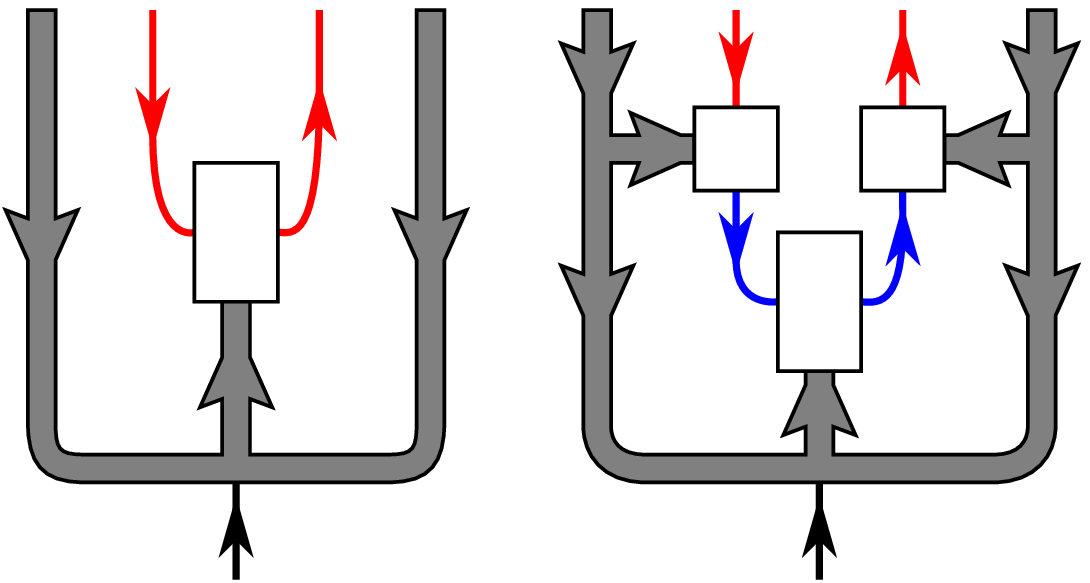}
		\caption{Representation of \Equation~(\ref{eq:nabla_variation_module}).}
		\label{subfig:nabla_variation_original}
	\end{subfigure}
	\begin{subfigure}[h]{.48\linewidth}
		\centering
		\labellist
		\pinlabel $+$ at 106 82
		\small
		\pinlabel $h_M$ at 60 80
		\pinlabel $h_N$ at 184 80
		\pinlabel $\varphi$ at 228 128
		\pinlabel $\psi$ at 184 36
		\endlabellist
		\includegraphics[scale=.55]{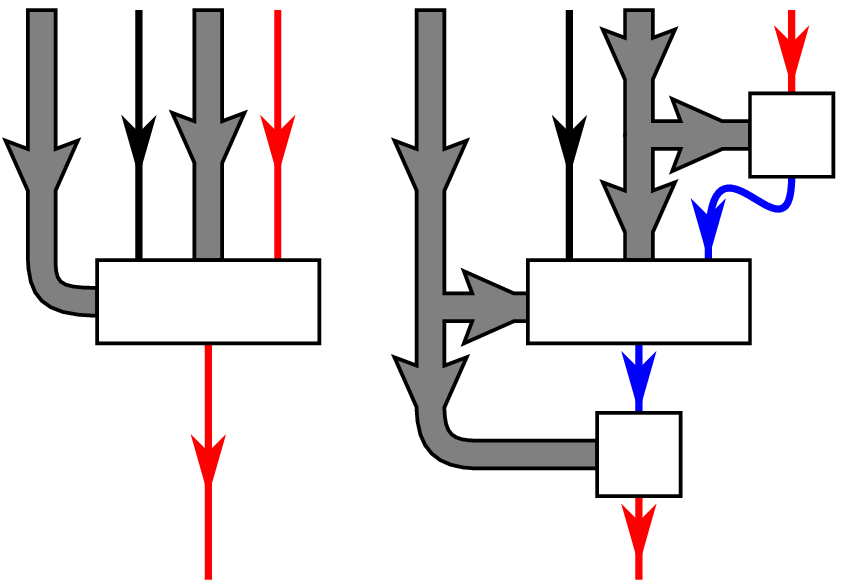}
		\caption{Representation of \Equation~(\ref{eq:nabla_variation_module_sideways}).}
		\label{subfig:nabla_variation_sideways}
	\end{subfigure}
	\begin{subfigure}[h]{.35\linewidth}
		\centering
		\labellist
		\pinlabel $+$ at 112 68
		\small
		\pinlabel $\varphi$ at 92 88
		\pinlabel $\psi$ at 92 40
		\pinlabel $h_M$ at 196 88
		\pinlabel $H$ at 196 40
		\endlabellist
		\includegraphics[scale=.55]{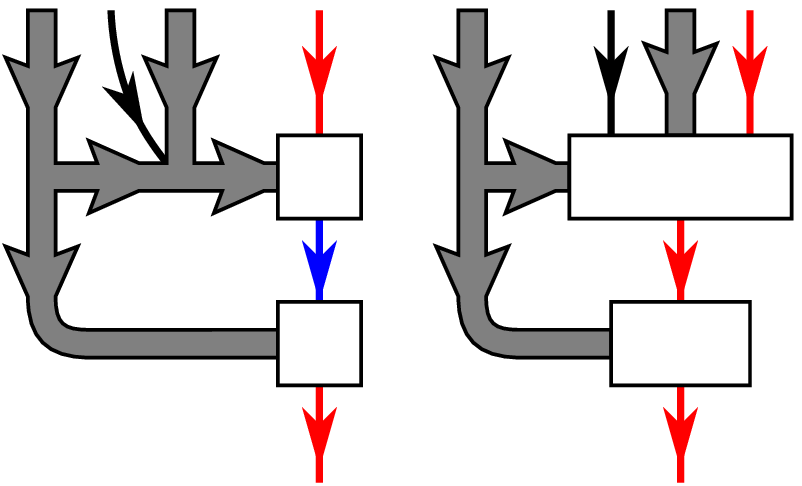}
		\caption{Null-homotopy of~(\ref{eq:nabla_variation_module_sideways}).}
		\label{subfig:nabla_var_homotopy_sideways}
	\end{subfigure}
	\begin{subfigure}[h]{.62\linewidth}
		\centering
		\labellist
		\pinlabel $+$ at 176 68
		\small
		\pinlabel \rotatebox[origin=c]{90}{$\varphi$} at 56 96
		\pinlabel \rotatebox[origin=c]{90}{$\psi$} at 108 96
		\pinlabel \rotatebox[origin=c]{90}{$m_M$} at 244 96
		\pinlabel \rotatebox[origin=c]{90}{$H$} at 296 96
		\endlabellist
		\includegraphics[scale=.55]{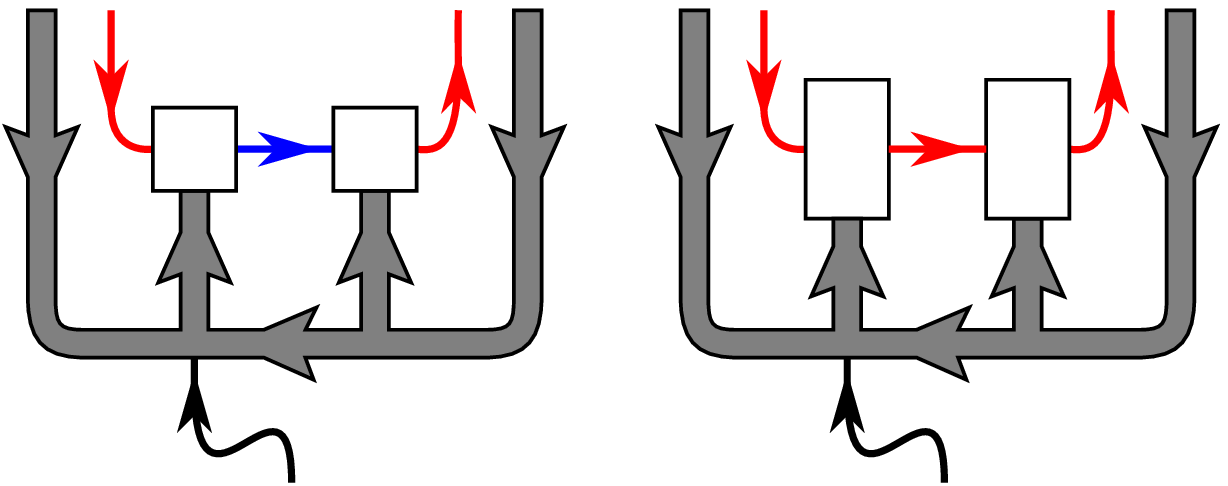}
		\caption{Null-homotopy of~(\ref{eq:nabla_variation_module}).}
		\label{subfig:nabla_var_homotopy_original}
	\end{subfigure}
	\caption{Diagrams from the proof of \Proposition~\ref{prop:nabla_invariance_module}.}
	\label{fig:join_variation_module_proof}
\end{figure}

\begin{proof}[Proof of \Proposition~\ref{prop:nabla_invariance_algebra}]
	Recall the statement of \Proposition~\ref{prop:nabla_invariance_algebra}. We are given two differential graded
	algebras $A$ and $B$, and three modules---$\li{_B}X^A$, $\li{_A}Y^B$, and $\li{_A}M$. We assume that there are
	homotopy equivalences $X\sqtens Y\simeq\li{_B}\II^B$ and $Y\sqtens X\simeq\li{_A}\II^A$, and
	that $X\sqtens A^\vee\sqtens X^\vee$ and $B^\vee$ have homologies of the same rank.
	
	We have to construct a homotopy equivalence $\varphi_X\co X\sqtens A^\vee\sqtens X^\vee\to B^\vee$, and a homotopy
	$\nabla_{X\sqtens M}\simeq\varphi_X\circ(\id_X\sqtens\nabla_M\sqtens\id_{X^\vee})$.

	We start by constructing the morphism $\varphi$. We can define it by the following equation:
	\begin{multline}
		\label{eq:phi_XAX_to_B}
		\left< (\varphi_X)_{i|1|j}(b_1,\ldots,b_i,~(x,a^\vee,x'^\vee),~b'_1,\ldots,b'_j),b''\right>\\
		=\left< \delta_{i+j+1|1|1}(b'_1,\ldots,b'_j,b'',b_1,\ldots,b_i,~x), (x',a)^\vee \right>.
	\end{multline}
	
	Again, it is useful to ``turn it sideways''. We can reinterpret $\varphi_X$ as a morphism of type--$AD$ modules
	$B\dtens X\to X$. In fact, it is precisely the canonical homotopy equivalence $h_X$ between the two. Diagrams for
	$\varphi_X$ and $h_X$ are shown in \Figure~\ref{fig:phi_XAX_to_B_both_ways}. 
	Since the $h_X$ is a homomorphism, it follows that $\varphi_X$ is one as well.
	\begin{figure}
		\begin{subfigure}[t]{.32\linewidth}
			\centering
			\labellist
			\small
			\pinlabel \rotatebox[origin=c]{90}{$\delta_X$} at 68 92
			\hair=1.5pt
			\pinlabel $B$ [b] at 12 136
			\pinlabel $X$ [b] at 40 136
			\pinlabel $A$ [b] at 68 136
			\pinlabel $X$ [b] at 96 136
			\pinlabel $B$ [b] at 124 136
			\pinlabel $B$ [t] at 68 16
			\endlabellist
			\includegraphics[scale=.55]{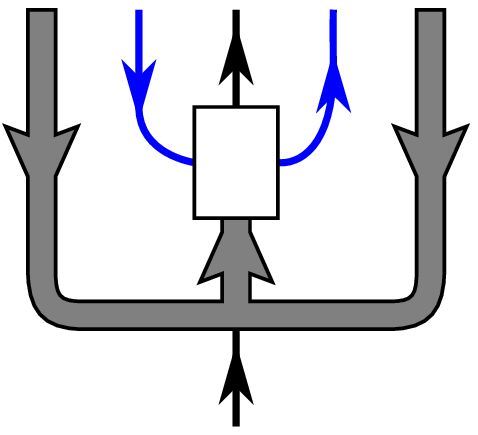}
			\caption{Definition of $\varphi$.}
			\label{subfig:phi_XAX_to_B}
		\end{subfigure}
		\begin{subfigure}[t]{.65\linewidth}
			\centering
			\labellist
			\pinlabel $=$ at 116 76
			\small
			\pinlabel $h_X$ at 60 76
			\pinlabel $\delta_X$ at 224 76
			\hair=1.5pt
			\pinlabel $B$ [b] at 12 136
			\pinlabel $B$ [b] at 36 136
			\pinlabel $B$ [b] at 60 136
			\pinlabel $X$ [b] at 84 136
			\pinlabel $X$ [t] at 60 16
			\pinlabel $A$ [t] at 104 16
			\pinlabel $B$ [b] at 136 136
			\pinlabel $B$ [b] at 156 136
			\pinlabel $B$ [b] at 176 136
			\pinlabel $X$ [b] at 224 136
			\pinlabel $X$ [t] at 224 16
			\pinlabel $A$ [t] at 256 16
			\endlabellist
			\includegraphics[scale=.55]{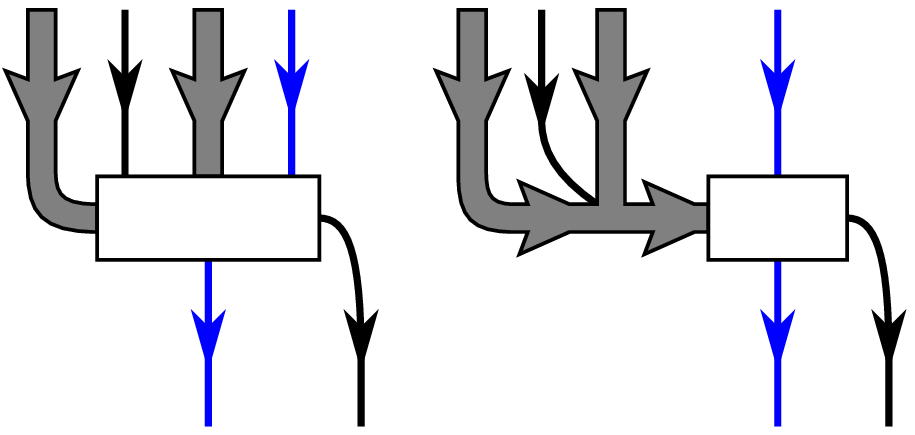}
			\caption{View as a map $B\dtens X\to X$.}
			\label{subfig:phi_XAX_to_B_sideways}
		\end{subfigure}
		\caption{Two views of the homotopy equivalence $\varphi$ from \Equation~(\ref{eq:phi_XAX_to_B}).}
		\label{fig:phi_XAX_to_B_both_ways}
	\end{figure}

	Next we show that $\nabla_{X\sqtens M}$ is homotopic to $\varphi_X\circ(\id_X\sqtens\nabla_M\sqtens\id_{X^\vee})$.
	They are in fact equal. This is best seen in \Figure~\ref{fig:nabla_XM}. We use the fact that $\ol\delta_X$ and
	$\delta_X$ commute with merges and splits.

	\begin{figure}
		\labellist
		\pinlabel $=$ at 232 118
		\pinlabel $=$ at 400 118
		\small
		\pinlabel $\ol\delta_X$ at 60 140
		\pinlabel \rotatebox[origin=c]{180}{$\ol\delta_X$} at 164 140
		\pinlabel \rotatebox[origin=c]{90}{$m_M$} at 112 184
		\pinlabel \rotatebox[origin=c]{90}{$\delta_X$} at 112 92
		\pinlabel \rotatebox[origin=c]{90}{$m_M$} at 316 164
		\pinlabel \rotatebox[origin=c]{90}{$\ol\delta_X$} at 316 92
		\pinlabel \rotatebox[origin=c]{90}{$m_{X\sqtens M}$} at 484 128
		\hair=1.5pt
		\pinlabel $B$ [b] at 16 220
		\pinlabel $X$ [b] at 60 220
		\pinlabel $M$ [b] at 88 220
		\pinlabel $M$ [b] at 136 220
		\pinlabel $X$ [b] at 164 220
		\pinlabel $B$ [b] at 208 220
		\pinlabel $B$ [t] at 112 16
		\pinlabel $B$ [b] at 256 220
		\pinlabel $X$ [b] at 276 220
		\pinlabel $M$ [b] at 292 220
		\pinlabel $M$ [b] at 340 220
		\pinlabel $X$ [b] at 356 220
		\pinlabel $B$ [b] at 376 220
		\pinlabel $B$ [t] at 316 16
		\pinlabel $B$ [b] at 424 220
		\pinlabel $X$ [b] at 444 220
		\pinlabel $M$ [b] at 460 220
		\pinlabel $M$ [b] at 508 220
		\pinlabel $X$ [b] at 524 220
		\pinlabel $B$ [b] at 544 220
		\pinlabel $B$ [t] at 484 16
		\endlabellist
		\includegraphics[scale=.6]{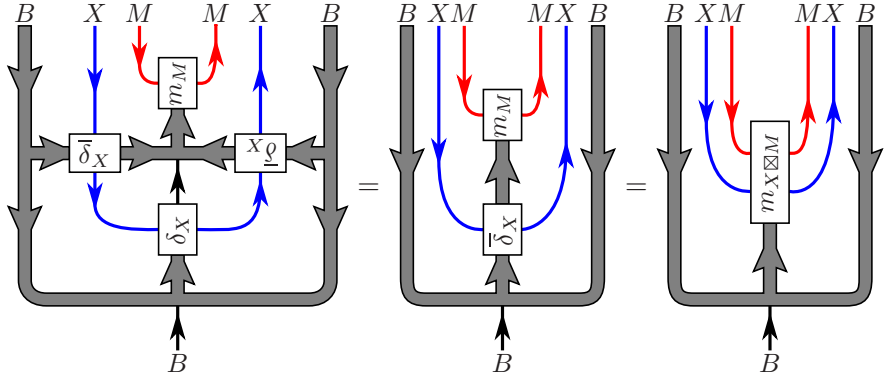}
		\caption{Equality of the direct and induced $\nabla$ maps for $X\sqtens M$.}
		\label{fig:nabla_XM}
	\end{figure}
	
	Finally, we need to show that $\varphi_X$ is a homotopy equivalence. We will do that by constructing a
	right homotopy	inverse for it.
	Combined with the fact that the homologies of the two sides have equal rank, this is enough to ascertain that
	it is indeed a homotopy equivalence.

	Recall that $X\sqtens Y\simeq\II$. Thus there exist morphisms of type--$AD$ $B,B$--bimodules
	$f\co\II\to X\sqtens Y$, and $g\co X\sqtens Y\to\II$, and a null-homotopy $H\co\II\to\II$ of
	$\id_\II-g\circ f$. Note that $g^\vee\co\II^\vee\to Y^\vee \sqtens X^\vee$ is a map of type--$DA$--modules,
	and $(\li{_B}\II^B)^\vee=\lu{B}\II_B$.

	Let $\varphi_Y\co Y\sqtens B^\vee\sqtens Y^\vee\to A$ be defined analogous to $\varphi_X$. Construct the homomorphism
	\begin{multline*}
		\psi=(\id_X\sqtens\varphi_Y\sqtens\id_{X^\vee})\circ(f\sqtens\id_{B^\vee}\sqtens\id_{Y^\vee}\sqtens\id_{X^\vee})
		\circ(\id_{\II}\sqtens\id_B\sqtens g^\vee)\co\\
		\II\sqtens B^\vee\sqtens\II \to X\sqtens A^\vee \sqtens X^\vee.
	\end{multline*}

	We need to show that $\varphi_X\circ\psi$ is homotopic to $\id_{B^\vee}$, or more precisely to the canonical isomorphism
	$\iota\co\II\sqtens B^\vee\sqtens\II\to B^\vee$.
	A graphical representation of $\varphi_X\circ\psi$ is shown in \Figure~\ref{subfig:phi_phi_unsimplified}.
	It simplifies significantly, due to the fact that $B$ is a DG-algebra, and $\mu_B$ only has two nonzero terms.
	The simplified version of $\varphi_X\circ\psi$ is shown in \Figure~\ref{subfig:phi_phi_simplified}.
	As usual, it helps to turn the diagram sideways. We can view it as a homomorphism $B\dtens\II\to\II$ of
	type--$AD$ $B,B$--bimodules. As can be seen from \Figure~\ref{subfig:phi_phi_sideways}, we get
	the composition
	\begin{equation}
		\label{eq:phi_phi_sideways}
		g\circ(h_X\sqtens\id_Y)\circ(\id_B\dtens f)=
		g\circ h_{X\sqtens Y}\circ(\id_B\dtens f)\co B\dtens\II\to\II.
	\end{equation}
	
	\begin{figure}
		\begin{subfigure}[h]{\linewidth}
			\centering
			\labellist
			\small
			\pinlabel $\ol\delta_X$ at 80 232
			\pinlabel $\ol\delta_X$ at 80 144
			\pinlabel \rotatebox[origin=c]{90}{$\delta_X$} at 204 96
			\pinlabel \rotatebox[origin=c]{180}{$\ol\delta_X$} at 328 144
			\pinlabel \rotatebox[origin=c]{180}{$\ol\delta_X$} at 328 284
			\pinlabel \rotatebox[origin=c]{180}{$\ol\delta_X$} at 328 336
			\pinlabel $\ol\delta_Y$ at 136 232
			\pinlabel \rotatebox[origin=c]{90}{$\delta_Y$} at 204 192
			\pinlabel \rotatebox[origin=c]{180}{$\ol\delta_Y$} at 272 284
			\pinlabel \rotatebox[origin=c]{180}{$\ol\delta_Y$} at 272 336
			\pinlabel \rotatebox[origin=c]{180}{$\mu_B$} at 204 328
			\pinlabel \rotatebox[origin=c]{180}{$\mu_B$} at 204 432
			\pinlabel $f$ at 108 284
			\pinlabel \rotatebox[origin=c]{180}{$g$} at 300 388
			\hair=1.5pt
			\pinlabel $B$ [b] at 12 472
			\pinlabel $\li{_B}\II^B$ [b] at 108 472
			\pinlabel $B$ [b] at 204 472
			\pinlabel $\lu{B}\II_B$ [b] at 300 472
			\pinlabel $B$ [b] at 396 472
			\hair=1.5pt
			\pinlabel $B$ [t] at 204 16
			\endlabellist
			\includegraphics[scale=.6]{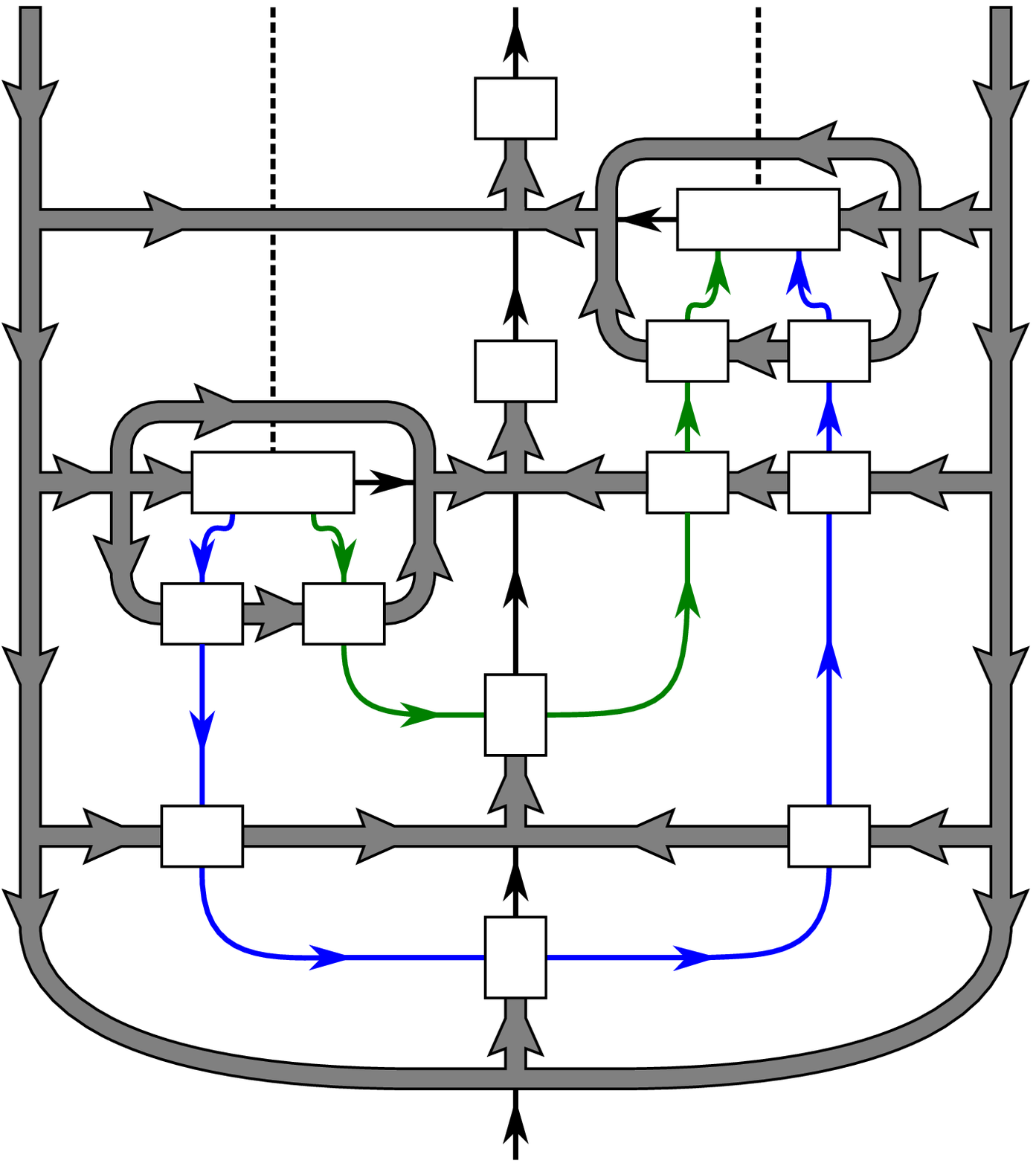}
			\caption{Before simplification.}
			\label{subfig:phi_phi_unsimplified}
		\end{subfigure}
		\begin{subfigure}[h]{.48\linewidth}
			\centering
			\labellist
			\small
			\pinlabel $\ol\delta_X$ at 56 132
			\pinlabel \rotatebox[origin=c]{90}{$\delta_X$} at 120 88
			\pinlabel \rotatebox[origin=c]{180}{$\ol\delta_X$} at 184 132
			\pinlabel \rotatebox[origin=c]{90}{$\delta_Y$} at 120 176
			\pinlabel \rotatebox[origin=c]{180}{$\mu_B$} at 120 236
			\pinlabel \rotatebox[origin=c]{180}{$\mu_B$} at 120 292
			\pinlabel $f$ at 72 204
			\pinlabel \rotatebox[origin=c]{180}{$g$} at 168 264
			\hair=1.5pt
			\pinlabel $B$ [b] at 12 328
			\pinlabel $\li{_B}\II^B$ [b] at 72 328
			\pinlabel $B$ [b] at 120 328
			\pinlabel $\lu{B}\II_B$ [b] at 168 328
			\pinlabel $B$ [b] at 228 328
			\hair=1.5pt
			\pinlabel $B$ [t] at 120 16
			\endlabellist
			\includegraphics[scale=.6]{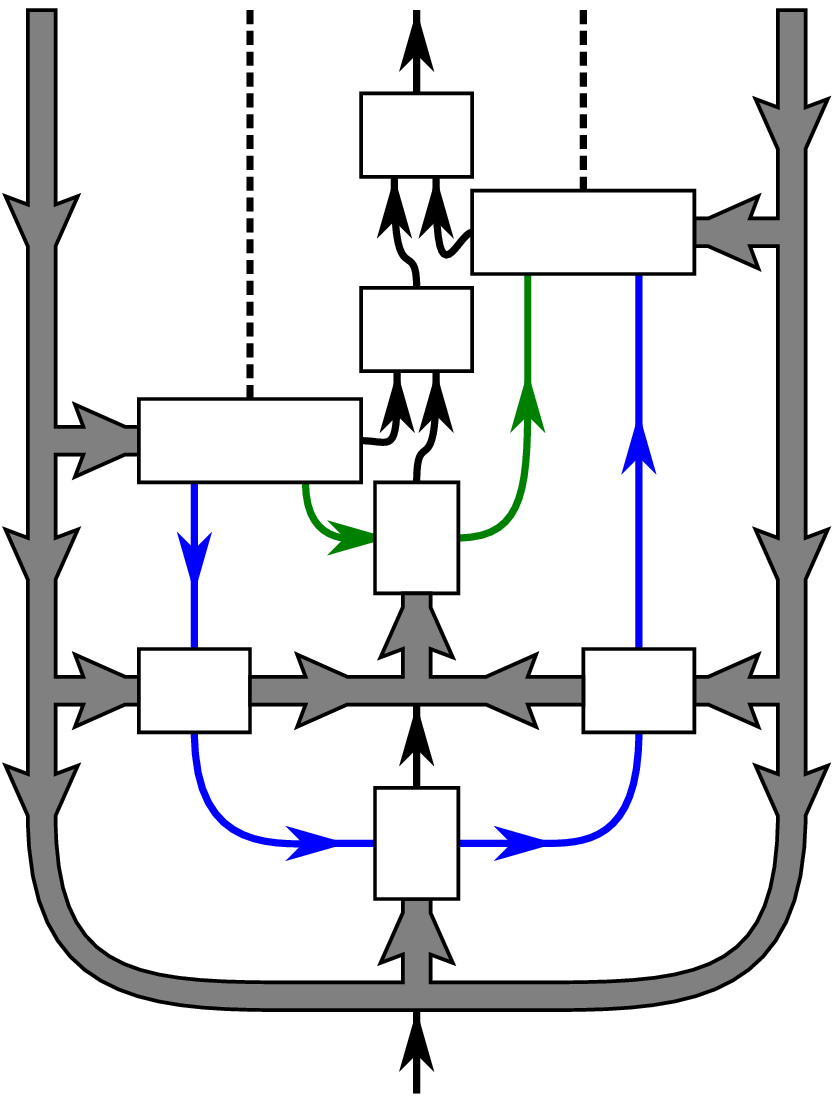}
			\caption{After simplification.}
			\label{subfig:phi_phi_simplified}
		\end{subfigure}
		\begin{subfigure}[h]{.48\linewidth}
			\centering
			\labellist
			\small
			\pinlabel $\ol\delta_X$ at 96 228
			\pinlabel $\delta_X$ at 96 176
			\pinlabel $\ol\delta_X$ at 96 124
			\pinlabel $\delta_Y$ at 184 176
			\pinlabel $\mu_B$ at 232 128
			\pinlabel $\mu_B$ at 232 56
			\pinlabel $f$ at 140 280
			\pinlabel $g$ at 140 72
			\hair=1.5pt
			\pinlabel $B$ [b] at 12 328
			\pinlabel $B$ [b] at 32 328
			\pinlabel $B$ [b] at 52 328
			\pinlabel $\li{_B}\II^B$ [b] at 140 328
			\hair=1.5pt
			\pinlabel $\li{_B}\II^B$ [t] at 140 20
			\pinlabel $B$ [t] at 232 20
			\endlabellist
			\includegraphics[scale=.6]{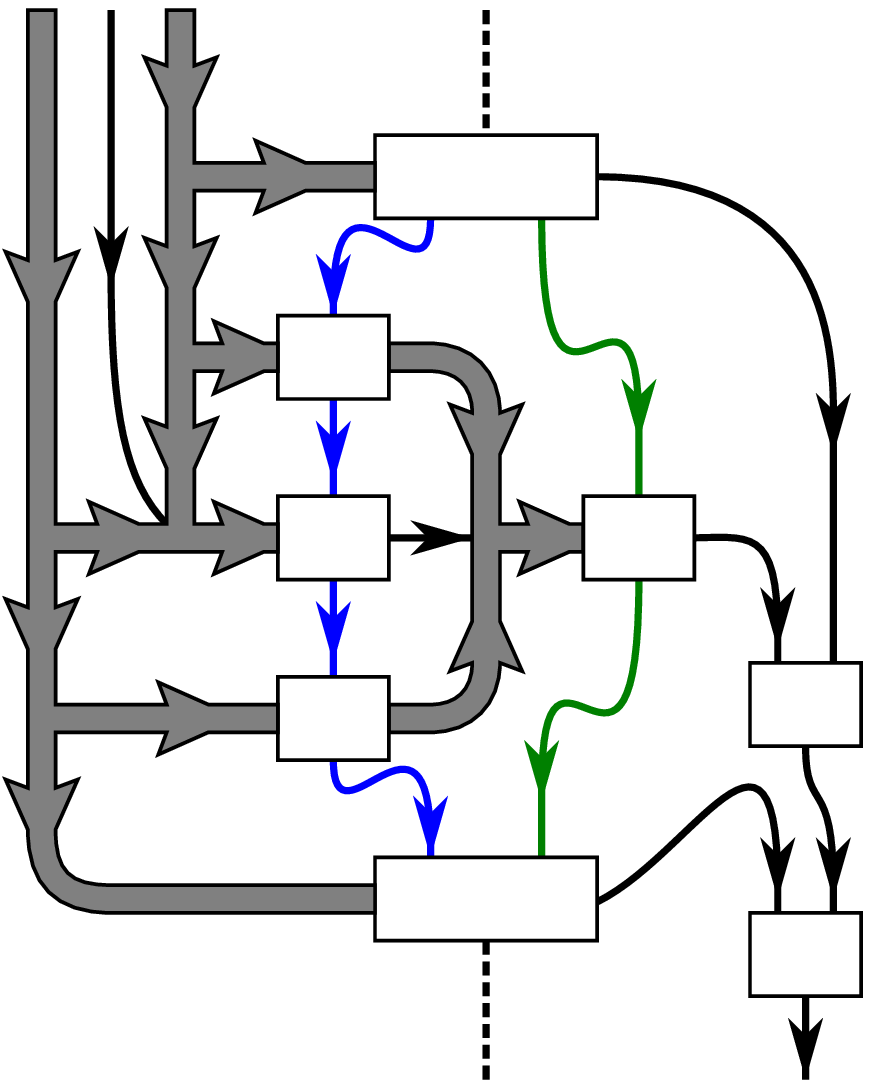}
			\caption{Written sideways.}
			\label{subfig:phi_phi_sideways}
		\end{subfigure}
		\caption{Three views of $\varphi_X\circ\psi\co\II\sqtens B^\vee\sqtens\II\to B^\vee$.}
		\label{fig:phi_phi}
	\end{figure}

	On the other hand, the homomorphism $\iota\co\II\sqtens B^\vee\sqtens\II\to B^\vee$, if written sideways, becomes
	the homotopy equivalence $h_{\II}\co B\dtens\II\to\II$. See \Figure~\ref{fig:iota_h_equivalence} for the
	calculation. In the second step we use some new notation. The caps on the thick strands denote a map $\BBar B\to K$
	to the ground ring, which is the identity on $B^{\otimes 0}$, and zero on $B^{\otimes i}$ for any $i>0$. The dots
	on the $\II$ strands denote the canonical isomorphism of $\II\sqtens B^\vee\sqtens\II$ and $B^\vee$ as
	modules over the ground ring.

	\begin{figure}
		\labellist
		\pinlabel $=$ at 132 76
		\pinlabel $=$ at 276 76
		\pinlabel $\longleftrightarrow$ at 456 76
		\small
		\pinlabel $\iota$ at 60 76
		\pinlabel \rotatebox[origin=c]{90}{$\delta_\II$} at 356 92
		\pinlabel $h_\II$ at 548 76
		\hair=1.5pt
		\pinlabel $B$ [b] at 12 136
		\pinlabel $\II$ [b] at 40 136
		\pinlabel $B$ [b] at 60 136
		\pinlabel $\II$ [b] at 80 136
		\pinlabel $B$ [b] at 108 136
		\pinlabel $B$ [t] at 60 16
		\pinlabel $B$ [b] at 156 136
		\pinlabel $\II$ [b] at 184 136
		\pinlabel $B$ [b] at 204 136
		\pinlabel $\II$ [b] at 224 136
		\pinlabel $B$ [b] at 252 136
		\pinlabel $B$ [t] at 204 16
		\pinlabel $B$ [b] at 300 136
		\pinlabel $\II$ [b] at 328 136
		\pinlabel $B$ [b] at 356 136
		\pinlabel $\II$ [b] at 384 136
		\pinlabel $B$ [b] at 411 136
		\pinlabel $B$ [t] at 356 16
		\pinlabel $B$ [b] at 500 136
		\pinlabel $B$ [b] at 524 136
		\pinlabel $B$ [b] at 548 136
		\pinlabel $\II$ [b] at 572 136
		\pinlabel $\II$ [t] at 548 16
		\pinlabel $B$ [t] at 592 16
		\endlabellist
		\includegraphics[scale=.59]{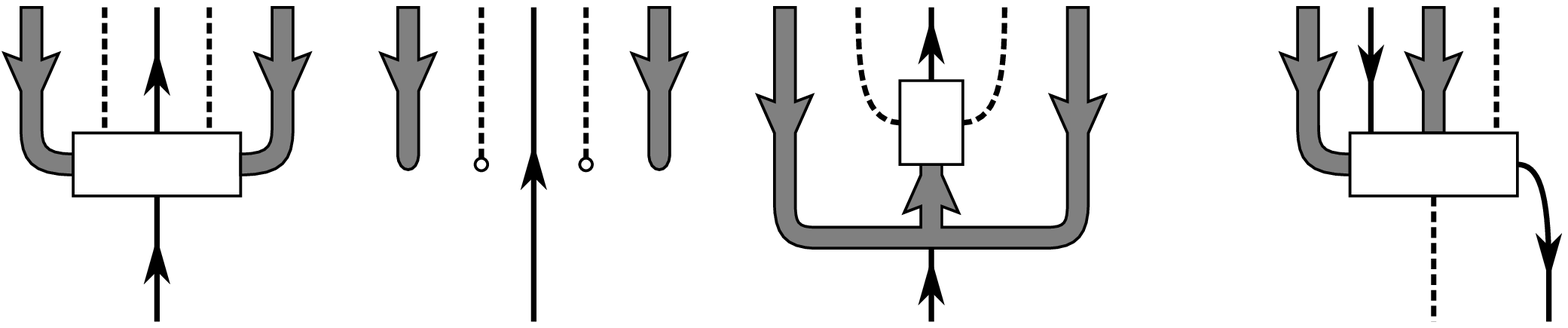}
		\caption{The equivalence of the morphism $\iota$ and $h_\II$.}
		\label{fig:iota_h_equivalence}
	\end{figure}
	
	Finding a null-homotopy for $\iota+\varphi_X\circ\psi$ is equivalent to finding a null-homotopy $B\dtens\II\to\II$ of
	$h_{\II}+g\circ h_{X\sqtens Y}\circ(\id_B\dtens f)$. There is a null-homotopy
	$\zeta_f\co B\dtens\II\to B\dtens\sqtens X\sqtens Y$ of $f\circ h_\II+h_{X\sqtens Y}\circ(\id_B\dtens f)$.
	Recall that $H$ was a null-homotopy of $\id_\II+g\circ f$.
	Thus we have
	\begin{align*}
		\del(H\circ h_\II + g\circ\zeta_f)&=(\id_\II\circ h_\II+g\circ f\circ h_\II)\\
		&+ (g\circ f\circ h_{\II}+g\circ h_{X\sqtens Y}\circ(\id_B\dtens F)\\
		&= h_{\II} + g\circ h_{X\sqtens Y}\circ(\id_B\dtens F),
	\end{align*}
	giving us the required null-homotopy.

	To finish the proof, notice that if $\varphi_X\circ\psi$ is homotopic to $\id_B$, then it is a quasi-isomorphism,
	i.e. a homomorphism whose scalar component is a quasi-isomorphism of chain
	complexes. Moreover, when working with $\ZZ/2$--coefficients, as we do, quasi-isomorphisms of $\Ainf$--modules
	and bimodules coincide with homotopy equivalences.

	In particular we have that $(\varphi_X\circ\psi)_{0|1|0}=(\varphi_X)_{0|1|0}\circ\psi_{0|1|0}$ induces an isomorphism on
	homology (in this case the identity map on homology). In particular $\psi$ induces an injection, while $\varphi_X$
	induces a surjection. Combined with the initial assumption that $B^\vee$ and $X\sqtens A^\vee \sqtens X^\vee$ have
	homologies of equal rank, this implies that $(\varphi_X)_{0|1|0}$ and $\psi_{0|1|0}$ induce isomorphisms on homology.
	That is, $\varphi_X$ and $\psi$ are quasi-isomorphisms, and so homotopy equivalences. This concludes the proof
	of \Proposition~\ref{prop:nabla_invariance_algebra}, and with it, of \Theorem~\ref{thm:join_map_invariance}.
\end{proof}

\section{Properties of the join map}
\label{sec:join_properties}

In this section we give some formulas for the join and gluing maps, and prove their formal properties.

\subsection{Explicit formulas}
\label{sec:join_alternatives}

We have abstractly defined the join map $\Psi_{\W}$ in terms of $\nabla_{\BSA(\W)}$ but so far have not given any
explicit formula for it. Here we give the general formula, as well as some special cases which are somewhat simpler.

If we want to compute $\Psi_{\W}$ for the join $\Y_1\Cup_{\W}\Y_2$, we need to pick
a parametrization by an arc diagram $\Z$, with associated algebra $A$, and
 representatives $U$ for $\BSD(\Y_1)^A$, $V$ for $\lu{A}\BSD(\Y_2)$, and $M$ for $\li{_A}\BSA(\W)$.
Then we know $\SFC(\Y_1)=U\sqtens M$, $\SFC(\Y_2)=M^\vee\sqtens V$, and $\SFC(\Y_1\Cup_{\W}\Y_2)=U\sqtens A^\vee\sqtens V$.
As given in \Definition~$\ref{def:join_map_def}$, the join map $\Psi_{\W}$ is
\begin{equation*}
	\Psi_{\W}=\id_U\sqtens\nabla_M\sqtens\id_V\co U\sqtens M\otimes M^\vee\sqtens V\to U\sqtens A^\vee \sqtens V.
\end{equation*}
In graphic form this can be seen in \Figure~\ref{subfig:join_full_general}.

This general form is not good for computations, especially if we try to write it algebraically.
However $\Psi_\W$ has a much simpler form when $M$ is a \emph{DG-type} module.

\begin{defn}
	\label{def:nice_module}
	An $\Ainf$--module $M_A$ is \emph{of DG-type} if it is a DG-module, i.e., if its structure maps $m_{1|i}$ vanish for $i\geq 2$. A
	bimodule $\li{_A}M_B$ is \emph{of DG-type} if $m_{i|1|j}$ vanish, unless $(i,j)$ is one of $(0,0)$, $(1,0)$ or
	$(0,1)$ (i.e. it is a DG-module over $A\otimes B$).

	A type--$DA$ bimodule $\lu{A}M_B$ is \emph{of DG-type} if $\delta_{1|1|j}$ vanish for all $j\geq 2$. A type--$DD$ bimodule
	$\lu{A}M^B$ is \emph{of DG-type} if $\delta_{1|1|1}(x)$ is always in $A\otimes X\otimes 1+1\otimes X\otimes B$
	(i.e. it is separated). All type $D$--modules $M^A$ are DG-type.
\end{defn}

The $\sqtens$--product of any combination of DG-type modules is also DG-type.
All modules $\BSA$, $\BSD$, $\BSAA$, etc., computed from a nice diagram are of DG-type.

\begin{prop}
	\label{prop:join_for_nice}
	Let the manifolds $\Y_1$, $\Y_2$, and $\W$, and the modules $U$, $V$, and $M$ be as in the above discussion.
	If $M$ is DG-type, the formula for the join map $\Psi_\W$ simplifies to:
	\begin{equation}
		\label{eq:join_nice_formula}
		\Psi_\W(u \sqtens m \otimes n^\vee \sqtens v)=
		\sum_a \left<m_M(a,m),n^\vee\right> \cdot u\sqtens{a}^\vee \sqtens v,
	\end{equation}
	where the sum is over a $\ZZ/2$--basis for $A$. A graphical representation is given
	in \Figure~\ref{subfig:join_full_nice}.
\end{prop}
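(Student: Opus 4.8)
The plan is to unwind the definition of $\nabla_M$ from \Definition~\ref{def:nabla_map} together with that of $\Psi_\W$ from \Definition~\ref{def:join_map_def}, and to use the DG-type hypothesis on $M$ to collapse all but one term. First I would recall that $\nabla_M$ is characterized by \Equation~(\ref{eq:nabla_def}), whose right-hand side is $\left<m_{i+j+1|1}(a'_1,\ldots,a'_j,a'',a_1,\ldots,a_i,p),q^\vee\right>$. Since $\li{_A}M$ is of DG-type, its structure map $m_{n|1}$ vanishes for $n\geq 2$, so this expression is zero unless $i+j+1\leq 1$, i.e.\ unless $i=j=0$. Hence $\nabla_M$ reduces to its single component $\nabla_{0|1|0}$; evaluating \Equation~(\ref{eq:nabla_def}) at $i=j=0$ and expanding the resulting element of $A^\vee$ in the dual basis $\{a^\vee\}$ of a $\ZZ/2$--basis $\{a\}$ of $A$ gives
\begin{equation*}
	\nabla_{0|1|0}(m,n^\vee)=\sum_a\left<m_M(a,m),n^\vee\right>\,a^\vee\in A^\vee ,
\end{equation*}
where $m_M=m_{1|1}$ is the module action.

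Next I would substitute this into $\Psi_\W=\id_U\sqtens\nabla_M\sqtens\id_V$. The $\sqtens$--product of morphisms is computed by applying the iterated type--$D$ operations of $U$ and $V$ and inserting their algebra outputs into the $a_i$ and $a'_j$ slots of $\nabla_M$, while $\id_U$ and $\id_V$ contribute nothing else. Because the only surviving component $\nabla_{0|1|0}$ accepts no algebra inputs on either side, every term with a nontrivial type--$D$ output vanishes and only the one built from the trivial iterates $\id_U$ and $\id_V$ survives. Evaluating on a generator $u\sqtens m\otimes n^\vee\sqtens v$ then yields $u\sqtens\nabla_{0|1|0}(m,n^\vee)\sqtens v=\sum_a\left<m_M(a,m),n^\vee\right>\,u\sqtens a^\vee\sqtens v$, which is \Equation~(\ref{eq:join_nice_formula}). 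The same collapse is visible in the diagrammatic calculus: in \Figure~\ref{subfig:join_full_general} the box $m_M$ acquires a single algebra leg once $M$ is DG-type, so the type--$D$ ``spiders'' coming from $U$ and $V$ have nothing to attach to, and the diagram degenerates to \Figure~\ref{subfig:join_full_nice}.

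I do not expect any substantive obstacle; the argument is pure bookkeeping. The two points that need care are, first, getting the index conventions right --- for the \emph{left} module $\li{_A}M$ the DG-type condition reads $m_{n|1}=0$ for $n\geq 2$, and it is exactly this that forces $i=j=0$ in \Equation~(\ref{eq:nabla_def}) --- and second, invoking the description of the $\sqtens$--product of two identity type--$D$ morphisms with a bimodule morphism precisely enough to conclude that the type--$D$ differentials of $U$ and $V$ genuinely drop out. Both are supplied by the algebraic framework of \Appendix~\ref{sec:algebra}, and the boundedness hypotheses already in force in the construction of $\Psi_\W$ guarantee that every sum involved is finite.
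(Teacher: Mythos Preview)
Your proposal is correct and is exactly the unwinding the paper has in mind: the paper's own proof simply states that the proposition ``follows directly from the definitions of DG-type and elementary modules.'' Your argument makes this explicit by observing that the DG-type hypothesis kills $m_{i+j+1|1}$ for $i+j\geq 1$ in \Equation~(\ref{eq:nabla_def}), leaving only $\nabla_{0|1|0}$, and then noting that a bimodule morphism with no algebra inputs absorbs nothing from $\ol\delta_U$ and $\ol\delta_V$ in the induced map, so \Figure~\ref{subfig:join_full_general} collapses to \Figure~\ref{subfig:join_full_nice}.
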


Finally, an even simpler case is that of elementary modules. We will see later that elementary modules play
an important role for gluing, and for the relationship between the bordered and sutured theories.

\begin{defn}
	\label{def:elementary_module}
	A type--$A$ module $\li{_A}M$ (or similarly $M_A$) is called \emph{elementary} if the following conditions hold:
	\begin{enumerate}
		\item $M$ is generated by a single element $m$ over $\ZZ/2$.
		\item All structural operations on $M$ vanish (except for multiplication by an idempotent, which
			might be identity).
	\end{enumerate}

	A type--$D$ module $\lu{A}M$ (or $M^A$), is called \emph{elementary} if the following conditions hold:
	\begin{enumerate}
		\item $M$ is generated by a single element $m$ over $\ZZ/2$.
		\item $\delta(m)=0$.
	\end{enumerate}
\end{defn}

Notice that for an elementary module $M=\{0,m\}$ we can decompose $m$ as a sum
$m=\iota_1 m+\cdots+\iota_k m$, where $(\iota_i)$ is the canonical basis of the ground ring. Thus we must have $\iota_i m=m$ for some
$i$, and $\iota_j m=0$ for all $i\neq j$. Therefore, elementary (left) modules over $A$ are in a $1$--to--$1$ correspondence
with the canonical basis for its ground ring.

We only use elementary type--$A$ modules in this section but we will need both types later.

\begin{rmk}
	For the algebras we discuss, the elementary type--$A$ modules are precisely the simple modules. The elementary
	type--$D$
	modules are the those $\lu{A}M$ for which $A\sqtens M\in\li{_A}\Mod$ is an elementary projective module.
\end{rmk}

\begin{prop}
	\label{prop:join_for_elementary}
	If $\li{_A}M=\{m,0\}$ is an elementary module corresponding to the basis idempotent $\iota_M$, then the join map $\Psi_\W$ reduces to
	\begin{equation}
		\label{eq:join_elementary_formula}
		\Psi_{\W}(u\sqtens m\otimes m^\vee\sqtens v)=
		u\sqtens {\iota_M}^\vee \sqtens v.
	\end{equation}

	Graphically, this is given in \Figure~\ref{subfig:join_full_elementary}.

	Moreover, in this case, $\SFC(\Y_1)=U\sqtens M\cong U\cdot \iota_M\subset U$
	and $\SFC(\Y_2)=M\sqtens V\cong \iota_M\cdot V\subset V$ as chain complexes.
\end{prop}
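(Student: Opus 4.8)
The plan is to reduce the statement to \Proposition~\ref{prop:join_for_nice}. An elementary module is in particular of DG-type: all structural operations on $\li{_A}M$ vanish apart from the action of the ground ring, so $m_{0|1}=0$ and $m_{i|1}=0$ for every $i\geq 2$, which is more than \Definition~\ref{def:nice_module} requires. Hence \Proposition~\ref{prop:join_for_nice} applies. Choosing a $\ZZ/2$--basis for $A$ that contains the idempotent $\iota_M$ --- legitimate, since $\iota_M$ is a canonical basis element of $\I(\Z)$ and is indecomposable in $A=\A(\Z)$ by \Definition~\ref{def:alpha_algebra} --- \Equation~(\ref{eq:join_nice_formula}) reads
\begin{equation*}
	\Psi_\W(u\sqtens m\otimes m^\vee\sqtens v)=\sum_a\left<m_M(a,m),m^\vee\right>\cdot u\sqtens a^\vee\sqtens v,
\end{equation*}
the sum being over this basis.

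Next I would evaluate the coefficients $\left<m_M(a,m),m^\vee\right>$. Here $m_M(a,m)=m_{1|1}(a,m)$, and the only way this can be nonzero is if $a$ is an idempotent acting nontrivially on $m$; among the basis elements that singles out $a=\iota_M$, for which $m_{1|1}(\iota_M,m)=\iota_M m=m$. Every other basis element --- a different idempotent $\iota_j$, which annihilates $m$, or a non-idempotent element, on which the structure operation vanishes --- contributes $0$. Since $\left<m,m^\vee\right>=1$, only the term $a=\iota_M$ survives, and we obtain \Equation~(\ref{eq:join_elementary_formula}).

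For the identification of chain complexes I would unwind the box tensor product. The underlying $\ZZ/2$--vector space of $U\sqtens M$ is $U\otimes_{\I(\Z)}M$; since $M\cong\I(\Z)\cdot\iota_M$ is one--dimensional and supported at the idempotent $\iota_M$, this is canonically the direct summand $U\cdot\iota_M\subset U$. Because $m_{0|1}=0$ and all actions $m_{i|1}$ with $i\geq 1$ vanish except the $\iota_M$--action, the box--tensor differential on $U\sqtens M$ reduces, under this identification, to the differential of $U$ restricted to $U\cdot\iota_M$; hence $\SFC(\Y_1)=U\sqtens M\cong U\cdot\iota_M$ as chain complexes. For $\Y_2$, recall $\SFC(\Y_2)=M^\vee\sqtens V$ with $M^\vee=\BSA(-\W)$; dualizing a one--dimensional module supported at an idempotent yields a one--dimensional module supported at that same idempotent, so $M^\vee$ is again elementary with supporting idempotent $\iota_M$, and the same computation gives $\SFC(\Y_2)\cong\iota_M\cdot V\subset V$.

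The only real work here is bookkeeping --- keeping the left/right idempotent conventions inside $\sqtens$ straight, and checking that $M^\vee$ is elementary with the same supporting idempotent --- so I do not anticipate any genuine obstacle; all of the substantive content already sits in \Proposition~\ref{prop:join_for_nice}.
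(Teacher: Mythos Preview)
Your proposal is correct and follows the same approach as the paper, which simply states that the result follows directly from the definitions of DG-type and elementary modules. You have supplied the details the paper omits: specializing \Equation~(\ref{eq:join_nice_formula}) using that $m_{1|1}(a,m)$ vanishes unless $a=\iota_M$, and unwinding the underlying ground-ring module of the box tensor product (your implicit correction of $M\sqtens V$ to $M^\vee\sqtens V$ in the statement is also right, since $\li{_A}M$ and $\lu{A}V$ cannot be paired directly).
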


\begin{figure}
	\begin{subfigure}[t]{.32\linewidth}
		\centering
		\labellist
		\small
		\pinlabel $\ol\delta_U$ at 16 56
		\pinlabel $\ol\delta_V$ at 120 56
		\pinlabel \rotatebox[origin=c]{90}{$m_M$} at 68 104
		\hair=1.5pt
		\pinlabel $U$ [b] at 16 136
		\pinlabel $U$ [t] at 16 16
		\pinlabel $V$ [b] at 120 136
		\pinlabel $V$ [t] at 120 16
		\pinlabel $M$ [b] at 44 136
		\pinlabel $M$ [b] at 92 136
		\pinlabel $A$ [t] at 68 16
		\endlabellist
		\includegraphics[scale=.6]{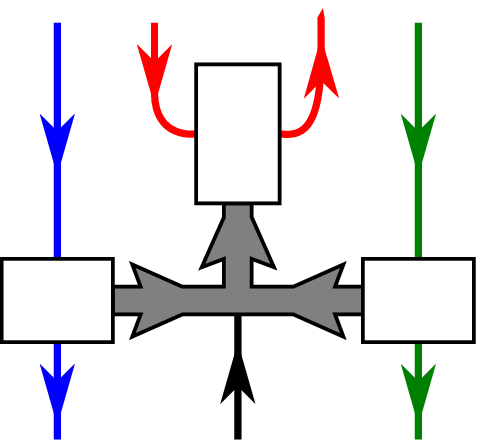}
		\caption{The general case.}
		\label{subfig:join_full_general}
	\end{subfigure}
	\begin{subfigure}[t]{.32\linewidth}
		\centering
		\labellist
		\small
		\pinlabel \rotatebox[origin=c]{90}{$m_M$} at 68 104
		\hair=1.5pt
		\pinlabel $U$ [b] at 16 136
		\pinlabel $U$ [t] at 16 16
		\pinlabel $V$ [b] at 120 136
		\pinlabel $V$ [t] at 120 16
		\pinlabel $M$ [b] at 44 136
		\pinlabel $M$ [b] at 92 136
		\pinlabel $A$ [t] at 68 16
		\endlabellist
		\includegraphics[scale=.6]{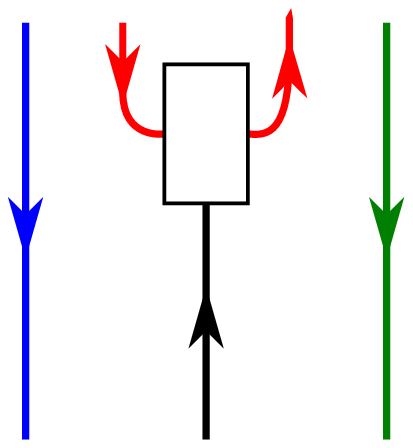}
		\caption{$M$ of DG-type.}
		\label{subfig:join_full_nice}
	\end{subfigure}
	\begin{subfigure}[t]{.32\linewidth}
		\centering
		\labellist
		\small
		\pinlabel $\iota_M^\vee$ at 68 76
		\hair=1.5pt
		\pinlabel $U$ [b] at 16 136
		\pinlabel $U$ [t] at 16 16
		\pinlabel $V$ [b] at 120 136
		\pinlabel $V$ [t] at 120 16
		\pinlabel $M$ [b] at 44 136
		\pinlabel $M$ [b] at 92 136
		\pinlabel $A$ [t] at 68 16
		\endlabellist
		\includegraphics[scale=.6]{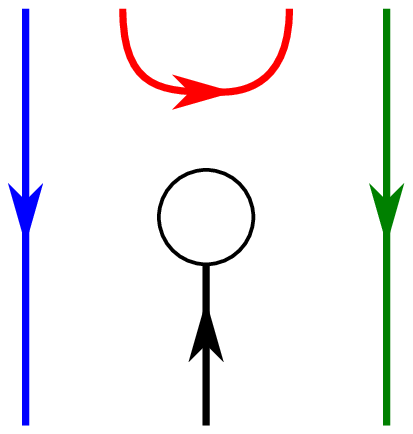}
		\caption{$M$ elementary.}
		\label{subfig:join_full_elementary}
	\end{subfigure}
	\caption{Full expression for join map in three cases.}
	\label{fig:join_full_all_cases}
\end{figure}

\Proposition~\ref{prop:join_for_nice} and \Proposition~\ref{prop:join_for_elementary} follow directly from the
definitions of DG-type and elementary modules.

\subsection{Formal properties}
\label{sec:join_basic_properties}

In this section we will show that the join map has the formal properties stated in
\Theorem~\ref{thm:intro_join_and_gluing}. A more precise statement of the properties is given below.

\begin{thm}
	\label{thm:join_nice_properties}
	The following properties hold:
	\begin{enumerate}
		\item \label{subthm:join_symmetric} Let $\Y_1$ and $\Y_2$ be sutured and $\W$ be partially
			sutured, with embeddings $\W\into\Y_1$ and $-\W\into\Y_2$. There are natural identifications
			of the disjoint unions $\Y_1\sqcup\Y_2$ and $\Y_2\sqcup\Y_1$,
			and of of the joins $\Y_1\Cup_{\W}\Y_2$ and $\Y_2\Cup_{-\W}\Y_1$.
			Under this identification, there is a homotopy
			\begin{equation*}
				\Psi_{\W}\simeq\Psi_{-\W}.
			\end{equation*}
		\item \label{subthm:join_associative} Let $\Y_1$, $\Y_2$, and $\Y_3$ be sutured, and $\W_1$ and $\W_2$
			be partially sutured, such that there are embeddings $\W_1\into\Y_1$, $(-\W_1\sqcup\W_2)\into\Y_2$,
			and $-\W_2\into\Y_3$. The following diagram commutes up to homotopy:
			{\centering
			\xymatrix@C=1in{
			\SFC(\Y_1\sqcup\Y_2\sqcup\Y_3) \ar[r]^{\Psi_{\W_1}} \ar[d]_{\Psi_{\W_2}} \ar[rd]^{\Psi_{\W_1\sqcup-\W_2}} & \SFC(\Y_1\Cup\Y_2\sqcup\Y_3) \ar[d]^{\Psi_{\W_2}} \\
				\SFC(\Y_1\sqcup\Y_2\Cup\Y_3) \ar[r]_{\Psi_{\W_1}} & \SFC(\Y_1\Cup\Y_2\Cup\Y_3)
				}}
		\item \label{subthm:join_identity} Let $\W$ be partially sutured. There is
			a canonical element $[\Delta_{\W}]$ in the sutured Floer homology $\SFH(\D(\W))$ of the double of
			$\W$. If $\Delta$ is any representative for $[\Delta_{\W}]$, and there is an embedding $\W\into\Y$, then
			\begin{equation}
				\label{eq:join_identity}
				\Psi_{\W}(\cdot,\Delta)\simeq\id_{\SFC(\Y)}\co\SFC(\Y)\to\SFC(\Y).
			\end{equation}
	\end{enumerate}
\end{thm}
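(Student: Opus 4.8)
The three properties are essentially algebraic statements about the map $\nabla$, once we translate the topology into the language of bordered sutured modules. Following the scheme used for Theorem~\ref{thm:join_map_invariance}, I would fix a parametrization of $\F$ by an arc diagram $\Z$ with algebra $A$, and write $\SFC(\Y_i)$ as $\sqtens$--products involving $U$, $V$, $M=\BSA(\W)$ and $M^\vee=\BSA(-\W)$, reducing each property to an identity about $\nabla_M$ alone.

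\smallskip

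\textbf{Symmetry.} By \Proposition~\ref{prop:mirror_is_dual}, $\BSA(-\W)$ is represented by $M^\vee$, and $\BSA(-(-\W))=\BSA(\W)$ by $M^{\vee\vee}\simeq M$. Under the canonical identification $\Y_1\Cup_\W\Y_2\cong\Y_2\Cup_{-\W}\Y_1$ (\Proposition~\ref{prop:join_properties}(1)), both $\Psi_\W$ and $\Psi_{-\W}$ are built from $\id_U\sqtens(\,\cdot\,)\sqtens\id_V$, so it suffices to show $\nabla_M$ and $\nabla_{M^\vee}$ agree after the identification $A^\vee\cong A^\vee$ and $M\otimes M^\vee\cong M^\vee\otimes M^{\vee\vee}$. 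I would check this directly from the defining \Equation~(\ref{eq:nabla_def}): the right-hand side $\langle m_{i+j+1|1}(a'_1,\ldots,a''\ldots,p),q^\vee\rangle$ is symmetric under swapping the roles of $(p,q^\vee)$ with $(q^{\vee\vee},p^\vee)$ and cyclically rotating the algebra inputs, which is exactly what the diagrammatic picture in \Figure~\ref{fig:nabla_def} shows when read upside down.

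\smallskip

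\textbf{Associativity.} Here I would exploit \Proposition~\ref{prop:join_properties}(2): $(\Y_1\Cup_{\W_1}\Y_2)\Cup_{\W_2}\Y_3\cong(\Y_1\sqcup\Y_3)\Cup_{\W_1\sqcup-\W_2}\Y_2$, etc. Cut $\Y_2$ along parallel copies of $\F_1$ and $\F_2$ to isolate $M_1=\BSA(\W_1)$, $M_2=\BSA(\W_2)$, leaving $\Y_2\setminus(\W_1\sqcup\W_2)$ with a $\tDD$--type bimodule $P$, so that $\SFC(\Y_2)\simeq M_1^\vee\sqtens P\sqtens M_2^\vee$ and $\SFC(\Y_1)=U_1\sqtens M_1$, $\SFC(\Y_3)=M_2^\vee\sqtens V_3$—wait, more carefully $\SFC(\Y_3)=M_2\sqtens V_3$ with $M_2$ for $-\W_2$. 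The three composites in the square then all become $\id_{U_1}\sqtens\nabla_{M_1}\sqtens\id_P\sqtens\nabla_{M_2}\sqtens\id_{V_3}$ applied in different orders; since $\nabla_{M_1}$ and $\nabla_{M_2}$ act on disjoint tensor factors and $\id\sqtens(\cdot)$, $(\cdot)\sqtens\id$ are DG-functors, the two orders agree on the nose, and the diagonal arrow $\Psi_{\W_1\sqcup-\W_2}$ equals $\nabla_{M_1}\otimes\nabla_{M_2}$ performed simultaneously—again literally the same map. So associativity is a formal consequence of the $\sqtens$ bifunctoriality once the geometric cutting is in place; no new homotopy is needed beyond the pairing theorem identifications.

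\smallskip

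\textbf{Identity.} This is the substantive part. Recall $\D(\W)=-\W\cup_{-\ol\F}\TW_{-\ol\F,-}\cup_\F\W$, so by the pairing theorem and \Proposition~\ref{prop:twisting_slice_invariants} (giving $\BSAA(\TW_{\ol\F,-})\simeq A$ as a bimodule over itself) we have $\SFC(\D(\W))\simeq M^\vee\sqtens A\sqtens M$. The canonical class $[\Delta_\W]$ should be the homology class of the ``coevaluation'' element: under $M^\vee\sqtens A\sqtens M\simeq M^\vee\sqtens M$, it is $\sum_i m_i^\vee\otimes m_i$ over a basis (equivalently the image of $1$ under the unit $K\to M^\vee\otimes M$ dual to evaluation). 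I must check this is a cycle and that its class is well-defined independent of representative—this follows because it is the canonical ``identity'' element of the bimodule $\BSAA$ of a product region, analogous to how the identity bimodule $\II$ behaves. Then, given $\W\into\Y$, we have $\Y\Cup_\W\D(\W)\cong\Y$ (\Proposition~\ref{prop:join_properties}(3)), and $\Psi_\W(\cdot,\Delta)=\id_U\sqtens(\nabla_M(\,\cdot\,\otimes\Delta))\sqtens\id_V$ where, after the identifications, $\nabla_M(\,\cdot\,\otimes(\sum m_i^\vee\otimes m_i))$ is precisely the counit-then-unit composite, which by a standard zig-zag identity for the dual pairing is homotopic to the identity on $M$. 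Diagrammatically: feeding the coevaluation into $\nabla_M$ and using the defining equation collapses, via the structure equation for $m_M$, to $h_M$ composed with its inverse, i.e.\ the identity up to homotopy. I expect verifying this zig-zag identity at the $\Ainf$ (chain) level—rather than just on homology—to be the main obstacle, since one must produce an explicit homotopy, presumably again by ``turning the diagram sideways'' as in the proof of \Proposition~\ref{prop:nabla_well_defined} and invoking that $h_M\co A\dtens M\to M$ is a homotopy equivalence with a chosen inverse. Once that homotopy is written down, \Equation~(\ref{eq:join_identity}) follows by tensoring with $\id_U$ and $\id_V$.
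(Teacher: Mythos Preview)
Your approaches to parts~(\ref{subthm:join_symmetric}) and~(\ref{subthm:join_associative}) are essentially what the paper does. For symmetry the paper phrases it as reflecting the entire diagram for $\Psi_M$ along the vertical axis, which sends $\nabla_M$ to $\nabla_{M^\vee}$; this is the same content as your direct check from \Equation~(\ref{eq:nabla_def}). For associativity the paper, like you, cuts $\Y_2$ to obtain a type--$DD$ bimodule $X$ and observes that the two $\nabla$'s act on disjoint tensor factors. One point you should make explicit: the paper chooses $M$, $N$, and $X$ to be of \emph{DG-type} so that $\sqtens$ with $X$ is strictly associative (not just up to homotopy) and so that the simplified formula of \Proposition~\ref{prop:join_for_nice} applies; without this the ``act on disjoint factors, hence equal on the nose'' argument does not quite go through.

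Part~(\ref{subthm:join_identity}) is where your proposal has a real gap. Your model $\SFC(\D(\W))\simeq M^\vee\sqtens A\sqtens M$ is missing the type--$DD$ identity bimodules: since $M^\vee$ and $A$ are both type--$A$ on the relevant side, one cannot $\sqtens$ them directly, and the correct model is $M^\vee\sqtens\lu{A}\II^A\sqtens A\sqtens\lu{A}\II^A\sqtens M$. More importantly, after applying $\nabla_M$ you land in $U\sqtens\II\sqtens A^\vee\sqtens\II\sqtens A\sqtens\II\sqtens M$, and you must then identify this with $U\sqtens\II\sqtens M$. Your ``zig-zag identity'' intuition is morally right, but it does not by itself produce the required chain-level map. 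The paper isolates this as a separate technical lemma: there is an explicit homomorphism
\[
c_A\co\lu{A}\II^A\sqtens A^\vee\sqtens\lu{A}\II^A\sqtens A\to\lu{A}\II_A,
\]
given by $*\sqtens a^\vee\sqtens*\sqtens b\mapsto b\otimes*$ if $a$ is an idempotent and $0$ otherwise, and one must prove (i) that $c_A$ is a homomorphism (a diagrammatic check of $\del c_A=0$), and (ii) that $c_A$ is a homotopy equivalence. Step~(ii) is not purely algebraic in the paper: one shows $\id_A\sqtens c_A$ is surjective, then invokes the topological fact that source and target both compute $\BSAA(\TW_{\ol\F,-})$ to conclude the ranks match. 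Once $c_A$ is in hand, the composition $(\id_U\sqtens c_A\sqtens\id_{\II\sqtens M})\circ\Psi_M\circ(\id\otimes\Delta_M)$ is \emph{equal} (not merely homotopic) to $\id$, by a direct diagrammatic simplification. Your sketch does not anticipate this cancellation lemma, and ``$h_M$ composed with its inverse'' is not what actually appears.
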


\begin{proof}
	We will prove the three parts in order.

	For part~(\ref{subthm:join_symmetric}), take representatives $U^A$ for $\BSD(\Y_1\setminus\W)$, $\lu{A}V$ for $\BSD(\Y_2\setminus-\W)$, and
	$\li{_A}M$ for
	$\BSA(\W)$.
	The main observation here is that we can turn left modules into right modules and vice versa, by reflecting
	all diagrams along the vertical axis (see \Appendix~\ref{sec:duals}). If we reflect the entire diagram for
	$\Psi_\M$, domain and target chain complexes are turned into isomorphic ones and we
	get a new map that is equivalent.

	The domain $U^A\sqtens\li{_A}M\otimes{M^\vee}_A\sqtens\lu{A}V$ becomes
	$V^{A^{\op}}\sqtens\li{_{A^{\op}}}{M^\vee}\otimes M_{A^{\op}}\sqtens\lu{A^{\op}}U$, and the target
	$U^A\sqtens\li{_A}{A^\vee}_A\sqtens\lu{A}V$ becomes
	$V^{A^{\op}}\sqtens\li{_{A^{\op}}}{(A^\vee)^{\op}}_{A^{\op}}\sqtens\lu{A^{\op}}U$.

	Notice that $V^{A^{\op}}$ is $\BSD(\Y_2\setminus-\W)$, $\li{_{A^{\op}}}U$ is $\BSD(\Y_1\setminus\W)$, and $\li{_{A^{\op}}}{M^\vee}$ is
	$\BSA(-\W)$. In addition $(A^\vee)^{\op}=(A^{\op})^\vee$. Since the map $\nabla_M$ is completely symmetric,
	when we reflect it, we get $\nabla_{M^\vee}$. Everything else is preserved, so reflecting $\Psi_{\W}$ gives precisely
	$\Psi_{-\W}$. This finishes  part~(\ref{subthm:join_symmetric}).

	For part~(\ref{subthm:join_associative}),
	the equivalence is best seen by working with convenient representatives. Pick the following modules as
	representatives: $U^A$ for $\BSD(\Y_1\setminus\W_1)$, $\lu{A}X^B$ for $\BSDD(\Y_2\setminus(-\W_1\cup\W_2))$,
	$\lu{B}V$ for $\BSD(\Y_1)$, $\li{_A}M$ for $\BSA(\W_1)$ and $\li{_B}N$ for $\BSD(\W_2)$. We can always choose $M$,
	$N$, and $X$ to be of DG-type in the sense of \Definition~\ref{def:nice_module}. Since $X$ is of DG-type, taking the
	$\sqtens$--product with it is associative. (This is only true up to homotopy in general). Since $M$ and $N$ are
	DG-type, we can apply \Proposition~\ref{prop:join_for_nice} to get formulas for $\Psi_{\W_1}$ and $\Psi_{\W_2}$. The two
	possible compositions are shown in \Figures~\ref{subfig:psi_1_then_2} and~\ref{subfig:psi_2_then_1}.

	To compute
	$\Psi_{\W_1\cup-\W_2}$, notice that $(U\otimes V)^{A,B^{\op}}$ represents $\BSDD(
	(\Y_1\cup\Y_3)\setminus(\W_1\cup-\W_3))$, $\lu{A,B^{\op}}X$ represents
	$\BSDD(\Y_2\setminus(-\W_1\cup\W_2))$, and $\li{_{A,B^{\op}}}(M\otimes N^\vee)$ is a DG-type module representing
	$\BSAA(\W_1\cup-\W_2)$. To compute the join map, we need to convert them to single modules. For type--$DD$ modules,
	this is trivial (any $A,B$--bimodule is automatically an $A\otimes B$--module and vice versa). For type--$AA$
	modules, this could be complicated in general. Luckily, it is easy for DG-type modules. Indeed, if $P_{A,B}$ is
	DG-type,
	the corresponding $A\otimes B$--module $P_{A\otimes B}$ is also DG-type, with algebra action
	\begin{equation*}
		m_{1|1}(\cdot,a\otimes b)=m_{1|1|0}(\cdot,a)\circ m_{1|0|1}(\cdot,b)=m_{1|0|1}(\cdot,b)\circ m_{1|1|0}(\cdot,a).
	\end{equation*}
	In the definition of bimodule invariants in~\cite{Zar:BSFH}, the procedure
	used to get $\BSAA$ from $\BSA$, and $\BSDD$ from $\BSD$ is exactly the reverse of this construction.

	Thus, we can see that $(U\otimes V)^{A\otimes B^{\op}}$ represents
	$\BSD((\Y_1\cup\Y_3)\setminus(\W_1\cup-\W_3))$, $\lu{A\otimes B^{\op}}X$ represents
	$\BSD(\Y_2\setminus(-\W_1\cup\W_2))$, and $\li{_{A\otimes B^{\op}}}(M\otimes N^\vee)$ represents
	$\BSA(\W_1\cup-\W_2)$. It is also easy to check that
	\begin{equation*}
		\li{_A}{A^\vee}_A\otimes\li{_{B^{\op}}}{(B^{\op})^\vee}_{B^{\op}}\cong\li{_{A\otimes B^{\op}}}{(A\otimes
		B^{\op})^\vee}_{A\otimes B^{\op}}.
	\end{equation*}

	We can see a diagram for $\Psi_{\W_1\cup-\W_2}$ in \Figure~\ref{subfig:psi_12_together}.
	By examining the diagrams, we see that the three maps are the same, which finishes 
	part~(\ref{subthm:join_associative}).

	\begin{figure}
		\begin{subfigure}[b]{.32\linewidth}
			\centering
			\labellist
			\small
			\pinlabel \rotatebox[origin=c]{90}{$m_M$} at 52 136
			\pinlabel \rotatebox[origin=c]{90}{$m_N$} at 140 80
			\hair=1.5pt
			\pinlabel $U$ [b] at 8 172
			\pinlabel $M$ [b] at 28 172
			\pinlabel $M$ [b] at 76 172
			\pinlabel $X$ [b] at 96 172
			\pinlabel $N$ [b] at 116 172
			\pinlabel $N$ [b] at 164 172
			\pinlabel $V$ [b] at 184 172
			\pinlabel $U$ [t] at 8 20
			\pinlabel $A$ [t] at 52 20
			\pinlabel $X$ [t] at 96 20
			\pinlabel $B$ [t] at 140 20
			\pinlabel $V$ [t] at 184 20
			\endlabellist
			\includegraphics[scale=.55]{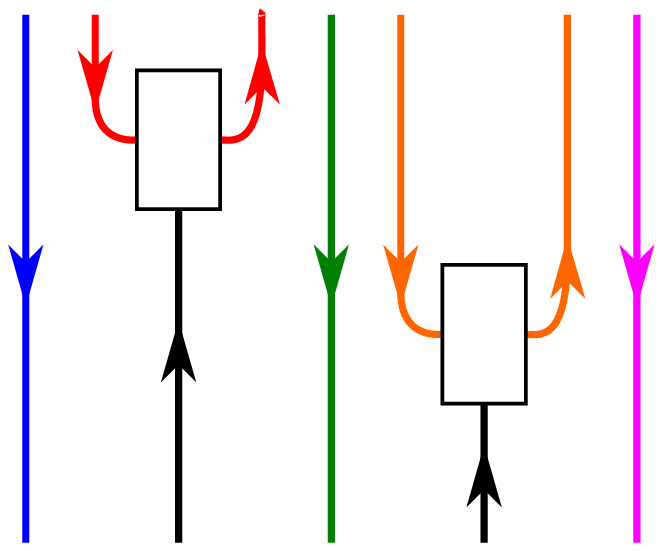}
			\caption{$\Psi_{\W_2}\circ\Psi_{\W_1}$}
			\label{subfig:psi_1_then_2}
		\end{subfigure}
		\begin{subfigure}[b]{.32\linewidth}
			\centering
			\labellist
			\small
			\pinlabel \rotatebox[origin=c]{90}{$m_M$} at 52 80
			\pinlabel \rotatebox[origin=c]{90}{$m_N$} at 140 136
			\hair=1.5pt
			\pinlabel $U$ [b] at 8 172
			\pinlabel $M$ [b] at 28 172
			\pinlabel $M$ [b] at 76 172
			\pinlabel $X$ [b] at 96 172
			\pinlabel $N$ [b] at 116 172
			\pinlabel $N$ [b] at 164 172
			\pinlabel $V$ [b] at 184 172
			\pinlabel $U$ [t] at 8 20
			\pinlabel $A$ [t] at 52 20
			\pinlabel $X$ [t] at 96 20
			\pinlabel $B$ [t] at 140 20
			\pinlabel $V$ [t] at 184 20
			\endlabellist
			\includegraphics[scale=.55]{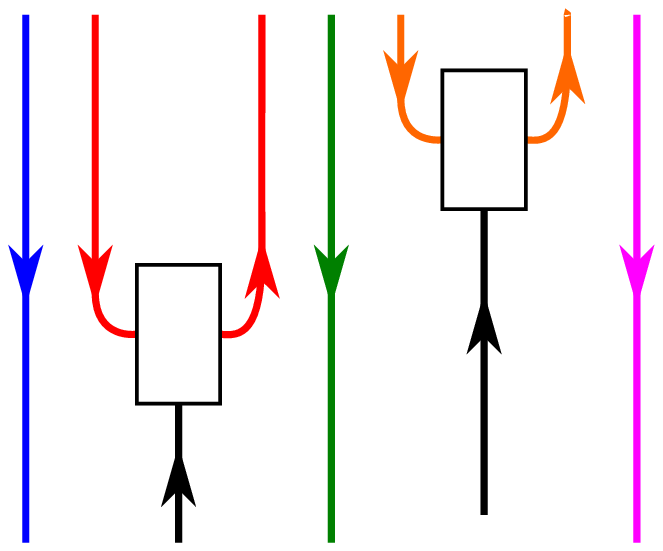}
			\caption{$\Psi_{\W_1}\circ\Psi_{\W_2}$}
			\label{subfig:psi_2_then_1}
		\end{subfigure}
		\begin{subfigure}[b]{.32\linewidth}
			\centering
			\labellist
			\small
			\pinlabel \rotatebox[origin=c]{90}{$m_M$} at 92 80
			\pinlabel \reflectbox{\rotatebox[origin=c]{90}{$m_N$}} at 124 108
			\hair=1.5pt
			\pinlabel $U\!\!\otimes\!\!V$ [b] at 22 172
			\pinlabel $M\!\!\otimes\!\!N^\vee$ [b] at 74 172
			\pinlabel $M^\vee\!\!\!\otimes\!\!N$ [b] at 154 172
			\pinlabel $X$ [b] at 200 172
			\pinlabel {$U\!\!\otimes\!\!V$} [t] at 22 20
			\pinlabel {$A\!\otimes\!B^{\op}$} [t] at 108 20
			\pinlabel $X$ [t] at 200 20
			\endlabellist
			\includegraphics[scale=.55]{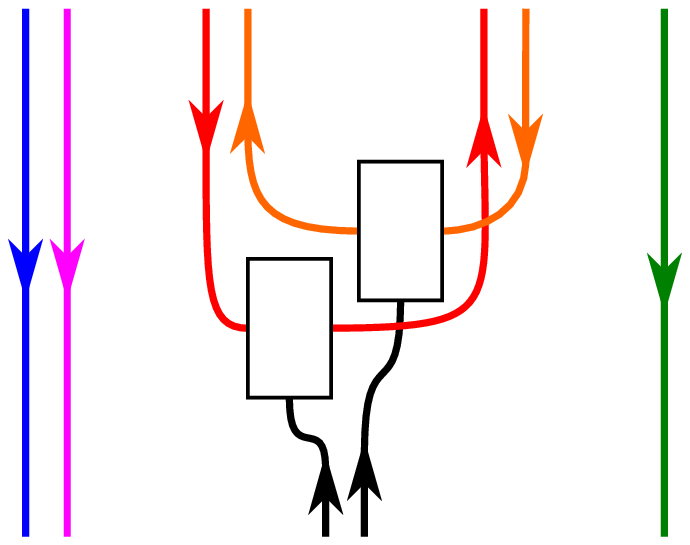}
			\caption{$\Psi_{\W_1\cup-\W_2}$}
			\label{subfig:psi_12_together}
		\end{subfigure}
		\caption{Three ways to join $\Y_1$, $\Y_2$, and $\Y_3$.}
		\label{fig:three_joins}
	\end{figure}

	Part~(\ref{subthm:join_identity}) requires some more work, so we will split it in several steps. We will define $\Delta_M$ for a fixed
	representative $M$ of $\BSD(\W)$.  We will prove that $[\Delta_M]$ does no depend on the choice of $M$.
	Finally, we will use a computational lemma to show that \Equation~(\ref{eq:join_identity}) holds for $\Delta_M$.

	First we will introduce some notation. Given an $\Ainf$--module $\li{_A}M$ over $A=\A(\Z)$, define the \emph{double}
	of $M$ to be
	\begin{equation}
		\label{eq:double_mod_def}
		\D(M)={M^\vee}\sqtens(\lu{A}\II^A\sqtens A\sqtens\lu{A}\II^A)\sqtens M.
	\end{equation}

	Note that if $M=\BSA(\W)$, then $\D(M)=\BSA(-\W)\sqtens\BSDD(\TW_{\F,-})\sqtens\BSA(\W)\simeq\SFC(\D(\W))$.
	Next we define the \emph{diagonal element} $\Delta_M\in\D(M)$ as follows. Pick a basis $(m_1,\ldots,m_k)$ of $M$
	over $\ZZ/2$. Define
	\begin{equation}
		\label{eq:delta_def}
		\Delta_M=\sum_{i=1}^k m_i\sqtens (*\sqtens 1 \sqtens *)\sqtens {m_i}^\vee.
	\end{equation}

	It is easy to check that this definition does not depend on the choice of basis. Indeed there is a really simple
	diagrammatic representation of $\Delta M$, given in \Figure~\ref{fig:delta_def}. We think of it as a linear map from
	$\ZZ/2$ to $\D(M)$. It is also easy to check that $\del\Delta_M=0$. Indeed, writing out the definition of
	$\del\Delta_M$, there are are only two nonzero terms which cancel.

	\begin{figure}
		\labellist
		\pinlabel $=$ at 124 64
		\small
		\pinlabel $\Delta_M$ at 58 104
		\pinlabel $1$ at 190 60
		\hair=1.5pt
		\pinlabel $M$ [t] at 12 20
		\pinlabel $\lu{A}\II^A$ [t] at 34 20
		\pinlabel $A$ [t] at 58 20
		\pinlabel $\lu{A}\II^A$ [t] at 82 20
		\pinlabel $M$ [t] at 104 20
		\pinlabel $M$ [t] at 144 20
		\pinlabel $\lu{A}\II^A$ [t] at 166 20
		\pinlabel $A$ [t] at 190 20
		\pinlabel $\lu{A}\II^A$ [t] at 214 20
		\pinlabel $M$ [t] at 236 20
		\endlabellist
		\includegraphics[scale=.7]{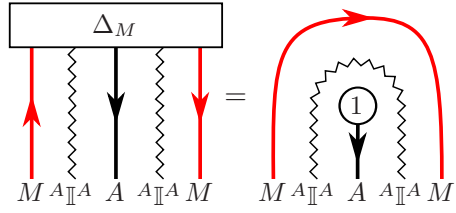}
		\caption{The diagonal element $\Delta_M$.}
		\label{fig:delta_def}
	\end{figure}

	The proof that $[\Delta_M]$ does not depend on the choices of $A$ and $M$ is very similar to the proof
	of \Theorem~\ref{thm:join_map_invariance}, so will omit it. (It involves showing independence from $M$, as well as
	from $A$ via a quasi-invertible bimodule $\lu{A}X_B$.)

	\begin{lem}
		\label{lem:av_a_homotopy_equivalence}
		Let $A$ be a differential graded algebra, coming from an arc diagram $\Z$.
		There is a homotopy equivalence
		\begin{equation*}
			c_A\co\lu{A}\II^A~\sqtens~ A^\vee~\sqtens~\lu{A}\II^A~\sqtens~\li{_A}A_A\to\lu{A}\II_A,
		\end{equation*}
		given by
		\begin{equation*}
			(c_A)_{1|1|0}\left( *\sqtens a^\vee \sqtens*\sqtens b \right)=
			\begin{cases}
				b \otimes * &\textrm{if $a$ is an idempotent,}\\
				0 &\textrm{otherwise.}
			\end{cases}
		\end{equation*}
	\end{lem}

	Here we use $*$ to denote the unique element with compatible idempotents in the two versions of $\II$.
	(Both versions have generators in 1--to--1 correspondence with the basis idempotents.)

	\begin{rmk}
		As we mentioned earlier, one has to be careful when working with type--$DD$ modules. While $\sqtens$ and $\dtens$ are usually associative
		by themselves, and with each other, this might fail  when a $DD$--module is involved, in which case we only
		have associativity up to homotopy equivalence. However, this could be mitigated in two situations. If the
		$DD$--module is DG-type (which fails for $\lu{A}\II^A$), or if the type--$A$ modules on both sides are DG-type, then
		true associativity still holds. This is true for $A$ and $A^\vee$, so the statement of the lemma makes sense.
	\end{rmk}

	\begin{proof}[Proof of Lemma~\ref{lem:av_a_homotopy_equivalence}]
		Note that we can easily see that there is \emph{some}
		homotopy equivalence $(\II\sqtens A^\vee\sqtens\II)\sqtens A\simeq\II$, since the left-hand side is
		\begin{equation*}
			\BSDD(\TW_{\F,+})\sqtens\BSAA(\TW_{-\ol\F,-})\simeq\BSDA(\TW_{\F,+}\cup\TW_{-\ol\F,-}),
		\end{equation*}
		while the right side is
		$\BSDA(\F\times[0,1])$, and those bordered sutured manifolds are the same.
		The difficulty is in finding the precise homotopy equivalence, which we need for computations, in order to
		``cancel'' $A^\vee$ and $A$.

		First, we need to show that $c_A$ is a homomorphism. This is best done graphically.
		The definition of $c_A$ is represented in \Figure~\ref{fig:av_a_h_def}. The notation we use there is that $\lu{A}\II^A$
		is a jagged line, without a direction, since $\II$ is its own dual. $\lu{A}\II_A$ is represented by a
		dashed line. As before the line can start or end with a dot, signifying the canonical isomorphism
		given by $\cdot\sqtens *$.

		We need to show that $\del c_A=0$. Note that by definition $c_A$ only has a $1|1|0$--term. On the other hand
		$\delta$ on $\II\sqtens A^\vee\sqtens\II\sqtens A$ has only $1|1|0$-- and $1|1|1$--terms, while
		$\delta$ on $\II$ has only a $1|1|1$--term.

		Thus only four terms from the definition of $\del c_A$ survive. These are shown in \Figure~\ref{fig:av_a_dh}.	
		Expanding the definition of $\delta$ on 
		$\II\sqtens A^\vee\sqtens\II\sqtens A$ in terms of the operations of $\II$, $A$, and $A^\vee$, we get seven terms.
		We can see them in \Figure~\ref{fig:av_a_dh_details}. The terms in
		\Figures~\ref{subfig:av_a_dh_details_1}---\ref{subfig:av_a_dh_details_4} correspond to
		\Figure~\ref{subfig:av_a_dh_1}, while those in \Figures~\ref{subfig:av_a_dh_details_5}---\ref{subfig:av_a_dh_details_7}
		correspond to \Figures~\ref{subfig:av_a_dh_2}---\ref{subfig:av_a_dh_4}, respectively.
		Six of the terms cancel in pairs, while the one in \ref{subfig:av_a_dh_details_2} equals $0$.

		\begin{figure}
			\labellist
			\pinlabel $=$ at 146 92
			\small
			\pinlabel $c_A$ at 66 92
			\pinlabel $1$ at 206 120
			\hair=1.5pt
			\pinlabel $\lu{A}\II^A$ [b] at 34 164
			\pinlabel $A$ [b] at 56 164
			\pinlabel $\lu{A}\II^A$ [b] at 78 164
			\pinlabel $A$ [b] at 100 164
			\pinlabel $\lu{A}\II^A$ [b] at 182 164
			\pinlabel $A$ [b] at 206 164
			\pinlabel $\lu{A}\II^A$ [b] at 230 164
			\pinlabel $A$ [b] at 256 164
			\pinlabel $A$ [t] at 8 20
			\pinlabel $\lu{A}\II_A$ [t] at 66 20
			\pinlabel $A$ [t] at 152 20
			\pinlabel $\lu{A}\II_A$ [t] at 206 20
		\endlabellist
		\includegraphics[scale=.6]{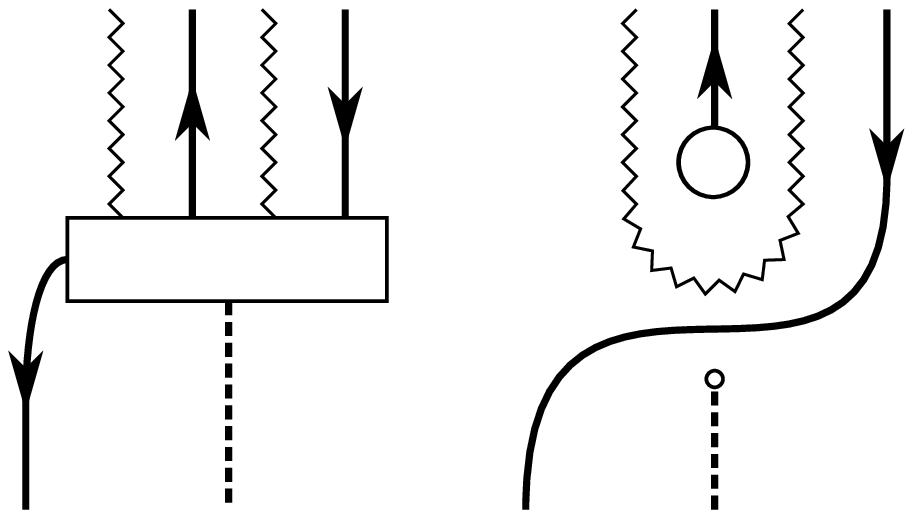}
		\caption{The cancellation homotopy equivalence $c_A:\II\sqtens A^\vee\sqtens\II\sqtens A\to\II$.}
		\label{fig:av_a_h_def}
	\end{figure}
	\begin{figure}
		\begin{subfigure}[b]{.24\linewidth}
			\centering
			\labellist
			\small
			\pinlabel $\mu_2$ at 16 40
			\pinlabel $\delta_{1|1|0}$ at 78 120
			\pinlabel $c_A$ at 78 68
		\endlabellist
		\includegraphics[scale=.5]{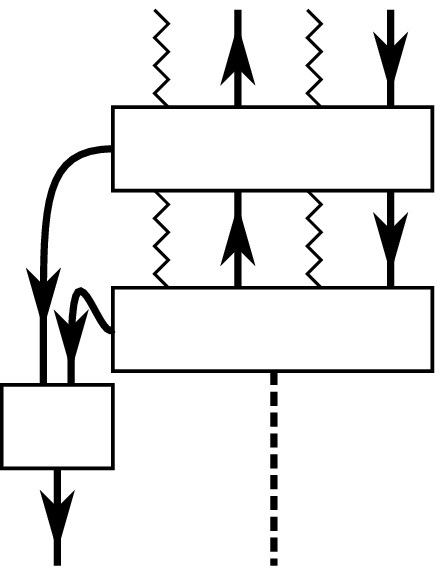}
		\caption{}
		\label{subfig:av_a_dh_1}
	\end{subfigure}
	\begin{subfigure}[b]{.24\linewidth}
		\centering
		\labellist
		\small
		\pinlabel $\mu_1$ at 16 40
		\pinlabel $c_A$ at 78 80
	\endlabellist
	\includegraphics[scale=.5]{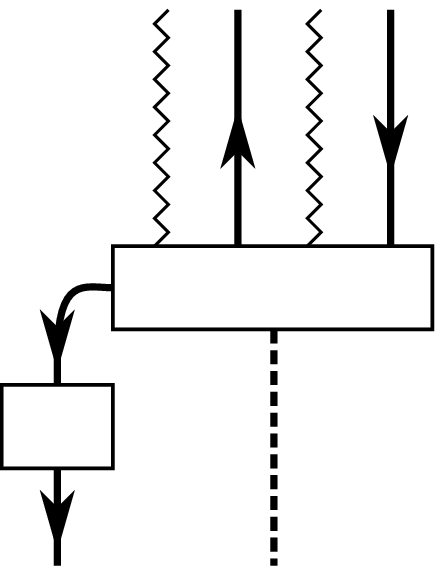}
	\caption{}
	\label{subfig:av_a_dh_2}
\end{subfigure}
\begin{subfigure}[b]{.24\linewidth}
	\centering
	\labellist
	\small
	\pinlabel $\mu_2$ at 16 40
	\pinlabel $c_A$ at 78 120
	\pinlabel $\delta_\II$ at 78 68
\endlabellist
\includegraphics[scale=.5]{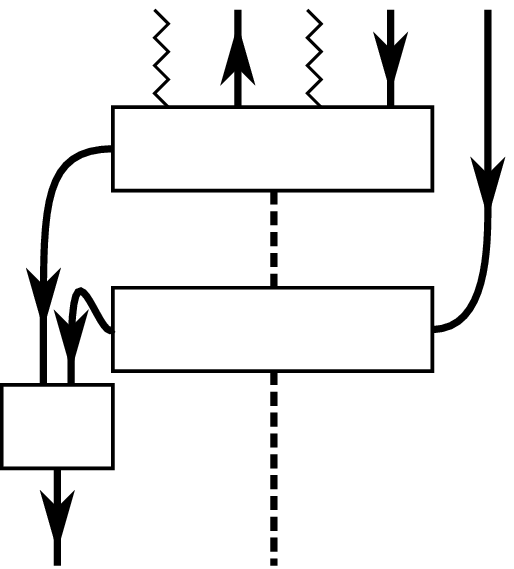}
\caption{}
\label{subfig:av_a_dh_3}
		\end{subfigure}
		\begin{subfigure}[b]{.24\linewidth}
			\centering
			\labellist
			\small
			\pinlabel $\mu_2$ at 16 40
			\pinlabel $\delta_{1|1|1}$ at 78 120
			\pinlabel $c_A$ at 78 68
		\endlabellist
		\includegraphics[scale=.5]{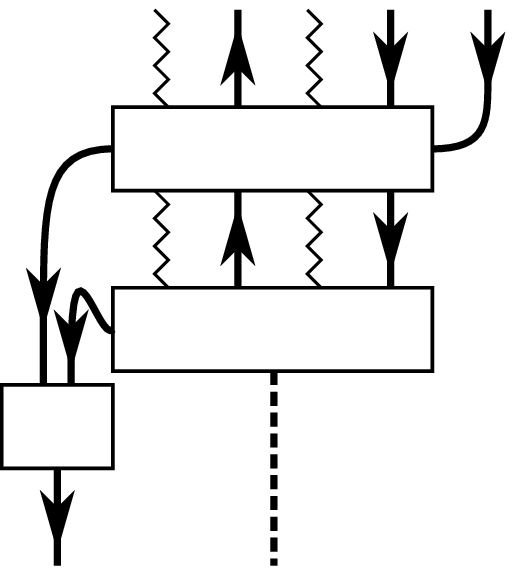}
		\caption{}
		\label{subfig:av_a_dh_4}
	\end{subfigure}
	\caption{Nontrivial terms of $\del c_A$.}
	\label{fig:av_a_dh}
\end{figure}
\begin{figure}
	\begin{subfigure}[b]{.24\linewidth}
		\centering
		\labellist
		\small
		\pinlabel $\mu_2$ at 14 36
		\pinlabel $\delta_\II$ at 42 72
		\pinlabel \rotatebox[origin=c]{180}{$\mu_2$} at 68 108
		\pinlabel $1$ at 69 60
	\endlabellist
	\includegraphics[scale=.6]{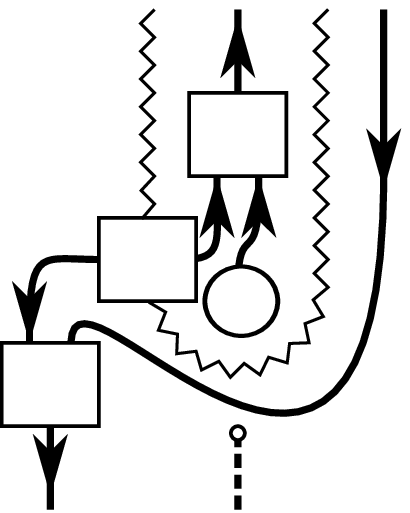}
	\caption{}
	\label{subfig:av_a_dh_details_1}
\end{subfigure}
\begin{subfigure}[b]{.24\linewidth}
	\centering
	\labellist
	\small
	\pinlabel $\mu_2$ at 16 44
	\pinlabel \rotatebox[origin=c]{180}{$\mu_1$} at 70 108
	\pinlabel $1$ at 10 112																									
	\pinlabel $1$ at 70 66
\endlabellist
\includegraphics[scale=.6]{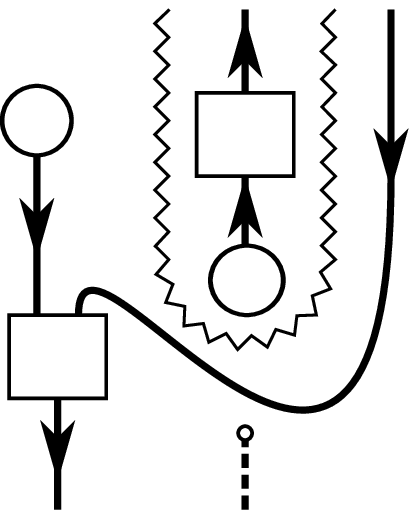}
\caption{}
\label{subfig:av_a_dh_details_2}
		\end{subfigure}
		\begin{subfigure}[b]{.24\linewidth}
			\centering
			\labellist
			\small
			\pinlabel $\mu_2$ at 16 32
			\pinlabel $\mu_2$ at 122 40
			\pinlabel $\delta_\II$ at 94 72
			\pinlabel \rotatebox[origin=c]{180}{$\mu_2$} at 68 108
			\pinlabel $1$ at 10 100
			\pinlabel $1$ at 68 60
		\endlabellist
		\includegraphics[scale=.6]{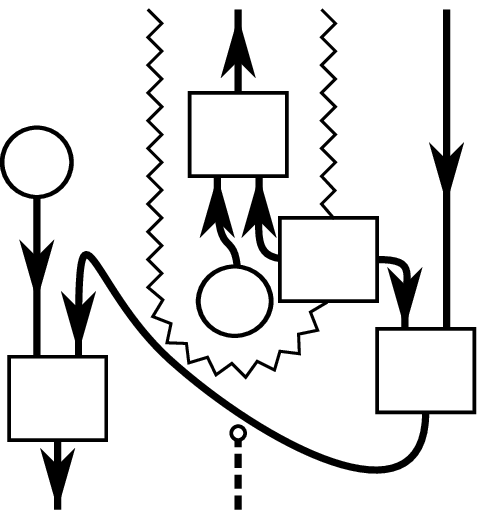}
		\caption{}
		\label{subfig:av_a_dh_details_3}
	\end{subfigure}
	\begin{subfigure}[b]{.24\linewidth}
		\centering
		\labellist
		\small
		\pinlabel $\mu_2$ at 16 44
		\pinlabel $\mu_1$ at 118 100
		\pinlabel $1$ at 10 112																									
		\pinlabel $1$ at 70 88
	\endlabellist
	\includegraphics[scale=.6]{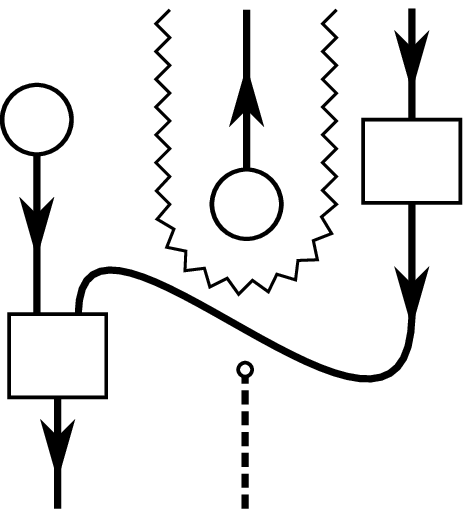}
	\caption{}
	\label{subfig:av_a_dh_details_4}
\end{subfigure}
\begin{subfigure}[b]{.32\linewidth}
	\centering
	\labellist
	\small
	\pinlabel $\mu_1$ at 16 44
	\pinlabel $1$ at 70 88
\endlabellist
\includegraphics[scale=.6]{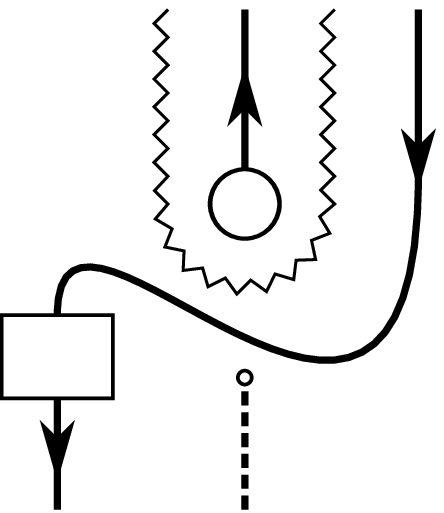}
\caption{}
\label{subfig:av_a_dh_details_5}
		\end{subfigure}
		\begin{subfigure}[b]{.32\linewidth}
			\centering
			\labellist
			\small
			\pinlabel $\mu_2$ at 16 44
			\pinlabel $1$ at 70 104
		\endlabellist
		\includegraphics[scale=.6]{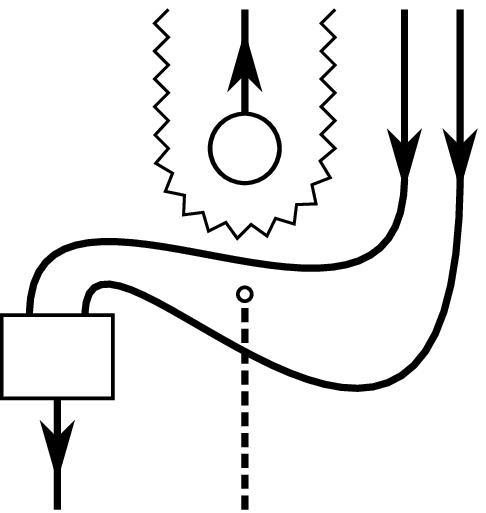}
		\caption{}
		\label{subfig:av_a_dh_details_6}
	\end{subfigure}
	\begin{subfigure}[b]{.32\linewidth}
		\centering
		\labellist
		\small
		\pinlabel $\mu_2$ at 106 100
		\pinlabel $1$ at 54 88
	\endlabellist
	\includegraphics[scale=.6]{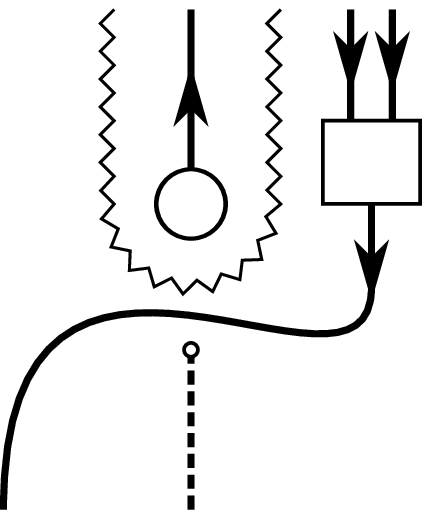}
	\caption{}
	\label{subfig:av_a_dh_details_7}
\end{subfigure}
\caption{Elementary terms of $\del h$.}
\label{fig:av_a_dh_details}
	\end{figure}

	Showing that $c_A$ is a homotopy equivalence is somewhat roundabout. 
	First we will show that the induced map
	\begin{equation*}
		\id_A\sqtens c_A\co A\sqtens(\lu{A}\II^A\sqtens A^\vee\sqtens\lu{A}\II^A\sqtens A)
		\to A\sqtens\lu{A}\II_A\cong A
	\end{equation*}
	is a homotopy equivalence. It is easy to see that the map is
	\begin{equation*}
		(\id_A\sqtens c_A)_{0|1|0} (a\sqtens *\sqtens b^\vee\sqtens *\sqtens c)=
		\begin{cases}
			a \cdot c &\textrm{if $b$ is an idempotent,}\\
			0 &\textrm{otherwise.}
		\end{cases}
	\end{equation*}

	In particular, it is surjective. Indeed, $\id_A\sqtens c_A(a\sqtens *\sqtens 1^\vee\sqtens *\sqtens 1)=a$ for all
	$a\in A$. Thus the induced map on homology is surjective. But the source and domain are homotopy equivalent for
	topological reasons (both represent $\BSAA(\TW_{\ol\F,-})$). This implies that $\id_A\sqtens c_A$ is a quasi-isomorphism,
	and a homotopy equivalence. But $(\II\sqtens A^\vee \sqtens\II)\sqtens A\simeq\II$ and
	$A\sqtens(\II\sqtens A^\vee \sqtens\II)\simeq\II$ for topological reasons, so $A\sqtens\cdot$ is an equivalence of derived categories.
	Thus, $c_A$ itself must have been a homotopy equivalence, which finishes the proof of the lemma.
\end{proof}

	We will now use  \Lemma~\ref{lem:av_a_homotopy_equivalence},
	to show that for any $\Y$ there is a homotopy $\Psi_\W(\cdot,\Delta_M)\simeq\id_{\SFC(\Y)}$. 
	Let $c_A$ be the homotopy equivalence from the lemma.
	There is a sequence of homomorphisms as follows.

	\centerline{ \xymatrix{
		\II\sqtens M
		\ar[d]^{\id_{\II\sqtens M}\otimes\Delta_M}\\
		\II\sqtens M\otimes\D(M)\cong\II\sqtens M\otimes M^\vee\sqtens\II\sqtens A\sqtens\II\sqtens M
		\ar[d]^{\id_{\II}\sqtens\nabla_M\sqtens\id_{\II\sqtens A\sqtens\II\sqtens M}}\\
		\II\sqtens A^\vee \sqtens\II\sqtens A \sqtens\II\sqtens M
		\ar[d]^{c_A\sqtens\id_{\II\sqtens M}} \\
		\II\sqtens M 
		}}
	
	The composition of these maps is shown in \Figure~\ref{fig:nabla_delta}. As we can see from the diagram, it is equal to
	$\id_\II\sqtens\id_M$. If $U=\BSD(\Y\setminus\W)$, then by applying the functor $\id_U\sqtens\cdot$ to both
	homomorphisms, we see that
	\begin{equation*}
		(\id_U\sqtens c_A\sqtens\id_{\II\sqtens M})\circ\Psi_M\circ(\id_{\SFC(\Y)}\sqtens\Delta_M)=\id_{\SFC(\Y)},
	\end{equation*}
	which is equivalent to \Equation~(\ref{eq:join_identity}).
\end{proof}
\begin{figure}
	\labellist
	\small
	\pinlabel $=$ at 250 150
	\pinlabel $=$ at 512 150
	\pinlabel $=$ at 746 150
	\tiny
	\pinlabel $1$ at 352 100
	\pinlabel $1$ at 426 232
	\pinlabel $1$ at 524 104
	\pinlabel $1$ at 600 100
	\pinlabel $1$ at 674 232
	\pinlabel $1$ at 772 144
	\pinlabel $\mu_A$ at 24 52
	\pinlabel $\mu_A$ at 280 52
	\pinlabel $\mu_A$ at 528 52
	\pinlabel $\ol\delta_\II$ at 46 144
	\pinlabel $\ol\delta_\II$ at 302 144
	\pinlabel $\ol\delta_{\II\sqtens{A}\sqtens\II\sqtens{M}}$ at 182 144
	\pinlabel $\ol\delta_{\II\sqtens{A}\sqtens\II\sqtens{M}}$ at 438 144
	\pinlabel $\ol\delta_{\II\sqtens{M}}$ at 204 88
	\pinlabel \rotatebox[origin=c]{90}{$m_M$} at 352 192
	\pinlabel \rotatebox[origin=c]{90}{$m_M$} at 600 192
	\pinlabel $\pi_A$ at 16 104
	\pinlabel $\pi_A$ at 272 104
	\pinlabel $\Delta_M$ at 170 228
	\pinlabel $\nabla_M$ at 98 144
	\pinlabel $c_A$ at 108 88
	\hair=1.5pt
	\pinlabel $\II$ [b] at 46 284
	\pinlabel $M$ [b] at 68 284
	\pinlabel $\II$ [b] at 302 284
	\pinlabel $M$ [b] at 324 284
	\pinlabel $\II$ [b] at 550 284
	\pinlabel $M$ [b] at 572 284
	\pinlabel $\II$ [b] at 794 284
	\pinlabel $M$ [b] at 816 284
	\pinlabel $A$ [t] at 24 16
	\pinlabel $\II$ [t] at 194 16
	\pinlabel $M$ [t] at 216 16
	\pinlabel $A$ [t] at 280 16
	\pinlabel $\II$ [t] at 450 16
	\pinlabel $M$ [t] at 472 16
	\pinlabel $A$ [t] at 528 16
	\pinlabel $\II$ [t] at 698 16
	\pinlabel $M$ [t] at 720 16
	\pinlabel $A$ [t] at 772 16
	\pinlabel $\II$ [t] at 794 16
	\pinlabel $M$ [t] at 816 16
	\endlabellist
	\includegraphics[scale=.43]{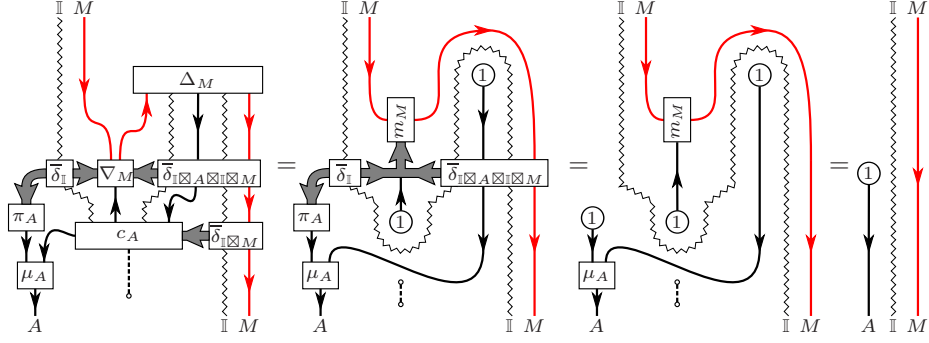}
	\caption{Proof that $\Psi_M(\cdot,\Delta_M)\simeq\id$.}
	\label{fig:nabla_delta}
\end{figure}

\subsection{Self-join and self-gluing}

So far we have been talking about the join or gluing of two disjoint sutured manifolds. However, we can extend these
notions to a self-join or self-gluing of a single manifold. For example if there is an embedding $(\W\sqcup-\W)\into\Y$,
then we can define the \emph{self-join} of $\Y$ along $\W$ to be the concatenation
\begin{equation*}
	\Y \Cup_{\W,\circlearrowleft}=(Y\setminus(\W\sqcup-\W))\cup_{\F\cup\ol\F}\TW_{\F,+}\cong
	\Y\Cup_{\W\sqcup-\W}\D(\W).
\end{equation*}
It is easy to see that if $\W$ and $-\W$ embed into different components of $\Y$, this is the same as the regular join.

Similarly, we can extend the join map to a \emph{self-join map}
\begin{equation*}
	\Psi_{\W,\circlearrowleft}\co\SFC(\Y)\to\SFC(\Y\Cup_{\W\sqcup-\W}\D(\W))\simeq\SFC(\Y\Cup_{\W,\circlearrowleft}),
\end{equation*}
by setting
\begin{equation*}
	\Psi_{\W,\circlearrowleft}=\Psi_{\W\sqcup-\W}(\cdot,\Delta_\W).
\end{equation*}

Again, if $\W$ and $-\W$ embed into disjoint components of $\Y$, $\Psi_{\W,\circlearrowleft}$ is, up to homotopy, the
same as the regular join map $\Psi_{\W}$. This follows quickly from properties~(\ref{subthm:join_associative})
and~(\ref{subthm:join_identity}) in \Theorem~\ref{thm:join_nice_properties}.

\section{The bordered invariants in terms of \texorpdfstring{\SFH}{SFH}}
\label{sec:reinterpretation}

In this section we give a (partial) reinterpretation of bordered and bordered sutured invariants in terms of $\SFH$ and
the gluing map $\Psi$. This is a more detailed version of \Theorem~\ref{thm:intro_bordered_via_SFH}.

\subsection{Elementary dividing sets}
\label{sec:elementary_div_sets}

Recall \Definition~\ref{def:dividing_set} of a dividing set. Suppose we have a sutured surface $\F=(F,\Lambda)$
parametrized by an arc diagram $\Z=(\ZZZ,\aaa,M)$ of rank $k$. We will define a set of $2^k$
distinguished dividing sets.

Before we do that, recall the way an arc diagram parametrizes a sutured surface, from \Section~\ref{sec:arc_diagrams}.
There is an embedding of the graph $G(\Z)$ into $F$, such that $\del\ZZZ=\Lambda$ (Recall
\Figure~\ref{fig:parametrized_surfaces}).
We will first define the elementary dividing sets in the cases that $\Z$ is of $\alpha$--type. In that case the image of
$\ZZZ$ is a push-off of $S_+$ into the interior of $F$. Denote the regions between $S_+$ and $\ZZZ$ by $R_0$. It is a
collection of discs, one for each component of $S_+$. The images of the arcs $e_i\subset G(\Z)$ are in the complement
$F\setminus R_0$.

\begin{defn}
	\label{def:elementary_dividing_set}
	Let $I\subset\{1,\ldots,k\}$.  The \emph{elementary dividing set for $\F$} associated to $I$ is the dividing set
	$\Gamma_I$ constructed as follows.
	Let $R_0$ be the region defined above. Set
	\begin{equation*}
		R_+ = R_0 \cup \bigcup_{i\in I}\nu(e_i) \subset F.
	\end{equation*}
	Then $\Gamma_I=(\del R_+)\setminus S_+$.
\end{defn}

If $\Z$ is of $\beta$--type, repeat the same procedure, substituting $R_-$ for $R_+$ and $S_-$ for $S_+$. For example
the region $R_0$ consists of discs bounded by $S_-\cup\ZZZ$.
Examples of both cases are given in \Figure~\ref{fig:elem_div_sets}.

\begin{figure}
	\begin{subfigure}[b]{.48\linewidth}
		\centering
		\labellist
		\small\hair=1.5pt
		\pinlabel $1$ [tl] at 26 30
		\pinlabel $2$ [bl] at 26 30
		\pinlabel $3$ [tl] at 26 78
		\pinlabel $4$ [bl] at 26 78
		\pinlabel $1$ [tr] at 92 20
		\pinlabel $2$ [bl] at 144 36
		\pinlabel $3$ [tl] at 144 72
		\pinlabel $4$ [br] at 92 88
		\endlabellist
		\includegraphics[scale=.59]{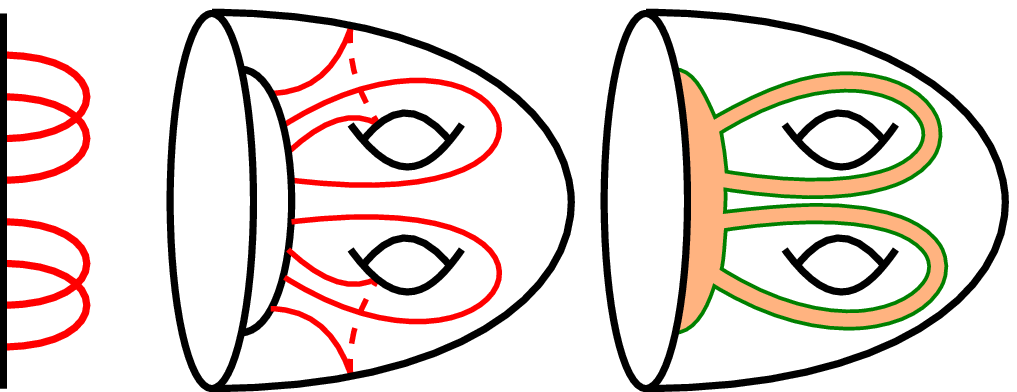}
		\caption{$\alpha$--type diagram.}
		\label{subfig:elem_div_set_alpha}
	\end{subfigure}
	\begin{subfigure}[b]{.48\linewidth}
		\centering
		\labellist
		\small\hair=1.5pt
		\pinlabel $1$ [tr] at 10 30
		\pinlabel $2$ [br] at 10 30
		\pinlabel $3$ [tr] at 10 78
		\pinlabel $4$ [br] at 10 78
		\pinlabel $1$ [tl] at 120 20
		\pinlabel $2$ [br] at 68 36
		\pinlabel $3$ [tr] at 68 72
		\pinlabel $4$ [bl] at 120 88
		\endlabellist
		\includegraphics[scale=.59]{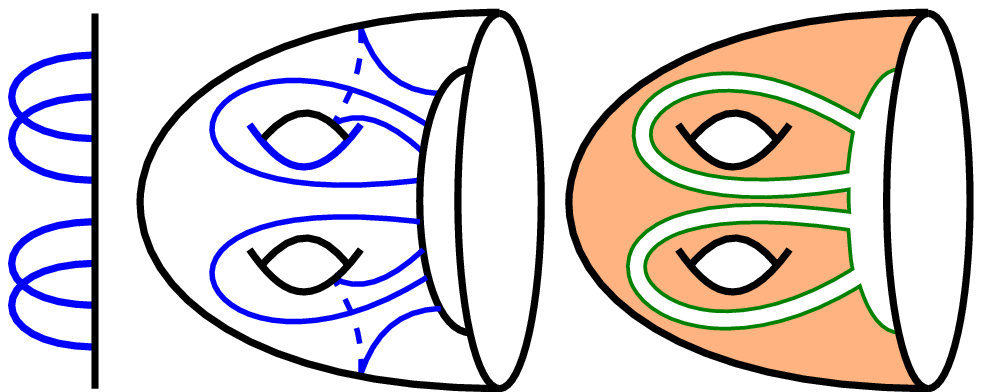}
		\caption{$\beta$--type diagram.}
		\label{subfig:elem_div_set_beta}
	\end{subfigure}
	\caption{Elementary dividing sets for an arc diagram. In each case we show the arc diagram, its embedding into
	the surface, and the dividing set $\Gamma_{\{2,3\}}$. The shaded regions are $R_+$.}
	\label{fig:elem_div_sets}
\end{figure}

We refer to the collection of $\Gamma_I$ for all $2^k$--many subsets of $\{1,\ldots,k\}$ as \emph{elementary dividing
sets for $\Z$}. The reason they are important is the following proposition.

\begin{prop}
	\label{prop:elem_div_sets}
	Let $\Z$ be an arc diagram of rank $k$, and let $I\subset\{1,\ldots,k\}$ be any subset. Let
	$\iota_I$ be the idempotent for $A=\A(\Z)$ corresponding to horizontal arcs at all $i\in I$, and let
	$\iota_{I^c}$ be the idempotent corresponding to the complement of $I$. Let $\W_I$ be the cap associated to
	the elementary dividing set $\Gamma_I$.

	Then the following hold:
	\begin{itemize}
		\item $\lu{A}\BSD(\W)$ is (represented by) the elementary type--$D$ module for $\iota_I$.
		\item $\li{_A}\BSA(\W)$ is (represented by) the elementary type--$A$ module for $\iota_{I^c}$.
	\end{itemize}
\end{prop}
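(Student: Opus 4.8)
The plan is to construct an explicit bordered sutured Heegaard diagram for the cap $\W_I=\W_{\Gamma_I}$ and to read off the invariants $\lu{A}\BSD(\W_I)$ and $\li{_A}\BSA(\W_I)$ directly, showing each has a single generator with vanishing structure maps. First I would recall from \Definition~\ref{def:cap} that $\W_{\Gamma_I}$ is the thickening $F\times[0,1]$ with dividing set $\Gamma_I\times\{1\}\cup\Lambda\times[0,1]$ and bordered boundary $-\F$ (parametrized by $-\Z$ or $-\ol{\Z}$, depending on type). The key point is that $\W_{\Gamma_I}$ is a product, so it admits a Heegaard diagram with \emph{no} $\beta$--circles at all: the Heegaard surface is $F(\Z)$ itself, the $\alpha$--arcs are the images of $e_1,\dots,e_k$, and there are no $\beta$--curves whatsoever. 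More precisely, one builds $\Sigma=F(\Z)$ from $\ZZZ\times[0,1]$ with $1$--handles as in \Definition~\ref{def:surface_from_diagram}, puts $\balpha^a$ as the cores $e_i$ through the handles, and chooses $\bbeta=\varnothing$, $\balpha^c=\varnothing$. Attaching no $2$--handles to either side gives $Y\cong\Sigma\times[0,1]$, and one checks the dividing set $\Gamma=(\del\Sigma\setminus\ZZZ)\times\{1/2\}$ together with the arc placement matches $\W_{\Gamma_I}$ on the nose --- here the choice of which arcs are "$\alpha$" is exactly the data of $I$, since the elementary dividing set $\Gamma_I$ is built by adjoining the neighborhoods $\nu(e_i)$ for $i\in I$ to $R_0$ (\Definition~\ref{def:elementary_dividing_set}).

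Next I would analyze the generators and structure maps of this diagram. Since $\bbeta=\varnothing$, the set of generators $\G(\HH)$ consists of exactly one element, the empty collection of intersection points $\balpha\cap\bbeta$; call it $\xxx$. There are no holomorphic curves to count because there are no $\beta$--curves to form boundary conditions or Reeb chords against, so all the higher structure maps $m_{1|i}$ (resp.\ $\delta$) vanish identically, leaving only the idempotent action. For $\BSA$, the idempotent acting nontrivially on $\xxx$ is $\iota_{I(\xxx)}$ where $I(\xxx)$ records the $\alpha$--arcs containing a point of $\xxx$; since $\xxx$ is empty, $I(\xxx)=\varnothing$ in the $\alpha$--arc count of \emph{this} diagram --- but one must be careful, because the diagram for $\W_{\Gamma_I}$ uses the parametrization $-\Z$ (or $-\ol\Z$), and the labeling of which handles carry arcs is governed by $I$. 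Untangling the bookkeeping, the generator $\xxx$ lies on precisely the arcs indexed by $I$ (those are the ones realized as $\alpha$--arcs through which the handle is "filled"), so $I(\xxx)=I^c$ relative to $A=\A(\Z)$ after accounting for the orientation reversal and the $\BSD$/$\BSA$ idempotent conventions (the only idempotent acting nontrivially on a $\BSD$ generator is $\iota_{I^c(\xxx)}$, the \emph{complement}). This yields exactly the two assertions: $\lu{A}\BSD(\W_I)$ is the elementary type--$D$ module for $\iota_I$, and $\li{_A}\BSA(\W_I)$ is the elementary type--$A$ module for $\iota_{I^c}$.

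The main obstacle I anticipate is \emph{not} the holomorphic curve count --- that is trivially empty --- but rather getting the idempotent bookkeeping exactly right: tracking how the orientation-reversal ($\F\mapsto-\F$, so $\Z\mapsto-\Z$ or $\ol\Z$), the $\alpha$ versus $\beta$ type of the arc diagram, and the $\BSD$-versus-$\BSA$ idempotent conventions (which differ by a complement, as recorded in \Section~\ref{sec:bordered_invariants_definition}) interact, so that the subset $I$ appearing in $\Gamma_I$ is matched with $\iota_I$ for $\BSD$ and $\iota_{I^c}$ for $\BSA$ and not the other way around. I would handle this by drawing the explicit diagram for a small rank example (say rank $1$ or $2$, matching \Figure~\ref{fig:elem_div_sets}), confirming the correspondence there, and then noting that the general case is identical handle-by-handle. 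A secondary point worth a sentence is invariance: by the main result of \cite{Zar:BSFH} the bordered sutured invariant depends only on the bordered sutured manifold up to homotopy equivalence, so exhibiting \emph{one} diagram computing the elementary module suffices. Finally, for the $\beta$--type case one repeats the construction with the roles of $S_+$/$S_-$ and the side of handle attachment swapped, exactly as in \Definition~\ref{def:elementary_dividing_set}, and the same argument applies verbatim.
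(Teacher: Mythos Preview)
Your proposed diagram does not actually depend on $I$, and this is a genuine gap. You set $\Sigma=F(\Z)$, take \emph{all} arcs $e_1,\dots,e_k$ as $\balpha^a$, and use $\bbeta=\varnothing$. But then the bordered sutured manifold it describes is completely determined: $Y=\Sigma\times[0,1]$ with dividing set $(\del\Sigma\setminus\ZZZ)\times\{1/2\}$, and nothing in this data records a choice of subset $I$. Your sentence ``the choice of which arcs are `$\alpha$' is exactly the data of $I$'' contradicts your own construction, since you have already declared that every $e_i$ is an $\alpha$--arc (as it must be, because the full arc diagram $\Z$ parametrizes the bordered boundary). Consequently the unique generator is the empty tuple, which occupies \emph{no} $\alpha$--arcs; so $I(\xxx)=\varnothing$, not $I^c$, and you only recover the single case $I=\{1,\dots,k\}$.

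The paper's diagram encodes $I$ through $\beta$--circles: it keeps all $k$ $\alpha$--arcs, but adds one small $\beta$--circle for each $i\notin I$, intersecting $\alpha^a_i$ in a single point. The unique generator then occupies exactly the arcs indexed by $I^c$, giving the correct idempotents, and the regions of $\Sigma\setminus(\balpha\cup\bbeta)$ are all boundary regions so no holomorphic curves are counted. The role of these $\beta$--circles is precisely to realize the dividing set $\Gamma_I$ on the capped side --- without them you cannot distinguish one elementary dividing set from another. The fix is not a bookkeeping adjustment but a change of diagram: you need those $k-\#I$ $\beta$--circles.
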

\begin{proof}
	The key fact is that there is a particularly simple Heegaard diagram $\HH$ for $\W_I$. For concreteness we will
	assume $\Z$ is a type--$\alpha$ diagram, though the case of a type--$\beta$ diagram is completely analogous.

	The diagram $\HH=(\Sigma,\balpha,\bbeta,\Z)$ contains no $\alpha$--circles, exactly one $\alpha$--arc
	$\alpha^a_i$ for each matched pair $M^{-1}(i)$, and $k-\# I$ many $\beta$--circles. Each $\beta$--circle has
	exactly one intersection point on it, with one of $\alpha^a_i$, for $i\notin I$. This implies that there is
	exactly one generator $\xxx\in\G(\HH)$, that occupies the arcs for $I^c$.  This implies that $\BSD(\W_I)$ and
	$\BSA(\W_I)$ are both $\{\xxx,0\}$ as $\ZZ/2$--modules. The actions of the ground ring are $\iota_I\cdot\xxx=\xxx$
	for $\BSD(\W_I)$ and $\iota_{I^c}\cdot\xxx=\xxx$ for $\BSA(\W_I)$. This was one of the two requirements for an
	elementary module.
	
	The connected components of $\Sigma\setminus(\balpha\cup\bbeta)$ are in 1--to--1 correspondence with components of
	$\del R_+$. In fact each such region is adjacent to exactly one component of $\del\Sigma\setminus\ZZZ$. Therefore,
	there are only boundary regions and no holomorphic curves are counted for the definitions of $\BSD(\W_I)$ and
	$\BSA(\W_I)$. This was the other requirement for an elementary module, so the proof is complete. The diagram $\HH$
	can be seen in \Figure~\ref{fig:elementary_diagram}.
\end{proof}

\begin{figure}
	\labellist
	\small
	\pinlabel $A$ at 17 24
	\pinlabel \reflectbox{$A$} at 59 24
	\pinlabel $B$ at 136 24
	\pinlabel \reflectbox{$B$} at 177 24
	\endlabellist
	\includegraphics[scale=.8]{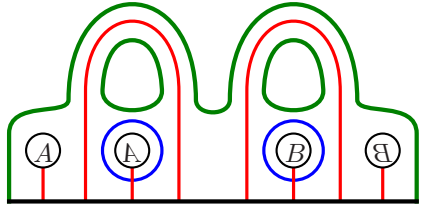}
	\caption{Heegaard diagram $\HH$ for the cap $\W_{2,3}$ corresponding to the dividing set from
	\Figure~\ref{subfig:elem_div_set_alpha}.}
	\label{fig:elementary_diagram}
\end{figure}

We will define one more type of object. Let $\F$ be a sutured surface parametrized by some arc diagram $\Z$. Let $I$ and
$J$ be two subsets of $\{1,\ldots,k\}$. Consider the sutured manifold $-\W_I\cup\TW_{-\ol\F,-}\cup\W_J$. Since $-\W_I$ and 
$\W_J$ are caps, topologically this is $F\times[0,1]$. The dividing set can be described as follows. Along
$F\times\{0\}$ it is $\Gamma_I\times\{0\}$, along $F\times\{1\}$ it is $\Gamma_J\times\{1\}$, and along
$\del F\times[0,1]$ it consists of arcs in the $[0,1]$ direction with a partial negative twist.

\begin{defn}
	\label{def:gamma_I_J}
	Let $\Gamma_{I\to J}$ denote the dividing set on $\del(F\times [0,1])$, such that
	\begin{equation*}
		(F\times[0,1],\Gamma_{I\to J})=-\W_I\cup\TW_{\ol\F,-}\cup\W_J.
	\end{equation*}
\end{defn}

\subsection{Main results}
\label{sec:main_result}

The main results of this section are the following two theorems. We will give the proofs in the next subsection.

\begin{thm}
	Let $\F$ be a sutured surface parametrized by an arc diagram $\Z$.
	\label{thm:algebra_as_SFH}
	The homology of $A=\A(\Z)$ decomposes as the sum
	\begin{equation}
		\label{eq:algebra_decomp}
		H_*(A)=\bigoplus_{I,J\subset\{1,\ldots,k\}}\iota_I\cdot H_*(A)\cdot \iota_J
		=\bigoplus_{I,J\subset\{1,\ldots,k\}}H_*(\iota_I\cdot A\cdot \iota_J),
	\end{equation}
	where
	\begin{equation}
		\label{eq:algebra_as_SFH}
		\iota_I\cdot H_*(A)\cdot\iota_J\cong\SFH(F\times[0,1],\Gamma_{I\to J}).
	\end{equation}

	Multiplication $\mu_2$ descends to homology as
	\begin{equation}
		\label{eq:algebra_as_SFH_multiplication}
		\begin{split}
			\mu_H=\Psi_{(F,\Gamma_{J})}\co\SFH\left(F\times[0,1],\Gamma_{I\to J}\right)\otimes
			\SFH\left(F\times[0,1],\Gamma_{J\to K}\right)\\
			\to\SFH\left(F\times[0,1],\Gamma_{I\to K}\right),
		\end{split}
	\end{equation}
	and is 0 on all other summands.
\end{thm}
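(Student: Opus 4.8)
The plan is to establish the three assertions of \Theorem~\ref{thm:algebra_as_SFH} in turn; the conceptual input is entirely contained in the pairing theorem and in the computations of \Section~\ref{sec:bordered}, \Section~\ref{sec:join}, and \Section~\ref{sec:join_properties}, and the work lies in assembling these while keeping track of idempotents. For the decomposition \Equation~(\ref{eq:algebra_decomp}) I would simply note that $A=\A(\Z)$ has ground ring $\I(\Z)$ with canonical basis $(\iota_I)_{I\subset\{1,\dots,k\}}$, that $\sum_I\iota_I$ acts as the identity, and that the differential on $A$ is a map of $\I(\Z)$--bimodules; hence $A=\bigoplus_{I,J}\iota_I\cdot A\cdot\iota_J$ as a complex, and $H_*(A)=\bigoplus_{I,J}H_*(\iota_I A\iota_J)=\bigoplus_{I,J}\iota_I H_*(A)\iota_J$.

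For the identification \Equation~(\ref{eq:algebra_as_SFH}) I would start from \Definition~\ref{def:gamma_I_J}: $(F\times[0,1],\Gamma_{I\to J})=-\W_I\cup\TW_{\ol\F,-}\cup\W_J$, where $\W_I$ and $\W_J$ are the caps of the elementary dividing sets $\Gamma_I$ and $\Gamma_J$. Applying the pairing theorem (\Theorem~\ref{thm:bimodules_all}) to this concatenation, the middle piece $\TW_{\ol\F,-}$ contributes $A$ as a bimodule over itself by \Proposition~\ref{prop:twisting_slice_invariants}, and the two caps contribute elementary modules by \Proposition~\ref{prop:elem_div_sets} (using \Proposition~\ref{prop:mirror_is_dual} for the mirrored cap $-\W_I$). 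Since the $\sqtens$--product of the bimodule $A$ with an elementary type--$D$ module for an idempotent $\iota$ simply restricts $A$ to the corresponding idempotent row (resp.\ column) --- as in the identification of \Proposition~\ref{prop:join_for_elementary} --- the product collapses to $\iota_I\cdot A\cdot\iota_J$ as a chain complex, and passing to homology gives \Equation~(\ref{eq:algebra_as_SFH}); combined with the previous paragraph this proves \Equation~(\ref{eq:algebra_decomp}).

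For the behaviour of multiplication, \Equation~(\ref{eq:algebra_as_SFH_multiplication}), the vanishing on all other summands is automatic, since $\mu_2$ sends $\iota_I A\iota_J\otimes\iota_{J'}A\iota_K$ into $\iota_I A\,\iota_J\iota_{J'}\,A\iota_K$, which is zero unless $J=J'$. For the surviving piece I would use that gluing along $(F,\Gamma_J)$ is, by the remark following \Definition~\ref{def:gluing}, the join along the cap $\W_{\Gamma_J}$, so $\Psi_{(F,\Gamma_J)}=\Psi_{\W_{\Gamma_J}}$; and since $\BSA(\W_{\Gamma_J})$ is elementary by \Proposition~\ref{prop:elem_div_sets}, \Proposition~\ref{prop:join_for_elementary} reduces the join map to $u\sqtens\iota^\vee\sqtens v$. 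Next I would check the topological identity $(F\times[0,1],\Gamma_{I\to J})\cup_{(F,\Gamma_J)}(F\times[0,1],\Gamma_{J\to K})\cong(F\times[0,1],\Gamma_{I\to K})$, which holds because in the resulting concatenation $-\W_I\cup\TW_{\ol\F,-}\cup\TW_{\F,+}\cup\TW_{\ol\F,-}\cup\W_K$ the adjacent positive and negative twisting slices compose to a product. On the level of invariants, the join description of the glued manifold yields a factor $A^\vee$ (from $\BSAA(\TW_{\F,+})\simeq A^\vee$) where the part~2 description of $\Gamma_{I\to K}$ has a factor $A$ (from $\BSAA(\TW_{\ol\F,-})\simeq A$); the cancellation homotopy equivalence $c_A$ of \Lemma~\ref{lem:av_a_homotopy_equivalence} reconciles these, and chasing $\Psi_{\W_{\Gamma_J}}$ through this reconciliation is the same diagrammatic computation as in the proof of the identity property of \Theorem~\ref{thm:join_nice_properties} (compare \Figure~\ref{fig:nabla_delta}), except that the two capped ends now carry $\iota_I$ and $\iota_K$; the outcome is that the pairing $\nabla$ turns into concatenation of strands, i.e.\ precisely $\mu_2\colon\iota_I A\iota_J\otimes\iota_J A\iota_K\to\iota_I A\iota_K$.

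The main obstacle is not conceptual but a matter of careful bookkeeping: keeping straight the orientations and the $\alpha$/$\beta$ types of all the arc diagrams in play, and in particular which of $\iota_I$ or $\iota_{I^c}$ is carried by each cap and by each of its $\BSA$ and $\BSD$ invariants, which are complementary. The single most delicate point is the last step above --- verifying that after composing with $c_A$ the reduced join map $u\sqtens\iota^\vee\sqtens v$ really becomes honest multiplication in $A$, rather than multiplication precomposed with some anti-automorphism or a relabelling of idempotents. I would settle this directly in the strand picture, using the nice diagram for $\TW_{\ol\F,-}$ from the proof of \Proposition~\ref{prop:twisting_slice_invariants}, in which both $m_M$ and $\mu_2$ are visibly given by concatenating strands, so that the match becomes manifest.
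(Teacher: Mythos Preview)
Your proposal is correct and follows essentially the same route as the paper: the idempotent decomposition, then the pairing theorem with $\BSAA(\TW_{-})\simeq A$ and the elementary caps from \Proposition~\ref{prop:elem_div_sets} for the identification, then the elementary form of the join map (\Proposition~\ref{prop:join_for_elementary}) together with the cancellation $c_A$ of \Lemma~\ref{lem:av_a_homotopy_equivalence} for the multiplication. The paper's only organizational difference is that it first proves a common generalization for $\BSAA$ of an arbitrary bordered sutured manifold and then specializes, obtaining \Equation~(\ref{eq:algebra_as_SFH_multiplication}) from the $\BSA$--action formula by treating $A$ as a right module over itself; the diagrammatic check you anticipate in your last paragraph is carried out explicitly there (\Figure~\ref{fig:BSA_SFH_action}) and indeed collapses to $m_{1|1}$.
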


\begin{thm}
	\label{thm:BSA_as_SFH}
	Let $\Y=(Y,\Gamma,\F)$ be a bordered sutured manifold where $\F$ parametrized by $\Z$.
	Then there is a decomposition
	\begin{equation}
		\label{eq:BSA_decomposition}
		\begin{split}
			H_*\left(\BSA(Y)_A\right)&=\bigoplus_{I\subset\{1,\ldots,k\}}H_*\left(\BSA(\Y)\right)\cdot\iota_I\\
			&=\bigoplus_{I\subset\{1,\ldots,k\}}H_*\left(\BSA(Y)\cdot\iota_I\right),
		\end{split}
	\end{equation}
	where
	\begin{equation}
		\label{eq:BSA_as_SFH}
		H_*\left(\BSA(Y)\right)\cdot\iota_I \cong \SFH(Y,\Gamma\cup\Gamma_I).
	\end{equation}
	
	Moreover, the $m_{1|1}$ action of $A$ on $\BSA$ descends to the following action on homology:
	\begin{equation}
		\label{eq:BSA_as_SFH_action}
		m_H=\Psi_{(F,\Gamma_I)}
		\co\SFH(Y,\Gamma\cup\Gamma_I)\otimes\SFH(F\times I,\Gamma_{I\to J})
		\to\SFH(Y,\Gamma\cup\Gamma_J),
	\end{equation}
	and $m_H=0$ on all other summands.

	Similar statements hold for left $A$--modules $\li{_A}\BSA(\Y)$, and for bimodules $\li{_A}\BSAA(\Y)_B$.
\end{thm}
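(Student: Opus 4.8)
The plan is to prove the three assertions---the idempotent decomposition, the identification of each summand with an $\SFH$ group, and the description of the $A$--action---by combining the pairing theorem (\Theorem~\ref{thm:bimodules_all}) with the computation of the bordered sutured invariants of the caps $\W_I$ (\Proposition~\ref{prop:elem_div_sets}) and the explicit formula for the join map on an elementary module (\Proposition~\ref{prop:join_for_elementary}); the algebra-side statement \Theorem~\ref{thm:algebra_as_SFH} is taken as known.

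First I would dispose of (\ref{eq:BSA_decomposition}): the $\iota_I$, $I\subset\{1,\dots,k\}$, form a complete set of orthogonal idempotents in the ground ring $\I(\Z)$, so $\BSA(Y)_A=\bigoplus_I\BSA(Y)\cdot\iota_I$ as a direct sum of chain complexes (the differential is $\I(\Z)$--linear, hence respects the $\iota_I$--grading), and since homology commutes with finite direct sums, $H_*(\BSA(Y))\cdot\iota_I=H_*(\BSA(Y)\cdot\iota_I)$. For (\ref{eq:BSA_as_SFH}), let $\W_I$ be the cap associated to the elementary dividing set $\Gamma_I$. By \Proposition~\ref{prop:elem_div_sets}, $\lu{A}\BSD(\W_I)$ is the elementary type--$D$ module for $\iota_I$, so box-tensoring $\BSA(Y)_A$ with it simply extracts the $\iota_I$--summand as a chain complex: $\BSA(Y)_A\sqtens\lu{A}\BSD(\W_I)\cong\BSA(Y)\cdot\iota_I$. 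On the other hand, by the pairing theorem this box tensor computes $\SFC$ of the concatenation $(Y,\Gamma,\F)\cup_{\F}\W_I$, which is exactly $(Y,\Gamma\cup\Gamma_I)$, since concatenating a cap ``fills in'' the parametrized piece of the boundary with the given dividing set (the remark after \Definition~\ref{def:cap}). Passing to homology yields (\ref{eq:BSA_as_SFH}).

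Next I would treat the $A$--action (\ref{eq:BSA_as_SFH_action}). The topological input is that gluing $(Y,\Gamma\cup\Gamma_I)$ to $(F\times[0,1],\Gamma_{I\to J})$ along $(F,\Gamma_I)$---which by \Definition~\ref{def:gluing} is the join along the cap $\W_{\Gamma_I}$---is diffeomorphic to $(Y,\Gamma\cup\Gamma_J)$: cutting the caps out of the two sides leaves $(Y,\Gamma,\F)$ and, by \Definition~\ref{def:gamma_I_J}, a negative twisting slice followed by $\W_J$, and the positive twisting slice the join inserts cancels the negative one, exactly as in the identity property \Theorem~\ref{thm:join_nice_properties}(\ref{subthm:join_identity}). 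I would then feed $\li{_A}M=\li{_A}\BSA(\W_{\Gamma_I})$, which by \Proposition~\ref{prop:elem_div_sets} is the elementary type--$A$ module for $\iota_{I^c}$, into \Definition~\ref{def:join_map_def} and apply \Proposition~\ref{prop:join_for_elementary}: the factor $\nabla_M$ collapses to $\iota_{I^c}^\vee$, the two domains become the chain complexes $\SFC(Y,\Gamma\cup\Gamma_I)$ and $\SFC(F\times[0,1],\Gamma_{I\to J})$ identified in the previous paragraph and in \Theorem~\ref{thm:algebra_as_SFH}, and the target becomes $\SFC(Y,\Gamma\cup\Gamma_J)$. Using that $\Psi$ is independent of all these choices (\Theorem~\ref{thm:join_map_invariance}), I would identify the resulting map with the $m_{1|1}$--action of the $\iota_I A\iota_J$--summand of $A$ on $\BSA(Y)$, giving (\ref{eq:BSA_as_SFH_action}); vanishing on the remaining summands is automatic, since $m_{1|1}(\cdot,a)$ annihilates $\BSA(Y)\cdot\iota_{I'}$ unless $a\in\iota_{I'}A$. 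The statements for $\li{_A}\BSA(\Y)$ and $\li{_A}\BSAA(\Y)_B$ follow by the same argument, capping off one or both parametrized boundary components with elementary modules.

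The hard part will be the last identification: checking that after everything has been pushed through the pairing theorem and $\nabla_M$ has collapsed, what remains is the module action $m_{1|1}$ itself and not some twist of it. This requires keeping two bookkeeping conventions straight---the complementary-idempotent relation between type--$A$ and type--$D$ modules (so that $\iota_{I^c}$ on the $\BSD$ side of a pairing matches $\iota_I$ on the $\BSA$ side), and the precise cancellation $\TW_{\F,+}\cup\TW_{-\ol\F,-}\simeq F\times[0,1]$ of twisting slices, which is what forces the glued-in piece to terminate on $\Gamma_J$ with the correct orientation data. As a cross-check, and as a partial alternative, one can instead try to deduce the module statement from \Theorem~\ref{thm:algebra_as_SFH} by invoking the associativity of the join map (\Theorem~\ref{thm:join_nice_properties}(\ref{subthm:join_associative})) to transport the algebra-level identification of $\mu_2$ with $\Psi$ down to the module level.
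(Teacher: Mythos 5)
Your proposal is correct and follows essentially the same route as the paper: the decomposition via the orthogonal idempotents of the ground ring, the identification of each summand by box-tensoring with the elementary cap modules of \Proposition~\ref{prop:elem_div_sets} and invoking the pairing theorem, and the computation of the action by feeding the elementary module $\li{_A}\BSA(\W_{\Gamma_I})$ into \Proposition~\ref{prop:join_for_elementary}. The ``hard part'' you flag---cancelling $A^\vee\sqtens A$ in the target so that what remains is genuinely $m_{1|1}$---is exactly what the paper's \Lemma~\ref{lem:av_a_homotopy_equivalence} (the explicit homotopy equivalence $c_A$) is for, and composing $\Psi_{M_I}$ with $\id\sqtens c_A\sqtens\id$ completes the argument as you anticipate.
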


\Theorem~\ref{thm:algebra_as_SFH} and~\ref{thm:BSA_as_SFH}, give us an alternative way to think about
bordered sutured Floer homology, or pure bordered Floer homology. (Recall that as shown in~\cite{Zar:BSFH}, the bordered
invariants $\CFD$ and $\CFA$ are special cases of $\BSD$ and $\BSA$.) More remarkably, as we show in~\cite{Zar:gluing2},
$H_*(A)$, $\mu_H$, and $m_H$ can be expressed in purely contact-geometric terms.

For practical purposes, $A$ and $\BSA$ can be replaced by the $\Ainf$--algebra $H_*(A)$ and the $\Ainf$--module
$H_*(\BSA)$ over it. For example, the pairing theorem will still hold. This is due to the fact that (using
$\ZZ/2$--coefficients), an $\Ainf$--algebra or module is always homotopy equivalent to its homology.

We would need, however, the higher multiplication maps of $H_*(A)$, and the higher actions of $H_*(A)$ on $H_*(\BSA)$.
The maps $\mu_H$ and $m_H$ that we just computed are only single terms of those higher operations. (Even though $A$ is a
DG-algebra, $H_*(A)$ usually has nontrivial higher multiplication.)

\subsection{Proofs}
\label{sec:SFH_equiv_proofs}

In this section we prove \Theorems~\ref{thm:algebra_as_SFH} and~\ref{thm:BSA_as_SFH}. Since there is a lot of overlap of
the two results and the arguments, we will actually give a combined proof of a mix of statements from both theorems.
The rest follow as corollary.

\begin{proof}[Combined proof of \Theorem~\ref{thm:algebra_as_SFH} and \Theorem~\ref{thm:BSA_as_SFH}]
	First, note that \Equation~(\ref{eq:algebra_decomp}) and \Equation~(\ref{eq:BSA_decomposition}) follow directly from the
	fact that the idempotents generate the ground ring over $\ZZ/2$.

	We will start by proving a generalization of \Equation~(\ref{eq:algebra_as_SFH}) and \Equation~(\ref{eq:BSA_as_SFH}).
	The statement is as follows. Let $\F$ and $\F'$ be two sutured surfaces parametrized by the arc diagrams
	$\Z$ and $\Z'$ of rank $k$ and $k'$, respectively. Let $A=\A(\Z)$ and $B=\A(\Z')$. Let
	$\Y=(Y,\Gamma,\ol{\F}\sqcup\F')$ be a
	bordered sutured manifold, and let $M=\li{_A}\BSAA(\Y)_B$.

	Fix $I\subset\{1,\ldots,k\}$ and $J\subset\{1,\ldots,k'\}$. Let $\W_I$ and $\W'_J$ be
	the respective caps associated to the dividing sets $\Gamma_I$ on $\F$ and $\Gamma'_J$ on $\F'$. Then the
	following homotopy equivalence holds.
	\begin{equation}
		\label{eq:BSAA_as_SFH}
		\iota_I\cdot\BSAA(\Y)\cdot\iota_J\simeq\SFC(Y,\Gamma_I\cup\Gamma\cup\Gamma'_J).
	\end{equation}
	
	The proof is easy. Notice that the sutured manifold $(Y,\Gamma_I\cup\Gamma\cup\Gamma'_J)$ is just
	$-\W_I\cup\Y\cup\W'_J$. By the pairing theorem,
	$\SFC(Y,\Gamma_I\cup\Gamma\cup\Gamma'_J)\simeq\BSD(-\W_I)\sqtens\BSAA(\Y)\sqtens\BSD(\W'_J)$.
	But by \Proposition~\ref{prop:elem_div_sets}, $\BSD(-\W_I)=\{x_I,0\}$ is the elementary module corresponding to
	$\iota_I$, while $\BSD(\W'_J)=\{y_J,0\}$ is the elementary idempotent corresponding to $\iota'_J$. Thus we have
	\begin{equation*}
		\begin{split}
			\BSD(-\W_I)\sqtens\BSAA(\Y)\sqtens\BSD(\W'_J)
			&= x_I\sqtens\BSAA(\Y)\sqtens y_J\\
			&\cong \iota_I\cdot\BSAA(\Y)\cdot\iota'_J.
		\end{split}
	\end{equation*}

	\Equation~(\ref{eq:algebra_as_SFH}) follows from \Equation~(\ref{eq:BSAA_as_SFH}) by substituting the empty sutured surface
	$\varnothing=(\varnothing,\varnothing)$ for $\F$. Its algebra is $\A(\varnothing)=\ZZ/2$, so
	$\li{_{\ZZ/2}}\BSAA(\Y)_B$ and $\BSA(\Y)_B$ can be identified.

	\Equation~(\ref{eq:BSA_as_SFH}) follows from \Equation~(\ref{eq:BSAA_as_SFH}) by substituting $\F(\Z)$ for both $\F$
	and $\F'$, and $\TW_{-\ol\F,-}$ for $\Y$. Indeed, $\BSAA(-\TW_{\ol\F,-})\simeq\A(\Z)$, as a bimodule over itself, by
	\Proposition~\ref{prop:twisting_slice_invariants}.

	Next we prove \Equation~(\ref{eq:BSA_as_SFH_action}). Let $U_A$ be a DG-type representative for $\BSA(\Y)_A$, and let
	$M_I$ be the elementary representative for $\li{_A}\BSA(\W_I)$. Since both are DG-type, we can form the
	associative product
	\begin{equation*}
		\begin{split}
			U\sqtens\lu{A}\II^A\sqtens M_I&\simeq\BSA(Y)\sqtens\BSD(\W_I)\\
			&\simeq\SFC(Y,\Gamma\cup\Gamma_I).
		\end{split}
	\end{equation*}
	Similarly, pick $M_J$ to be the elementary representative for $\li{_A}\BSA(\W_J)$. We also know that
	$\li{_A}A_A$ is a DG-type representative for $\li{_A}\BSAA(\TW_{-\ol\F,-})_A$. We have the associative
	product
	\begin{equation*}
		\begin{split}
			{M_I}^\vee\sqtens\lu{A}\II^A\sqtens A\sqtens\lu{A}\II^A\sqtens M_J
			&\simeq\BSD(-\W_I)\sqtens A\sqtens\BSD(\W_J)\\
			&\simeq\SFC(F\times[0,1],\Gamma_{I\to J}).
		\end{split}
	\end{equation*}
	
	Gluing the two sutured manifolds along $(F,\Gamma_I)$ results in
	\begin{equation*}
		\Y\cup\TW_{\F,+}\cup\TW_{-\ol\F,-}\cup\W_J\cong\Y\cup\W_J=(Y,\Gamma\cup\Gamma_J),
	\end{equation*}
	so we get the correct manifold.

	The gluing map can be written as the composition of
	\begin{align*}
		\Psi_{M_I}
		&\co (U\sqtens\II)\sqtens M_I \otimes {M_I}^\vee \sqtens(\II\sqtens A\sqtens\II\sqtens M_J)\\
		&\to (U\sqtens\II)\sqtens A^\vee\sqtens(\II\sqtens A\sqtens\II\sqtens M_J),\\
		\id_U\sqtens c_A\sqtens \id_{\II\sqtens M_J}
		&\co U\sqtens(\II\sqtens A^\vee\sqtens\II\sqtens A)\sqtens\II\sqtens M_J
		\to U\sqtens\II\sqtens M_J,
	\end{align*}
	where $c_A$ is the homotopy equivalence from \Lemma~\ref{lem:av_a_homotopy_equivalence}.

	Luckily, since $M_I$ is elementary, $\Psi_{M_I}$ takes the simple form from
	\Proposition~\ref{prop:join_for_elementary}. In addition, since $U$ and $M_J$ are DG-type, $\id\sqtens h\sqtens\id$
	is also very simple. As can be seen in \Figure~\ref{fig:BSA_SFH_action}, the composition is in fact
	\begin{equation*}
		u\sqtens *\sqtens x_{I^c}\otimes {x_{I^c}}^\vee\sqtens *\sqtens a \sqtens *\sqtens x_{J^c}\mapsto
		m_{1|1}(u,a)\sqtens *\sqtens x_{J^c}.
	\end{equation*}

	Since $\cdot\sqtens *\sqtens x_{I^c}$ corresponds to $\cdot\iota_I$, this translates to the map
	\begin{equation*}
		\begin{split}
			\Psi_{(F,\Gamma_I)}\co (U\cdot\iota_I) \otimes(\iota_I\cdot A\cdot\iota_J)&
			\to U\cdot\iota_J,\\
			(u\cdot\iota_I)\otimes(\iota_I\cdot a\cdot\iota_J).
			&\mapsto m(u,a)\cdot\iota_J
		\end{split}
	\end{equation*}

	\begin{figure}
		\labellist
		\pinlabel $=$ at 224 108
		\small
		\pinlabel $\id_{U\sqtens\II}\sqtens\nabla_{M_I}\sqtens\id_{\II\sqtens{A}\sqtens\II\sqtens{M_J}}$ at 102 140
		\pinlabel $\id_U\sqtens{c_A}\sqtens\id_{\II\sqtens{M_J}}$ at 102 76
		\pinlabel $m_U$ at 256 60
		\pinlabel $\iota_I^\vee$ at 298 146
		\pinlabel $1$ at 298 94
		\hair=1.5pt
		\pinlabel $U$ [b] at 16 196
		\pinlabel $\II$ [b] at 38 196
		\pinlabel $M_I$ [b] at 60 196
		\pinlabel $M_I$ [b] at 104 196
		\pinlabel $\II$ [b] at 126 196
		\pinlabel $A$ [b] at 148 196
		\pinlabel $\II$ [b] at 170 196
		\pinlabel $M_J$ [b] at 192 196
		\pinlabel $U$ [b] at 256 196
		\pinlabel $\II$ [b] at 274 196
		\pinlabel $M_I$ [b] at 288 196
		\pinlabel $M_I$ [b] at 308 196
		\pinlabel $\II$ [b] at 322 196
		\pinlabel $A$ [b] at 340 196
		\pinlabel $\II$ [b] at 358 196
		\pinlabel $M_J$ [b] at 376 196
		\pinlabel $U$ [t] at 16 20
		\pinlabel $\II$ [t] at 170 20
		\pinlabel $M_J$ [t] at 192 20
		\pinlabel $U$ [t] at 256 20
		\pinlabel $\II$ [t] at 358 20
		\pinlabel $M_J$ [t] at 376 20
		\endlabellist
		\includegraphics[scale=.7]{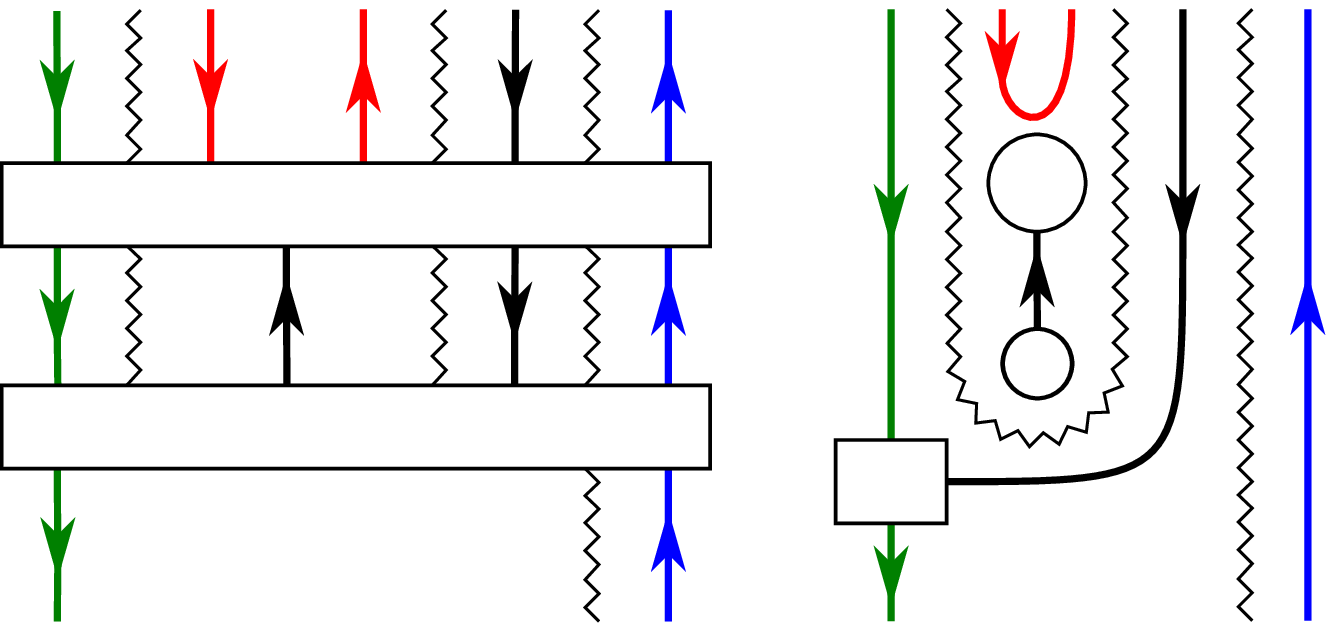}
		\caption{The gluing map $\Psi_{M_I}$ on $\SFC(Y,\Gamma_I)\otimes\SFC(F\times[0,1],\Gamma_{I\to J})$,
		followed by the chain homotopy equivalence $\id\sqtens c_A\sqtens\id$.}
		\label{fig:BSA_SFH_action}
	\end{figure}

	Note that even though we picked a specific representative for $\BSA(\Y)_A$, the group $H_*(\BSA(\Y))$ and the induced
	action $m_H$ of $H_*(A)$ do not depend on this choice. Finally, \Equation~(\ref{eq:algebra_as_SFH_multiplication})
	follows by treating $A$ as a right module over itself.
\end{proof}

\appendix
\section{Calculus of diagrams}
\label{sec:diagrams}

This appendix summarizes the principles of the diagrammatic calculus we have used throughout the
paper. First we describe the algebraic objects we work with, and the necessary assumptions on them. Then we describe the
diagrams representing these objects.

\subsection{Ground rings}
\label{sec:ground_rings}

The two basic objects we work with are a special class of rings, and bimodules over them. We call these rings
\emph{ground rings}.

\begin{defn}
	\label{def:ground_ring}
	A \emph{ground ring} $K$ is a finite dimensional $\ZZ/2$--algebra with a distinguished basis $(e_1,\ldots,e_k)$ such
	that multiplication is given by the formula
	\begin{equation*}
		e_i\cdot e_j =
		\begin{cases}
			e_i &\textrm{if $i=j$,}\\
			0 &\textrm{otherwise.}
		\end{cases}
	\end{equation*}

	Such a basis for $K$ is called a \emph{canonical basis}.
\end{defn}

The canonical basis elements are uniquely determined by the property that $e_i$ cannot be written as a sum $u+v$, where
$u$ and $v$ are nonzero and $u\cdot v =0$. Each element of $K$ is an idempotent, while $1_K=e_1+\cdots+e_k$ is an
identity element.

We consider only finite dimensional bimodules $\li{_K}M_L$ over ground rings $K$ and $L$, and collections
$(\li{_K}{M_i}_L)_{i\in I}$ where $I$ is a countable index set (usually $I=\{0,1,2,\ldots\}$, or some Cartesian power of
the same),
and each $M_i$ is a finite-dimensional $K,L$--bimodule. It is often useful to think of the collection $(M_i)$ as the
direct sum $\bigoplus_{i\in I}M_i$, but that sometimes leads to problems, so we will not make this identification.

There are some basic properties of bimodules over ground rings as defined above.

\begin{prop}
	\label{prop:ground_ring_modules}
	Suppose $K$, $L$, and $R$ are ground rings with canonical bases $(e_1,\ldots,e_k)$, $(e'_1,\ldots,e'_l)$, and 
	$(e''_1,\ldots,e''_r)$, respectively.
	\begin{itemize}
		\item A bimodule $\li{_K}M_L$ is uniquely determined by the collection of $\ZZ/2$--vector spaces
			\begin{align*}
				e_i\cdot M\cdot e'_j,& &i\in\{1,\ldots,k\},j\in\{1,\ldots,l\},
			\end{align*}
			which we will call the \emph{components} of $M$.
		\item A $K,L$--bilinear map $f\co M\to N$ is determined by the collection of $\ZZ/2$--linear maps
			\begin{equation*}
				f|_{e_i\cdot M\cdot e'_j}\co e_i\cdot M\cdot e'_j \to e_i\cdot N\cdot e'_j.
			\end{equation*}
		\item The tensor product $(\li{_K}M_L)\otimes_L(\li{_L}N_R)$ has components
			\begin{equation*}
				e_i\cdot(M\otimes_L N)\cdot e''_j=\bigoplus_{p=1}^{l}(e_i\cdot M\cdot e'_p)\otimes_{\ZZ/2}(e'_p\cdot
				N\cdot e''_j).
			\end{equation*}
		\item The dual $\li{_L}{M^\vee}_K$ of $\li{_K}M_L$ has components
			\begin{equation*}
				e_i\cdot{M^\vee}\cdot e'_j\cong (e'_j\cdot M\cdot e_i)^\vee,
			\end{equation*}
			and the double dual $(M^\vee)^\vee$ is canonically isomorphic to $M$.
	\end{itemize}
\end{prop}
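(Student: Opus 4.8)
The plan is to reduce all four statements to a single structural fact: in a ground ring $K$ with canonical basis $(e_1,\ldots,e_k)$, the elements $e_i$ are pairwise orthogonal idempotents with $e_1+\cdots+e_k=1_K$, so for any left $K$-module $V$ there is an internal direct sum decomposition $V=\bigoplus_i e_i\cdot V$ of $\ZZ/2$-vector spaces in which $e_i$ acts as the projection onto the $i$-th summand. Everything else is bookkeeping layered on top of this observation, and on the fact that all modules in sight are finite-dimensional over $\ZZ/2$.

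First I would prove the first bullet. Given $\li{_K}M_L$, apply the decomposition above on the left, then decompose each $e_i\cdot M$ further using the orthogonal idempotents $e'_j$ of $L$ acting on the right, obtaining $M=\bigoplus_{i,j}e_i\cdot M\cdot e'_j$ as a $\ZZ/2$-vector space. The left action of an arbitrary $a=\sum_i\lambda_i e_i\in K$ is then $\lambda_i$ times the projection onto the $i$-th ``row'', and similarly on the right, so $M$ together with its full bimodule structure is reconstructed from the components. Conversely, given any family $(V_{ij})$ of $\ZZ/2$-vector spaces, one sets $M=\bigoplus_{i,j}V_{ij}$ and lets $e_i$ (resp. $e'_j$) act as the projection onto the $i$-th row (resp. $j$-th column); since the row- and column-projections commute and are idempotent, the ground-ring relations hold and this defines a $K,L$-bimodule with the prescribed components. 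The second bullet is then immediate: for $K,L$-bilinear $f\co M\to N$ one has $f(e_i\cdot m\cdot e'_j)=e_i\cdot f(m)\cdot e'_j\in e_i\cdot N\cdot e'_j$, so $f$ carries each component into the matching component and is the direct sum of its restrictions; conversely any compatible family of $\ZZ/2$-linear maps assembles to a bilinear map.

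For the tensor product I would use that over $L$ one has $m\cdot e'_p\otimes n=m\otimes e'_p\cdot n$ together with $\sum_p e'_p=1_L$, so $m\otimes n=\sum_p (m\cdot e'_p)\otimes(e'_p\cdot n)$; combining this with the first bullet gives the decomposition $e_i\cdot(M\otimes_L N)\cdot e''_j=\bigoplus_p(e_i\cdot M\cdot e'_p)\otimes_{\ZZ/2}(e'_p\cdot N\cdot e''_j)$, the internal summands being genuine $\ZZ/2$-tensor products because $e'_p\cdot L\cdot e'_p=\ZZ/2\cdot e'_p$. For the dual, recall $M^\vee=\Hom_{\ZZ/2}(M,\ZZ/2)$ carries the $L,K$-bimodule structure $(\ell\cdot\phi\cdot k)(m)=\phi(k\cdot m\cdot\ell)$; then $(e'_j\cdot\phi\cdot e_i)(m)=\phi(e_i\cdot m\cdot e'_j)$, so $e'_j\cdot M^\vee\cdot e_i$ is precisely the space of functionals vanishing on every component except $e_i\cdot M\cdot e'_j$, which is canonically $(e_i\cdot M\cdot e'_j)^\vee$. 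The canonical isomorphism $(M^\vee)^\vee\cong M$ is $m\mapsto\ev_m$: it is $K,L$-bilinear by a direct check and bijective componentwise since each $e_i\cdot M\cdot e'_j$ is finite-dimensional.

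The only step requiring genuine care is keeping track of which ground ring acts on which side after dualizing, and matching the resulting subscripts against the orthogonality computation; this is the sole ``obstacle'', and it is purely notational rather than mathematical. No part of the proposition needs anything beyond the idempotent decomposition and finite-dimensionality, so the write-up should be short.
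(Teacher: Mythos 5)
Your argument is correct and is just a fully spelled-out version of what the paper itself dismisses as immediate: the paper's entire proof is ``These follow immediately,'' with the sole substantive remark being that $M^{\vee\vee}\cong M$ uses finite-dimensionality of each component, which you also identify. The orthogonal-idempotent decomposition, the $\sum_p e'_p=1_L$ trick for the tensor product, and the componentwise evaluation map are exactly the intended (and only) route.
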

\begin{proof}
	These follow immediately. The fact that $M^{\vee\vee}\cong M$ is due to the fact the fact that each component is a finite
	dimensional	vector space.
\end{proof}

Finally, when dealing with countable collections we introduce the following conventions. For consistency we can think of
a single
module $M$ as a collection $(M_i)$ indexed by the set $I=\{1\}$.

\begin{defn}
	\label{def:module_collections}
	Let $K$, $L$, and $M$ be as in \Proposition~\ref{prop:ground_ring_modules}.
	\begin{itemize}
		\item An \emph{element} of $(M_i)_{i\in I}$ is a collection $(m_i)_{i\in I}$ where $m_i\in M_i$.
		\item A \emph{bilinear map} $f\co(\li{_K}{M_i}_L)_{i\in I}\to(\li{_K}{N_j}_L)_{j\in J}$ is a collection
			\begin{align*}
				f_{(i,j)}&\co M_i\to N_j &(i,j)\in I\times J.
			\end{align*}
			Equivalently, a map $f$ is an element of the collection
			\begin{equation*}
				\Hom_{K,L}((M_i)_{i\in I},(N_j)_{j\in J})=(\Hom(M_i,N_j))_{(i,j)\in I\times J}.
			\end{equation*}
		\item The \emph{tensor} $\li{_K}(M_i)_L\otimes\li{_L}(N_j)_R$ is the collection
			\begin{equation*}
			((M\otimes N)_{(i,j)})_{(i,j)\in I\times J}=(M_i\otimes N_j)_{(i,j)\in I\times J}.
			\end{equation*}
		\item The \emph{dual} $((M_i)_{i\in I})^\vee$ is the collection $({M_i}^\vee)_{i\in I}$.
		\item  Given bilinear maps $f\co (M_i)\to (N_j)$ and $g\co (N_j)\to (P_p)$, their composition
			$g\circ f\co (M_i)\to (P_p)$ is the collection
			\begin{equation*}
				(g\circ f)_{(i,p)}=\sum_{j\in J}g_{(j,p)}\circ f_{(i,j)}.
			\end{equation*}
	\end{itemize}
\end{defn}

Note that the composition of maps on collections may not always be defined due to a potentially infinite sum.
On the other hand, the double dual $(M_i)^{\vee\vee}$ is still canonically isomorphic to $(M_i)$.

\subsection{Diagrams for maps}
\label{sec:map_diagrams}

We will use the following convention for our diagram calculus. There is a TQFT-like structure, where to decorated planar
graphs we assign bimodule maps.

\begin{prop}
	\label{prop:hom_space_equivalence}
	Suppose $K_0,K_1,\ldots,K_n=K_0$ are ground rings, $n\geq 0$, and $\li{_{K_{i-1}}}{M_i}_{K_i}$ are bimodules, or
	collections of bimodules. Then the following $\ZZ/2$--spaces are canonically isomorphic.
	\begin{align*}
		A_i &=M_i\otimes M_{i+1}\otimes\cdots\otimes M_n\otimes M_1\otimes\cdots \otimes M_{i-1}/\sim,\\
		B_{i,j}
		&=\Hom_{K_i,K_j}(M_i^\vee\otimes\cdots\otimes M_1^\vee\otimes M_n^\vee\otimes\cdots\otimes M_{j+1}^\vee,~M_{i+1}\otimes\cdots\otimes
		M_j),\\
		C_{i,j}&=\Hom_{K_j,K_i}(M_j^\vee\otimes\cdots\otimes M_{i+1}^\vee,~M_{j+1}\otimes\cdots\otimes M_n\otimes
		M_1\otimes\cdots\otimes M_i),
	\end{align*}
	for $0\leq i\leq j\leq n$, where the relation $\sim$ in the definition of $A_i$ is $k\cdot x\sim x\cdot k$, for
	$k\in K_{i-1}$.
\end{prop}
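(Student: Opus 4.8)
The plan is to reduce the whole web of isomorphisms to a single elementary fact about finite-dimensional bimodules over ground rings---a hom--tensor duality---together with the cyclic symmetry of the trace. \textbf{Step 1: the building block.} First I would prove that for finite-dimensional $K,L$--bimodules $M$ and $N$ over ground rings $K,L$ there is a canonical isomorphism
\begin{equation*}
	\Hom_{K,L}(M,N)\;\cong\;(M^\vee\otimes_K N)\big/\!\sim,
\end{equation*}
where $M^\vee\otimes_K N$ is the $L,L$--bimodule built from $\li{_L}{M^\vee}_K$ and $\li{_K}N_L$, and $\sim$ identifies $e\cdot x$ with $x\cdot e$ for $e\in L$. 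Both sides are computed componentwise with \Proposition~\ref{prop:ground_ring_modules}: a $K,L$--linear map $M\to N$ is the same as a family of $\ZZ/2$--linear maps $e_iMe'_j\to e_iNe'_j$, which (finite dimensions) is $(e_iMe'_j)^\vee\otimes(e_iNe'_j)\cong(e'_jM^\vee e_i)\otimes(e_iNe'_j)$, i.e.\ the $(e'_j,e'_j)$--component of $M^\vee\otimes_K N$. Then for any $L,L$--bimodule $P$ the relation $\sim$ kills the off-diagonal components $e'_aPe'_b$ with $a\neq b$ and is trivial on the diagonal ones, so $P/\!\sim\;\cong\;\bigoplus_a e'_aPe'_a$; this matches the two sides.

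\textbf{Step 2: cyclicity of the trace, and the $A_i$.} Next I would note that for finite-dimensional $\li{_L}P_K$ and $\li{_K}Q_L$ there is a canonical isomorphism $(P\otimes_K Q)/\!\sim_L\cong(Q\otimes_L P)/\!\sim_K$, since the Step~1 computation shows both equal $\bigoplus_{a,b}(e'_aPe_b)\otimes_{\ZZ/2}(e_bQe'_a)$. Applying this around the cycle, all the spaces $A_i$ become canonically isomorphic to a single ``cyclic tensor product'' $\mathcal{T}$, obtained by taking $M_1\otimes_{K_1}\cdots\otimes_{K_{n-1}}M_n$---a $K_0,K_0$--bimodule---and dividing by the trace relation; here one uses that the relation $\sim$ in the definition of $A_i$ is exactly the trace relation for the $K_{i-1},K_{i-1}$--bimodule $M_i\otimes\cdots\otimes M_{i-1}$.

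\textbf{Step 3: the Hom spaces.} Finally, to handle $B_{i,j}$ I would apply Step~1 with $M$ the source $M_i^\vee\otimes\cdots\otimes M_1^\vee\otimes M_n^\vee\otimes\cdots\otimes M_{j+1}^\vee$ (a $K_i,K_j$--bimodule) and $N$ the target $M_{i+1}\otimes\cdots\otimes M_j$. Using $M^{\vee\vee}\cong M$ and the fact that dualizing reverses a tensor product of bimodules, the dual of the source is $M_{j+1}\otimes\cdots\otimes M_n\otimes M_1\otimes\cdots\otimes M_i$; tensoring this with the target over $K_i$ and dividing by the trace over $K_j$ gives precisely the full cyclic word split after $M_j$ modulo its trace relation, i.e.\ $A_{j+1}\cong\mathcal{T}$. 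The identical computation for $C_{i,j}$---whose source has dual $M_{i+1}\otimes\cdots\otimes M_j$---produces $A_{i+1}\cong\mathcal{T}$. Every map used is canonical, so these assemble into the coherent web of canonical isomorphisms asserted. The ``collection'' version is the same argument carried out componentwise via the definitions of $\Hom$, $\otimes$ and $\vee$ from \Definition~\ref{def:module_collections}; no infinite sums occur because we only identify families, never compose two of them.

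\textbf{Main obstacle.} The only real work is the bookkeeping in Step~3: checking that after dualizing, the source of each $\Hom$ glues onto the target to recover the \emph{entire} cyclic word with the trace over the correct ground ring, and that the degenerate cases $i=j$, $i=0$, $j=n$ (where stretches of the word are empty) still match. Beyond Step~1 and the cyclicity of the trace, nothing conceptual is involved.
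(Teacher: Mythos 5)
Your proposal is correct, and it is essentially a fully worked-out version of the paper's own (one-line) proof, which simply observes that everything reduces to componentwise finite-dimensional linear algebra over $\ZZ/2$. Your Step 1 hom--tensor identity, the trace-cyclicity, and the dual-reverses-tensor bookkeeping are exactly the canonical identifications the paper is implicitly invoking under the name of Frobenius duality.
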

\begin{proof}
	The proof is straightforward. If all $M_i$ are single modules, then we are only dealing with finite-dimensional
	$\ZZ/2$--vector spaces. If some of them are collections, then the index sets for $A_i$, $B_{i,j}$ and $C_{i,j}$ are
	all the same, and any individual component still consists of finite dimensional vector spaces.
\end{proof}

This property is usually referred to as \emph{Frobenius duality}. Our bimodules behave similar to a pivotal tensor
category. Of course we do not have a real category, as even compositions are not always defined.

\begin{defn}
	\label{def:composite_diagram}
	A \emph{diagram} is a planar oriented graph, embedded in a disc, with some degree--$1$ vertices on the boundary of the disc
	There are labels as follows.
	\begin{itemize}
		\item Each planar region (and thus each arc of the boundary) is labeled by a ground ring $K$. 
		\item Each edge is labeled by a bimodule $\li{_K}M_L$, such that when traversing the edge in its direction, the
			region on the left is labeled by $K$, while the one on the right is labeled by $L$.
		\item An internal vertex with all outgoing edges labeled by $M_1,\ldots,M_n$, in cyclic counterclockwise order, is labeled
			by an element of one of the isomorphic spaces in \Proposition~\ref{prop:hom_space_equivalence}.
		\item If any of the edges adjacent to a vertex are incoming, we replace the corresponding modules by their duals.
	\end{itemize}
\end{defn}

When drawing diagrams we will omit the bounding disc, and the boundary vertices.
We will usually interpret diagrams consisting of a single internal vertex having several incoming edges
$M_1,\ldots,M_m$ ``on top'', and several outgoing edges $N_1,\ldots,N_n$ ``on the bottom'', as a bilinear map in
$\Hom(M_1\otimes\cdots\otimes{}M_m,~N_1\otimes\cdots\otimes{}N_n)$. See \Figure~\ref{fig:calculus_example} for an example.

\begin{figure}
	\centering
	\labellist
	\pinlabel $\leftrightarrow$ at 158 62
	\pinlabel $\leftrightarrow$ at 274 62
	\pinlabel $F$ at 62 66
	\pinlabel $F$ at 206 62
	\pinlabel \rotatebox[origin=c]{180}{$F$} at 338 62
	\small\hair 0.5pt
	\pinlabel $M_1$ [t] at 12 16
	\pinlabel $M_2$ [t] at 44 16
	\pinlabel $M_3$ [t] at 76 16
	\pinlabel $M_4$ [t] at 108 16
	\pinlabel $M_5$ [t] at 140 16
	\pinlabel $M_2$ [t] at 188 16
	\pinlabel $M_3$ [t] at 220 16
	\pinlabel $M_4$ [t] at 252 16
	\pinlabel $M_5$ [t] at 308 16
	\pinlabel $M_1$ [t] at 340 16
	\pinlabel $M_1^\vee$ [b] at 204 116
	\pinlabel $M_5^\vee$ [b] at 236 116
	\pinlabel $M_4^\vee$ [b] at 292 116
	\pinlabel $M_3^\vee$ [b] at 324 116
	\pinlabel $M_2^\vee$ [b] at 356 116
	\endlabellist
	\includegraphics[scale=.5]{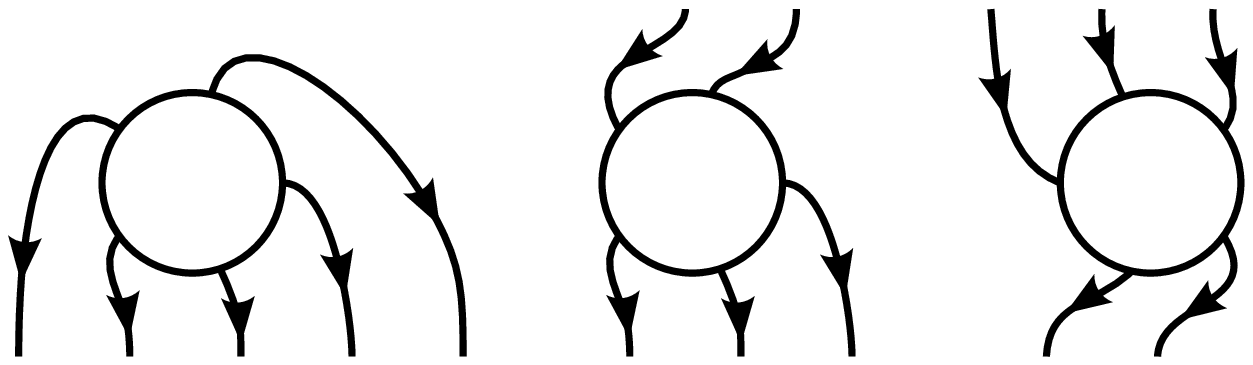}
	\caption{Three equivalent diagrams with a single vertex. The label $F$ is interpreted as an element of
	$A_1=M_1\otimes\cdots\otimes M_5/\sim$, $B_{1,4}=\Hom(M_1^\vee\otimes M_5^\vee,M_2\otimes M_3\otimes M_4)$, and
	$C_{1,4}\Hom(M_4^\vee\otimes M_3^\vee\otimes M_2^\vee,M_5\otimes M_1)$, respectively.}
	\label{fig:calculus_example}
\end{figure}

Under some extra assumptions, discussed in \Section~\ref{sec:boundedness}, a diagram with more vertices can also be
evaluated, or interpreted as an element of some set, corresponding to all outgoing edges. The most common example is
having two diagrams $\D_1$ and $\D_2$ representing linear maps

\centerline{\xymatrix{
M \ar[r]^{f_1} & N \ar[r]^{f_2} & P.
}}

Stacking the two diagrams together, feeding the outgoing edges of $\D_1$ into the incoming edges of $\D_2$, we get a new
diagram $\D$, corresponding to the map $f_2\circ f_1\co M\to P$. More generally, we can ``contract'' along all internal
edges, pairing the elements assigned to the two ends of an edge. As an example we will compute the diagram $D$ in
\Figure~\ref{fig:calculus_complicated_example}. Suppose the values of the vertices $F$, $G$,
and $H$ are as follows:
\begin{align*}
	F&=\sum_i m_i\otimes q_i\otimes s_i\in M\otimes Q \otimes S,\\
	G&=\sum_j s'_j\otimes r'_j\otimes p'_j\in S^\vee\otimes R^\vee\otimes P^\vee,\\
	H&=\sum_k q'_k\otimes n_k \otimes r_k \in Q^\vee\otimes N \otimes R.
\end{align*}
Then the value of $D$ is given by
\begin{equation*}
	D=\sum_{i,j,k} \left<q_i,q'_k\right>_Q\cdot \left<s_i,s'_j\right>_S\cdot \left<r_k,r'_j\right>_R
	\cdot m_i\otimes n_k \otimes p'_j\in M\otimes N\otimes P^\vee.
\end{equation*}

\begin{figure}
	\labellist
	\pinlabel $=$ at 112 66
	\pinlabel $D$ at 36 92
	\pinlabel $F$ at 176 116
	\pinlabel $G$ at 296 116
	\pinlabel $H$ at 236 60
	\small\hair=0.5pt
	\pinlabel $M$ [t] at 12 16
	\pinlabel $N$ [t] at 36 16
	\pinlabel $P$ [t] at 60 16
	\pinlabel $M$ [t] at 160 16
	\pinlabel $N$ [t] at 236 16
	\pinlabel $P$ [t] at 312 16
	\pinlabel $Q$ [tr] at 188 42
	\pinlabel $R$ [tl] at 284 40
	\pinlabel $S$ [b] at 236 98
	\endlabellist
	\includegraphics[scale=.5]{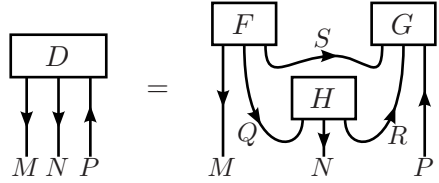}
	\caption{Evaluation of a complex diagram.}
	\label{fig:calculus_complicated_example}
\end{figure}

Edges that go from boundary to boundary and closed loops
can be interpreted as having an identity vertex in the middle. As with individual vertices, we can rotate a diagram to
interpret it as an element of different spaces, or different linear maps.

Note that the above construction might fail if any of the internal edges corresponds to a collection, since there might be an
infinite sum involved. The next section discusses how to deal with this problem.

\subsection{Boundedness}
\label{sec:boundedness}

When using collections of modules we have to make additional assumptions to avoid infinite sums. We use the concept of
boundedness of maps and diagrams.

\begin{defn}
	\label{def:boundedness}
	An element $(m_i)_{i\in I}$ of the collection $(M_i)_{i\in I}$ is called \emph{bounded} if only finitely many of its components $m_i$
	are nonzero. Equivalently, the bounded elements of $(M_i)$ can be identified with the elements of $\bigoplus_i M_i$.

	For a collection $(M_{i,j})_{i\in I,j\in J}$ there are several different concepts of boundedness. An element
	$(m_{i,j})$ is \emph{totally bounded} if it is bounded in the above sense, considering $I\times J$ as a single
	index-set. A weaker condition is that $(m_{i,j})$ is \emph{bounded in $J$ relative to $I$}. This means that for each $i\in
	I$, there are only finitely many $j\in J$, such that $m_{i,j}$ is nonzero. Similarly, an element can be bounded in
	$I$ relative to $J$.
\end{defn}

Note that $f\co (M_i)\to (N_j)$ is bounded in $J$ relative to $I$ exactly when $f$ represents a map from $\bigoplus_i
M_i$ to $\bigoplus_j N_j$. In computations relatively bounded maps are more common than totally bounded ones.
For instance the identity map $\id\co(M_i)\to(M_i)$ and the natural pairing $\left<\cdot,\cdot\right>\co(M_i)^\vee\otimes(M_i)\to
K$ are not totally bounded, but are bounded in each index relative to the other.
 
To be able to collapse an edge labeled by a collection $(M_i)_{i\in I}$ in a diagram, at least one of the two
adjacent vertices needs to be labeled by an element relatively bounded in the $I$--index.
For a given diagram $D$ we can ensure that it has a well-defined evaluation by imposing enough
 boundedness conditions on individual vertices. (There is usually no unique minimal set of conditions.)
Total or relative boundedness of $D$ can also be achieved by a stronger set of conditions. For example, if all vertices
are totally bounded, the entire diagram is also totally bounded.

\section{\texorpdfstring{$\Ainf$}{A-infinity}--algebras and modules}
\label{sec:algebra}

In this section we will present some of the background on $\Ainf$--algebras and modules, and the way they are used in
the bordered setting. A more thorough treatment is given in~\cite{LOT:bimodules}. 

As in \Appendix~\ref{sec:diagrams}, we always work with $\ZZ/2$--coefficients which avoids dealing with signs.
Everything is expressed in terms of the diagram calculus of \Appendix~\ref{sec:diagrams}. As described there, all
modules are finite dimensional, although we also deal with countable collections of such modules. There is essentially
only one example of collections that we use, which is presented below.

\subsection{The bar construction}
\label{sec:bar_definition}

Suppose $K$ is a ground ring and $\li{_K}M_K$ is a bimodule over it.

\begin{defn}
	\label{def:bar}
	The \emph{bar of $M$} is the collection
	\begin{equation*}
		\BBar M = (M^{\otimes i})_{i=0,\ldots,\infty},
	\end{equation*}
	of tensor powers of $M$.
\end{defn}

There are two important maps on the bar of $M$.

\begin{defn}
	\label{def:split_and_merge}
	The \emph{split} on $\BBar M$ is the map $s\co\BBar M\to\BBar M\otimes\BBar M$ with components
	\begin{equation*}
		s_{(i,j,k)}=
		\begin{cases}
			\id\co M^{\otimes i}\to(M^{\otimes j})\otimes(M^{\otimes k}) &\textrm{if $i=j+k$,}\\
			0 &\textrm{otherwise.}
		\end{cases}
	\end{equation*}

	The \emph{merge} map $\BBar M\otimes\BBar M\to \BBar M$ is similarly defined.
\end{defn}

Merges and splits can be extended to more complicated situations where any combination of copies of $\BBar M$ and $M$
merge into $\BBar M$, or split from $\BBar M$. All merges are associative, and all splits are coassociative.

Like the identity map, splits and merges are bounded in incoming indices, relative to outgoing, and vice versa. To
simplify diagrams, we draw merges and splits as merges ans splits of arrows, respectively, without using a box for the
corresponding vertex (see \Figure~\ref{fig:split_and_merge}).

\begin{figure}
	\begin{subfigure}[b]{.48\linewidth}
		\centering
		\labellist
		\tiny\hair 2pt
		\pinlabel $\BBar{}M$ [b] at 52 112
		\pinlabel $\BBar{}M$ [b] at 152 112
		\pinlabel $\BBar{}M$ [b] at 296 112
		\pinlabel $\BBar{}M$ [t] at 24 20
		\pinlabel $\BBar{}M$ [t] at 80 20
		\pinlabel $M$ [t] at 124 20
		\pinlabel $\BBar{}M$ [t] at 184 20
		\pinlabel $\BBar{}M$ [t] at 240 20
		\pinlabel $\BBar{}M$ [t] at 296 20
		\pinlabel $\BBar{}M$ [t] at 352 20
		\endlabellist
		\includegraphics[scale=.45]{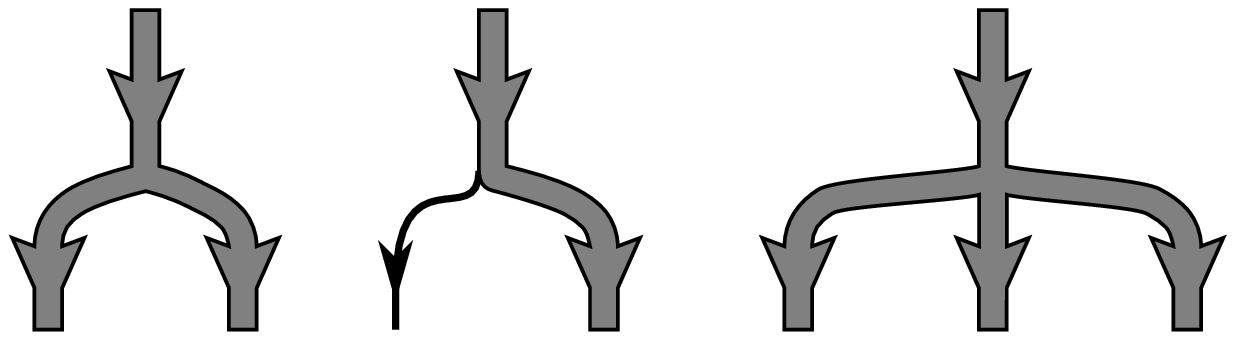}
		\caption{Split maps.}
		\label{subfig:splits}
	\end{subfigure}
	\begin{subfigure}[b]{.48\linewidth}
		\centering
		\labellist
		\tiny\hair 2pt
		\pinlabel $\BBar{}M$ [t] at 52 20
		\pinlabel $\BBar{}M$ [t] at 152 20
		\pinlabel $\BBar{}M$ [t] at 296 20
		\pinlabel $\BBar{}M$ [b] at 24 112
		\pinlabel $\BBar{}M$ [b] at 80 112
		\pinlabel $M$ [b] at 124 112
		\pinlabel $\BBar{}M$ [b] at 184 112
		\pinlabel $\BBar{}M$ [b] at 240 112
		\pinlabel $M$ [b] at 296 112
		\pinlabel $\BBar{}M$ [b] at 352 112
		\endlabellist
		\includegraphics[scale=.45]{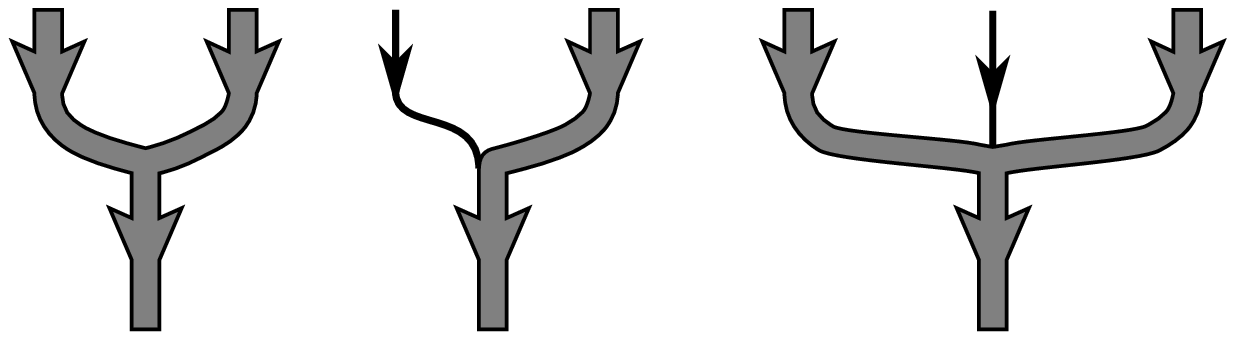}
		\caption{Merge maps.}
		\label{subfig:merges}
	\end{subfigure}
	\caption{}
	\label{fig:split_and_merge}
\end{figure}

\subsection{Algebras and modules}
\label{sec:algebras_and_modules}

The notion of an $\Ainf$--algebra is a generalization of that of a differential graded (or DG) algebra. While the algebras that arise in the context 
of bordered Floer homology are only DG, we give the general definition for completeness. We will omit grading shifts.

\begin{defn}
	\label{def:ainf_algebra}
	An \emph{$\Ainf$--algebra $A$ over the base ring $K$} consists a $K$--bimodule $\li{_K}A_K$, together with a
	collection of linear maps $\mu_i\co A^{\otimes i}\to A$, $i\geq 1$, satisfying certain compatibility conditions. By adding the trivial map
	$\mu_0=0\co K\to A$, we can regard this as a map $\mu=(\mu_i)\co\BBar A\to A$. This induces a map
	$\ol{\mu}\co \BBar A\to \BBar A$, given by splitting $\BBar M$ into three copies of itself, applying $\mu$ to the
	middle one, and merging again (see \Figure~\ref{subfig:mubar}).

	The compatibility condition is $\ol\mu\circ\mu=0$, or equivalently $\ol\mu\circ\ol\mu=0$
	(see \Figure~\ref{subfig:mu_condition}).

	The algebra is \emph{unital} if there is a map $1\co K\to A$ (which we draw as a circle labeled ``$1$'' with an
	outgoing arrow labeled ``$A$''), such that $\mu_2(1,a)=\mu_2(a,1)=a$, and $\mu_i(\ldots,1,\ldots)=0$ if $i\neq 2$.

	The algebra $A$ is \emph{bounded} if $\mu$ is bounded, or equivalently if $\ol\mu$ is relatively bounded in both
	directions.
\end{defn}

Notice that a DG-algebra with multiplication $m$ and differential $d$ is just an $\Ainf$ algebra with $\mu_1=d$,
$\mu_2=m$, and $\mu_i=0$ for $i\geq 3$. Moreover, DG-algebras are always bounded.

Since DG-algebras are associative, there is one more operation that is specific to them.

\begin{defn}
	\label{def:pi}
	The \emph{associative multiplication} $\pi\co\BBar A\to A$ for a DG-algebra $A$ is the map with components
	\begin{equation*}
		\pi_i(a_1\otimes\cdots\otimes a_i)=
		\begin{cases}
			a_1 a_2 \cdots a_i &i>0,\\
			1 &i=0.
		\end{cases}
	\end{equation*}
\end{defn}

\begin{figure}
	\begin{subfigure}[b]{.39\linewidth}
		\labellist
		\pinlabel $\ol\mu$ at 20 78
		\pinlabel $\mu$ at 132 78
		\pinlabel $=$ at 64 78
		\endlabellist
		\centering
		\includegraphics[scale=.5]{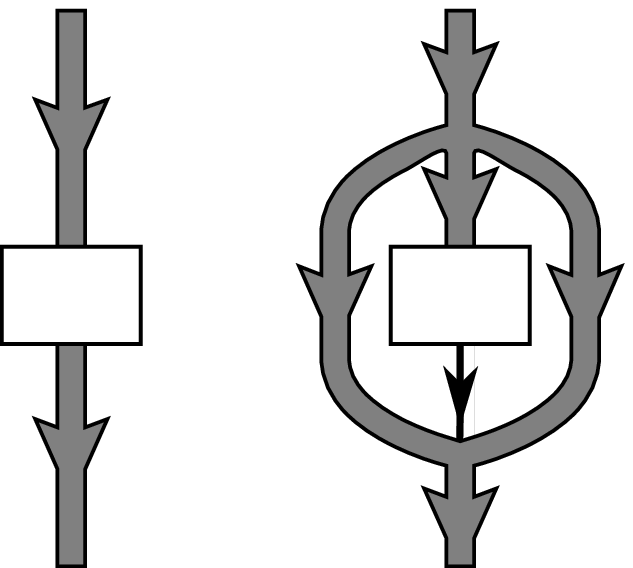}
		\caption{$\bar\mu$ in terms of $\mu$.}
		\label{subfig:mubar}
	\end{subfigure}
	\begin{subfigure}[b]{.59\linewidth}
		\labellist
		\pinlabel $=~0$ [l] at 51 80
		\pinlabel $\Longleftrightarrow$ at 138 80
		\pinlabel $=~0$ [l] at 239 80
		\pinlabel $\ol\mu$  at 20 110
		\pinlabel $\ol\mu$  at 208 110
		\pinlabel $\mu$  at 20 46
		\pinlabel $\ol\mu$  at 208 46
		\endlabellist
		\centering
		\includegraphics[scale=.5]{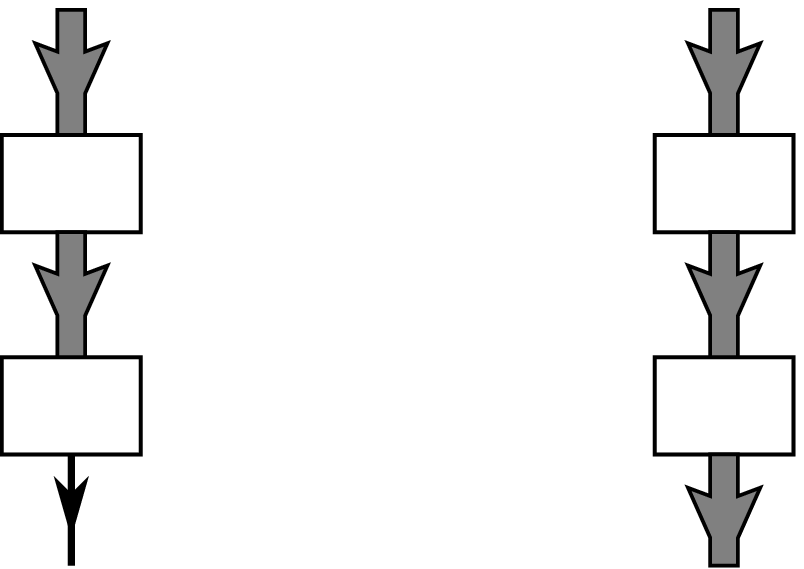}
		\caption{Compatibility conditions.}
		\label{subfig:mu_condition}
	\end{subfigure}
	\caption{Definition of $\Ainf$--algebras}
	\label{fig:ainf_algebra_def}
\end{figure}

There are two types of modules: type--$A$, which is the usual notion of an $\Ainf$--module, and type--$D$. There are
four types of bimodules: type--$AA$, type--$DA$, etc. These can be extend to tri-modules and so on. We describe several
of the bimodules. Other cases can be easily deduced.

Suppose $A$ and $B$ are unital $\Ainf$--algebras with ground rings $K$ and $L$, respectively. We use the following
notation. A type--$A$ module over $A$ will have $A$ as a lower index. A type--$D$ module over $A$ will have $A$ as an
upper index. Module structures over the ground rings $K$ and $L$ are denoted with the usual lower index notation.

\begin{defn}
	\label{def:aa_module}
	A type--$AA$ bimodule $\li{_A}M_B$ consists of a bimodule $\li{_K}M_L$ over the ground rings, together with a map
	$m=(m_{i|1|j})\co\BBar A\otimes M\otimes \BBar B\to M$. The compatibility conditions for $m$ are given in
	\Figure~\ref{fig:aa_condition}.
\end{defn}

The bimodule $M$ is \emph{unital} if $m_{1|1|0}(1_A,m)=m_{0|1|1}(m,1_B)=m$, and $m_{i|1|j}$ vanishes
in all other cases where one of the inputs is $1_A$ or $1_B$.

The bimodule can be \emph{bounded}, \emph{bounded only in $A$}, \emph{relatively bounded in $A$ with respect to
$B$}, etc. These are defined in terms of the index sets of $\BBar A$ and $\BBar B$.

\begin{figure}
	\labellist
	\pinlabel $m$ at 60 110
	\pinlabel $m$ at 60 46
	\pinlabel $\ol\mu_A$ at 176 110
	\pinlabel $m$ at 224 46
	\pinlabel $\ol\mu_B$ at 424 110
	\pinlabel $m$ at 376 46
	\pinlabel $+$ at 142 80
	\pinlabel $+$ at 300 80
	\pinlabel $=~0$ [l] at 456 80
	\endlabellist
	\includegraphics[scale=.5]{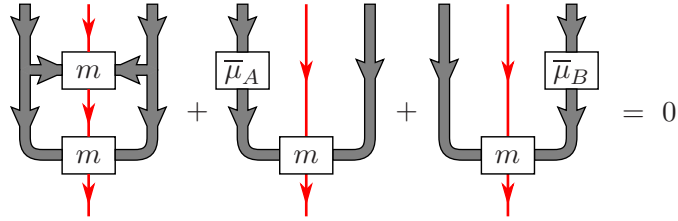}
	\caption{Structure equation for a type--$AA$ module.}
	\label{fig:aa_condition}
\end{figure}

\begin{defn}
	\label{def:da_module}
	A type--$DA$ bimodule $\lu{A}M_B$ consists of a bimodule $\li{_K}M_L$ over the ground rings, together with a map
	$\delta=(\delta_{1|1|j})\co M\otimes \BBar B\to A \otimes M$. This induces another map
	$\ol\delta=(\delta_{i|1|j})\co M\otimes \BBar B\to \BBar A\otimes M$, by splitting $\BBar B$ into $i$ copies, and
	applying $\delta$ $i$--many times (see \Figure~\ref{subfig:deltabar}). The compatibility conditions for $\delta$ and $\ol\delta$ are given in
	\Figure~\ref{subfig:da_condition}.
\end{defn}

The bimodule $M$ is \emph{unital} if $\delta_{1|1|1}(m,1_B)=1_A\otimes m$, and $\delta_{1|1|i}$ vanishes for
$i>1$ if one of the inputs is $1_B$.

Again, there are various boundedness conditions that can be imposed.

\begin{figure}
	\begin{subfigure}[b]{.30\linewidth}
		\centering
		\labellist
		\pinlabel $\vdots$ at 126 90
		\pinlabel $=$ at 98 80
		\pinlabel $\vdots$ at 160 87
		\pinlabel $\vdots$ at 208 87
		\small
		\pinlabel $\ol\delta$ at 46 78
		\pinlabel $\delta$ at 160 125
		\pinlabel $\delta$ at 160 40
		\endlabellist
		\includegraphics[scale=.45]{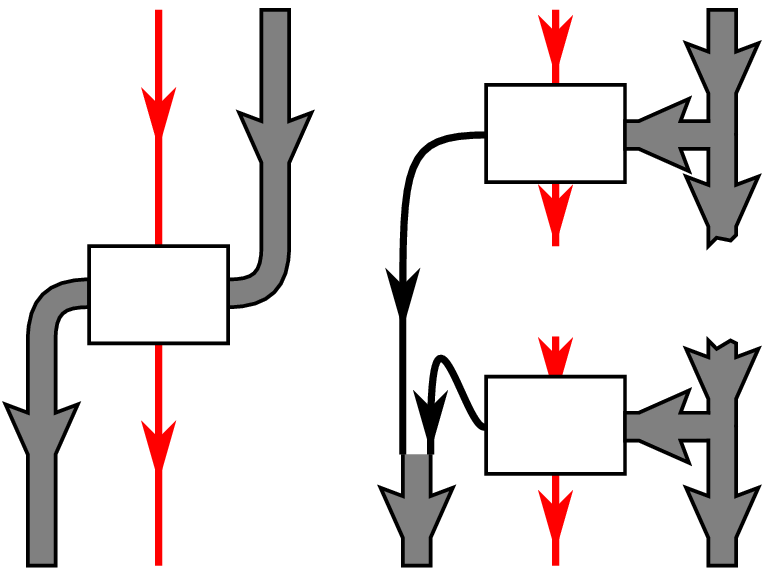}
		\caption{$\ol\delta$ in terms of $\delta$.}
		\label{subfig:deltabar}
	\end{subfigure}
	\begin{subfigure}[b]{.67\linewidth}
		\labellist
		\pinlabel $+$ at 102 80
		\pinlabel $+$ at 400 80
		\pinlabel $=0$ [l] at 188 80
		\pinlabel $=0$ [l] at 480 80
		\pinlabel $\Longleftrightarrow$ at 272 80
		\small
		\pinlabel $\ol\mu_B$ at 68 110
		\pinlabel $\ol\delta$ at 168 110
		\pinlabel $\ol\mu_B$ at 360 110
		\pinlabel $\ol\delta$ at 460 110
		\pinlabel $\delta$ at 36 46
		\pinlabel $\mu_A$ at 136 46
		\pinlabel $\ol\delta$ at 328 46
		\pinlabel $\ol\mu_A$ at 428 46
		\endlabellist
		\centering
		\includegraphics[scale=.45]{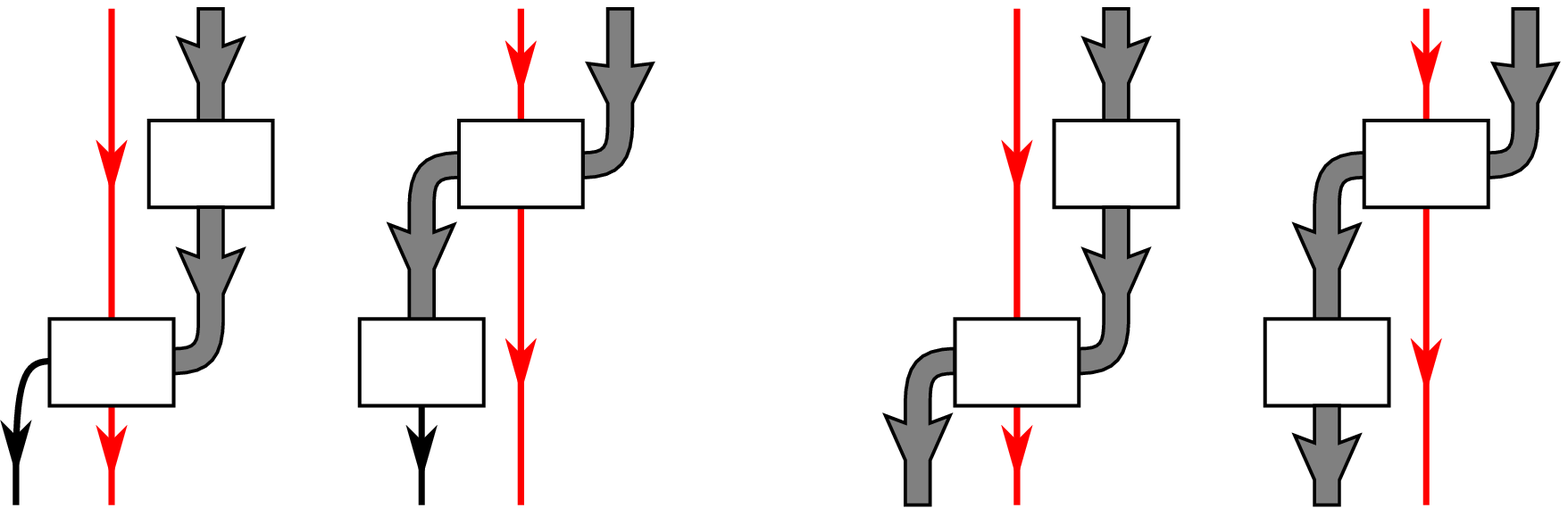}
		\caption{Structure equation for a type--$DA$ module.}
		\label{subfig:da_condition}
	\end{subfigure}
		\caption{}
		\label{fig:da_def}
\end{figure}

Type--$DD$ modules only behave well if the algebras involved are DG, so we only give the definition for that case.

\begin{defn}
	\label{def:dd_module}
	Suppose $A$ and $B$ are DG-algebras.
	A type $DD$--module $\lu{A}M^B$ consists of a bimodule $\li{_K}M_L$ over the ground rings, together with a map
	$\delta_{1|1|1}\co M\to A\otimes M\otimes B$ satisfying the condition in \Figure~\ref{fig:dd_condition}.
\end{defn}

\begin{figure}
	\labellist
	\pinlabel $+$ at 108 80
	\pinlabel $+$ at 232 80
	\pinlabel $=0$ at 372 80
	\small
	\pinlabel $\delta$ at 52 114
	\pinlabel $\delta$ at 164 114
	\pinlabel $\delta$ at 300 122
	\pinlabel $\delta$ at 300 74
	\pinlabel $\mu_A$ at 20 50
	\pinlabel $\mu_B$ at 196 50
	\pinlabel $\mu_A$ at 268 34
	\pinlabel $\mu_B$ at 332 34
	\endlabellist
	\includegraphics[scale=.5]{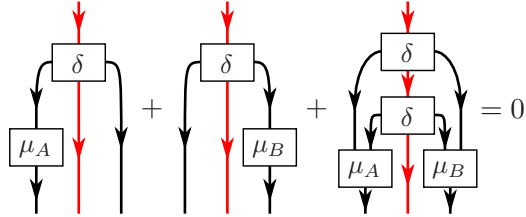}
	\caption{Structure equation for a type--$DD$ module.}
	\label{fig:dd_condition}
\end{figure}

We omit the definition of one-sided type--$A$ and type--$D$ modules, as they can be regarded as special cases of
bimodules. Type--$A$ modules over $A$ can be interpreted as type--$AA$ bimodules over $A$ and $B=\ZZ/2$. Similarly,
type--$D$ modules are type $DA$--modules over $\ZZ/2$.

\subsection{Tensor products}
\label{sec:tensors}

There are two types of tensor products for $\Ainf$--modules. One is the more traditional derived tensor product
$\dtens$. It is generally hard to work with, as $M\dtens N$ is infinite dimensional over $\ZZ/2$ even when $M$ and $N$
are finite dimensional. This is bad for computational reasons, as well as when using diagrams---it violates some of the
assumptions of \Appendix~\ref{sec:diagrams}. Nevertheless, we do use it in a few places throughout the paper.

Throughout the rest of this section assume that $A$, $B$, and $C$ are DG-algebras over the ground rings $K$, $L$, and
$P$, respectively.

\begin{defn}
	\label{def:dtens}
	Suppose $\li{_A}M_B$ and $\li{_B}N_C$ are two type--$AA$ bimodules.
	The \emph{derived tensor product} $(\li{_A}M_B)\dtens_B(\li{_B}N_C)$ is a type--$AA$ bimodule $\li{_A}(M\dtens N)_B$
	defined as follows. Its underling bimodule over the ground rings is
	\begin{equation*}
		\begin{split}
			\li{_K}(M\dtens N)_P&=(\li{_K}M_L)\otimes_L\left(\bigoplus_{i=0}^\infty {\li{_L}B_L}^{\otimes
			i}\right)\otimes_L(\li{_L}N_P)\\
			&=M\otimes_L \BBar B\otimes_L N.
		\end{split}
	\end{equation*}
	Here we're slightly abusing notation in identifying $\BBar B$ with a direct sum. The structure map as an
	$\Ainf$--bimodule over $A$ and $C$ is $m_{M\dtens N}$, as shown in \Figure~\ref{subfig:aaaa_dtens_def}.
\end{defn}

Similarly, we can take the derived tensor product of a $DA$ module and an $AA$ module, or a $DA$ module and an $AD$
module. The former is demonstrated in \Figure~\ref{subfig:daaa_dtens_def}.

\begin{figure}
	\begin{subfigure}[b]{\linewidth}
		\centering
		\labellist
		\pinlabel $=$ at 138 50
		\pinlabel $+$ at 288 50
		\pinlabel $+$ at 432 50
		\small
		\pinlabel $m_{M\dtens{}N}$ at 60 50
		\pinlabel $m_M$ at 200 50
		\pinlabel $m_N$ at 376 50
		\pinlabel $\ol\mu_B$ at 484 50
		\endlabellist
		\includegraphics[scale=.6]{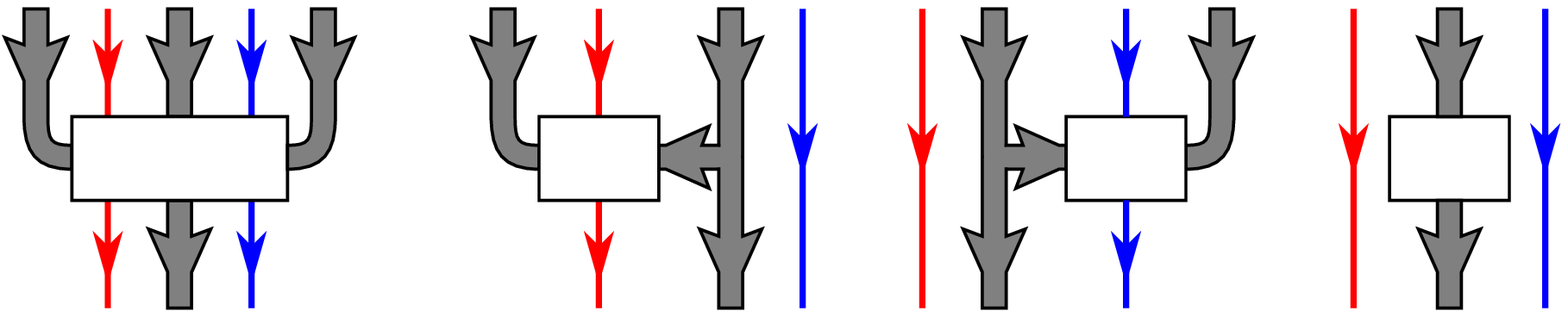}
		\caption{$\li{_A}M_B\dtens\li{_B}N_C$}
		\label{subfig:aaaa_dtens_def}
	\end{subfigure}
	\begin{subfigure}[b]{\linewidth}
		\centering
		\labellist
		\pinlabel $=$ at 127 50
		\pinlabel $+$ at 266 50
		\pinlabel $+$ at 430 50
		\small
		\pinlabel $\delta_{M\dtens{}N}$ at 54 50
		\pinlabel $\delta_M$ at 180 50
		\pinlabel $m_N$ at 376 50
		\pinlabel $\ol\mu_B$ at 504 50
		\pinlabel $1$ at 288 50
		\pinlabel $1$ at 452 50
		\endlabellist
		\includegraphics[scale=.6]{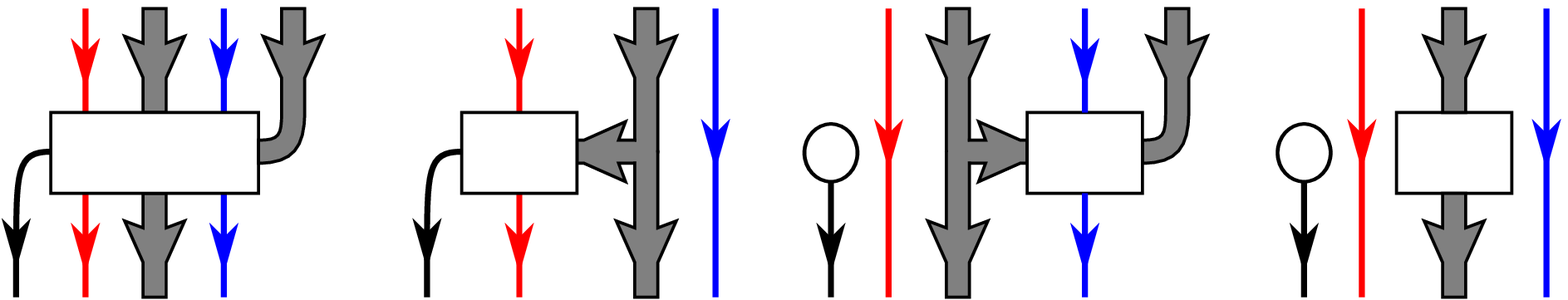}
		\caption{$\lu{A}M_B\dtens\li{_B}N_C$}
		\label{subfig:daaa_dtens_def}
	\end{subfigure}
	\caption{Structure maps for two types of $\dtens$ products.}
	\label{fig:dtens_def}
\end{figure}

The other type of tensor product is the square tensor product $\sqtens$. It is asymmetric, as it requires one side to be
a type--$D$ module, and the other to be a type--$A$ module. The main advantage of $\sqtens$ over $\dtens$ is that
$M\sqtens N$ is finite dimensional over $\ZZ/2$ whenever $M$ and $N$ are. Its main disadvantage is that $M\sqtens N$ is
only defined subject to some boundedness conditions on $M$ and $N$.

\begin{defn}
	\label{def:sqtens}
	Suppose $\li{_A}M_B$ is a type--$AA$ bimodule and $\lu{B}N_C$ is a type--$DA$ bimodule, such that at least one of
	$M$ and $N$ is relatively bounded in $B$. The \emph{square tensor product}
	$(\li{_A}M_B)\sqtens_B(\li{_B}N_C)$ is a type--$AA$ bimodule $\li{_A}(M\sqtens N)_C$ defined as follows.
	Its underlying bimodule over the ground rings is
	\begin{equation*}
		\li{_K}(M\sqtens N)_P=(\li{_K}M_L)\sqtens_L(\li{_L}N_P),
	\end{equation*}
	and its structure map is $m_{M\sqtens N}$ as shown in \Figure~\ref{subfig:aada_sqtens_def}.
\end{defn}

There are three other combinations depending on whether the modules are of type $D$ or $A$ with respect 
$A$ and $C$. All combinations are shown in \Figure~\ref{fig:sqtens_def}.

\begin{figure}
	\begin{subfigure}[b]{.48\linewidth}
		\centering
		\labellist
		\pinlabel $=$ at 120 50
		\small
		\pinlabel $m_{M\sqtens{}N}$ at 55 50
		\pinlabel $m_M$ at 176 50
		\pinlabel $\ol\delta_N$ at 244 50
		\endlabellist
		\includegraphics[scale=.5]{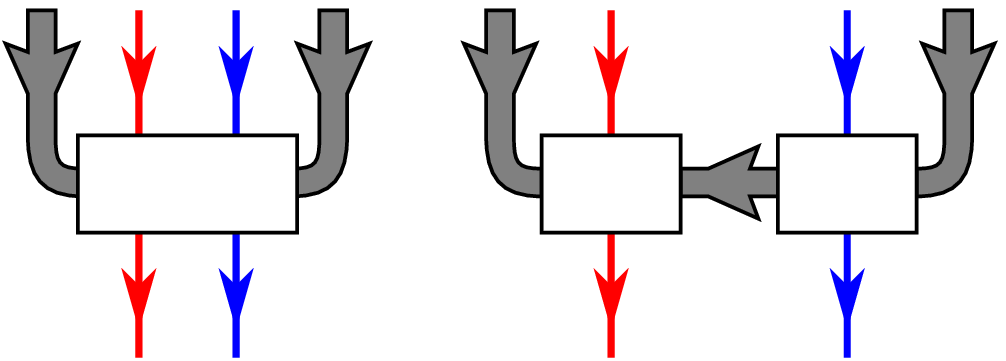}
		\caption{$\li{_A}M_B\sqtens\lu{B}N_C$}
		\label{subfig:aada_sqtens_def}
	\end{subfigure}
	\begin{subfigure}[b]{.48\linewidth}
		\centering
		\labellist
		\pinlabel $=$ at 120 50
		\small
		\pinlabel $\delta_{M\sqtens{}N}$ at 54 50
		\pinlabel $\delta_M$ at 176 50
		\pinlabel $\ol\delta_N$ at 244 50
		\endlabellist
		\includegraphics[scale=.5]{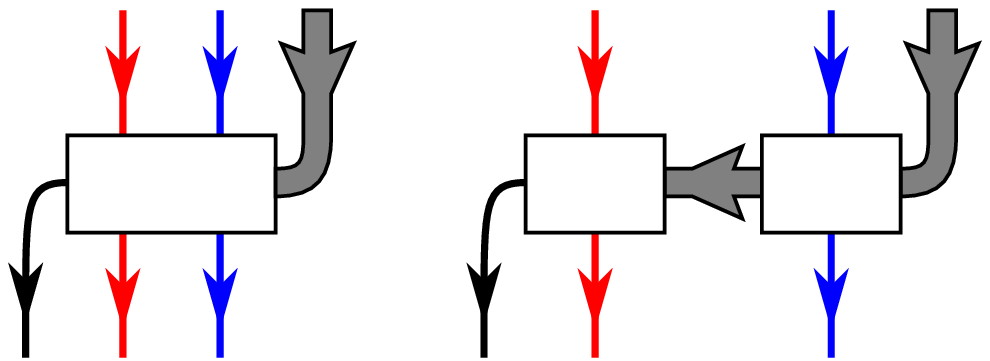}
		\caption{$\lu{A}M_B\sqtens\lu{B}N_C$}
		\label{subfig:dada_sqtens_def}
	\end{subfigure}
	\begin{subfigure}[b]{.48\linewidth}
		\centering
		\labellist
		\pinlabel $=$ at 120 50
		\small
		\pinlabel $\delta_{M\sqtens{}N}$ at 54 50
		\pinlabel $m_M$ at 176 50
		\pinlabel $\ol\delta_N$ at 244 50
		\pinlabel $\pi_C$ at 300 34
		\endlabellist
		\includegraphics[scale=.5]{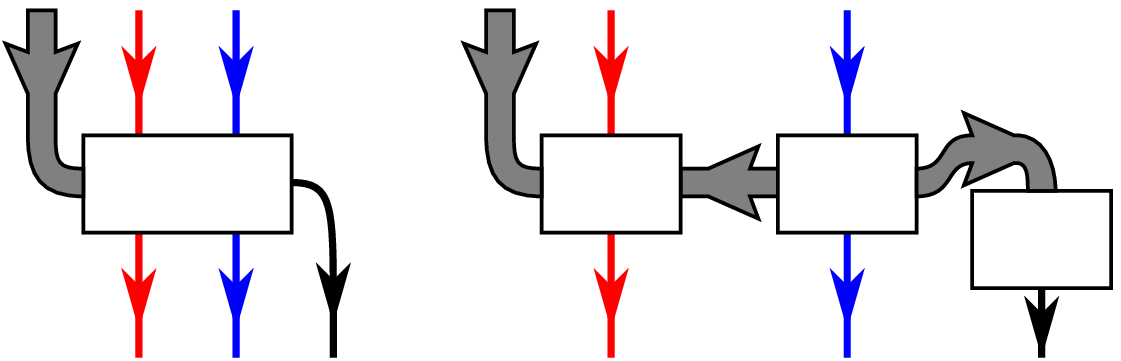}
		\caption{$\li{_A}M_B\sqtens\lu{B}N^C$}
		\label{subfig:aadd_sqtens_def}
	\end{subfigure}
	\begin{subfigure}[b]{.48\linewidth}
		\centering
		\labellist
		\pinlabel $=$ at 120 50
		\small
		\pinlabel $\delta_{M\sqtens{}N}$ at 54 50
		\pinlabel $\delta_M$ at 176 50
		\pinlabel $\ol\delta_N$ at 244 50
		\pinlabel $\pi_C$ at 300 34
		\endlabellist
		\includegraphics[scale=.5]{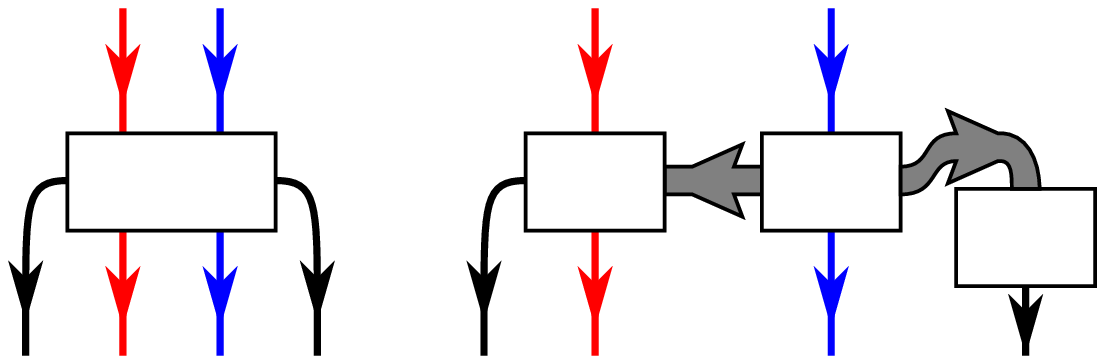}
		\caption{$\lu{A}M_B\sqtens\lu{B}N^C$}
		\label{subfig:dadd_sqtens_def}
	\end{subfigure}
	\caption{Structure maps for the four types of $\sqtens$ products.}
	\label{fig:sqtens_def}
\end{figure}

\subsection{Morphisms and homomorphisms}
\label{sec:morphisms_and_premorphisms}

There are two different notions of morphisms when working with $\Ainf$--modules and bimodules. The more natural one is
that of \emph{homomorphisms}, which generalize chain maps. However, if we work only with homomorphisms, too much
information is lost. For this reason we also consider the more general \emph{morphisms}. These generalize linear maps of
chain complexes, which do not necessarily respect differentials.

\begin{defn}
	\label{def:morphism}
	A \emph{morphism} $f\co M\to N$ between two bimodules $M$ and $N$ of the same type is a collection of maps
	of the same type as the structure maps for $M$ and $N$. For example, $f\co \li{_A}M_B\to \li{_A}N_B$ has components
	$f_{i|1|j}\co\BBar A\otimes M\otimes\BBar B\to N$. The spaces of morphisms are denoted by $\li{_A}\Mor_B(M,N)$, etc.
\end{defn}

Suppose $A$ and $B$ are DG-algebras.
The bimodules of each type, e.g. $\li{_A}\Mod_B$, form a DG-category, with morphism spaces $\li{_A}\Mor_B$, etc.
The differentials and composition maps for each type are shown in \Figures~\ref{fig:premorphism_differentials} and~\ref{fig:premorphism_compositions},
respectively.

\begin{defn}
	\label{def:homorphism}
	A \emph{homomorphism} $f\co M\to N$ of bimodules is a morphism $f$ which is a cycle, i.e., $\del f=0$. A
	\emph{null-homotopy} of $f$ is a morphism $H$, such that $\del H=f$. The space of homomorphisms up to homotopy is
	denoted by $\li{_A}\Hom_B$, etc.
\end{defn}

Notice that the homomorphism space $\li{_A}\Hom_B(M,N)$ is exactly the homology of $\li{_A}\Mor_B(M,N)$. This gives us a new
category of bimodules.

Having homomorphisms and homotopies allows us to talk about homotopy equivalences of modules. For example, if
$\li{_A}M_B$ is a bimodule, then $A\dtens M\simeq M\simeq M\dtens B$, via canonical homotopy equivalences. For example,
there is $h_M\co A\dtens M\to M$, which we used in several places.

\begin{figure}
	\begin{subfigure}[b]{\linewidth}
		\centering
		\labellist
		\pinlabel $=$ at 100 80
		\pinlabel $+$ at 244 80
		\pinlabel $+$ at 388 80
		\pinlabel $+$ at 500 80
		\small
		\pinlabel $\del{}f$ at 44 74
		\pinlabel $m_M$ at 172 110
		\pinlabel $f$ at 316 110
		\pinlabel $\mu_A$ at 412 110
		\pinlabel $\mu_B$ at 588 110
		\pinlabel $f$ at 172 46
		\pinlabel $m_N$ at 316 46
		\pinlabel $f$ at 444 46
		\pinlabel $f$ at 556 46
		\endlabellist
		\includegraphics[scale=.55]{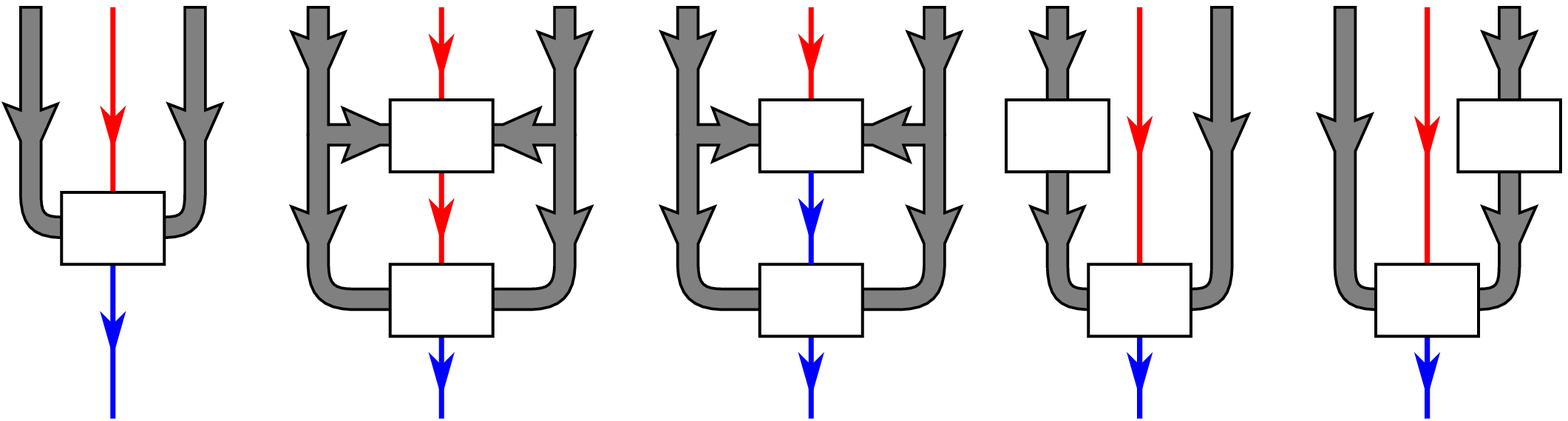}
		\caption{Type--$AA$.}
		\label{subfig:aa_premorphism_differential}
	\end{subfigure}
	\begin{subfigure}[b]{\linewidth}
		\centering
		\labellist
		\pinlabel $=$ at 92 80
		\pinlabel $+$ at 224 80
		\pinlabel $+$ at 356 80
		\pinlabel $+$ at 464 80
		\small
		\pinlabel $\del{f}$ at 40 74
		\pinlabel $f$ at 152 74
		\pinlabel $f$ at 284 122
		\pinlabel $f$ at 408 114
		\pinlabel $f$ at 484 46
		\pinlabel $\delta_M$ at 152 122
		\pinlabel $\delta_N$ at 284 74
		\pinlabel $\mu_A$ at 120 34
		\pinlabel $\mu_A$ at 252 34
		\pinlabel $\mu_A$ at 376 50
		\pinlabel $\ol\mu_B$ at 516 110
		\endlabellist
		\includegraphics[scale=.55]{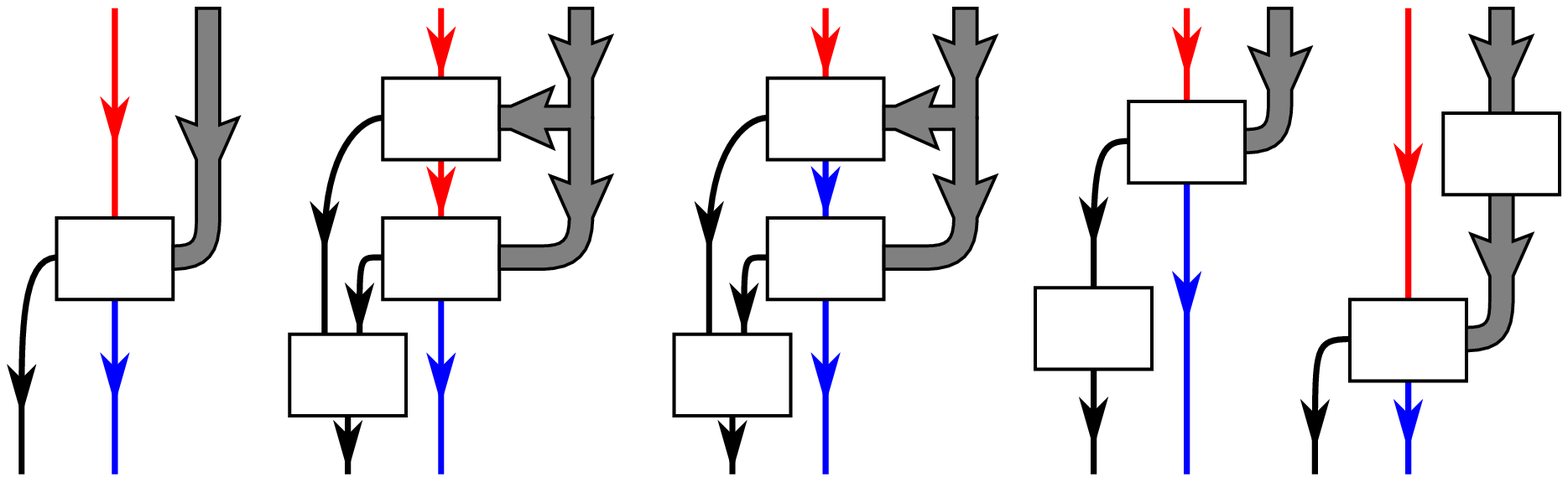}
		\caption{Type--$DA$.}
		\label{subfig:da_premorphism_differential}
	\end{subfigure}
	\begin{subfigure}[b]{\linewidth}
		\centering
		\labellist
		\pinlabel $=$ at 92 80
		\pinlabel $+$ at 212 80
		\pinlabel $+$ at 332 80
		\pinlabel $+$ at 444 80
		\small
		\pinlabel $\del{f}$ at 40 74
		\pinlabel $f$ at 152 74
		\pinlabel $f$ at 272 122
		\pinlabel $f$ at 392 114
		\pinlabel $f$ at 496 114
		\pinlabel $\delta_M$ at 152 122
		\pinlabel $\delta_N$ at 272 74
		\pinlabel $\mu_A$ at 120 34
		\pinlabel $\mu_B$ at 184 34
		\pinlabel $\mu_A$ at 240 34
		\pinlabel $\mu_B$ at 304 34
		\pinlabel $\mu_A$ at 360 50
		\pinlabel $\mu_B$ at 528 50
		\endlabellist
		\includegraphics[scale=.55]{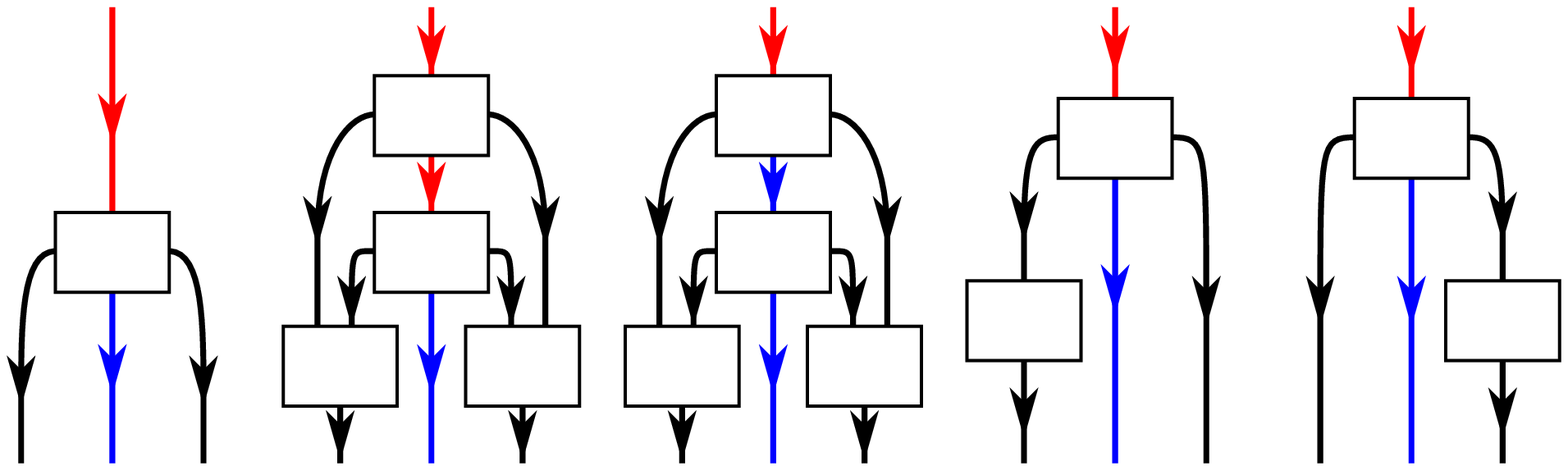}
		\caption{Type--$DD$.}
		\label{subfig:dd_premorphism_differential}
	\end{subfigure}
	\caption{Differentials of the different types of morphisms.}
	\label{fig:premorphism_differentials}
\end{figure}
\begin{figure}
	\begin{subfigure}[b]{.34\linewidth}
		\centering
		\labellist
		\pinlabel $=$ at 100 80
		\small
		\pinlabel $g\!\circ\!\!f$ at 44 74
		\pinlabel $f$ at 172 110
		\pinlabel $g$ at 172 46
		\endlabellist
		\includegraphics[scale=.5]{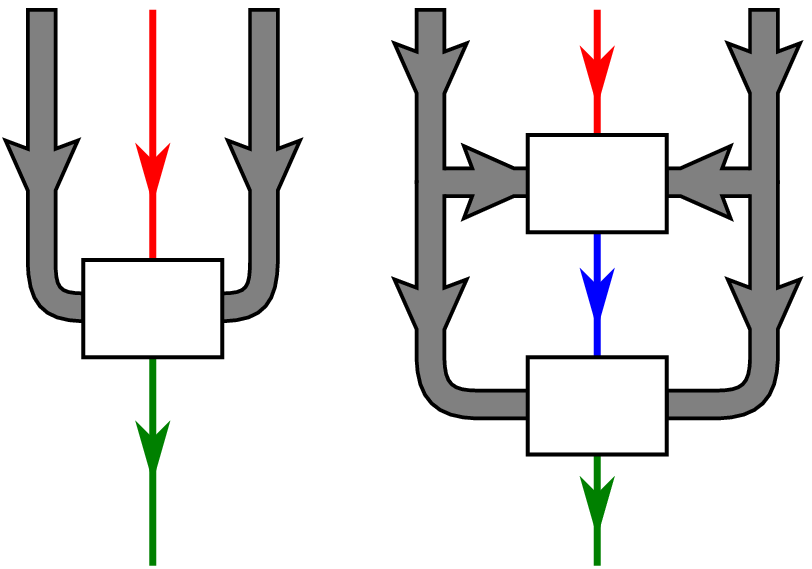}
		\caption{Type--$AA$.}
		\label{subfig:aa_premorphism_composition}
	\end{subfigure}
	\begin{subfigure}[b]{.30\linewidth}
		\centering
		\labellist
		\pinlabel $=$ at 92 80
		\small
		\pinlabel $g\!\circ\!\!f$ at 40 74
		\pinlabel $g$ at 152 74
		\pinlabel $f$ at 152 122
		\pinlabel $\mu_A$ at 120 34
		\endlabellist
		\includegraphics[scale=.5]{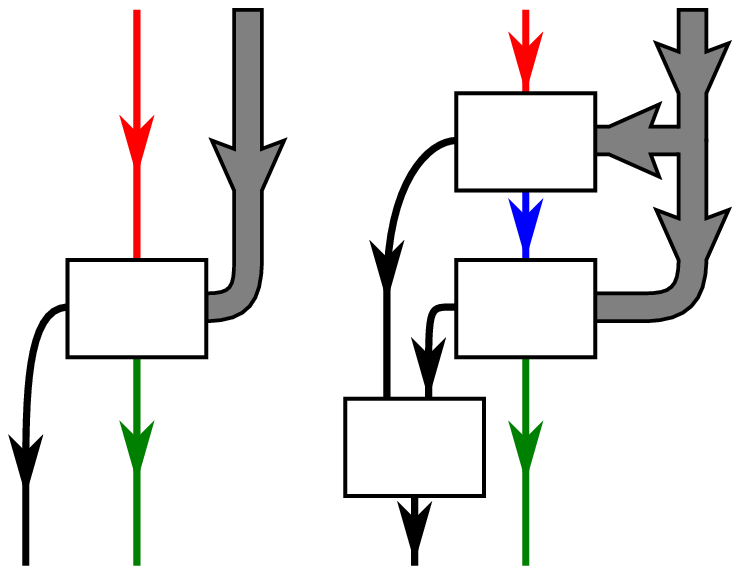}
		\caption{Type--$DA$.}
		\label{subfig:da_premorphism_composition}
	\end{subfigure}
	\begin{subfigure}[b]{.30\linewidth}
		\centering
		\labellist
		\pinlabel $=$ at 92 80
		\small
		\pinlabel {$g\!\circ\!\!f$} at 40 74
		\pinlabel $g$ at 152 74
		\pinlabel $f$ at 152 122
		\pinlabel $\mu_A$ at 120 34
		\pinlabel $\mu_B$ at 184 34
		\endlabellist
		\includegraphics[scale=.5]{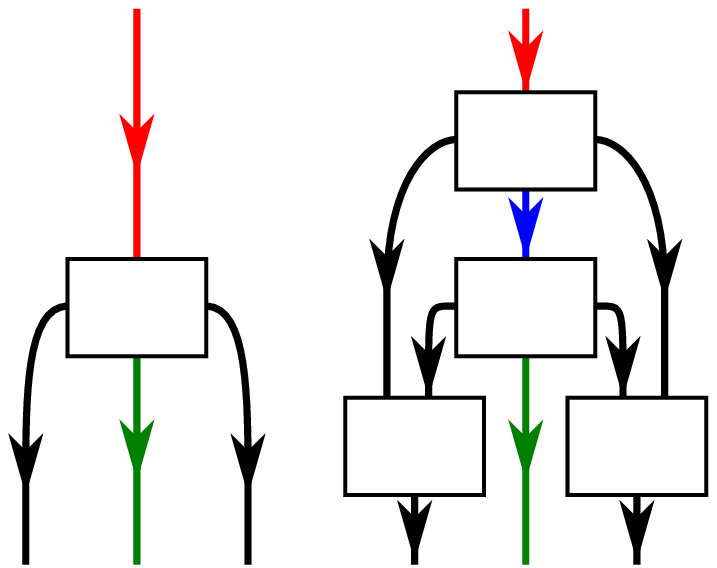}
		\caption{Type--$DA$.}
		\label{subfig:dd_premorphism_composition}
	\end{subfigure}
	\caption{Compositions of the different types of morphisms.}
	\label{fig:premorphism_compositions}
\end{figure}

\subsection{Induced morphisms}
\label{sec:induced_maps}

Suppose $f\co M\to N$ is a bimodule morphism. This induces morphisms
\begin{align*}
	f\dtens\id&\co M\dtens P\to N\dtens P &f\sqtens\id&\co M\sqtens P\to N\sqtens P,\\
\end{align*}
whenever the tensor products are defined. The main types of induced morphisms are shown in \Figure~\ref{fig:induced_maps}.
The functors $\cdot\sqtens\id$ and $\cdot\dtens\id$ are DG-functors. That is, they preserve homomorphisms, homotopies, and
compositions.

\begin{figure}
	\begin{subfigure}[b]{.44\linewidth}
		\centering
		\labellist
		\pinlabel $=$ at 120 76
		\small
		\pinlabel $f\!\sqtens\!\id_N$ at 54 76
		\pinlabel $f$ at 176 76
		\pinlabel $\ol\delta_N$ at 244 76
		\endlabellist
		\includegraphics[scale=.54]{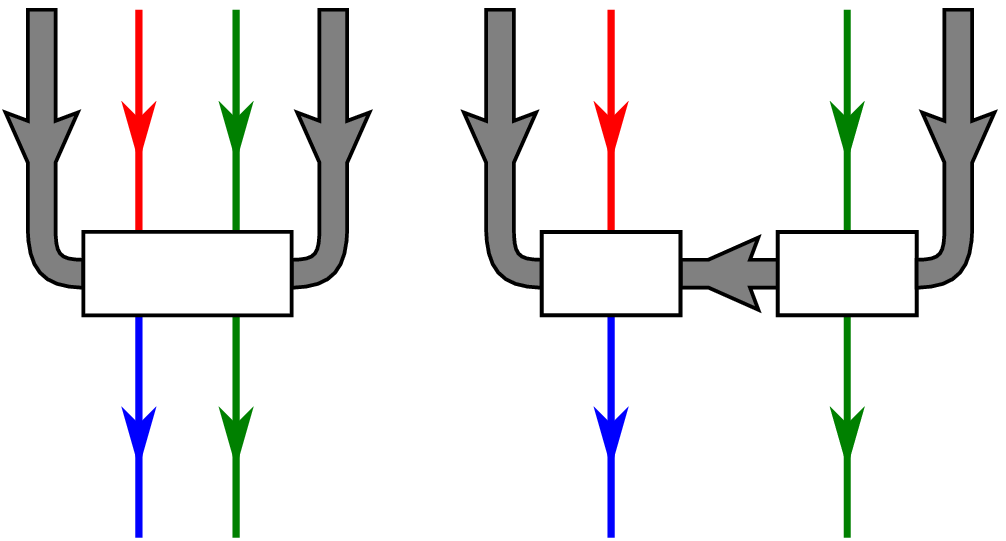}
		\caption{}
		\label{subfig:a_to_d_induced_map}
	\end{subfigure}
	\begin{subfigure}[b]{.50\linewidth}
		\centering
		\labellist
		\pinlabel $=$ at 120 76
		\small
		\pinlabel $\id_M\!\sqtens\!f$ at 54 76
		\pinlabel $m_M$ at 176 76
		\pinlabel $\ol\delta_N$ at 268 120
		\pinlabel $f$ at 268 76
		\pinlabel $\ol\delta_P$ at 268 32
		\endlabellist
		\includegraphics[scale=.54]{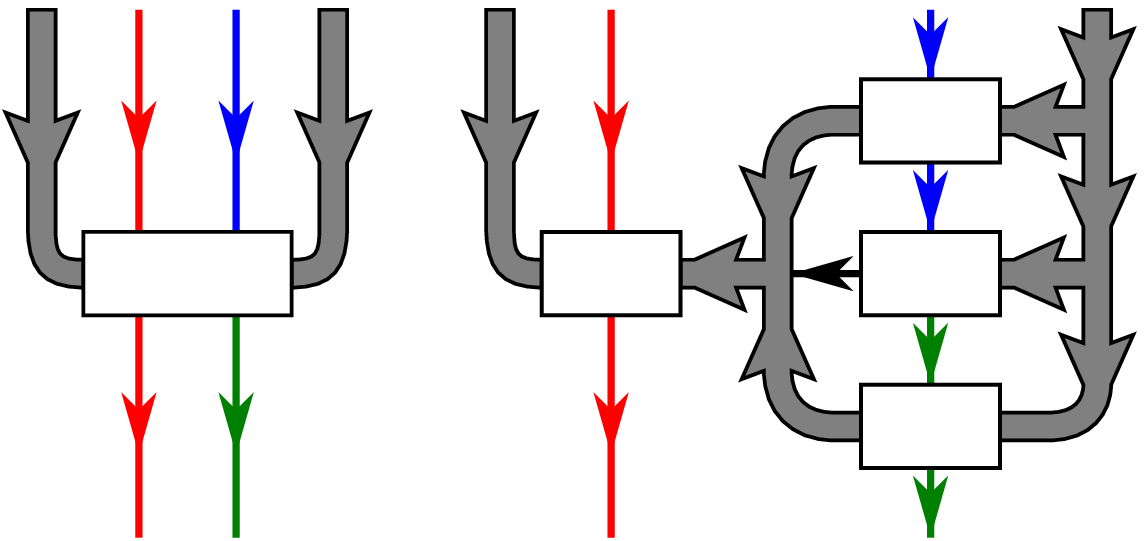}
		\caption{}
		\label{subfig:d_to_a_induced_map}
	\end{subfigure}
	\begin{subfigure}[b]{.48\linewidth}
		\centering
		\labellist
		\pinlabel $=$ at 132 50
		\small
		\pinlabel $f\!\dtens\!\id_N$ at 60 50
		\pinlabel $f$ at 188 50
		\endlabellist
		\includegraphics[scale=.55]{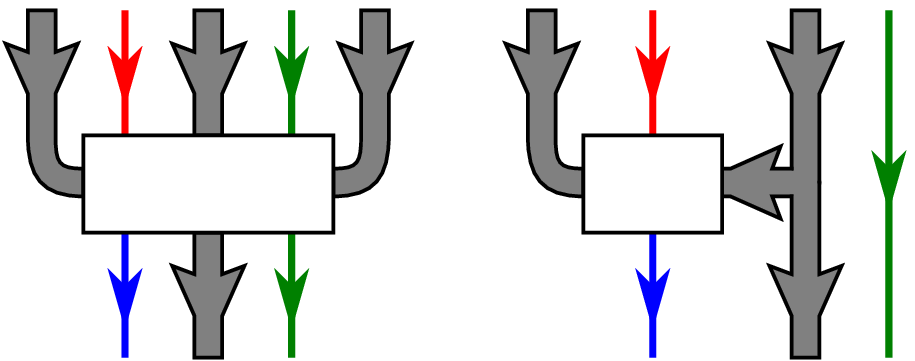}
		\caption{}
		\label{subfig:dtens_induced_map}
	\end{subfigure}
	\caption{Three types of induced maps on tensor products.}
	\label{fig:induced_maps}
\end{figure}

\subsection{Duals}
\label{sec:duals}
There are two operations on modules, which can be neatly expressed by diagrams. One is the operation of turning a bimodule
$\li{_A}M_B$ into a bimodule $\li{_{B^{\op}}}M_{A^{\op}}$. (Similarly, type--$DA$ bimodules become type--$AD$ bimodules,
etc.) Diagrammatically this is achieved by reflecting diagrams along the vertical
axis. See \Figure~\ref{fig:opposite_module} for an example.

The other operation is dualizing modules and bimodules. If $\li{_A}M_B$ has an underlying bimodule $\li{_K}M_L$ over the
ground rings, then
its dual $\li{_B}{M^\vee}_A$ has an underlying bimodule $\li{_L}{M^\vee}_K=(\li{_K}M_L)^\vee$. Diagrammatically this is achieved
by rotating diagrams by 180 degrees. Again, there are variations for type--$D$ modules. See
\Figure~\ref{fig:dual_module} for an example.

Since the structure equations are symmetric, it is immediate that both of
these operations send bimodules to bimodules, as long as we restrict to modules finitely generated over $\ZZ/2$.

\begin{figure}
	\labellist
	\pinlabel $=$ at 384 52
	\small\hair=0.5pt
	\pinlabel $A$ [t] at 16 20
	\pinlabel $M$ [t] at 48 20
	\pinlabel $A^{\op}$ [t] at 340 20
	\pinlabel $M$ [t] at 308 20
	\pinlabel $A^{\op}$ [t] at 500 20
	\pinlabel $M$ [t] at 468 20
	\pinlabel $B$ [b] at 80 86
	\pinlabel $M$ [b] at 48 86
	\pinlabel $B^{\op}$ [b] at 276 86
	\pinlabel $M$ [b] at 308 86
	\pinlabel $B^{\op}$ [b] at 436 86
	\pinlabel $M$ [b] at 468 86
	\pinlabel $m_M$ at 48 52
	\pinlabel \reflectbox{$m_M$} at 308 52
	\pinlabel $m_M^{\op}$ at 468 52
	\endlabellist
	\includegraphics[scale=.5]{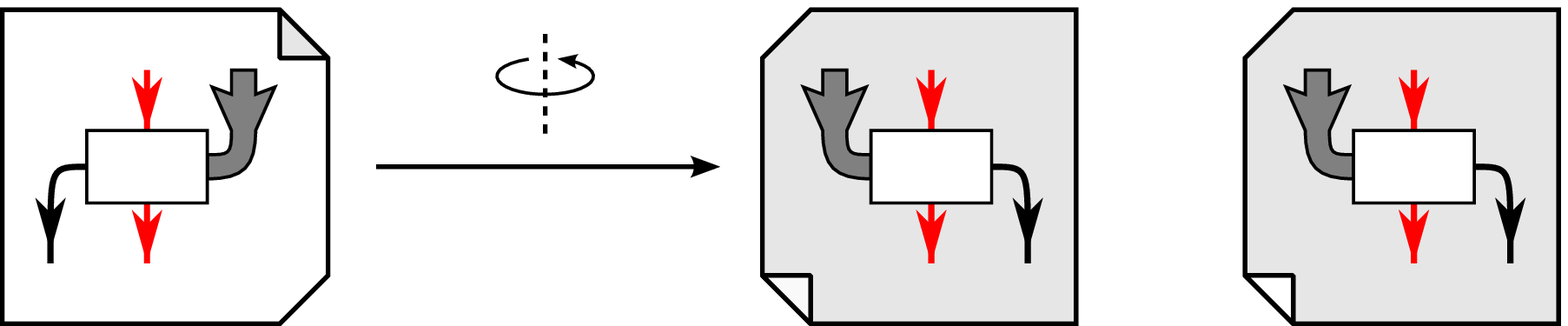}
	\caption{Passing from $\lu{A}\Mod_B$ to $\li{_{B^{\op}}}\Mod^{A^{\op}}$ by reflection.}
	\label{fig:opposite_module}
\end{figure}

\begin{figure}
	\labellist
	\pinlabel $=$ at 384 52
	\small\hair=0.5pt
	\pinlabel $A$ [t] at 16 20
	\pinlabel $M$ [t] at 48 20
	\pinlabel $A$ [b] at 344 86
	\pinlabel $M$ [t] at 312 20
	\pinlabel $A$ [t] at 500 20
	\pinlabel $M$ [t] at 468 20
	\pinlabel $B$ [b] at 80 86
	\pinlabel $M$ [b] at 48 86
	\pinlabel $B$ [t] at 280 20
	\pinlabel $M$ [b] at 312 86
	\pinlabel $B$ [b] at 436 86
	\pinlabel $M$ [b] at 468 86
	\pinlabel $m_M$ at 48 52
	\pinlabel \rotatebox[origin=c]{180}{$m_M$} at 312 52
	\pinlabel $m_{M^\vee}$ at 470 52
	\endlabellist
	\includegraphics[scale=.5]{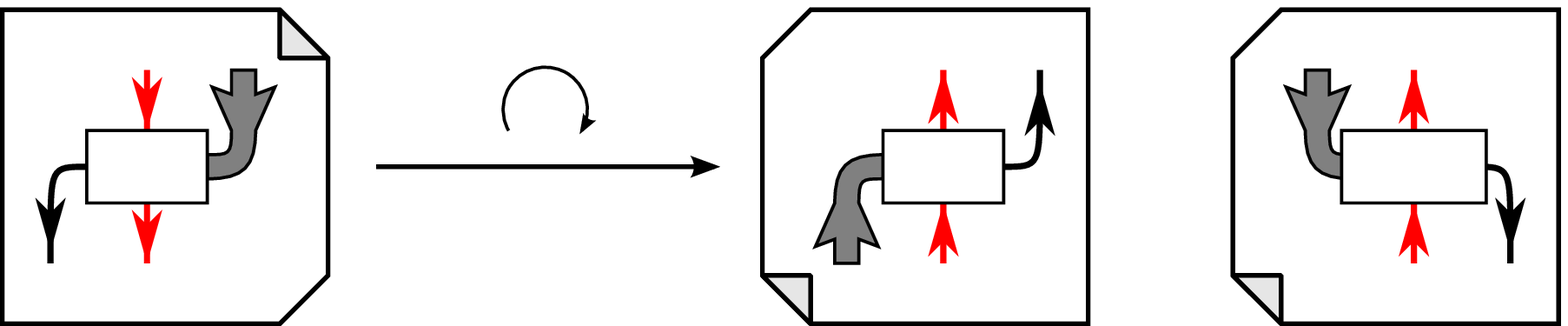}
	\caption{Passing from $\lu{A}\Mod_B$ to $\li{_B}\Mod^A$ by rotation.}
	\label{fig:dual_module}
\end{figure}

This gives equivalences of the DG-categories
\begin{equation*}
	\lu{A}\Mod_B\cong \li{_{B^{\op}}}\Mod^{A^{\op}} \cong\left(\li{_B}\Mod^A\right)^{\op},
\end{equation*}
etc. One can check that both constructions extend to tensors, induced morphisms, etc.

\bibliographystyle{hamsalpha2}
\bibliography{../bibliography/all}

\end{document}